\newtheorem{definition}{Definition}[section]
\newtheorem{proposition}{Proposition}[section]
\newtheorem{theorem}{Theorem}[section]
\newtheorem{lemma}[proposition]{Lemma}
\newtheorem{remark}{Remark}[section]
\newtheorem*{thank}{Acknowledgments}
\newcommand{\C}{\mathbb{C}}
\newcommand{\R}{\mathbb{R}}
\numberwithin{equation}{section}
\newcommand{\p}{\partial}
\date{}
\title{Threshold solutions for 
the 3D focusing cubic-quintic  
nonlinear Schr\"{o}dinger 
equation at low frequencies}
\author{Masaru Hamano, 
Hiroaki Kikuchi, Minami Watanabe}
\date{}
\begin{document}
\maketitle

\begin{abstract}
This paper addresses
the focusing cubic-quintic 
nonlinear Schr\"{o}dinger equation in 
three space dimensions. 
Especially, we study the 
global dynamics of 
solutions whose energy 
and mass equal to those of 
the ground state in the 
sprits of 
Duyckaerts and Merle~\cite{DM2009}. 
When we try to obtain 
the corresponding results of 
\cite{DM2009}, 
we meet several difficulties 
due to the cubic-quintic 
nonlinearity. 
We overcome them by 
using the one-pass theorem 
(no return theorem) developed 
by Nakanishi and 
Schlag~\cite{NS2012}. 
\end{abstract}


\section{Introduction}
In this paper, we consider 
 the following nonlinear 
Schr\"{o}dinger equation: 
\begin{equation} \label{nls}
i \p_{t} \psi + \Delta \psi 
+ |\psi|^{2} \psi + |\psi|^{4} \psi = 0
\qquad \mbox{in $\R \times\R^{3}$},  
\end{equation}
where 
$\Delta$ is the Laplace operator on $\R^{3}$. 
Several studies have been made on 
the asymptotic behavior of solutions 
to double power nonlinear 
Schr\"{o}dinger equations 
(see e.g. \cite{AM2022, AIKN2021, CC, 
FO2018, FH2021, Fuk2003, 
Hayashi2021, KLT2022, KMV2021, 
KOPV2017, LMR,  MZZ, Murphy2021, 
Ohta1995, OY2015, TVZ2007} and references therein). 
Here, we are concerned with global dynamics of 
solutions whose mass and 
energy equal to those of the ground state.

For any $\psi_{0} \in H^{1}(\R^3)$, 
there exists a unique solution $\psi$ in 
$C(I_{\max}; H^{1}(\R^3))$ with $\psi|_{t=0} 
= \psi_{0}$ for some interval 
$I_{\max} = (- T_{\max}^{-}, T_{\max}^{+}) \subset \R$, 
a maximal existence interval including $0$. 
We say that $\psi$ blows up in 
finite time if $T_{\max}^{+} < \infty$ or $T_{\max}^{-} < \infty$. 
The solution $\psi$ satisfies the following 
conservation laws of the mass and 
the energy in 
this order: 
\begin{equation} \label{eq-con}
\mathcal{M}(\psi(t)) = \mathcal{M}(\psi_{0}), \qquad 
\mathcal{E}(\psi(t)) = \mathcal{E}(\psi_{0}), 
\end{equation}
where 
\begin{equation*} 
\mathcal{M}(u) := \frac{1}{2} \|u\|_{L^{2}}^{2}, \qquad 
\mathcal{E}(u) := \frac{1}{2} \|\nabla u\|_{L^{2}}^{2}
- \frac{1}{4} \|u\|_{L^{4}}^{4}
- \frac{1}{6} \|u\|_{L^{6}}^{6}
\qquad \mbox{for $u \in H^{1}(\R^{3})$} 
\end{equation*} 

If, in addition, $\psi_{0} \in L^{2}
(\R^3, |x|^{2} dx)$, then the corresponding solution 
$\psi$ also belongs to $C(I_{\max}; L^{2}
(\R^{3}, |x|^{2} dx))$ and satisfies the so-called 
virial identity:
\begin{equation} \label{virial}
\begin{split}
\int_{\R^{3}} |x|^{2} |\psi(t, x)|^{2} dx 
& = \int_{\R^3} |x|^{2} |\psi_{0}(x)|^{2} dx 
+ 2 t \ \textrm{Im} \int_{\R^{3}} x \cdot \nabla \psi_{0}(x)
\overline{\psi_{0}(x)} dx \\
& \quad 
+ 16 
\int_{0}^{t} \int_{0}^{t^{\prime}} 
\mathcal{K}(\psi(t^{\prime \prime})) dt^{\prime \prime} 
dt^{\prime} \qquad \mbox{for any $t \in I_{\max}$}, 
\end{split}
\end{equation}
where 
\begin{equation*} 
\mathcal{K}(u):= \|\nabla u\|_{L^{2}}^{2} 
- \frac{3}{4}\|u\|_{L^{4}}^{4} 
- \|u\|_{L^{6}}^{6} 
\qquad \mbox{for $u \in H^{1}(\R^{3})$}. 
\end{equation*}
See e.g. Cazenave~\cite[Section 6.5]
{Cazenave} for details.

By a \textit{standing wave}, we mean a solution 
to \eqref{nls} of the form 
\begin{equation*}
\psi(t, x) = e^{i \omega t} Q_{\omega}(x)
\end{equation*} 
for some $\omega > 0$ and 
$Q_{\omega} \in H^{1}(\R^{3}) 
\setminus \{0\}$. 
Then, we see that 
$Q_{\omega}$ should solve 
the following semilinear elliptic equation:
\begin{equation} \label{sp}
- \Delta Q + \omega Q - |Q|^{2} Q 
- |Q|^{4} Q = 0
\qquad \mbox{in $\R^{3}$}. 
\end{equation}
If we define the action functional 
$\mathcal{S}_{\omega}$ by 
\begin{equation} \label{action}
\mathcal{S}_{\omega}(u) := 
\mathcal{E}(u) + \omega \mathcal{M}(u) 
\qquad \mbox{for $u \in H^{1}(\R^{3})$}, 
\end{equation}
then $\mathcal{S}_{\omega}^{\prime}(Q_{\omega}) 
= 0$ 
in $H^{-1}(\R^{3})$ if and only if 
$Q_{\omega} \in H^{1}(\R^{3})$ is a 
weak solution to \eqref{sp}. 
To seek a solution to \eqref{sp}, 
we consider the following 
minimization problem: 
	\begin{equation}\label{vari-v}
	m_{\omega} := 
	\inf\left\{ 
	\mathcal{S}_{\omega} (u) \colon 
	u \in H^{1}(\R^{3}) \setminus \{0\}, 
	\; \mathcal{K} (u) = 0
	\right\}. 
	\end{equation}
It is known that 
if there is a minimizer for 
$m_{\omega}$, 
it satisfies \eqref{sp}. 
Here we call $Q_{\omega}$ a 
\textit{ground state}
to \eqref{vari-v} provided 
$Q_{\omega}$ 
is a minimizer for $m_{\omega}$. 
Concerning the existence of a 
ground state, 
the following results hold:  
\begin{theorem}[\cite{AIKN, WW}]\label{thm-0}
There exists $\omega_{c} > 0$ such that 
$m_{\omega}$ has a ground state for  
$0< \omega < \omega_{c}$  
and no ground state for $\omega > \omega_{c}$.
\end{theorem}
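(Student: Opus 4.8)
The plan is to describe attainment of $m_\omega$ through a single universal quantity — the energy–critical level coming from the quintic term — and then to track how $m_\omega$ sits relative to that level as $\omega$ increases. Write $u_\lambda(x):=\lambda^{3/2}u(\lambda x)$, which preserves $\mathcal M$, and note $\tfrac{d}{d\lambda}\mathcal S_\omega(u_\lambda)=\tfrac1\lambda\mathcal K(u_\lambda)$, with $\lambda\mapsto\mathcal S_\omega(u_\lambda)$ strictly increasing then strictly decreasing; hence $m_\omega=\inf_{u\ne0}\sup_{\lambda>0}\mathcal S_\omega(u_\lambda)$, and since this scaling commutes with Schwarz symmetrization — which does not increase $\mathcal S_\omega$ — the infimum is unchanged upon restricting to radial functions. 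On $\{\mathcal K(u)=0\}$ one has $\mathcal S_\omega(u)=\tfrac18\|u\|_{L^4}^4+\tfrac13\|u\|_{L^6}^6+\tfrac\omega2\|u\|_{L^2}^2\ge0$, and combining the constraint with the Gagliardo–Nirenberg inequality $\|u\|_{L^4}^4\lesssim\|u\|_{L^2}\|\nabla u\|_{L^2}^3$ and the Sobolev inequality $\|u\|_{L^6}^6\le S^{-3}\|\nabla u\|_{L^2}^6$ prevents $\|\nabla u\|_{L^2}$ and $\|u\|_{L^2}$ from being simultaneously small on $\{\mathcal K(u)=0,\ u\ne0\}$; therefore $m_\omega>0$ and every minimizing sequence is bounded in $H^1$ and bounded away from $0$. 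Finally, since the constraint set is $\omega$–independent and $\mathcal S_\omega=\mathcal S_{\omega'}+(\omega-\omega')\mathcal M$ with $\mathcal M>0$, the map $\omega\mapsto m_\omega$ is non-decreasing, and wherever $m_\omega$ is attained, say at $Q_\omega$, it is in fact strictly increasing: $m_{\omega'}\le\mathcal S_{\omega'}(Q_\omega)=m_\omega-(\omega-\omega')\mathcal M(Q_\omega)<m_\omega$ for $\omega'<\omega$.

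Let $W$ denote the Aubin–Talenti function ($-\Delta W=W^5$ on $\R^3$) and put $\ell^\ast:=\tfrac13\|\nabla W\|_{L^2}^2=\tfrac13S^{3/2}$. I would establish: (i) $m_\omega\le\ell^\ast$ for all $\omega>0$, by testing with cut‑off, $\dot H^1$‑concentrated rescalings of $W$ — after a negligible rescaling to restore $\mathcal K=0$, their gradient and $L^6$ norms converge to those of $W$ while the $L^4$‑ and $L^2$‑masses vanish, so the action tends to $\ell^\ast$; (ii) $m_\omega\to0$ as $\omega\to0^+$, by testing with the cubic‑NLS ground state $R_\omega=\omega^{1/2}R(\omega^{1/2}\cdot)$ rescaled onto $\{\mathcal K=0\}$, whose action is $O(\omega^{1/2})$ — so in particular $m_\omega<\ell^\ast$ for small $\omega$; and (iii) if $m_\omega<\ell^\ast$, then $m_\omega$ is attained. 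For (iii), take a radial minimizing sequence $u_n$ with $\mathcal K(u_n)=0$; it is bounded in $H^1$, so $u_n\rightharpoonup u_\ast$ in $H^1$ and $u_n\to u_\ast$ in $L^4(\R^3)$ by radial compactness. Vanishing ($u_\ast=0$) is excluded: then $\|u_n\|_{L^4}\to0$, so $\|\nabla u_n\|_{L^2}^2-\|u_n\|_{L^6}^6=\tfrac34\|u_n\|_{L^4}^4\to0$; the constraint and Gagliardo–Nirenberg forbid $\|\nabla u_n\|_{L^2}\to0$ (it would make $\|u_n\|_{L^2}$, hence $\mathcal S_\omega(u_n)$, blow up), so $\liminf\|\nabla u_n\|_{L^2}^2\ge S^{3/2}$ by Sobolev, whence $m_\omega\ge\liminf\tfrac13\|u_n\|_{L^6}^6\ge\ell^\ast$, a contradiction. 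A Brezis–Lieb decomposition $u_n=u_\ast+r_n$ then shows $r_n\to0$ in $H^1$: a nonvanishing $\dot H^1$‑remainder must, by Sobolev and $\mathcal K(u_n)=0$, satisfy $\liminf\|\nabla r_n\|_{L^2}^2\ge S^{3/2}$ and carry action $\ge\ell^\ast$, impossible below $\ell^\ast$; and the rescaling identity together with $m_\omega<\ell^\ast$ pins down $\mathcal K(u_\ast)=0$, forcing $\|\nabla r_n\|_{L^2},\|r_n\|_{L^2}\to0$. Thus $u_\ast$ is a minimizer, and it solves \eqref{sp} because the Lagrange multiplier attached to the constraint vanishes: pairing $\mathcal S_\omega'(u_\ast)=\mu\,\mathcal K'(u_\ast)$ with the generator $\tfrac32u_\ast+x\cdot\nabla u_\ast$ of $\lambda\mapsto(u_\ast)_\lambda$ gives $0=\mathcal K(u_\ast)$ on the left and a nonzero multiple of $\mu$ on the right, so $\mu=0$.

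Now put $\omega_c:=\sup\{\omega>0:m_\omega<\ell^\ast\}$, which is $>0$ by (ii). By monotonicity $\{m_\omega<\ell^\ast\}$ is a half‑line $(0,\omega_c)$, so (iii) produces a ground state for every $0<\omega<\omega_c$; and for $\omega>\omega_c$ one has $m_\omega\le\ell^\ast$ from (i) and $m_\omega\ge\ell^\ast$ by the definition of $\omega_c$, hence $m_\omega=\ell^\ast$. If this were attained by some $Q_\omega$ with $\omega>\omega_c$, then fixing $\omega'\in(\omega_c,\omega)$ the strict‑monotonicity remark gives $\ell^\ast=m_{\omega'}\le\mathcal S_{\omega'}(Q_\omega)=\ell^\ast-(\omega-\omega')\mathcal M(Q_\omega)<\ell^\ast$, a contradiction — so no ground state exists for $\omega>\omega_c$. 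It remains to see $\omega_c<\infty$; this I would obtain from a refined expansion of $\mathcal S_\omega$ along Aubin–Talenti bubbles glued to the decaying profile of $-\Delta+\omega$, in the spirit of Brezis–Nirenberg in dimension three: the leading correction to $\ell^\ast$ is a competition, at the single scale $\lambda^{-1}$, between the focusing cubic term and the mass term (both of size $\lambda^{-1}$ for $\R^3$ bubbles), whose net coefficient becomes nonnegative once $\omega$ passes an explicit threshold; combined with $\mathcal S_\omega(u)\ge\tfrac13\|u\|_{L^6}^6$ on the constraint set this yields $m_\omega=\ell^\ast$ for large $\omega$.

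I expect the two genuinely delicate points to be step (iii) — the concentration–compactness analysis carried out exactly at the critical level $\ell^\ast$, i.e.\ ruling out the escape of part of a minimizing sequence into an energy–critical bubble — and the finiteness of $\omega_c$, which hinges on getting the sign of the first correction in the bubble expansion right. The latter is dimension‑sensitive and is really the source of the threshold being a finite positive number; it is here, and in the interaction between the $L^2$‑supercritical cubic power and the energy‑critical quintic power, that the cubic–quintic nonlinearity makes the argument heavier than in the single‑power setting.
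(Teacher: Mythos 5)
The paper does not prove this theorem itself; it imports it from \cite{AIKN, WW}, so the only internal point of comparison is Appendix \ref{conv}, where the compactness of minimizing sequences below the level $\sigma^{3/2}/3$ is carried out. Your overall architecture is the right one and matches those sources: the critical level $\ell^{*}=\tfrac13\sigma^{3/2}$ is exactly the quantity in \eqref{eq-A2}; your positivity and monotonicity of $m_\omega$, the radial reduction, the fibering map \eqref{eq-A3}, the test computations giving $m_\omega\le\ell^{*}$ and $m_\omega=O(\omega^{1/2})$, the Lagrange-multiplier removal via the scaling generator, and the strict-monotonicity argument ruling out attainment for $\omega>\omega_c$ are all correct. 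In step (iii) your exclusion of a nontrivial remainder bubble is stated slightly circularly: the claim $\liminf\|\nabla r_n\|_{L^2}^2\ge S^{3/2}$ presupposes a sign for $\mathcal K(u_*)$ that has not yet been established at that point. The clean order of operations — the one used in the proof of Proposition \ref{conv-gs} — is to first show $\mathcal K(u_*)\le 0$ by rescaling the remainder onto the constraint set, then use \eqref{vari-v2} together with weak lower semicontinuity of $\mathcal J_\omega$ to force $\mathcal J_\omega(u_*)=m_\omega$ and $\mathcal J_\omega(u_n-u_*)\to 0$. This is repairable.

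The genuine gap is the finiteness of $\omega_c$, i.e.\ non-existence for large $\omega$, which is the actual content of the non-existence half of the theorem (and of the entire reference \cite{AIKN}). You propose to obtain $m_\omega=\ell^{*}$ for large $\omega$ from ``a refined expansion of $\mathcal S_\omega$ along Aubin--Talenti bubbles.'' A test-function expansion can only produce \emph{upper} bounds on the infimum; what is needed here is the reverse inequality $m_\omega\ge\ell^{*}$, or directly the non-attainment, and neither follows from evaluating the action on bubbles. Moreover the natural quantitative route does not close: for a hypothetical ground state one has the identities $\omega\|Q_\omega\|_{L^2}^2=\tfrac14\|Q_\omega\|_{L^4}^4$ and $m_\omega=\tfrac13\|\nabla Q_\omega\|_{L^2}^2$, and combining them with Gagliardo--Nirenberg and Sobolev only yields $m_\omega\ge \ell^{*}-C/\omega$, which is compatible with attainment for every $\omega$. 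The actual argument in \cite{AIKN} is a contradiction/blow-up analysis: assuming ground states exist along $\omega_n\to\infty$, one shows they concentrate as Aubin--Talenti bubbles and then extracts a sign contradiction from a two-term expansion of the action of the \emph{actual} minimizers (not of test functions), using their precise asymptotics; this is where the dimension-three competition between the cubic term and the mass term enters. You correctly flag this step as the delicate one, but as written your sketch points at the wrong kind of computation, so the non-existence clause of the theorem is not established by the proposal.
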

\begin{remark}
We do not know whether the ground state 
exists or not at $\omega = \omega_c$. 
\end{remark}
There are several results on 
the global dynamics of solutions to 
nonlinear Schr\"{o}dinger equations.  
See e.g. 
\cite{AHI2022, AI2022, AM2022, AN2013, AIKN2021, CC, 
Dodson2019, 
DHR2008, DM2009, DOR, 
DR, GI2022, HR2008, KM2006, KMV2021, KOPV2017,
KV2010, LZ2009, 
NS2012, Y2022, YZZ2022} and references therein.
Let us recall some of them which are concerned with 
the following nonlinear Schr\"{o}dinger equations: 
\begin{equation} \label{nls-p}
i \p_{t} \psi + \Delta \psi + |\psi|^{p-1} \psi = 0
\qquad \mbox{in $\R \times\R^{d}$}. 
\end{equation}
where 
$d \in \mathbb{N}, 1 < p < 2^{*} - 1$ and 
$2^{*} = \frac{2d}{d -2}$. 
The equation \eqref{nls-p} is scale invariant. 
 More precisely, putting
 \begin{equation} \label{eq-scale}
\psi_{\lambda}(t, x) 
:= \lambda^{\frac{2}{p-1}} \psi(\lambda^{2} t, \lambda x) 
\qquad 
(\lambda > 0),  
\end{equation}
we see that if 
$\psi(t, x)$ satisfies \eqref{nls-p}, 
so does $\psi_{\lambda}$. 
The scaling \eqref{eq-scale} preserves 
the mass $\mathcal{M}$ and the corresponding 
energy when $p = 1 + 4/d$ and 
$p = (d+2)/(d-2)$, respectively.  
Thus, the exponent 
$p = 1 + 4/d$
is referred to as \lq\lq mass critical'' and 
$p = (d+2)/(d-2)$ as 
\lq\lq energy critical''.~\footnote
{ 
Note that the quintic power 
nonlinearity $|\psi|^{4} \psi$ in three space dimensions 
which is involved in \eqref{nls}
corresponds to the energy critical 
one}

It is known that \eqref{nls-p} 
has a stationary solution, which 
neither scatters~\footnote{Here, we say that a solution scatters if 
the solution converges to the one of the 
linear Schr\"{o}dinger equation} nor 
blows up. 
More precisely, 
when the energy critical case, 
\eqref{nls-p} has the 
following explicit static solution, 
	\[
	W(x) := 
	\left(1 + 
	\frac{|x|^{2}}{d(d-2)} 
	\right)^{- \frac{d-2}{2}}. 
	\]
The solution $W$ is called by 
\textit{Aubin-Talenti function}. 
Similarly, 
when $1 < p < (d+2)/(d-2)$, 
the equation 
\eqref{nls-p} also has a  
standing wave 
$\psi(t, x) = 
e^{i \omega t}R_{\omega}\; 
(\omega >0)$. 
Then, $R_{\omega}$ satisfies the 
following semilinear equation: 
\begin{equation}\label{sp-p}
- \Delta R + \omega R - |R|^{p-1}R = 0
\qquad \mbox{in $\R^{d}$}. 
\end{equation}

For the energy critical case 
$p = (d+2)/(d-2)$, 
Kenig and Merle~\cite{KM2006} 
employed the concentration-compactness and  
showed that the radial solution 
to \eqref{nls-p} whose energy is 
less than that of the Aubin-Talenti 
function $W$ 
blows up in finite time 
or scatters as $t \to \pm \infty$ 
for $d = 3, 4, 5$. 
Killip and Visan~\cite{KV2010}
extended the result of \cite{KM2006} 
for $d \geq 5$,  
removing the radial condition.  
Dodson~\cite{Dodson2019} 
obtained the corresponding 
result of ~\cite{KV2010} for 
$d = 4$. 

For the mass supercritical and 
the energy subcritical case 
$1 + 4/d < p < (d+2)/(d-2)$, 
Holmer and Roudenko~\cite{HR2008} 
considered the three dimensional cubic  
nonlinear Schr\"{o}dinger equation 
($d =p = 3$) and 
proved that 
the radial solution 
below the ground state
scatters or blows up in finite time. 
Then, Duyckaerts, Holmer and 
Roudenko~\cite{DHR2008} extended 
the result of \cite{HR2008} 
to non-radial $H^{1}$ initial data.  
Then, Akahori and 
Nawa~\cite{AN2013} and 
Fang, Xie and 
Cazenave~\cite{FXC2011} extended 
the result to general dimension and 
power nonlinearity.

Duyckaerts and Merle~\cite{DM2009} 
studied 
the threshold solution to the energy critical nonlinear 
Schr\"{o}dinger equations, that is, 
the solution whose energy equals 
to the Aubin-Talenti function 
for $d = 3, 4, 5$. 
They constructed special solutions 
$W^{\pm}$, 
which converge to the 
Aubin-Talenti function $W$ in the 
positive time direction while 
$W^{+}$ blows up and $W^{-}$ scatters in the negative 
time direction, respectively. 
They also classified 
the threshold solutions 
under the radial assumption. 
Li and Zhang~\cite{LZ2009} 
extended the result of 
\cite{DM2009} to the higher 
dimensions $d \geq 6$. 
Duyckaerts and 
Roudenko~\cite{DR2010} studied the 
threshold solution for 
the three dimensional cubic
nonlinear Schr\"{o}dinger equations. 
They also constructed special solutions 
and classify all solutions 
(not necessarily radially symmetric)
at the threshold level. 
Recently, Campos, Farah and 
Roudenko~\cite{CFR} 
generalized the result of 
Duyckaerts and 
Roudenko~\cite{DR2010} to 
any dimension and any power of 
the nonlinearity. 
They also considered the 
energy critical case and 
gave an alternative proof of 
the result of Li and Zhang~\cite{LZ2009}. 
See also \cite{AHI2022, AI2022, AM2022, 
DOR, GI2022, Murphy2021} for the threshold 
solutions to other 
nonlinear Schr\"{o}dinger equations. 

In this paper, we address the threshold 
solution to \eqref{nls}. 
To state our results, we put 
	\begin{align*}
	& 
	\mathcal{BA}_{\omega} 
	:= \left\{u \in 
	H_{\text{rad}}^{1}(\R^{3}) 
	\colon 
	\mathcal{S}_{\omega} (u) 
	= m_{\omega}, \; 
	\mathcal{M}(u) 
	= \mathcal{M}(Q_{\omega}) 
	\right\}, \\
	& 
	\mathcal{BA}_{\omega, +} 
	:= \left\{u \in 
	\mathcal{BA}_{\omega}  
	\colon 
	\mathcal{K}(u) > 0 
	\right\}, \\
	& 
	\mathcal{BA}_{\omega, -} 
	:= \left\{u \in 
	\mathcal{BA}_{\omega}  
	\colon 
	\mathcal{K}(u) < 0 
	\right\}, \\
	& 
	\mathcal{BA}_{\omega, 0} 
	:= \left\{u \in 
	\mathcal{BA}_{\omega} 
	\colon 
	\mathcal{K}(u) = 0 
	\right\}.
	\end{align*}
Clearly, we have 
$\mathcal{BA}_{\omega} 
= \mathcal{BA}_{\omega, -} 
\cup \mathcal{BA}_{\omega, 0} 
\cup \mathcal{BA}_{\omega, +}$. 
We see from Proposition \ref{thm-lu} below 
that 
	\begin{equation}\label{set-B0}
	\mathcal{BA}_{\omega, 0} 
	= \left\{ e^{i \theta} Q_{\omega} 
	\colon \theta \in \R 
	\right\}. 
	\end{equation}
In addition, we can easily find that 
the sets $\mathcal{BA}_{\omega, \pm}$ 
and $\mathcal{BA}_{\omega, 0}$ are 
invariant under the flow of \eqref{nls} 
(see e.g. Lemma \ref{inv-} below). 
Then, 
by a similar argument to 
\cite{DM2009}, 
we can construct the 
following special solutions to 
\eqref{nls}:
\begin{theorem}\label{main-sp}
There exists a sufficiently small 
$\omega_{*}>0$ such that 
for $\omega \in (0, \omega_{*})$, 
\eqref{nls} has two radial solutions 
$Q_{\omega}^{+} \in
 \mathcal{BA}_{\omega, +} $ and 
$Q_{\omega}^{-} \in 
\mathcal{BA}_{\omega, -}$ 
satisfying 
the following:
\begin{enumerate}
\item[\textrm{(i)}] 
$Q_{\omega}^{\pm}$ exists 
on $[0, \infty)$, and 
there exist constants 
$e_{\omega}, C_{\omega}> 0$
such that 
\[
\text{dist}_{H^{1}}
(Q_{\omega}^{\pm}(t), 
\mathcal{O}(Q_{\omega})) \leq C_{\omega} 
e^{- e_{\omega} t} 
\qquad \mbox{for all $t \geq 0$},  
\]
where 
	\begin{align*}
{\rm dist}_{H^{1}}\big(u, \mathscr{O}(Q_{\omega})\big)
:=
\inf_{\theta \in \mathbb{R}}\|u -e^{i\theta}Q_{\omega}\|_{H^{1}}.
\end{align*}
\item[\textrm{(ii)}] 
$\mathcal{K}(Q_{\omega}^{-}) 
< 0$ and the negative 
time of existence of 
$Q_{\omega}^{-}$ is 
finite. 
\item[\textrm{(iii)}] 
$\mathcal{K}(Q_{\omega}^{+}) 
> 0$, 
$Q_{\omega}^{+}$ exists on 
$(-\infty, \infty)$ and scatters 
for negative time, that is, 
there exists $\phi_{-} \in H^{1}(\R^{3})$ such that 
  \[
  \lim_{t \to -\infty} \|Q_{\omega}^{+}(t) 
  - e^{i t \Delta}\phi_{-}\|_{H^{1}} = 0.
  \] 
\end{enumerate}
\end{theorem}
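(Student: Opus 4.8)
The plan is to follow the Duyckaerts--Merle construction~\cite{DM2009} adapted to the cubic-quintic setting, building the special solutions as a perturbation of the standing wave $e^{i\omega t}Q_\omega$. First I would linearize \eqref{nls} around $e^{i\omega t}Q_\omega$: writing $\psi(t,x)=e^{i\omega t}(Q_\omega(x)+h(t,x))$, the equation for $h$ becomes $\partial_t h + \mathcal{L}_\omega h = R(h)$, where $\mathcal{L}_\omega$ is the linearized operator (with matrix form acting on $(\mathrm{Re}\,h,\mathrm{Im}\,h)$) and $R(h)$ collects the superlinear remainder, which is $O(\|h\|_{H^1}^2)$ for $h$ small since the nonlinearities $|\psi|^2\psi$ and $|\psi|^4\psi$ are smooth enough near $Q_\omega>0$. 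The key spectral input is that $\mathcal{L}_\omega$, restricted to the radial subspace (after modding out the phase direction $iQ_\omega$ coming from the $U(1)$ symmetry), has exactly one negative eigenvalue $-e_\omega<0$ with eigenfunction $\mathcal{Y}_+$, with the rest of the spectrum nonnegative and a spectral gap. This is where the low-frequency hypothesis $\omega\in(0,\omega_*)$ enters: for small $\omega$ the ground state $Q_\omega$ is a small rescaled perturbation of the Aubin--Talenti function (cf.\ Theorem~\ref{thm-0} and the asymptotics behind it), so the spectral structure of $\mathcal{L}_\omega$ can be read off from that of the energy-critical linearized operator, which is known. I would state this spectral fact as a lemma (citing or proving it via the known low-frequency asymptotics of $Q_\omega$) and take it as the starting point.

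Next I would set up the fixed-point construction of $Q_\omega^{\pm}$. Let $a>0$ be a small parameter measuring the size of the unstable mode. One seeks a solution of the form $h(t) = a e^{-e_\omega t}\mathcal{Y}_+ + g(t)$ on $[T_a,\infty)$ with $g(t) = o(e^{-e_\omega t})$ in $H^1$, solving the Duhamel formulation in which the unstable component is integrated from $+\infty$ and the stable/center components are propagated by the (bounded, decaying) semigroup generated by $\mathcal{L}_\omega$ on the complementary spectral subspace. A contraction mapping argument on the weighted space $\{g : \sup_{t\ge T_a} e^{(e_\omega+\delta) t}\|g(t)\|_{H^1}<\infty\}$ for small $\delta>0$ produces, for each sign of $a$, a solution $h^{a}$ with $\|h^a(t)\|_{H^1}\lesssim |a| e^{-e_\omega t}$; this yields (i) with $\mathscr{O}(Q_\omega)$ since the phase direction is quotiented out (the modulation parameter $\theta(t)$ converges and can be absorbed). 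The sign of $a$ determines whether $\mathcal{K}$ becomes positive or negative along the solution: expanding $\mathcal{K}(Q_\omega+h^a(t))$ near $t=T_a$, the leading correction is linear in $a e^{-e_\omega t}$ with a nonzero coefficient (because $\mathcal{Y}_+$ is not tangent to the constraint manifold $\{\mathcal{K}=0\}$), so one choice of sign gives $\mathcal{K}<0$ — this is $Q_\omega^-$ — and the other gives $\mathcal{K}>0$ — this is $Q_\omega^+$; note $\mathcal{S}_\omega(Q_\omega^\pm)=m_\omega$ and $\mathcal{M}(Q_\omega^\pm)=\mathcal{M}(Q_\omega)$ are forced by the conservation laws \eqref{eq-con} together with $\mathcal{S}_\omega(e^{i\omega t}Q_\omega+h^a(t))\to m_\omega$ as $t\to\infty$, so indeed $Q_\omega^\pm\in\mathcal{BA}_{\omega,\pm}$.

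Finally I would analyze the negative-time behavior. For $Q_\omega^-$: since $\mathcal{K}(Q_\omega^-(t))<0$ and this set is flow-invariant with $\mathcal{S}_\omega=m_\omega$, $\mathcal{M}=\mathcal{M}(Q_\omega)$, the variational/virial machinery (the analogue of the Payne--Sattinger blow-up argument, using the virial identity \eqref{virial} and a coercivity estimate $\mathcal{K}(u)\le -c<0$ uniformly on this invariant set, which follows from the minimization problem \eqref{vari-v} and Proposition~\ref{thm-lu}) forces $T^-_{\max}<\infty$, giving (ii). For $Q_\omega^+$: since $\mathcal{K}(Q_\omega^+(t))>0$ on the flow-invariant set $\mathcal{BA}_{\omega,+}$, one has global existence in negative time and uniform $H^1$ bounds; scattering then follows from the scattering theory for \eqref{nls} at the threshold energy-mass level restricted to $\mathcal{K}>0$. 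Here is where, as the authors indicate, the one-pass (no-return) theorem of Nakanishi--Schlag~\cite{NS2012} is invoked: it rules out the orbit returning near $\mathscr{O}(Q_\omega)$ in the past, so that the solution is eventually trapped in the scattering region and a concentration-compactness/rigidity argument applies to yield the asymptotic state $\phi_-$, proving (iii). The main obstacle, compared with the pure energy-critical case, is exactly this last point: the cubic-quintic nonlinearity breaks scaling, so the standard Kenig--Merle rigidity does not directly apply, and one must combine the variational characterization near the threshold with the Nakanishi--Schlag ejection/one-pass analysis to control the trajectory — establishing the relevant profile decomposition and the no-return property in this non-scale-invariant, mass-and-energy-constrained setting is the technical heart of the argument.
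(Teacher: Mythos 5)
Your first two steps (linearization, spectral data of $-i\mathcal{L}_{\omega}$, and a fixed-point construction of a solution $e^{i\omega t}(Q_{\omega}+Ae^{-e_{\omega}t}\mathcal{Y}_{\omega,-}+o(e^{-e_{\omega}t}))$ whose unstable component is integrated from $t=+\infty$, with the sign of $A$ deciding the sign of $\mathcal{K}$) are essentially the paper's route via Propositions \ref{prop-approx} and \ref{prop-spec}. One point you gloss over: that ``$\mathcal{Y}_{\omega,+}$ is not tangent to $\{\mathcal{K}=0\}$'' is not automatic here. The linear coefficient of $\mathcal{K}$ along the unstable direction is $-e_{\omega}(Q_{\omega},\mathcal{Y}_{\omega,2})_{L^{2}}-4(Q_{\omega}^{5},\mathcal{Y}_{\omega,1})_{L^{2}}$ (see \eqref{eq-egf10-1}), and its nonvanishing with a definite sign is exactly where the cubic-quintic structure bites: the paper needs the normalization \eqref{eq-egf10} together with the low-frequency estimate \eqref{eq-ejec4} of Lemma \ref{tech-lem1} to make the first term dominate. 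So the smallness of $\omega$ is used not only for the spectral picture but also for this sign condition; without it your argument that one sign of $A$ gives $\mathcal{K}>0$ and the other $\mathcal{K}<0$ has a hole.

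The genuine gap is in your treatment of the negative-time behavior. For $Q_{\omega}^{-}$ you invoke ``a coercivity estimate $\mathcal{K}(u)\le -c<0$ uniformly on this invariant set'': this is false at the threshold. The set $\mathcal{BA}_{\omega,-}$ contains points arbitrarily close to $\mathcal{O}(Q_{\omega})$ — indeed $Q_{\omega}^{-}(t)$ itself for $t$ large, along which $\mathcal{K}\to 0$ — so the Payne--Sattinger mechanism valid strictly below $m_{\omega}$ does not apply. The paper instead proves the degenerate bound $\mathcal{K}(\psi(t))<-C\|\eta(t)\|_{H^{1}}$ for $t<0$ (Lemma \ref{lem-blowup1}), which requires the dichotomy $\limsup_{t\to-\infty}\mathcal{K}=0$, the convergence to $\mathcal{O}(Q_{\omega})$ via the one-pass theorem, and the eigenmode analysis; it then must separately establish that $|x|\psi_{0}\in L^{2}$ by a localized-virial monotonicity argument (Lemmas \ref{lem-blowup2} and the proof of Proposition \ref{K-blowup}) before the finite-variance blow-up criterion (Lemma \ref{U-blowup}) can be applied — you cannot use \eqref{virial} directly since finite variance is not known a priori. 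Similarly, for $Q_{\omega}^{+}$ there is no ``scattering theory at the threshold level restricted to $\mathcal{K}>0$'' to cite (that is essentially the content of Theorem \ref{main-class}\textrm{(iii)}, whose conclusion even then admits the exceptional solution $Q_{\omega}^{+}$); the paper's Proposition \ref{prop-scatter} argues by contradiction, showing that a solution failing to scatter in both directions converges to $\mathcal{O}(Q_{\omega})$ in both directions with $\mathcal{K}\gtrsim\|\eta\|_{H^{1}}$, and that integrating the localized virial identity over all of $\R$ forces $\eta\equiv 0$ — deliberately avoiding the concentration-compactness/rigidity route you sketch. As written, your proof of parts \textrm{(ii)} and \textrm{(iii)} does not go through.
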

In what follows, we say that 
$\psi = \phi$ 
up to the symmetries if  
there exists $t_0 \in \R$ and 
$\theta_0$ such that 
	\[
	\psi(t, x) 
	= e^{i \theta_0} 
	\phi(t + t_{0}, x) 
	\qquad 
	\mbox{or}
	\qquad 
	\psi(t, x) 
	= e^{i \theta_0} 
	\overline{\phi} (- t + t_0, x). 
	\]
Our main result is as follows: 
\begin{theorem}\label{main-class}
Let $\omega_{*} > 0$ be a 
constant given in Theorem 
\ref{main-sp} and 
$\psi$ be a radial 
solution to 
\eqref{nls} with 
$\psi|_{t = 0} = \psi_{0} \in 
\mathcal{BA}_{\omega}$ 
for $\omega \in (0, \omega_*)$. 
Then, the following holds: 
\begin{enumerate}
\item[\textrm{(i)}] 
If $\psi_{0} \in \mathcal{BA}_{\omega, -}$, 
then either $\psi$ blows up in finite 
time or $\psi = Q_{\omega}^{-}$ 
up to the symmetries. 
\item[\textrm{(ii)}] 
If $\psi_{0} \in 
\mathcal{BA}_{\omega, 0}$, 
then $\psi = 
e^{i \omega t} Q_{\omega}$ 
up to the symmetries. 
\item[\textrm{(iii)}] 
If $\psi_{0} \in \mathcal{BA}_{\omega, +}$,
then either $\psi$ scatters or 
$\psi = Q_{\omega}^{+}$ 
up to the symmetries. 
\end{enumerate}
\end{theorem}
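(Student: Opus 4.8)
The plan is to follow the threshold classification strategy of Duyckaerts--Merle~\cite{DM2009} and Duyckaerts--Roudenko~\cite{DR2010}, but --- as the abstract indicates --- to replace those parts of the concentration--compactness argument that are problematic for the cubic--quintic nonlinearity by the one-pass (no-return) theorem of Nakanishi and Schlag~\cite{NS2012}. Statement (ii) is immediate: by~\eqref{set-B0} we have $\psi_{0}=e^{i\theta}Q_{\omega}$ for some $\theta\in\R$, and since $Q_{\omega}$ solves~\eqref{sp} the function $e^{i(\omega t+\theta)}Q_{\omega}(x)$ solves~\eqref{nls} with this datum, so by uniqueness $\psi(t,x)=e^{i(\omega t+\theta)}Q_{\omega}(x)$, which is $e^{i\omega t}Q_{\omega}$ up to the symmetries. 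The content is thus in (i) and (iii), which are mirror images of one another under the exchange of $\mathcal{K}>0$/scattering with $\mathcal{K}<0$/finite-time blow-up; I describe (iii). By~\eqref{eq-con} and the invariance of $\mathcal{BA}_{\omega,+}$ (Lemma~\ref{inv-}) we have $\psi(t)\in\mathcal{BA}_{\omega,+}$ throughout $I_{\max}$, and $\psi$ is global (as in the analysis of~\eqref{nls} at and below the ground-state threshold).

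Two preliminary ingredients are needed. The first is a \emph{variational estimate}: for every $\eps>0$ there is $\delta(\eps)>0$ such that $u\in\mathcal{BA}_{\omega,+}$ with $\mathrm{dist}_{H^{1}}(u,\mathscr{O}(Q_{\omega}))\ge\eps$ implies $\mathcal{K}(u)\ge\delta(\eps)$, together with the mirror statement on $\mathcal{BA}_{\omega,-}$; this follows from a routine compactness/contradiction argument using Proposition~\ref{thm-lu}. The second is the \emph{local dynamics near $\mathscr{O}(Q_{\omega})$}: a solution that stays in a small tubular neighbourhood of the orbit is written as $\psi(t)=e^{i\theta(t)}(Q_{\omega}+h(t))$ with an orthogonality condition on $h$, and one studies the operator obtained by linearising~\eqref{sp} at $Q_{\omega}$. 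Its spectral picture --- a simple real eigenvalue $e_{\omega}>0$ with its opposite $-e_{\omega}$, a generalised kernel spanned by the phase direction $iQ_{\omega}$ and $\partial_{\omega}Q_{\omega}$, and the remainder of the spectrum on $i\R$ with a gap at the origin --- combined with the two constraints $\mathcal{S}_{\omega}(\psi)=m_{\omega}$ and $\mathcal{M}(\psi)=\mathcal{M}(Q_{\omega})$ (which remove the generalised-kernel directions) and coercivity of the associated quadratic form on the constrained subspace, produces an exponential dichotomy: the stable and neutral parts of $h$ are controlled, while the unstable coefficient $a_{+}(t)$ obeys $\dot a_{+}=e_{\omega}a_{+}+O(\|h\|_{H^{1}}^{2})$.

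The heart of the proof is then a trichotomy furnished by the one-pass theorem of~\cite{NS2012}, adapted to~\eqref{nls} in the low-frequency regime: there is a neighbourhood of $\mathscr{O}(Q_{\omega})$ which $\psi$ is inside on at most one time interval, and on the complement of that interval $\mathcal{K}(\psi(t))$ keeps a fixed sign, which for $\psi\in\mathcal{BA}_{\omega,+}$ is $+$. If $\psi$ eventually leaves the neighbourhood (or never enters it), then $\mathcal{K}(\psi(t))\ge\delta>0$ for all large $t$ by the variational estimate, and the virial/Morawetz argument of~\cite{NS2012} for radial data away from the ground state gives that $\psi$ scatters --- the first alternative. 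Otherwise $\psi$ remains in the neighbourhood on a half-line $[T,\infty)$; then $a_{+}$ cannot exceed the quadratic scale $O(\|h\|_{H^{1}}^{2})$ (else $|a_{+}|$ grows and $\psi$ is ejected, contradicting that it stays near the orbit), so $\|h(t)\|_{H^{1}}\to0$, and bootstrapping the ODE together with the coercivity gives $\mathrm{dist}_{H^{1}}(\psi(t),\mathscr{O}(Q_{\omega}))\le Ce^{-et}$ for some $e>0$. A uniqueness argument for the stable manifold --- a contraction estimate for the two solutions of~\eqref{nls} that converge exponentially to $Q_{\omega}$, as in~\cite{DM2009,DR2010} --- then identifies $\psi$, up to a phase and a time translation, with the branch whose $\mathcal{K}$ has a prescribed sign; since $\mathcal{K}(\psi)>0$ this branch is $Q_{\omega}^{+}$, so $\psi=Q_{\omega}^{+}$ up to the symmetries. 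The remaining possibilities --- $\psi$ close to $\mathscr{O}(Q_{\omega})$ only as $t\to-\infty$, or only on a bounded interval --- reduce to the above: the former by time reversal (one of the permitted symmetries, under which $\mathcal{BA}_{\omega,+}$ is invariant), the latter by the variational estimate, which then forces scattering in both time directions. Statement (i) is obtained by the same scheme with ``scatters'' replaced by ``blows up in finite time'', using the localised-virial blow-up criterion for radial solutions with $\mathcal{K}\le-\delta<0$ in place of the scattering criterion and $Q_{\omega}^{-}$ in place of $Q_{\omega}^{+}$.

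The main obstacle will be the two steps where the cubic--quintic structure genuinely intervenes. First, the spectral and modulation analysis near $\mathscr{O}(Q_{\omega})$: one must establish the non-degeneracy of $Q_{\omega}$, the simplicity and reality of $\pm e_{\omega}$, the absence of embedded eigenvalues and threshold resonances, and the coercivity of the linearised energy form on the codimension-two constrained subspace, all for an operator built from two nonlinear terms --- one of them energy-critical --- in the low-frequency limit, where $Q_{\omega}$ degenerates. Second, transplanting the one-pass theorem together with the associated scattering and blow-up criteria of~\cite{NS2012} to~\eqref{nls}: in~\cite{DM2009} these were bypassed by pure-power concentration--compactness, and making them work here for the mixed, partly energy-critical nonlinearity is precisely the difficulty advertised in the abstract. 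Granting these, the assembly described above is essentially bookkeeping.
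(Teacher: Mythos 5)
Your proposal follows essentially the same route as the paper: invariance of $\mathcal{BA}_{\omega,\pm}$ under the flow, the one-pass theorem combined with the modulation/eigenmode ODE analysis to show that a non-scattering (resp.\ globally existing) threshold solution converges exponentially to $\mathscr{O}(Q_{\omega})$ (Propositions \ref{prop-conv-n} and \ref{prop-conv-p}), a uniqueness statement for solutions converging exponentially to the orbit (Proposition \ref{prop-unique}), and identification of the resulting one-parameter family with $Q_{\omega}^{\pm}$ up to phase and time translation. The ingredients you defer (spectral analysis of the linearized operator, the adapted one-pass theorem, and the scattering/localized-virial blow-up criteria) are exactly the ones the paper imports from \cite{AIKN2021} or proves in Sections 4--6, so the assembly you describe matches the paper's proof.
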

\begin{remark}
\begin{enumerate}
\item[\textrm{(i)}]
Nakanishi and 
Schlag~\cite{NS2012} proved that  
the solutions whose energy is 
slightly larger than that of the 
ground state is 
classified into 9 sets 
(combination of 
blows up, scattering and 
trapped by the ground state 
generated by the phase for 
$t > 0$ and $t < 0$). 
See also \cite{AIKN2021, DR, 
KMV2021, NakanishiJMSC, 
NakanishiCMP} for the global dynamics above the ground state. 
In particular, 
it was studied in \cite{AIKN2021}
that the behavior of 
the solutions $\psi$ to \eqref{nls} 
with $\psi|_{t = 0} = \psi_0$ 
satisfying 
$ \mathcal{S}_{\omega}(\psi_{0}) 
< m_{\omega} + \varepsilon$ 
are also classified into the 9 sets.  
However, it seems that 
from the result of \cite{AIKN2021}, 
we could not determine 
the behavior of solutions 
by the initial data as in Theorem 
\ref{main-class}. 
Another difference between 
the result of \cite{AIKN2021} and ours 
is that we obtain a kind of 
uniqueness of solution which converges 
to the orbit of the ground state 
(see Proposition \ref{prop-unique} below). 
\item[\textrm{(ii)}] 
We may extend our results 
to general dimensions and power nonlinearities by using the argument 
of \cite{CFR}. 
However, for the simplicity of 
our presentation, we restrict ourselves 
to three space dimension and 
cubic-quintic nonlinearity. 
\end{enumerate} 
\end{remark}
The proof of Theorem \ref{main-class} 
is based on that of 
\cite{DM2009, DR2010}. 
However,  
it seems that 
due to the cubic and quintic 
nonlinearities, 
some part of 
the argument in \cite{DM2009, 
DR2010} 
does not work for our equation 
\eqref{nls}. 
For example, in \cite{DR2010}, 
a Cauchy-Schwarz type inequality 
plays an important role (see 
\cite[Claim 5.4]{DR2010} in detail). 
In contrast, it seems difficult to 
obtain a corresponding inequality for 
our equation \eqref{nls}. 
To overcome the difficulty, 
 we employ the one-pass theorem 
 (no return theorem)
which was introduced by Nakanishi and 
Schlag~\cite{NS2012} for the equation \eqref{nls-p} 
with $d = p = 3$. 
Roughly speaking, 
one-pass theorem states that 
if a solution moves away 
from a neighborhood of the ground states, then the solution never 
return to the neighborhood. 
We employ the one-pass theorem 
to prove that if a threshold 
solution neither blows up nor 
scatters, the solution converges 
to the ground state exponentially 
(see Propositions \ref{prop-conv-n} 
and \ref{prop-conv-p}). 
\begin{remark}
\begin{enumerate}
\item[\textrm{(i)}]
The reason why we need the 
radially symmetry for solutions 
is due to the one-pass theorem. 
Indeed, a kind of 
Ogawa-Tsutsumi's saturated virial identity was used for the proof of 
the one-pass theorem. 
Except for the theorem,  
we do not require 
the condition. 
\item[\textrm{(ii)}]
Recently, Ardila and Murphy~\cite{AM2022} studies 
the threshold 
solutions to the following cubic-quintic nonlinear 
Schr\"{o}dinger equation: 
\begin{equation} \label{nls-}
i \p_{t} \psi + \Delta \psi 
+ |\psi|^{2} \psi - |\psi|^{4} \psi = 0
\qquad \mbox{in $\R \times\R^{3}$}.   
\end{equation}
Note that the quintic power 
nonlinearity is defocusing, which is different 
from our equation \eqref{nls} 
and any solutions to \eqref{nls-} 
are global ($T_{\max}^{\pm} = \infty$). 
They also classified the threshold solutions, 
which are not necessary radially symmetric,  
to \eqref{nls-}. 
Let $\psi$ be the 
threshold solution whose sign of the virial 
functional (the one corresponding to $\mathcal{K}$) is 
positive. 
Then, they showed that the solution 
either $\psi$ scatters in both time directions and 
coincide with a special solution.      
To this end, they employed the modulation analysis and 
the concentration-compactness method. 
Their method might work for our equation \eqref{nls}. 
However, we would like to stress that 
we can study the threshold solutions 
which blow up or scatter in a unified way by using 
the one-pass theorem. 
\end{enumerate}
\end{remark}

This paper is organized as follows. 
In Section 2, we recall 
several properties of the ground state 
and its linearized operator. 
In Section 3, we recall the one-pass 
theorem, which was proved in 
\cite{AIKN2021}. 
In Sections 4 and 5, 
we study dynamics of solutions 
which start from the sets 
$\mathcal{BA}_{\omega, \pm}$, 
respectively.  
In Section 6, we give a sketch of 
the proof of Theorem \ref{main-sp}. 
In Section 7, we obtain the uniqueness 
of the special solution and give the 
proof of Theorem \ref{main-class}. 
In Appendix, we give the proof of 
a convergence property which we admit in 
Section 4.  

\noindent
\textbf{Notation. 
}
\begin{enumerate}
\item[\textrm{(i)}]
We use $(\cdot, \cdot)_{L^{2}}$ to 
denote the inner product in $L^{2}(\R^{3})$:
  \[
  (u, v)_{L^{2}} := \int_{\R^{3}} u(x) 
  \overline{v(x)} dx \qquad 
  \mbox{for all $u, v \in L^{2}(\R^{3})$}. 
  \]
\item[\textrm{(ii)}]
We also use $L^{2}_{\text{real}}(\R^{3})$ 
to denote the real Hilbert space of complex-valued 
functions in $L^{2}(\R^{3})$ which is equipped with 
the inner product 
  \[
  (u, v)_{L^{2}_{\text{real}}}:= 
  \text{Re}\; \int_{\R^{3}} u(x) \overline{v(x)} dx 
   \qquad 
  \mbox{for all $u, v \in L_{\text{real}}^{2}(\R^{3})$}.
  \]
\item[\textrm{(iii)}]
We use $\langle \cdot, \cdot \rangle_{H^{-1}, H^{1}}$ 
to denote the duality pair of $u \in H^{1}(\R^{3})$ 
and $v \in H^{1}(\R^{3})$: 
   \[
  \langle u, v\rangle_{H^{-1}, H^{1}} 
  := ((1 - \Delta)^{- \frac{1}{2}} u, 
  (1 - \Delta)^{- \frac{1}{2}} v)_{L^{2}_{\text{real}}}\qquad 
  \mbox{for all $u \in H^{1}(\R^{3})$ and 
  $v \in H^{-1}(\R^{3})$}. 
  \]
\end{enumerate}
\section{Properties of ground state and 
its linearized operator}
In this section, we recall several 
properties of the ground state 
and its linearized operator, 
which are mainly 
obtained in \cite{AIKN2021}.  
First, we recall that the 
uniqueness of ground state 
and that the following 
slope condition holds:  
\begin{proposition}[Proposition 2.0.4 
of \cite{AIKN2021}]
\label{thm-lu}
The following properties hold: 
\begin{enumerate}
\item[\textrm{(i)}]  
There exists $\omega_{1} > 0$ such that for 
$\omega \in (0, \omega_{1})$, 
the positive radial 
ground state $Q_{\omega}$ is 
unique up to phase. 
Namely, if 
$u \in H^{1}(\R^3)$ satisfies 
$\mathcal{S}_{\omega}(u) = m_{\omega}$ and 
$\mathcal{K}(u) = 0$, then we have 
$u = e^{i \theta} Q_{\omega}$ for some 
$\theta \in \R$. 
\item[\textrm{(ii)}]
The mapping $\omega \in (0, 
\omega_{1}) \mapsto Q_{\omega} 
\in H^{1}(\R^{3})$ is continuously 
differentiable, 
\item[\textrm{(iii)}] 
\begin{equation*}
\frac{d}{d \omega}
\mathcal{M}(Q_{\omega}) 
= (Q_{\omega}, \p_{\omega} 
Q_{\omega})_{L_{\text{real}}
^{2}}< 0 
\qquad \mbox{for $\omega \in (0, \omega_1)$}. 
\end{equation*}
\end{enumerate}
\end{proposition}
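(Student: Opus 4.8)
The plan is to treat all three items as one low-frequency perturbation analysis around the energy-critical Aubin--Talenti function $W$. The key preliminary, which I would recall from \cite{AIKN2021} (or re-derive), is the asymptotic description of $Q_\omega$ as $\omega\downarrow0$: after the rescaling adapted to the quintic (energy-critical) term one has $Q_\omega(x)=\mu_\omega^{1/2}\bigl(W+\rho_\omega\bigr)(\mu_\omega x)$ for a concentration parameter $\mu_\omega\to\infty$ fixed by a matching condition, with $\|\rho_\omega\|_{\dot H^1}\to0$; the cubic and linear terms contribute only lower-order corrections together with an exponentially decaying tail $\sim e^{-\sqrt{\omega}\,|x|}$, which is what places $Q_\omega$ in $L^2(\R^3)$. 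The one analytic input behind everything is the non-degeneracy of $W$ on the radial subspace, $\Ker\bigl(-\Delta-5W^4\bigr)\cap\dot H^1_{\text{rad}}(\R^3)=\Span\{\tfrac{1}{2} W+x\cdot\nabla W\}$, the kernel being spanned by the scaling generator.

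For (i), if $Q^{(1)}_\omega,Q^{(2)}_\omega$ are two positive radial ground states at the same small $\omega$, then both are $\dot H^1$-close to a rescaled copy of $W$; after normalising scales, their difference solves a linear equation whose operator is a small perturbation of $-\Delta-5W^4$, so projecting off the one-dimensional kernel and using non-degeneracy a contraction argument forces $Q^{(1)}_\omega=Q^{(2)}_\omega$ (equivalently, a Lyapunov--Schmidt reduction near $W$ gives a locally unique solution branch, and since every ground state at $\omega<\omega_1$ lies in the relevant neighbourhood once $\omega_1$ is chosen small, local uniqueness upgrades to global uniqueness on $(0,\omega_1)$). For (ii), I would apply the implicit function theorem to $F(\omega,Q):=-\Delta Q+\omega Q-Q^{3}-Q^{5}$ at $(\omega,Q_\omega)$ on the space of real-valued radial $H^1(\R^3)$ functions: its partial derivative $D_QF(\omega,Q_\omega)=\mathcal{L}_{+,\omega}:=-\Delta+\omega-3Q_\omega^{2}-5Q_\omega^{4}$ is self-adjoint, a relatively compact perturbation of $-\Delta+\omega$, and by the minimality of $Q_\omega$ together with (i) and the non-degeneracy of $W$ it has trivial kernel there; hence it is an isomorphism onto the corresponding dual space and $\omega\mapsto Q_\omega$ is continuously differentiable.

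For (iii), differentiating \eqref{sp} in $\omega$ (using (ii)) gives $\mathcal{L}_{+,\omega}(\p_\omega Q_\omega)=-Q_\omega$, hence $\p_\omega Q_\omega=-\mathcal{L}_{+,\omega}^{-1}Q_\omega$ and $\tfrac{d}{d\omega}\mathcal{M}(Q_\omega)=(Q_\omega,\p_\omega Q_\omega)_{L^2_{\text{real}}}=-(\mathcal{L}_{+,\omega}^{-1}Q_\omega,Q_\omega)_{L^2_{\text{real}}}$, so the claim is equivalent to $(\mathcal{L}_{+,\omega}^{-1}Q_\omega,Q_\omega)_{L^2_{\text{real}}}>0$. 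Testing the elliptic equation against $Q_\omega$ gives $(\mathcal{L}_{+,\omega}Q_\omega,Q_\omega)_{L^2_{\text{real}}}=-2\|Q_\omega\|_{L^4}^{4}-4\|Q_\omega\|_{L^6}^{6}<0$, so $\mathcal{L}_{+,\omega}$ has a negative direction, and exactly one negative eigenvalue (minimality under the single constraint $\mathcal{K}=0$), with trivial kernel by (i); thus $(\mathcal{L}_{+,\omega}^{-1}Q_\omega,Q_\omega)$ is precisely the Vakhitov--Kolokolov quantity, whose sign is not automatic. A soft argument already yields one inequality: $m_\omega=\inf\{\mathcal{E}(u)+\omega\mathcal{M}(u):\mathcal{K}(u)=0\}$ is, for each admissible $u$, affine in $\omega$, so $m_\omega$ is concave; it is moreover $C^2$ by (ii), and the envelope identity $\tfrac{d}{d\omega}m_\omega=\mathcal{M}(Q_\omega)$ --- valid because $\mathcal{S}_\omega'(Q_\omega)=0$ --- gives $\tfrac{d}{d\omega}\mathcal{M}(Q_\omega)=m_\omega''\le0$.

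The remaining --- and substantive --- point is the \emph{strict} inequality, and this is the main obstacle. I would obtain it from the low-frequency expansion above: after rescaling, $\mathcal{L}_{+,\omega}$ is a perturbation of $-\Delta-5W^4$ whose radial inverse is controlled through the non-degeneracy of $W$, and one then expands $\p_\omega Q_\omega=-\mathcal{L}_{+,\omega}^{-1}Q_\omega$ and the pairing $(Q_\omega,\p_\omega Q_\omega)_{L^2_{\text{real}}}$ and finds it strictly negative to leading order, with the dominant contribution again coming from the slowly decaying tail region where $Q_\omega$ is square-integrable (a spectral computation using the unique negative mode of $\mathcal{L}_{+,\omega}$ and the near-$W$ profile would give the sign equivalently). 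The genuine difficulty is that the limiting profile $W$ lies in $\dot H^1$ but not in $L^2(\R^3)$, so $\mathcal{M}(Q_\omega)$ is produced entirely by that tail and the remainder must be controlled sharply enough that the leading asymptotics --- and their strict monotonicity in $\omega$ --- survive; carrying this out, together with the uniform non-degeneracy of $\mathcal{L}_{+,\omega}$ for small $\omega$ used in (i)--(ii), is where essentially all the work lies, and is done in \cite{AIKN,AIKN2021}.
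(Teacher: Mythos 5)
The paper gives no proof of this proposition: it is imported verbatim as Proposition 2.0.4 of \cite{AIKN2021}, so there is no in-paper argument to compare against. Your sketch follows the same low-frequency strategy as that cited source (blow-up analysis around the Aubin--Talenti function $W$, radial non-degeneracy of $W$, the implicit function theorem for the branch $\omega \mapsto Q_{\omega}$, and tail asymptotics of the mass for the strict Vakhitov--Kolokolov sign), and you correctly isolate the strict inequality in (iii) --- forced by $W \notin L^{2}(\R^{3})$ --- as the step where essentially all the work lies.
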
 
Let $\psi$ be a solution to \eqref{nls}. 
We consider the following decomposition of the form 
\begin{equation}\label{eq-dec1}
\psi(t,x)=e^{i\theta(t)}\big(Q_{\omega}(x)+\eta(t,x)\big),
\end{equation} 
where $\theta(t)$ is a 
function of $t\in I_{\max}$ to be chosen later 
(see \eqref{eq-dec18} below)
and $\eta$ is the remainder. 
Let $\eta_{1} = \textrm{Re} \; \eta$ 
and 
$\eta_{2} = \textrm{Im} \; \eta$. 
We will identify $\C$ and $\R^{2}$ and 
consider $\eta = \eta_{1} + i \eta_{2}$ as 
an element 
$\begin{pmatrix}
\eta_{1} \\
\eta_{2}
\end{pmatrix}$ of $\R^{2}$. 
Then, $\eta$ satisfies 
\begin{equation}\label{eq-dec7}
\begin{split}
\frac{\partial \eta}{\partial t}(t)
&=
-i\mathscr{L}_{\omega} \eta(t)
-
i\Big\{ \frac{d \theta}{dt}(t)- \omega \Big\}(Q_{\omega}+\eta(t)) 
+
N_{\omega}(\eta(t)),
\end{split}
\end{equation}
where 
\begin{equation*} 
- i \mathcal{L}_{\omega}
= 
\begin{pmatrix}
0 & L_{\omega, -} \\
- L_{\omega, +} & 0
\end{pmatrix}, 
\end{equation*}
and the self-adjoint operators 
$L_{\omega, \pm}$ and 
the reminder $N_{\omega}(\eta)$ are defined by 
\begin{equation}\label{sta-eq54}
L_{\omega, +} := - \Delta + \omega 
- 3 Q_{\omega}^{2} - 5 Q_{\omega}^{4}, 
\qquad 
L_{\omega, -} := - \Delta + \omega 
- Q_{\omega}^{2} - Q_{\omega}^{4}, 
\end{equation}
\begin{equation}\label{eq-dec8}
\begin{split}
N_{\omega}(\eta)&:=
N_{\omega, 1}(\eta) + N_{\omega, 2}(\eta)
\end{split}
\end{equation}
Here, $N_{\omega, 1}(\eta)$ 
and $N_{\omega, 2}(\eta)$ are 
defined by 
\begin{equation*}
N_{\omega, 1} (\eta) 
:= i \left(|Q_{\omega}+ \eta|^{2}\big( Q_{\omega}+\eta \big) 
- Q_{\omega}^{3}
-2 Q_{\omega}^{2}\eta 
- Q_{\omega}^{2}\overline{\eta}
\right), 
\end{equation*}
\begin{equation*}
\begin{split}
N_{\omega, 2} (\eta)
&:= i \left(
|Q_{\omega}+\eta|^{4}\big( Q_{\omega}+\eta \big) 
- Q_{\omega}^{5}
- 3 Q_{\omega}^{4}
\eta 
- 2Q_{\omega}^{4} 
\overline{\eta} 
\right).
\end{split}
\end{equation*}
We note that 
for any $u,v \in H^{1}(\R^{3})$, 
\begin{equation}\label{eq-dec2}
\begin{split}
\big[ \mathcal{S}_{\omega}''(Q_{\omega})u \big]v 
&= 
\langle \mathscr{L}_{\omega}u, v 
\rangle_{H^{-1},H^{1}}. 
\end{split}
\end{equation} 
Moreover, since $Q_{\omega}$ is a solution to \eqref{sp}, 
we can verify that 
\begin{align}
\label{eq-dec4}
&\mathscr{L}_{\omega}Q_{\omega}=L_{\omega,+}Q_{\omega} 
= - 2 Q_{\omega}^{3}- 
4Q_{\omega}^{5},
\\[6pt]
\label{eq-dec5}
&\mathscr{L}_{\omega} (iQ_{\omega}) 
= \mathscr{L}_{\omega}
\begin{pmatrix}
0 \\
Q_{\omega}
\end{pmatrix}
=L_{\omega,-}Q_{\omega}=0,
\\[6pt]
\label{eq-dec6}
&
\mathscr{L}_{\omega} \partial_{\omega}Q_{\omega}
=
L_{\omega,+}\partial_{\omega}Q_{\omega} =- Q_{\omega}.
\end{align}

\begin{theorem}[Proposition  4.0.1, 
Lemmas B.0.1 and B.0.2 of \cite{AIKN2021}]\label{thm2-1}
Let $\sigma(- i \mathcal{L}_{\omega})$ 
be the spectrum of the operator 
$- i \mathcal{L}_{\omega}$ 
and $\sigma_{\text{ess}}
(- i \mathcal{L}_{\omega})$ 
be its essential spectrum. 
Then, we have the following: 
\begin{enumerate}
\item[\textrm(i)] 
$\sigma_{\text{ess}}
(- i \mathcal{L}_{\omega}) 
= \left\{
i \xi \colon \xi \in \R, |\xi| \geq \omega
\right\}$. 
\item[\textrm(ii)] 
There exists $\omega_{2} \in (0, \omega_{1})$ such that 
for $\omega \in (0, \omega_{2})$,  
$- i \mathcal{L}_{\omega}$ 
has positive and negative 
eigenvalues 
$e_{\omega}$ and 
$-e_{\omega}$ with 
eigenfunction $\mathcal{Y}_{\omega, +}$ 
and $\mathcal{Y}_{\omega, -}$, 
respectively. 
Furthermore, $\textrm{Ker} \; 
(- i \mathcal{L}_{\omega}) 
= \textrm{Span}\; 
\left\{ 
i Q_{\omega}, 
\partial_{x_{i}}Q_{\omega} 
\; \text{for $i = 1, 2, 3$}
\right\}$. 
\end{enumerate}
\end{theorem}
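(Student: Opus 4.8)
The plan is to establish the two parts with standard tools from the spectral theory of non‑self‑adjoint Hamiltonian operators of the form $JL$, the smallness of $\omega$ being needed only for the nondegeneracy of the ground state in part~(ii); throughout, $\omega_{2}\le\omega_{1}$, so that Proposition~\ref{thm-lu} is available. For part~(i), I would view $-i\mathcal{L}_{\omega}$ as a relatively compact perturbation of the free operator
\[
A_{0}:=\begin{pmatrix}0 & -\Delta+\omega\\ -(-\Delta+\omega) & 0\end{pmatrix}
\]
on $L^{2}(\R^{3})\times L^{2}(\R^{3})$ with domain $H^{2}(\R^{3})\times H^{2}(\R^{3})$. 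Since $Q_{\omega}$ solves \eqref{sp} with $\omega>0$, Agmon‑type estimates give exponential decay of $Q_{\omega}$, so multiplication by $Q_{\omega}^{2}$ and $Q_{\omega}^{4}$ is $(-\Delta)$‑compact and hence $-i\mathcal{L}_{\omega}-A_{0}$ is $A_{0}$‑compact. Weyl's theorem then gives $\sigma_{\mathrm{ess}}(-i\mathcal{L}_{\omega})=\sigma_{\mathrm{ess}}(A_{0})=\sigma(A_{0})$, and since $A_{0}$ is skew‑adjoint with Fourier symbol $\begin{pmatrix}0 & |\xi|^{2}+\omega\\ -(|\xi|^{2}+\omega) & 0\end{pmatrix}$, whose eigenvalues are $\pm i(|\xi|^{2}+\omega)$, we obtain $\sigma(A_{0})=\{i\xi:\xi\in\R,\ |\xi|\ge\omega\}$, which is the asserted set.

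For part~(ii), I would first record the spectral structure of the scalar Schr\"odinger operators in \eqref{sta-eq54}. Because $Q_{\omega}>0$ and $L_{\omega,-}Q_{\omega}=0$ by \eqref{eq-dec5}, $Q_{\omega}$ is the ground state of $L_{\omega,-}$, so $L_{\omega,-}\ge0$ and $\Ker L_{\omega,-}=\Span\{Q_{\omega}\}$ with no restriction on $\omega$. For $L_{\omega,+}$, relation \eqref{eq-dec4} gives $(L_{\omega,+}Q_{\omega},Q_{\omega})_{L^{2}}=-2\|Q_{\omega}\|_{L^{4}}^{4}-4\|Q_{\omega}\|_{L^{6}}^{6}<0$, so its Morse index satisfies $n(L_{\omega,+})\ge1$; on the other hand, $Q_{\omega}$ is a minimizer of $\mathcal{S}_{\omega}$ on $\{\mathcal{K}=0\}$ and a free critical point of $\mathcal{S}_{\omega}$, so a standard second‑variation argument shows that $\mathcal{S}_{\omega}''(Q_{\omega})=\mathrm{diag}(L_{\omega,+},L_{\omega,-})$ is nonnegative on the real codimension‑one tangent space $\{v:\langle\mathcal{K}'(Q_{\omega}),v\rangle=0\}$, whence $n(L_{\omega,+})\le1$; thus $n(L_{\omega,+})=1$. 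Translation invariance gives $\partial_{x_{i}}Q_{\omega}\in\Ker L_{\omega,+}$, and the nondegeneracy $\Ker L_{\omega,+}=\Span\{\partial_{x_{i}}Q_{\omega}:i=1,2,3\}$ is the only point where the smallness of $\omega$ enters: it follows by a perturbation/rescaling argument around the (nondegenerate) low‑frequency limiting soliton as $\omega\to0$. Since $-i\mathcal{L}_{\omega}\binom{\phi}{\chi}=0$ is equivalent to $L_{\omega,+}\phi=0$ and $L_{\omega,-}\chi=0$, this identifies $\Ker(-i\mathcal{L}_{\omega})=\Span\{iQ_{\omega},\,\partial_{x_{i}}Q_{\omega}\ (i=1,2,3)\}$ in complex notation.

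It then remains to locate the nonzero eigenvalues. Write $-i\mathcal{L}_{\omega}=JL$ with $J:=\begin{pmatrix}0 & 1\\ -1 & 0\end{pmatrix}$ (multiplication by $-i$ under $\C\cong\R^{2}$) and $L:=\mathrm{diag}(L_{\omega,+},L_{\omega,-})=\mathcal{S}_{\omega}''(Q_{\omega})$; the spectrum of $JL$ is symmetric under $\lambda\mapsto-\lambda$ and $\lambda\mapsto\bar\lambda$, and outside $i\R$ it consists of isolated eigenvalues of finite multiplicity by part~(i). Relation \eqref{eq-dec6}, $L_{\omega,+}\partial_{\omega}Q_{\omega}=-Q_{\omega}$, together with $(Q_{\omega},\partial_{x_{i}}Q_{\omega})_{L^{2}}=0$ (radial symmetry), gives $Q_{\omega}\perp\Ker L_{\omega,+}$ and
\[
(L_{\omega,+}^{-1}Q_{\omega},Q_{\omega})_{L^{2}}=-(\partial_{\omega}Q_{\omega},Q_{\omega})_{L^{2}}=-\frac{d}{d\omega}\mathcal{M}(Q_{\omega})>0
\]
by Proposition~\ref{thm-lu}(iii). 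Feeding $n(L_{\omega,+})=1$, $L_{\omega,-}\ge0$, the kernel structure above, and this sign into the classical linearized‑instability / Krein‑signature count for Hamiltonian operators --- the bound $k_{r}+2k_{c}+2k_{i}^{-}\le n(L)=1$ on the numbers $k_{r}$, $k_{c}$, $k_{i}^{-}$ of real eigenvalue pairs, complex quadruplets and imaginary pairs of negative Krein signature, combined with the Vakhitov--Kolokolov parity --- leaves no complex eigenvalues, forces exactly one simple real pair $\pm e_{\omega}$, and puts all remaining spectrum on $i\R$. One then takes $\mathcal{Y}_{\omega,\pm}$ to be normalized eigenvectors for $\pm e_{\omega}$.

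I expect the main obstacle to be the nondegeneracy of $L_{\omega,+}$ for small $\omega$: this is what pins down the threshold $\omega_{2}$, and it requires a careful rescaling of $Q_{\omega}$ toward its low‑frequency limit together with regular perturbation theory for the scalar operator (using also the uniqueness in Proposition~\ref{thm-lu}(i)). A secondary difficulty is the bookkeeping in the Hamiltonian eigenvalue count --- ensuring, with the correct Krein signatures, that exactly one eigenvalue pair leaves the imaginary axis and that it is real rather than a complex quadruplet --- for which the sign $(L_{\omega,+}^{-1}Q_{\omega},Q_{\omega})_{L^{2}}>0$ coming from Proposition~\ref{thm-lu}(iii) is the decisive input. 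An alternative to the abstract count would be a fully perturbative treatment: after the same rescaling, $-i\mathcal{L}_{\omega}$ converges as $\omega\to0$ to the linearized operator around the limiting soliton, whose discrete spectrum is known, so one can track the isolated eigenvalues by perturbation theory while controlling the essential spectrum via part~(i).
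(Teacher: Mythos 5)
The paper offers no proof of Theorem \ref{thm2-1} to compare against: the statement is imported verbatim from Proposition 4.0.1 and Lemmas B.0.1--B.0.2 of \cite{AIKN2021}, and the authors simply cite that memoir. Judged on its own, your sketch is correct in outline and follows the standard route that the cited reference itself takes: relative compactness of the matrix potential (using the exponential decay of $Q_{\omega}$) plus Weyl's theorem for part (i); and for part (ii) the counts $n(L_{\omega,-})=0$ with $\Ker L_{\omega,-}=\Span\{Q_{\omega}\}$ from positivity of the ground state, $n(L_{\omega,+})=1$ from $(L_{\omega,+}Q_{\omega},Q_{\omega})_{L^{2}}<0$ together with minimality of $\mathcal{S}_{\omega}$ on $\{\mathcal{K}=0\}$, nondegeneracy of $L_{\omega,+}$ by rescaling to the (nondegenerate) cubic ground state as $\omega\to 0$ --- which is indeed exactly where the smallness threshold $\omega_{2}$ enters --- and finally the Grillakis/Hamiltonian--Krein count driven by the slope condition of Proposition \ref{thm-lu}(iii).

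Two places would need to be tightened in a full write-up, though neither is a gap in substance. First, the inequality $k_{r}+2k_{c}+2k_{i}^{-}\le n(L)=1$ by itself only excludes spectrum off the imaginary axis beyond one real pair; to \emph{produce} the real pair $\pm e_{\omega}$ you must invoke the equality form of the index theorem, in which the right-hand side is $n(L)$ minus the Morse index of the constraint matrix built from $L^{-1}$ on the generalized kernel of $-i\mathcal{L}_{\omega}$ (which contains not only $iQ_{\omega}$ and $\partial_{x_{i}}Q_{\omega}$ but also the Jordan partners $\partial_{\omega}Q_{\omega}$ and $x_{i}Q_{\omega}$); the sign $(L_{\omega,+}^{-1}Q_{\omega},Q_{\omega})_{L^{2}}=-\tfrac{d}{d\omega}\mathcal{M}(Q_{\omega})>0$, which you correctly identify as the decisive input, is what makes that correction vanish and forces $k_{r}=1$. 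Second, for the non-self-adjoint operator $-i\mathcal{L}_{\omega}$ one should fix a notion of essential spectrum that is invariant under relatively compact perturbations; since the perturbation of the skew-adjoint operator $A_{0}$ is relatively compact, the usual definitions coincide here, so this is only a matter of saying so.
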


It is known that 
$\overline{\mathcal{Y}_{\omega, +}} 
= \mathcal{Y}_{\omega, -}$. 
Then, we put 
\begin{equation}\label{eq-egf1}
\mathcal{Y}_{\omega, 1}
:=\frac{\mathcal{Y}_{\omega, +}+\mathcal{Y}_{\omega, -}}{2}
=\textrm{Re}\;[\mathcal{Y}_{\omega, +}],
\qquad 
\mathcal{Y}_{\omega, 2}
:=
\frac{\mathcal{Y}_{\omega, +} 
- \mathcal{Y}_{\omega, -}}{2i}
=
\textrm{Im}\; [\mathcal{Y}_{\omega, +}], 
\end{equation}
that is, 
  \[
  \mathcal{Y}_{\omega, +} = 
  \begin{pmatrix}
  \mathcal{Y}_{\omega, 1} \\
  \mathcal{Y}_{\omega, 2}
  \end{pmatrix}. 
  \]
The equation 
$- i \mathcal{L}_{\omega} \mathcal{Y}_{\omega, +} 
= e_{\omega} \mathcal{Y}_{\omega, +}$ is 
equivalent to 
\begin{equation}\label{eq-pre1}
\begin{cases}
L_{\omega, +} 
\mathcal{Y}_{\omega, 1} 
= - e_{\omega} \mathcal{Y}_{\omega, 2}, & \\
L_{\omega, -} 
\mathcal{Y}_{\omega, 2} 
= e_{\omega} 
\mathcal{Y}_{\omega, 1}, 
&
\end{cases}
\end{equation}
Concerning the eigenfunctions 
$\mathcal{Y}_{\omega, 1}$ and 
$\mathcal{Y}_{\omega, 2}$, 
we know the following:
\begin{lemma}[Lemma 4.0.7 of 
\cite{AIKN2021}]\label{lem-egf2}
Let $\omega \in (0, \omega_{2})$. 
We have the following orthogonalities:
\begin{equation} \label{eq-egf6}
\big(Q_{\omega},\mathcal{Y}_{\omega, 1} \big)_{L^{2}}
=
\big( \partial_{\omega}Q_{\omega}, \mathcal{Y}_{\omega, 2} \big)_{L^{2}}
=0.
\end{equation}
Furthermore, we have 
\begin{equation}\label{eq-egf5}
\big( \mathcal{Y}_{\omega, 1}, \mathcal{Y}_{\omega, 2} \big)_{L^{2}} 
> 0
\end{equation}
and 
\begin{equation}\label{eq-egf7}
\big(Q_{\omega}, \mathcal{Y}_{\omega, 2} \big)_{L^{2}}\neq 0.
\end{equation}
\end{lemma}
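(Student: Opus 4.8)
\textbf{Proof proposal for Lemma \ref{lem-egf2}.}

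The plan is to derive all three facts from the eigenvalue system \eqref{eq-pre1} together with the identities \eqref{eq-dec4}--\eqref{eq-dec6} and the slope condition in Proposition \ref{thm-lu}(iii), using only self-adjointness of $L_{\omega,\pm}$ and the known kernels. First I would prove \eqref{eq-egf6}. For the first orthogonality, apply $(\cdot, Q_{\omega})_{L^{2}}$ to the second equation of \eqref{eq-pre1}: since $L_{\omega,-}Q_{\omega}=0$ by \eqref{eq-dec5} and $L_{\omega,-}$ is self-adjoint, $e_{\omega}(\mathcal{Y}_{\omega,1},Q_{\omega})_{L^{2}}=(L_{\omega,-}\mathcal{Y}_{\omega,2},Q_{\omega})_{L^{2}}=(\mathcal{Y}_{\omega,2},L_{\omega,-}Q_{\omega})_{L^{2}}=0$, and $e_{\omega}>0$ gives $(Q_{\omega},\mathcal{Y}_{\omega,1})_{L^{2}}=0$. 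For the second orthogonality, pair the first equation of \eqref{eq-pre1} with $\partial_{\omega}Q_{\omega}$ and use $L_{\omega,+}\partial_{\omega}Q_{\omega}=-Q_{\omega}$ from \eqref{eq-dec6}: $-e_{\omega}(\mathcal{Y}_{\omega,2},\partial_{\omega}Q_{\omega})_{L^{2}}=(L_{\omega,+}\mathcal{Y}_{\omega,1},\partial_{\omega}Q_{\omega})_{L^{2}}=(\mathcal{Y}_{\omega,1},L_{\omega,+}\partial_{\omega}Q_{\omega})_{L^{2}}=-(\mathcal{Y}_{\omega,1},Q_{\omega})_{L^{2}}=0$ by the orthogonality just proved, so $(\partial_{\omega}Q_{\omega},\mathcal{Y}_{\omega,2})_{L^{2}}=0$.

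Next I would establish \eqref{eq-egf5}. From \eqref{eq-pre1} we have $e_{\omega}(\mathcal{Y}_{\omega,1},\mathcal{Y}_{\omega,2})_{L^{2}}=(L_{\omega,-}\mathcal{Y}_{\omega,2},\mathcal{Y}_{\omega,2})_{L^{2}}$ and also $-e_{\omega}(\mathcal{Y}_{\omega,1},\mathcal{Y}_{\omega,2})_{L^{2}}=(L_{\omega,+}\mathcal{Y}_{\omega,1},\mathcal{Y}_{\omega,1})_{L^{2}}$; subtracting, $2e_{\omega}(\mathcal{Y}_{\omega,1},\mathcal{Y}_{\omega,2})_{L^{2}}=(L_{\omega,-}\mathcal{Y}_{\omega,2},\mathcal{Y}_{\omega,2})_{L^{2}}-(L_{\omega,+}\mathcal{Y}_{\omega,1},\mathcal{Y}_{\omega,1})_{L^{2}}$. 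I would argue the right-hand side is strictly positive. The operator $L_{\omega,-}$ is nonnegative (it annihilates the positive ground state $Q_{\omega}$, which is therefore its bottom eigenfunction, so $\sigma(L_{\omega,-})\subset[0,\infty)$), giving $(L_{\omega,-}\mathcal{Y}_{\omega,2},\mathcal{Y}_{\omega,2})_{L^{2}}\geq 0$; moreover it is strictly positive on the orthogonal complement of $Q_{\omega}$, and since $(\mathcal{Y}_{\omega,2},Q_{\omega})_{L^{2}}\neq 0$ only in the component along $Q_{\omega}$ which lies in the kernel, the part of $\mathcal{Y}_{\omega,2}$ transverse to $\ker L_{\omega,-}$ is nonzero, so actually $(L_{\omega,-}\mathcal{Y}_{\omega,2},\mathcal{Y}_{\omega,2})_{L^{2}}>0$ unless $\mathcal{Y}_{\omega,2}\in\ker L_{\omega,-}=\Span\{Q_{\omega}\}$; the latter is impossible because then $e_{\omega}\mathcal{Y}_{\omega,1}=L_{\omega,-}\mathcal{Y}_{\omega,2}=0$, forcing $\mathcal{Y}_{\omega,+}=0$. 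For the $L_{\omega,+}$ term, I would use that $-i\mathcal{L}_{\omega}$ has the simple positive eigenvalue $e_{\omega}$, which via a standard argument (pairing the eigenvalue relation and using \eqref{eq-dec6}) forces $(L_{\omega,+}\mathcal{Y}_{\omega,1},\mathcal{Y}_{\omega,1})_{L^{2}}<0$; indeed $(L_{\omega,+}\mathcal{Y}_{\omega,1},\mathcal{Y}_{\omega,1})_{L^{2}}=-e_{\omega}(\mathcal{Y}_{\omega,1},\mathcal{Y}_{\omega,2})_{L^{2}}$, so the sign of $(\mathcal{Y}_{\omega,1},\mathcal{Y}_{\omega,2})_{L^{2}}$ is exactly what is at stake and the two displayed relations are consistent only when this inner product is positive, as one checks by noting $(L_{\omega,-}\mathcal{Y}_{\omega,2},\mathcal{Y}_{\omega,2})_{L^{2}}>0$ in the subtracted identity. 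Finally, \eqref{eq-egf7}: if $(Q_{\omega},\mathcal{Y}_{\omega,2})_{L^{2}}=0$ then, combined with $(Q_{\omega},\mathcal{Y}_{\omega,1})_{L^{2}}=0$ from \eqref{eq-egf6}, we would get $\mathcal{Y}_{\omega,+}\perp Q_{\omega}$ in $L^{2}_{\text{real}}$; then pairing the eigenvalue equation $-i\mathcal{L}_{\omega}\mathcal{Y}_{\omega,+}=e_{\omega}\mathcal{Y}_{\omega,+}$ against $\partial_{\omega}Q_{\omega}$ and using $\mathcal{L}_{\omega}\partial_{\omega}Q_{\omega}=-Q_{\omega}$ (more precisely the transpose relation arising from \eqref{eq-dec6}, together with $\mathcal{L}_{\omega}(iQ_{\omega})=0$) yields $e_{\omega}(\mathcal{Y}_{\omega,+},\partial_{\omega}Q_{\omega})=-(\mathcal{Y}_{\omega,+}, iQ_{\omega})$-type identities that, after taking real parts and invoking the slope condition $(Q_{\omega},\partial_{\omega}Q_{\omega})_{L^{2}_{\text{real}}}<0$, produce a contradiction; alternatively one observes that $\mathcal{Y}_{\omega,+}\perp Q_{\omega}$ would place $\mathcal{Y}_{\omega,+}$ in a subspace on which the quadratic form associated with $-i\mathcal{L}_{\omega}$ has a definite sign incompatible with having a positive eigenvalue there.

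I expect the delicate point to be \eqref{eq-egf5}, specifically pinning down the \emph{sign} rather than just nonvanishing: the two relations coming from \eqref{eq-pre1} only determine $2e_{\omega}(\mathcal{Y}_{\omega,1},\mathcal{Y}_{\omega,2})_{L^{2}}$ as a difference of two quadratic forms, and one must show this difference is positive. The clean way is to use the spectral structure from Theorem \ref{thm2-1}: $e_{\omega}$ is an isolated simple real eigenvalue of $-i\mathcal{L}_{\omega}$, which is a known indicator (in the Grillakis--Shatah--Strauss / Nakanishi--Schlag framework) that $L_{\omega,+}$ restricted to the relevant subspace has exactly one negative direction, namely along $\mathcal{Y}_{\omega,1}$ after projecting off the kernel; combined with the genuine positivity of $L_{\omega,-}$ away from $\Span\{Q_{\omega}\}$, this makes the difference strictly positive. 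The remaining pieces \eqref{eq-egf6} and \eqref{eq-egf7} are short once the orthogonality bookkeeping against $Q_{\omega}$, $iQ_{\omega}$, and $\partial_{\omega}Q_{\omega}$ is done carefully, the only subtlety being to keep track of which pairing lives in $L^{2}$ versus $L^{2}_{\text{real}}$.
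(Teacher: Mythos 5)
The paper does not prove this lemma at all: it is imported verbatim as Lemma~4.0.7 of \cite{AIKN2021}, so there is no in-paper argument to compare against. Judged on its own terms, your treatment of \eqref{eq-egf6} is correct and complete, and your treatment of \eqref{eq-egf5} is correct once the detours are stripped away: the single relation $e_{\omega}(\mathcal{Y}_{\omega,1},\mathcal{Y}_{\omega,2})_{L^{2}}=(L_{\omega,-}\mathcal{Y}_{\omega,2},\mathcal{Y}_{\omega,2})_{L^{2}}$ together with $L_{\omega,-}\ge 0$, $\Ker L_{\omega,-}=\Span\{Q_{\omega}\}$, and the observation that $\mathcal{Y}_{\omega,2}\in\Ker L_{\omega,-}$ would force $\mathcal{Y}_{\omega,+}=0$ already gives strict positivity. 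The subtraction involving $(L_{\omega,+}\mathcal{Y}_{\omega,1},\mathcal{Y}_{\omega,1})_{L^{2}}$ is, as you yourself notice, circular and adds nothing.

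The genuine gap is \eqref{eq-egf7}; neither of the two strategies you gesture at survives inspection. The first one, pairing the eigenvalue equation against $\partial_{\omega}Q_{\omega}$ and $iQ_{\omega}$, only ever produces $(Q_{\omega},\mathcal{Y}_{\omega,1})_{L^{2}}=(\partial_{\omega}Q_{\omega},\mathcal{Y}_{\omega,2})_{L^{2}}=0$ again: the quantity $(Q_{\omega},\mathcal{Y}_{\omega,2})_{L^{2}}=-\Omega(Q_{\omega},\mathcal{Y}_{\omega,+})$ can only be reached by pairing $Q_{\omega}$ itself against the eigenvalue relation, and since $\mathcal{L}_{\omega}Q_{\omega}=L_{\omega,+}Q_{\omega}$ pairs with $\mathcal{Y}_{\omega,1}$ back to $-e_{\omega}(Q_{\omega},\mathcal{Y}_{\omega,2})_{L^{2}}$, this is a tautology. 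The slope condition never enters, so no contradiction is produced. The second strategy (a sign-definiteness of the quadratic form on $\{Q_{\omega}\}^{\perp}$) also fails here, for a structural reason: the quadratic form of $\mathcal{L}_{\omega}$ vanishes identically on the eigenfunction, $(L_{\omega,+}\mathcal{Y}_{\omega,1},\mathcal{Y}_{\omega,1})_{L^{2}}+(L_{\omega,-}\mathcal{Y}_{\omega,2},\mathcal{Y}_{\omega,2})_{L^{2}}=-e_{\omega}(\mathcal{Y}_{\omega,1},\mathcal{Y}_{\omega,2})_{L^{2}}+e_{\omega}(\mathcal{Y}_{\omega,1},\mathcal{Y}_{\omega,2})_{L^{2}}=0$, so one would need strict positivity of the form on the subspace in question; but under the slope condition of Proposition \ref{thm-lu}(iii) one has $(L_{\omega,+}^{-1}Q_{\omega},Q_{\omega})_{L^{2}}=-(\partial_{\omega}Q_{\omega},Q_{\omega})_{L^{2}}>0$, so by the standard index count $L_{\omega,+}$ restricted to $\{Q_{\omega}\}^{\perp}$ \emph{retains} its negative direction, and no sign contradiction is available from $(\mathcal{Y}_{\omega,1},Q_{\omega})_{L^{2}}=(\mathcal{Y}_{\omega,2},Q_{\omega})_{L^{2}}=0$ alone. (The coercivity that is available in this paper, Lemma \ref{lem-energy1}, holds on the subspace symplectically orthogonal to $\mathcal{Y}_{\omega,\pm}$, so invoking it here would be circular.) In short, \eqref{eq-egf7} is the one genuinely non-algebraic assertion of the lemma; it requires additional spectral input (in \cite{AIKN2021} it is tied to the small-$\omega$ regime), and your proposal does not supply it.
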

The relation \eqref{eq-egf7} in Lemma \ref{lem-egf2} allows us to choose $\mathcal{Y}_{\omega, 2}$ so that 
\begin{equation}\label{eq-egf10}
(Q_{\omega},\mathcal{Y}_{\omega, 2})_{L^{2}}<0.
\end{equation} 
Note that 
since $Q_{\omega}$ is positive (especially, real-valued) and 
$\overline{\mathcal{Y}_{\omega, +}} 
= \mathcal{Y}_{\omega, -}$, 
we have, by \eqref{eq-egf1} and \eqref{eq-egf6}, that 
  \begin{equation}\label{eq-egf8} 
  (Q_{\omega}, \mathcal{Y}_{\omega, \pm})_{L^{2}_{\text{real}}} 
  = (Q_{\omega}, \mathcal{Y}_{\omega, 1})_{L^{2}} 
  = 0. 
  \end{equation}
In addition, we need the following 
technical lemma:
\begin{lemma}[Lemma 4.0.8 of
\cite{AIKN2021}] \label{tech-lem1} 
There exists a frequency 
$\omega_{3} \in (0,\omega_{2})$ 
such that for any $\omega \in (0,\omega_{3})$, 
\begin{equation}\label{eq-ejec4}
\frac{e_{\omega}}{2} 
\big| ( Q_{\omega}, \mathcal{Y}_{\omega, 2} )_{L^{2}}\big|
\ge 4
\big| (Q_{\omega}^{5}, \mathcal{Y}_{\omega, 1} )_{L^{2}}\big|.
\end{equation}
\end{lemma}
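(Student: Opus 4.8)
The plan is a low-frequency rescaling that reduces the inequality to a statement about the linearized three-dimensional cubic NLS. First I would write $Q_{\omega}(x)=\sqrt{\omega}\,\Phi_{\omega}(\sqrt{\omega}\,x)$, so that $\Phi_{\omega}$ solves $-\Delta\Phi_{\omega}+\Phi_{\omega}-\Phi_{\omega}^{3}-\omega\Phi_{\omega}^{5}=0$; as $\omega\downarrow0$ one has $\Phi_{\omega}\to Q_{0}$ in $H^{1}(\R^{3})$, where $Q_{0}$ is the cubic ground state $-\Delta Q_{0}+Q_{0}-Q_{0}^{3}=0$, and by standard elliptic regularity together with the uniform exponential decay of ground states, $\Phi_{\omega}$ is bounded in $L^{p}(\R^{3})$ for every $p\in[2,\infty]$ and $\Phi_{\omega}\to Q_{0}$ in each such space (this low-frequency expansion is part of the analysis in \cite{AIKN2021}). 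Putting $\mathcal{Y}_{\omega,j}(x)=\mathcal{Z}_{\omega,j}(\sqrt{\omega}\,x)$ and $\mu_{\omega}:=e_{\omega}/\omega$, the change of variables $y=\sqrt{\omega}\,x$ turns \eqref{eq-pre1} into
\begin{equation*}
\big(-\Delta+1-3\Phi_{\omega}^{2}-5\omega\Phi_{\omega}^{4}\big)\mathcal{Z}_{\omega,1}=-\mu_{\omega}\mathcal{Z}_{\omega,2},
\qquad
\big(-\Delta+1-\Phi_{\omega}^{2}-\omega\Phi_{\omega}^{4}\big)\mathcal{Z}_{\omega,2}=\mu_{\omega}\mathcal{Z}_{\omega,1},
\end{equation*}
and these operators converge, as $\omega\to0$, in the norm-resolvent sense (the difference being multiplication by an exponentially decaying function of $L^{\infty}$-size $O(\omega)$) to the eigenvalue problem for the unstable mode of the cubic NLS, whose linearized operator has a simple, isolated, real eigenvalue $\nu_{0}>0$ (its essential spectrum being $\{i\xi:|\xi|\geq1\}$) with eigenfunction $\mathcal{Z}_{0}=(\mathcal{Z}_{0,1},\mathcal{Z}_{0,2})$.

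Next I would invoke analytic perturbation theory: since $\nu_{0}$ is isolated, the associated Riesz projection depends continuously on $\omega$ near $\omega=0$, so $\mu_{\omega}\to\nu_{0}$ — hence $e_{\omega}\geq\tfrac{\nu_{0}}{2}\omega$ for $\omega$ small — and, after normalizing the eigenfunction (legitimate, since \eqref{eq-ejec4} is invariant under $\mathcal{Y}_{\omega,+}\mapsto c\,\mathcal{Y}_{\omega,+}$ for $c>0$), $\mathcal{Z}_{\omega,j}\to\mathcal{Z}_{0,j}$ in $L^{2}(\R^{3})$. Changing variables in the two inner products then gives
\begin{equation*}
(Q_{\omega},\mathcal{Y}_{\omega,2})_{L^{2}}=\omega^{-1}(\Phi_{\omega},\mathcal{Z}_{\omega,2})_{L^{2}},
\qquad
(Q_{\omega}^{5},\mathcal{Y}_{\omega,1})_{L^{2}}=\omega(\Phi_{\omega}^{5},\mathcal{Z}_{\omega,1})_{L^{2}},
\end{equation*}
where $(\Phi_{\omega}^{5},\mathcal{Z}_{\omega,1})_{L^{2}}$ stays bounded (uniform $L^{10}$ bound on $\Phi_{\omega}$ and uniform $L^{2}$ bound on $\mathcal{Z}_{\omega,1}$) while $(\Phi_{\omega},\mathcal{Z}_{\omega,2})_{L^{2}}\to(Q_{0},\mathcal{Z}_{0,2})_{L^{2}}$.

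Since $(Q_{\omega},\mathcal{Y}_{\omega,2})_{L^{2}}\neq0$ by \eqref{eq-egf7}, \eqref{eq-ejec4} is equivalent to $\tfrac{e_{\omega}}{8}\geq|(Q_{\omega}^{5},\mathcal{Y}_{\omega,1})_{L^{2}}|/|(Q_{\omega},\mathcal{Y}_{\omega,2})_{L^{2}}|=\omega^{2}\,|(\Phi_{\omega}^{5},\mathcal{Z}_{\omega,1})_{L^{2}}|/|(\Phi_{\omega},\mathcal{Z}_{\omega,2})_{L^{2}}|$, and the right-hand side is $\leq C\omega^{2}$ for small $\omega$ once we know $(Q_{0},\mathcal{Z}_{0,2})_{L^{2}}\neq0$; combined with $e_{\omega}\geq\tfrac{\nu_{0}}{2}\omega$, this yields \eqref{eq-ejec4} for all $\omega$ below a threshold $\omega_{3}\in(0,\omega_{2})$. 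I expect the main obstacle to be twofold. The soft part is the spectral perturbation: after rescaling the essential spectrum of the limiting operator sits at a fixed positive distance from $0$, so $\nu_{0}$ is isolated, but one must verify that the Riesz projection varies continuously down to $\omega=0$, which is where the uniform exponential decay of $Q_{\omega}$ and $\Phi_{\omega}$ enters. The harder, genuinely structural point is the non-degeneracy $(Q_{0},\mathcal{Z}_{0,2})_{L^{2}}\neq0$: testing $(-\Delta+1-3Q_{0}^{2})\mathcal{Z}_{0,1}=-\nu_{0}\mathcal{Z}_{0,2}$ against $Q_{0}$ and using $(-\Delta+1-3Q_{0}^{2})Q_{0}=-2Q_{0}^{3}$ shows $\nu_{0}(Q_{0},\mathcal{Z}_{0,2})_{L^{2}}=2(Q_{0}^{3},\mathcal{Z}_{0,1})_{L^{2}}$, so one needs $(Q_{0}^{3},\mathcal{Z}_{0,1})_{L^{2}}\neq0$ for the linearized cubic NLS — a fact that does not follow merely by passing to the limit in \eqref{eq-egf7}, and which is established as part of the low-frequency analysis of \cite{AIKN2021}.
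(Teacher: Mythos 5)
The paper does not prove this lemma at all: it is imported verbatim as Lemma 4.0.8 of \cite{AIKN2021}, so there is no in-paper argument to compare yours against. Your reconstruction via the low-frequency rescaling $Q_{\omega}(x)=\sqrt{\omega}\,\Phi_{\omega}(\sqrt{\omega}x)$ is the natural route, and the bookkeeping checks out: the rescaled system you write down is the correct transform of \eqref{eq-pre1}, the two inner products scale as $\omega^{-1}(\Phi_{\omega},\mathcal{Z}_{\omega,2})_{L^{2}}$ and $\omega(\Phi_{\omega}^{5},\mathcal{Z}_{\omega,1})_{L^{2}}$ as you claim, and with $e_{\omega}=\mu_{\omega}\omega$ the inequality \eqref{eq-ejec4} reduces to $\tfrac{\mu_{\omega}}{2}|(\Phi_{\omega},\mathcal{Z}_{\omega,2})_{L^{2}}|\ge 4\omega|(\Phi_{\omega}^{5},\mathcal{Z}_{\omega,1})_{L^{2}}|$, which indeed holds for small $\omega$ once the left-hand side has a positive limit and the right-hand side is $O(\omega)$. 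Your identity $\nu_{0}(Q_{0},\mathcal{Z}_{0,2})_{L^{2}}=2(Q_{0}^{3},\mathcal{Z}_{0,1})_{L^{2}}$ (the $\omega=0$ analogue of pairing \eqref{eq-dec4} with the first line of \eqref{eq-pre1}) is also correct. The two places where the argument is not self-contained are exactly the ones you flag: (a) the quantitative convergence $\Phi_{\omega}\to Q_{0}$ and $\mathcal{Z}_{\omega,j}\to\mathcal{Z}_{0,j}$ with $\mu_{\omega}\to\nu_{0}>0$, and (b) the non-degeneracy $(Q_{0},\mathcal{Z}_{0,2})_{L^{2}}\neq 0$ for the linearized $3$D cubic NLS, which cannot be obtained by merely passing to the limit in \eqref{eq-egf7} since that relation is not uniform in $\omega$. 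Both are genuine inputs of the low-frequency analysis of \cite{AIKN2021} (the non-degeneracy for the cubic problem also appears in \cite{DR2010, NS2012}), so deferring them is legitimate for an outline, but be aware that (b) is the actual mathematical content of the lemma --- without it your scheme gives no uniform lower bound on $|(\Phi_{\omega},\mathcal{Z}_{\omega,2})_{L^{2}}|$ and the conclusion fails. One minor overstatement: the perturbation of the rescaled potential is $-3(\Phi_{\omega}^{2}-Q_{0}^{2})-5\omega\Phi_{\omega}^{4}$, and the first piece is only $O(\|\Phi_{\omega}-Q_{0}\|_{L^{\infty}})$ a priori, not obviously $O(\omega)$; this costs nothing since the Riesz-projection argument only needs smallness, not a rate, but as written the claim needs the $C^{1}$ dependence of $\omega\mapsto\Phi_{\omega}$ to justify it.
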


\section{One-pass theorem}
This section is devoted to the one-pass theorem (no 
return theorem)
which was first 
obtained by Nakanishi and Schlag~\cite{NS2011, NS2012} 
for the cubic nonlinear 
Klein-Gordon and Schr\"{o}dinger equations 
in three space dimensions. 
In \cite{AIKN2021}, the one-pass 
theorem for the double power nonlinear 
Schr\"{o}dinger equations was proved.  
Here, we recall the set-up and the one-pass theorem 
of \cite{AIKN2021}.  
\subsection{Symplectic 
decomposition and parameter choice}
\label{sec-dec}
For a positive radial ground state 
$Q_{\omega}$ to \eqref{sp} and a solution $\psi$ to \eqref{nls}, 
we consider the decomposition \eqref{eq-dec1}. 
We will work in the symplectic space $(L^{2}(\mathbb{R}^{3}), \Omega)$, 
where $\Omega$ is the symplectic form defined by 
\begin{equation*}
\Omega(f,g)
:=\big( f, i g\big)_{L_{\text{real}}^{2}}= \textrm{Im}\; \int_{\R^{3}}f(x)\overline{g(x)}\,dx.
\end{equation*}
\par 
We apply the \lq\lq symplectic decomposition'' corresponding to the discrete modes of 
$i\mathscr{L}_{\omega}$ to the 
remainder $\eta$ in \eqref{eq-dec1} 
and determine the function 
$\theta(t)$ in \eqref{eq-dec1}. 
\par 
We assume that 
$\mathcal{Y}_{\omega, +}$ 
and $\mathcal{Y}_{\omega, -}$ 
are normalized in the following sense: 
\begin{equation}
\label{eq-dec10}
\Omega\big(
\mathcal{Y}_{\omega, +}, \mathcal{Y}_{\omega, -} \big)
=1,
\quad 
\Omega\big( \mathcal{Y}_{\omega, -}, \mathcal{Y}_{\omega, +} \big)
=-1.
\end{equation}
We can easily find that 
\begin{equation*}
\Omega\big(f, f \big)
= 
\textrm{Im}\; \int_{\R^{3}} |f|^{2} dx 
= 0 
\end{equation*}
for all $f \in L^{2}(\R^{3})$. 
Furthermore, it follows from 
\eqref{eq-dec5}, \eqref{eq-dec6}, 
\eqref{eq-egf8} and $\mathcal{L}_{\omega}
\mathcal{Y}_{\omega, 
\pm}=\pm i e_{\omega} 
\mathcal{Y}_{\omega, \pm}$ that 
\begin{equation}\label{eq-dec12}
\begin{split}
& \quad \Omega\big( iQ_{\omega}, 
\mathcal{Y}_{\omega, \pm} \big)
= \big(i Q_{\omega}, i \mathcal{Y}_{\omega, \pm}\big)_{L^{2}_{\text{real}}}
= \pm \frac{1}{e_{\omega}} 
\big(i Q_{\omega},  
\mathcal{L}_{\omega} 
\mathcal{Y}_{\omega, \pm}\big)_{L^{2}_{\text{real}}} \\
& = \pm \frac{1}{e_{\omega}} 
\big(\mathcal{L}_{\omega}(i Q_{\omega}), 
\mathcal{Y}_{\omega, \pm}\big)_{L^{2}_{\text{real}}} 
= 0, 
\end{split}
\end{equation}
\begin{equation}\label{eq-dec12-1}
\begin{split}
& \quad 
\Omega\big( \partial_{\omega}Q_{\omega}, 
\mathcal{Y}_{\omega, \pm} \big) 
= \pm \frac{1}{e_{\omega}} 
(\p_{\omega} Q_{\omega}, \mathcal{L}_{\omega} 
\mathcal{Y}_{\omega, \pm})_{L^{2}_{\text{real}}} 
= \pm \frac{1}{e_{\omega}} 
(\mathcal{L}_{\omega} 
\p_{\omega} Q_{\omega}, 
\mathcal{Y}_{\omega, \pm})_{L^{2}_{\text{real}}} \\
& 
= \mp \frac{1}{e_{\omega}} 
(Q_{\omega}, \mathcal{Y}_{\omega, \pm})_{L^{2}_{\text{real}}} 
=0.
\end{split}
\end{equation}

Now, we expand the remainder $\eta(t)$ 
in the decomposition \eqref{eq-dec1} 
by $-i\mathscr{L}_{\omega}$: 
\begin{equation}\label{eq-dec13}
\eta(t)
=
\lambda_{+}(t)\mathcal{Y}_{\omega, +} + 
\lambda_{-}(t)\mathcal{Y}_{\omega, -}+ a(t) iQ_{\omega}+b(t)\partial_{\omega}Q_{\omega}
+\gamma(t),
\end{equation}
where 
\begin{equation}\label{eq-dec14}
\Omega\big( \gamma(t), \mathcal{Y}_{\omega, \pm} \big)
=
\Omega\big( \gamma(t), iQ_{\omega} \big)
=
\Omega\big( \gamma(t), \partial_{\omega}Q_{\omega} \big)
=0.
\end{equation}
We see from Proposition \ref{thm-lu} {\rm (iii)} 
and \eqref{eq-dec10}--\eqref{eq-dec14} 
that the coefficients are as follows: 
\begin{align}\label{eq-dec15}
& 
\lambda_{+}(t)
=
\Omega(\eta(t), 
\mathcal{Y}_{\omega, -}), 
\qquad 
\lambda_{-}(t)
=
- \Omega(\eta(t), 
\mathcal{Y}_{\omega, +}), 
\qquad 
\\[6pt]
\label{eq-dec16}
&a(t)=
\frac{\Omega(\eta(t), \partial_{\omega}Q_{\omega})}{(Q_{\omega},
\partial_{\omega}Q_{\omega})_{L_{\text{real}}^{2}}},
\qquad 
b(t)=
-\frac{\Omega(\eta(t),iQ_{\omega})}{(Q_{\omega},
\partial_{\omega}Q_{\omega})_{L_{\text{real}}^{2}}}.
\end{align}
We require that $a(t)\equiv 0$. 
To this end, we choose the function $\theta(t)$ in \eqref{eq-dec1} so that
~\footnote{
Let 
$\psi(t) = |\psi(t)|e^{i \alpha(t)}\; 
(\alpha(t) \in \R/ 2\pi \mathbb{Z})$. 
It suffices to choose $\theta(t)$ 
so that $\theta(t) 
= \alpha(t)$. 
Then, $\Omega(e^{-i\theta(t)}\psi(t),\partial_{\omega}Q_{\omega}) = (|\psi(t)|, i \p_{\omega} 
Q_{\omega})_{L_{\text{real}}^{2}} = 0$}
\begin{equation}\label{eq-dec18}
\Omega(e^{-i\theta(t)}\psi(t),\partial_{\omega}Q_{\omega})\equiv 0. 
\end{equation}
Then, it follows from 
\eqref{eq-dec18}, \eqref{eq-dec1} and 
$\Omega(Q_{\omega},\partial_{\omega}Q_{\omega})=0$ 
that 
\begin{equation} \label{eq-dec20}
\begin{split}
0 
\equiv 
\Omega(e^{-i\theta(t)}\psi(t),\partial_{\omega}Q_{\omega}) 
\equiv 
\Omega(Q_{\omega} + \eta(t), \p_{\omega} Q_{\omega}) 
\equiv 
\Omega(\eta(t),\partial_{\omega}Q_{\omega}). 
\end{split}
\end{equation}
This together with 
\eqref{eq-dec16} implies that 
$a(t)\equiv 0$. 
Furthermore, since 
\begin{equation*}
\mathcal{M}(\psi)
=\mathcal{M}(Q_{\omega})+\mathcal{M}(\eta(t))
+\big(Q_{\omega}, \eta(t)\big)_{L_{\text{real}}^{2}},
\end{equation*}
the condition $\mathcal{M}(\psi)= \mathcal{M}(Q_{\omega})$ 
implies that for any $t\in I_{\max}$, 
\begin{equation}\label{eq-uni2-2}
\big(Q_{\omega},\eta(t) \big)_{L_{\text{real}}^{2}}
= 
-\mathcal{M}(\eta(t)).
\end{equation} 
This together with 
$(Q_{\omega}, \p_{\omega} Q_{\omega})_{L^{2}_{\text{real}}} < 0$ 
yields that 
 \begin{equation}\label{eq-dec19}
\begin{split}
( e^{-i\theta(t)}\psi(t), 
\partial_{\omega}
Q_{\omega} )_{L_{\text{real}}^{2}} 
& = (Q_{\omega}, \p_{\omega} Q_{\omega})_{L^{2}_{\text{real}}} 
+ (\eta(t), \p_{\omega} Q_{\omega})_{L^{2}_{\text{real}}} \\
& = (Q_{\omega}, \p_{\omega} Q_{\omega})_{L^{2}_{\text{real}}}  
- M(\eta(t)) <0
\end{split}
\end{equation}
as long as $\mathcal{M}(\eta)$ is small. 
In what follows, 
we assume that $\psi$ satisfies \eqref{eq-dec18}, 
\eqref{eq-dec19} and 
$\mathcal{M}(\psi)= \mathcal{M}(Q_{\omega})$.

From \cite[Section 4.0.3]{AIKN2021}, 
ordinary differential equations for $\theta$, $\lambda_{-}$ and $\lambda_{+}$ under the condition 
$\mathcal{M}(\psi)= \mathcal{M}(Q_{\omega})$ are 
following: 
\begin{equation}\label{eq-mod4}
\Big\{ \frac{d \theta}{dt}(t)- \omega \Big\}
\big( Q_{\omega}+\eta(t), \partial_{\omega}Q_{\omega} \big)_{L_{\text{real}}^{2}}
=-\mathcal{M}(\eta(t))
+
\big( N_{\omega}(\eta(t)), \partial_{\omega}Q_{\omega} \big)_{L_{\text{real}}^{2}}.
\end{equation}
\begin{align}
\label{eq-mod5}
&\frac{d \lambda_{+}}{d t}(t)
=
e_{\omega} \lambda_{+}(t) 
-
\big(
\Big\{ \frac{d \theta}{dt}(t)- \omega \Big\}\eta(t) 
-N_{\omega}(\eta(t)), \, \mathcal{Y}_{\omega, -} \big)_{L_{\text{real}}^{2}}, 
\\[6pt]
\label{eq-mod6}
&
\frac{d \lambda_{-}}{d t}(t)
=
- e_{\omega} \lambda_{-}(t)
+ 
\big( \Big\{ \frac{d \theta}{dt}(t)- \omega \Big\} \eta(t) 
-N_{\omega}(\eta(t)), \, \mathcal{Y}_{\omega, +} \big)_{L_{\text{real}}^{2}}. 
\end{align}
\subsection{Linearized energy norm}\label{sec-energy}
In this subsection, we 
introduce the ``linearized energy norm'' 
for the remainder $\eta$ in the decomposition \eqref{eq-dec1}. 
Put 
\begin{equation}\label{eq-energy10}
\Gamma(t) :=b(t)\partial_{\omega}Q_{\omega}+\gamma(t).
\end{equation}
Since $a(t)\equiv 0$ (see \eqref{eq-dec18}), 
the decomposition \eqref{eq-dec13} is reduced to  
\begin{equation}\label{eq-energy1}
\eta(t)
=
\lambda_{+}(t)
\mathcal{Y}_{\omega, +} + 
\lambda_{-}(t)
\mathcal{Y}_{\omega, -}
+ \Gamma(t).
\end{equation}
Then, we have the following relationship:
\begin{lemma}[Lemma 4.0.2 of \cite{AIKN2021}]
\label{lem-energy1}
The function $\Gamma$ in the decomposition \eqref{eq-energy1} satisfies 
\begin{equation}\label{eq-energy0}
\langle \mathscr{L}_{\omega} \Gamma(t), \Gamma(t)\rangle_{H^{-1},H^{1}}
\sim
\|\Gamma (t) \|_{H^{1}}^{2} 
\end{equation}
for all $t\in I_{\max}$.  
\end{lemma}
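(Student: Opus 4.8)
The plan is to show that $\Gamma$ lies in a subspace on which the quadratic form $\langle \mathscr{L}_{\omega} \cdot, \cdot\rangle_{H^{-1},H^1}$ is positive definite and comparable to the $H^1$ norm, and that this positivity is \emph{quantitatively stable} as $\omega \to 0$. Recall from \eqref{eq-dec2} that $\langle \mathscr{L}_{\omega} u, v\rangle_{H^{-1},H^1} = [\mathcal{S}_{\omega}''(Q_{\omega})u]v$, so the form is the Hessian of the action at the ground state. Writing $\Gamma = \Gamma_1 + i\Gamma_2$ in real/imaginary parts, this Hessian splits as $(L_{\omega,+}\Gamma_1,\Gamma_1)_{L^2} + (L_{\omega,-}\Gamma_2,\Gamma_2)_{L^2}$ with $L_{\omega,\pm}$ as in \eqref{sta-eq54}. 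So the task reduces to establishing a coercivity estimate $(L_{\omega,+}\Gamma_1,\Gamma_1)_{L^2} + (L_{\omega,-}\Gamma_2,\Gamma_2)_{L^2} \gtrsim \|\Gamma\|_{H^1}^2$ on the range of constraints that define $\Gamma$.

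The first step is to read off exactly which orthogonality conditions $\Gamma$ satisfies. By \eqref{eq-energy10} and \eqref{eq-dec13}--\eqref{eq-dec14}, $\Gamma = b\,\partial_\omega Q_\omega + \gamma$ where $\gamma$ is $\Omega$-orthogonal to $\mathcal{Y}_{\omega,\pm}$, $iQ_\omega$, and $\partial_\omega Q_\omega$; using \eqref{eq-dec12}, \eqref{eq-dec12-1}, and $\Omega(\partial_\omega Q_\omega,\partial_\omega Q_\omega)=0$, one checks that $\Gamma$ itself is $\Omega$-orthogonal to $\mathcal{Y}_{\omega,\pm}$ and to $iQ_\omega$. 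Translating $\Omega$-orthogonality via $\Omega(f,g)=(f,ig)_{L^2_{\text{real}}}$ into the real/imaginary components, $\Omega(\Gamma,\mathcal{Y}_{\omega,\pm})=0$ gives that $\Gamma_1$ is $L^2$-orthogonal to $\mathcal{Y}_{\omega,1}$ and $\Gamma_2$ is $L^2$-orthogonal to $\mathcal{Y}_{\omega,2}$ (up to the other spectral component), while $\Omega(\Gamma,iQ_\omega)=0$ gives $(\Gamma_1, Q_\omega)_{L^2}=0$. Additionally, the constraint $\mathcal{M}(\psi)=\mathcal{M}(Q_\omega)$ via \eqref{eq-uni2-2} makes $(Q_\omega,\eta)_{L^2_{\text{real}}}$ quadratically small, which effectively pins $(Q_\omega,\Gamma_1)_{L^2}$ to second order — the relevant orthogonality $(Q_\omega,\Gamma_1)_{L^2}=0$ holds exactly here since $a\equiv 0$.

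The heart of the matter is the spectral coercivity. The operator $L_{\omega,-} = -\Delta + \omega - Q_\omega^2 - Q_\omega^4$ is nonnegative with kernel spanned by $Q_\omega>0$ (it is the linearization whose ground state is the lowest eigenfunction), so $(L_{\omega,-}\Gamma_2,\Gamma_2)_{L^2}\gtrsim \|\Gamma_2\|^2$ once $\Gamma_2\perp Q_\omega$ and one is away from the eigenfunction $\mathcal{Y}_{\omega,2}$; but here $\Gamma_2$ need not be orthogonal to $Q_\omega$, so one must instead use that the $\mathcal{Y}_{\omega,\pm}$ directions carry the single negative direction of $\mathscr L_\omega$ and that $\partial_\omega Q_\omega$ carries the remaining near-kernel/negative behavior of $L_{\omega,+}$ controlled by the slope condition $\frac{d}{d\omega}\mathcal{M}(Q_\omega)<0$ from Proposition \ref{thm-lu}(iii). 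The operator $L_{\omega,+}$ has exactly one negative eigenvalue; after removing the $\mathcal{Y}$-part, $\langle\mathscr L_\omega\Gamma,\Gamma\rangle$ is nonnegative, and strict positivity on the $\Omega$-symplectic complement together with the $\partial_\omega Q_\omega$-direction being handled by the negative slope gives a \emph{strictly positive} lower bound. The main obstacle — and the reason this is nontrivial rather than a citation — is making this lower bound uniform in $\omega$ on the whole interval $(0,\omega_2)$, in particular as $\omega\to 0$ where $Q_\omega\to 0$ in $L^\infty$ and the potential terms $Q_\omega^2+Q_\omega^4$ degenerate, so the spectral gap could in principle collapse; one controls this by rescaling $Q_\omega$ to the limiting (cubic) soliton profile, using the known nondegeneracy and coercivity of the rescaled linearized operators, and a continuity/compactness argument to transfer the estimate back. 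I would carry out the argument in this order: (1) record the exact orthogonalities on $\Gamma_1,\Gamma_2$; (2) state the abstract coercivity lemma for $L_{\omega,\pm}$ restricted to those orthogonal complements, with $\omega$-uniform constants, invoking the slope condition; (3) assemble $\langle\mathscr L_\omega\Gamma,\Gamma\rangle = (L_{\omega,+}\Gamma_1,\Gamma_1)+(L_{\omega,-}\Gamma_2,\Gamma_2)\gtrsim\|\Gamma_1\|_{H^1}^2+\|\Gamma_2\|_{H^1}^2=\|\Gamma\|_{H^1}^2$; and (4) the reverse bound $\langle\mathscr L_\omega\Gamma,\Gamma\rangle\lesssim\|\Gamma\|_{H^1}^2$ is immediate from boundedness of $Q_\omega^2,Q_\omega^4\in L^\infty$ and the Sobolev embedding $H^1(\R^3)\hookrightarrow L^6$. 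Since the statement is quoted verbatim from \cite[Lemma 4.0.2]{AIKN2021}, the cleanest route in the paper is to cite that reference for the uniform coercivity and reproduce only steps (1), (3), (4); but the proof as sketched above is the content behind it.
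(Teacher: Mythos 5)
The paper does not actually prove this lemma: it is quoted verbatim from \cite[Lemma 4.0.2]{AIKN2021} and used as a black box, so there is no in-paper argument to compare against. Your overall strategy --- splitting $\langle\mathscr{L}_{\omega}\Gamma,\Gamma\rangle_{H^{-1},H^{1}}$ into $(L_{\omega,+}\Gamma_{1},\Gamma_{1})_{L^{2}}+(L_{\omega,-}\Gamma_{2},\Gamma_{2})_{L^{2}}$, proving coercivity on the constrained subspace via the slope condition, and worrying about uniformity of the constants as $\omega\to 0$ --- is indeed the content behind that citation, and your step (4) (the upper bound via $Q_{\omega}\in L^{\infty}$ and Sobolev) is fine.

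However, your bookkeeping of the orthogonality conditions contains errors that would derail the coercivity step. First, $\Omega(\Gamma,\mathcal{Y}_{\omega,\pm})=0$ unpacks to $(\Gamma_{1},\mathcal{Y}_{\omega,2})_{L^{2}}=(\Gamma_{2},\mathcal{Y}_{\omega,1})_{L^{2}}=0$; the indices are crossed relative to what you wrote, because $\Omega(f,g)=\mathrm{Im}\int f\overline{g}\,dx$ pairs real parts against imaginary parts. Second, and more seriously, $\Gamma$ is \emph{not} $\Omega$-orthogonal to $iQ_{\omega}$: since $\Omega(\partial_{\omega}Q_{\omega},iQ_{\omega})=-(Q_{\omega},\partial_{\omega}Q_{\omega})_{L^{2}}\neq 0$ by Proposition \ref{thm-lu} (iii), one gets $(\Gamma_{1},Q_{\omega})_{L^{2}}=b(t)\,(Q_{\omega},\partial_{\omega}Q_{\omega})_{L^{2}}$, which is nonzero whenever $b(t)\neq 0$. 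Your claim that $a\equiv 0$ forces $(Q_{\omega},\Gamma_{1})_{L^{2}}=0$ is also incorrect: $a\equiv 0$ is the condition $\Omega(\eta,\partial_{\omega}Q_{\omega})\equiv 0$, which constrains the \emph{imaginary} part (it yields $(\Gamma_{2},\partial_{\omega}Q_{\omega})_{L^{2}}=0$), not the real part against $Q_{\omega}$. This matters because positivity of $L_{\omega,+}$ is normally available only on $\{Q_{\omega}\}^{\perp}$, where $\Gamma_{1}$ does not lie. The repair is to keep the finer structure $\Gamma_{1}=b\,\partial_{\omega}Q_{\omega}+\gamma_{1}$ with $(\gamma_{1},Q_{\omega})_{L^{2}}=0$ and expand, using \eqref{eq-dec6}, $(L_{\omega,+}\Gamma_{1},\Gamma_{1})_{L^{2}}=-b^{2}(Q_{\omega},\partial_{\omega}Q_{\omega})_{L^{2}}+(L_{\omega,+}\gamma_{1},\gamma_{1})_{L^{2}}$ (the cross term $2b(L_{\omega,+}\partial_{\omega}Q_{\omega},\gamma_{1})_{L^{2}}=-2b(Q_{\omega},\gamma_{1})_{L^{2}}$ vanishes); the first term is \emph{positive} precisely by the slope condition, and the second is coercive on $\{Q_{\omega}\}^{\perp}\cap\{\mathcal{Y}_{\omega,2}\}^{\perp}$. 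As written, your sketch assumes an orthogonality that fails and thereby skips the step where the slope condition actually enters the quadratic form.
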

We also recall that 
\begin{equation}\label{eq-energy3}
\begin{split}
\mathcal{E}(\psi)-\mathcal{E}(Q_{\omega})
= - e_{\omega} \lambda_{+}(t)
\lambda_{-}(t)
+
\frac{1}{2}
\langle 
\mathscr{L}_{\omega}\Gamma(t), 
\Gamma(t) \rangle_{H^{-1},H^{1}}
+ O(\, \|\eta(t)\|_{H^{1}}^{3} \,).
\end{split}
\end{equation}
See \cite[(4.69)]{AIKN2021}.
Defining the linearized energy norm 
$\|\eta(t)\|_{E}$ by 
\begin{equation}\label{eq-energy4}
\|\eta(t) \|_{E}^{2}
:=
\frac{e_{\omega}}{2} 
\big( \lambda_{+}^{2}(t) + 
\lambda_{-}^{2}(t)\big)
+
\frac{1}{2}
\langle 
\mathscr{L}_{\omega}\Gamma(t), 
\Gamma(t) \rangle_{H^{-1},H^{1}}, 
\end{equation}
we have, by \eqref{eq-energy3}, that 
\begin{equation}\label{eq-energy5}
\mathcal{E}(\psi)-\mathcal{E}(Q_{\omega})
+
\frac{e_{\omega}}{2} 
\big(\lambda_{+}(t) 
+ \lambda_{-}(t)\big)^{2}
-
\|\eta(t)\|_{E}^{2}
=O( \, \|\eta(t)\|_{H^{1}}^{3} \,).
\end{equation}
Then, we can also have the following lemma: 
\begin{lemma}[Lemma 4.0.3 of \cite{AIKN2021}]
\label{lem-energy2}
The function $\Gamma$ in the decomposition \eqref{eq-energy1} satisfies
\begin{equation}\label{eq-energy6}
\|\Gamma(t)\|_{H^{1}}
\lesssim 
\|\eta(t) \|_{H^{1}}
\end{equation} 
for all $t\in I_{\max}$. Moreover, we have 
\begin{equation}\label{eq-energy7}
\|\eta(t) \|_{H^{1}}
\sim
\|\eta(t) \|_{E}.
\end{equation}
\end{lemma}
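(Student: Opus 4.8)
The plan is to derive both estimates from the structure of the decomposition \eqref{eq-energy1}, the explicit formulas \eqref{eq-dec15} for $\lambda_{\pm}$, and the coercivity of Lemma \ref{lem-energy1}; throughout, $\omega$ is fixed and small, and all implied constants may depend on it. As a preliminary I would record that $\mathcal{Y}_{\omega, \pm}$ lie in $H^{1}(\R^{3})$ (being eigenfunctions of $-i\mathscr{L}_{\omega}$ for the isolated eigenvalues $\pm e_{\omega}$, by Theorem \ref{thm2-1}), as does $\partial_{\omega}Q_{\omega}$ (Proposition \ref{thm-lu}~(ii)); in particular $\|\mathcal{Y}_{\omega, \pm}\|_{H^{1}}$ is a finite, $\omega$-dependent constant.

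First I would prove \eqref{eq-energy6}. Since $a(t) \equiv 0$, the decomposition \eqref{eq-energy1} gives
\[
\Gamma(t) = \eta(t) - \lambda_{+}(t)\mathcal{Y}_{\omega, +} - \lambda_{-}(t)\mathcal{Y}_{\omega, -}.
\]
From \eqref{eq-dec15} and the Cauchy--Schwarz bound $|\Omega(f, g)| \le \|f\|_{L^{2}}\|g\|_{L^{2}}$ one gets $|\lambda_{\pm}(t)| \lesssim \|\eta(t)\|_{L^{2}} \le \|\eta(t)\|_{H^{1}}$, and the triangle inequality applied to the display above then yields $\|\Gamma(t)\|_{H^{1}} \lesssim \|\eta(t)\|_{H^{1}}$.

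Next I would prove \eqref{eq-energy7}. By the definition \eqref{eq-energy4} of $\|\eta(t)\|_{E}$ and Lemma \ref{lem-energy1},
\[
\|\eta(t)\|_{E}^{2}
= \frac{e_{\omega}}{2}\big(\lambda_{+}^{2}(t) + \lambda_{-}^{2}(t)\big)
+ \frac{1}{2}\langle \mathscr{L}_{\omega}\Gamma(t), \Gamma(t)\rangle_{H^{-1}, H^{1}}
\sim \lambda_{+}^{2}(t) + \lambda_{-}^{2}(t) + \|\Gamma(t)\|_{H^{1}}^{2}.
\]
The bounds $|\lambda_{\pm}(t)| \lesssim \|\eta(t)\|_{H^{1}}$ and \eqref{eq-energy6} give $\lambda_{+}^{2}(t) + \lambda_{-}^{2}(t) + \|\Gamma(t)\|_{H^{1}}^{2} \lesssim \|\eta(t)\|_{H^{1}}^{2}$; conversely, the triangle inequality applied to \eqref{eq-energy1} gives $\|\eta(t)\|_{H^{1}} \lesssim |\lambda_{+}(t)| + |\lambda_{-}(t)| + \|\Gamma(t)\|_{H^{1}}$, hence $\|\eta(t)\|_{H^{1}}^{2} \lesssim \lambda_{+}^{2}(t) + \lambda_{-}^{2}(t) + \|\Gamma(t)\|_{H^{1}}^{2}$. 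Combining the two displays gives $\|\eta(t)\|_{H^{1}}^{2} \sim \lambda_{+}^{2}(t) + \lambda_{-}^{2}(t) + \|\Gamma(t)\|_{H^{1}}^{2} \sim \|\eta(t)\|_{E}^{2}$.

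I do not expect a genuine obstacle: all the analytic content is already packaged in Lemma \ref{lem-energy1}, namely the coercivity $\langle \mathscr{L}_{\omega}\Gamma, \Gamma\rangle \sim \|\Gamma\|_{H^{1}}^{2}$ on the subspace defined by the symplectic orthogonality conditions \eqref{eq-dec14}, and the rest is bookkeeping with the explicit coefficients \eqref{eq-dec15}. The only point needing a little care is the preliminary step that $\mathcal{Y}_{\omega, \pm}$ and $\partial_{\omega}Q_{\omega}$ lie in $H^{1}$, so that the $\lambda_{\pm}$ are controlled by $\|\eta\|_{L^{2}}$ and the pieces of \eqref{eq-energy1} may be recombined freely. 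Were one to reprove Lemma \ref{lem-energy1} from scratch, the real work would lie in the spectral analysis of $\mathscr{L}_{\omega}$ --- isolating its kernel and the $\pm e_{\omega}$-eigenspaces (Theorem \ref{thm2-1}~(ii)), establishing positivity of $\langle \mathscr{L}_{\omega}\cdot, \cdot\rangle$ on the remaining invariant subspace, and upgrading this to the lower bound $\langle \mathscr{L}_{\omega}u, u\rangle + C\|u\|_{L^{2}}^{2} \gtrsim \|u\|_{H^{1}}^{2}$, which relies on $\omega > 0$ (Theorem \ref{thm2-1}~(i)).
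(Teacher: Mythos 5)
Your argument is correct, and since the paper itself states this lemma without proof (quoting Lemma 4.0.3 of \cite{AIKN2021}), your reconstruction is exactly the expected one: all the substance sits in the coercivity $\langle \mathscr{L}_{\omega}\Gamma,\Gamma\rangle_{H^{-1},H^{1}}\sim\|\Gamma\|_{H^{1}}^{2}$ of Lemma \ref{lem-energy1}, and the rest is the bookkeeping you carry out with \eqref{eq-dec15}, the bound $|\lambda_{\pm}|\lesssim\|\eta\|_{L^{2}}$, and the triangle inequality in both directions. No gaps.
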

As a summary, 
we obtain the following: 
\begin{proposition}[Proposition 4.0.4 of 
\cite{AIKN2021}]\label{prop-dist1} 
Let $\psi$ be a function in $H^{1}(\R^{3})$ satisfying 
$\mathcal{M}(\psi)= \mathcal{M}(Q_{\omega})$. 
We have the following the decomposition:  
\begin{align}
&\psi(t, x) = 
e^{i\theta(t)} (Q_{\omega} + 
\eta(t, x) ),
\quad 
\Omega(e^{-i\theta(t)} \psi, \partial_{\omega}Q_{\omega})\equiv 0,
\quad 
(e^{-i\theta(t)}\psi, \partial_{\omega}Q_{\omega})_{L_{\text{real}}^{2}}<0, 
\notag
\\[6pt] 
& 
\eta(t, x)
= 
\lambda_{+}(t) \mathcal{Y}_{\omega, +} + 
\lambda_{-}(t) \mathcal{Y}_{\omega, -} 
+\Gamma(t).
\label{eq-energy11}
\end{align}
Furthermore, 
There exists a constant $\delta_{E}(\omega) >0$ such that 
if  
$\|\eta(t)\|_{E}
\le 
4\delta_{E}(\omega)$, we obtain   
\begin{equation*}
\Bigm| 
\mathcal{E}(\psi)-\mathcal{E}(Q_{\omega})
+
\frac{e_{\omega}}{2} \big(\lambda_{+}(t) 
+ 
\lambda_{-}(t) \big)^{2}
-
\|\eta(t)\|_{E}^{2}
\Bigm| \le 
\frac{\|\eta(t)\|_{E}^{2}}{10}. 
\end{equation*}
\end{proposition}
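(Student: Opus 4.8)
The plan is to assemble the ingredients already prepared in Section~3. For the decomposition, I would run the standard modulation argument on a small $H^{1}$-tube around the orbit $\mathscr{O}(Q_{\omega})$: apply the implicit function theorem to the map $F(\psi,\theta):=\Omega(e^{-i\theta}\psi,\partial_{\omega}Q_{\omega})$, which vanishes at $(Q_{\omega},0)$ because $\Omega(Q_{\omega},\partial_{\omega}Q_{\omega})=0$ (both functions being real), and whose partial derivative in $\theta$ at $(Q_{\omega},0)$ equals $\Omega(-iQ_{\omega},\partial_{\omega}Q_{\omega})=-(Q_{\omega},\partial_{\omega}Q_{\omega})_{L^{2}_{\text{real}}}\neq0$ by Proposition~\ref{thm-lu}(iii). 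This produces $\theta(t)$ satisfying \eqref{eq-dec18} together with a small remainder $\eta(t):=e^{-i\theta(t)}\psi(t)-Q_{\omega}$. The sign condition $(e^{-i\theta(t)}\psi(t),\partial_{\omega}Q_{\omega})_{L^{2}_{\text{real}}}<0$ is exactly \eqref{eq-dec19}: writing the left side as $(Q_{\omega},\partial_{\omega}Q_{\omega})_{L^{2}_{\text{real}}}+(\eta(t),\partial_{\omega}Q_{\omega})_{L^{2}_{\text{real}}}$, the mass identity \eqref{eq-uni2-2} (which encodes $\mathcal{M}(\psi)=\mathcal{M}(Q_{\omega})$) controls the second term, and $(Q_{\omega},\partial_{\omega}Q_{\omega})_{L^{2}_{\text{real}}}<0$ then forces the sum to be negative once $\mathcal{M}(\eta)$ is small. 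Finally, expanding $\eta$ along the discrete modes of $-i\mathscr{L}_{\omega}$ as in \eqref{eq-dec13} and noting that the choice \eqref{eq-dec18} of $\theta$ forces $a(t)\equiv0$ (see \eqref{eq-dec20}), the expansion reduces to $\eta(t)=\lambda_{+}(t)\mathcal{Y}_{\omega,+}+\lambda_{-}(t)\mathcal{Y}_{\omega,-}+\Gamma(t)$ with $\Gamma$ symplectically orthogonal as in \eqref{eq-dec14}; this is the asserted decomposition \eqref{eq-energy11}.

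For the quantitative bound, I would start from \eqref{eq-energy5}, which is obtained by inserting the quadratic energy expansion \eqref{eq-energy3} into the definition \eqref{eq-energy4} of the linearized energy norm and which yields a constant $C_{\omega}>0$ with
\[
\Big| \mathcal{E}(\psi)-\mathcal{E}(Q_{\omega})+\frac{e_{\omega}}{2}\big(\lambda_{+}(t)+\lambda_{-}(t)\big)^{2}-\|\eta(t)\|_{E}^{2} \Big| \le C_{\omega}\,\|\eta(t)\|_{H^{1}}^{3}.
\]
By Lemma~\ref{lem-energy2} there is $D_{\omega}>0$ with $\|\eta(t)\|_{H^{1}}\le D_{\omega}\|\eta(t)\|_{E}$, so the right side is at most $C_{\omega}D_{\omega}^{3}\,\|\eta(t)\|_{E}\cdot\|\eta(t)\|_{E}^{2}$. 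Under the hypothesis $\|\eta(t)\|_{E}\le4\delta_{E}(\omega)$ this is bounded by $4C_{\omega}D_{\omega}^{3}\,\delta_{E}(\omega)\,\|\eta(t)\|_{E}^{2}$. Hence it suffices to fix $\delta_{E}(\omega)>0$ small enough that $4C_{\omega}D_{\omega}^{3}\,\delta_{E}(\omega)\le\frac{1}{10}$ and that the tube $\{\|\eta\|_{E}\le4\delta_{E}(\omega)\}$ lies inside the neighborhood where the modulation construction and \eqref{eq-energy5} are valid; this gives the claimed bound $\|\eta(t)\|_{E}^{2}/10$.

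I expect no genuinely hard step here, since every delicate fact has already been quoted from \cite{AIKN2021}: the spectral picture for $-i\mathscr{L}_{\omega}$ (Theorem~\ref{thm2-1}), the coercivity $\langle\mathscr{L}_{\omega}\Gamma,\Gamma\rangle_{H^{-1},H^{1}}\sim\|\Gamma\|_{H^{1}}^{2}$ on the symplectically orthogonal subspace (Lemma~\ref{lem-energy1}), the norm equivalence $\|\eta\|_{H^{1}}\sim\|\eta\|_{E}$ (Lemma~\ref{lem-energy2}), and the quadratic energy expansion \eqref{eq-energy3} together with its cubic remainder. The only point calling for a little care is that all the constants ($e_{\omega}$, the implicit constants in the various $\sim$, $C_{\omega}$, $D_{\omega}$) depend on $\omega$, so $\delta_{E}(\omega)$ must be taken as an $\omega$-dependent threshold; it is strictly positive throughout the low-frequency range $(0,\omega_{3})$ because each of these constants is finite and positive there.
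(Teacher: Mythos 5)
Your proposal is correct and follows essentially the same route as the paper, which states this proposition as a summary of the symplectic decomposition already assembled in Section~3 (the choice of $\theta$ via \eqref{eq-dec18}, the vanishing of $a(t)$ from \eqref{eq-dec20}, the sign condition \eqref{eq-dec19} from the mass identity \eqref{eq-uni2-2}, and the bound from \eqref{eq-energy5} combined with Lemma~\ref{lem-energy2} and a smallness choice of $\delta_{E}(\omega)$), deferring details to \cite{AIKN2021}. The only cosmetic difference is that you construct $\theta(t)$ by the implicit function theorem using the nondegeneracy $(Q_{\omega},\partial_{\omega}Q_{\omega})_{L^{2}_{\text{real}}}\neq 0$, whereas the paper's footnote indicates a direct choice of the phase; your version is the standard rigorous formulation of the same modulation step.
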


\subsection{Distance function from the ground state}\label{sec-dist}
In this subsection, 
we introduce a distance function from the ground state $Q_{\omega}$ 
by using the linearized energy norm \eqref{eq-energy4}. 
For this, we 
fix a non-increasing smooth function $\chi$ on $[0,\infty)$ such that 
\begin{equation*}
\chi(r)=\left\{ \begin{array}{ccc}
1 &\mbox{if}& 0\le r\le 1,
\\ 
0 &\mbox{if}& r\ge 2. 
\end{array} \right.
\end{equation*}Then, we define a function 
$d_{\omega}\colon 
[0, T_{\max})
\to [0,\infty)$ by 
\begin{equation}\label{eq-dist3}
d_{\omega}^{2}(t)
:=
\|\eta(t)\|_{E}^{2}
+
\chi\Bigm( \frac{\|\eta(t)\|_{E}}{2\delta_{E}(\omega)}
\Bigm)
C_{\omega}(\psi(t)), 
\end{equation}
where $\delta_{E}(\omega)$ is the constant given by Proposition \ref{prop-dist1}, and 
\begin{equation*}
C_{\omega}(\psi(t))
:=\mathcal{E}(\psi) - 
\mathcal{E}(Q_{\omega})
+
\frac{e_{\omega}}{2} 
\big( \lambda_{+}(t) + 
\lambda_{-}(t) \big)^{2} -
\|\eta(t)\|_{E}^{2}.
\end{equation*}
Now, we introduce new parameters 
$\lambda_{1}(t)$ and $\lambda_{2}(t)$ defined by 
\begin{equation}\label{eq-dist6}
\lambda_{1}(t):=
\frac{\lambda_{+}(t)+\lambda_{-}(t)
}{2},
\qquad 
\lambda_{2}(t) 
:=\frac{\lambda_{+}(t) - 
\lambda_{-}(t)}{2}.
\end{equation}
It follows from \eqref{eq-mod5} and \eqref{eq-mod6} that 
\begin{align}
\label{eq-dist7}
\frac{d\lambda_{1}}{dt}(t)
&=
e_{\omega} \lambda_{2}(t)
+
( \Big\{ \frac{d\theta}{dt}(t)-\omega \Big\}\eta(t)
-N_{\omega}(\eta(t)), \, 
i \mathcal{Y}_{\omega, 2})_{L_{\text{real}}^{2}},
\\[6pt]
\label{eq-dist8}
\frac{d\lambda_{2}}{dt}(t)
&=
e_{\omega} \lambda_{1}(t)
-
( \Big\{ \frac{d\theta}{dt}(t)-\omega \Big\}\eta(t)
-N_{\omega}(\eta(t)), \, 
\mathcal{Y}_{\omega, 1} )_{L_{\text{real}}^{2}}.
\end{align}
We recall a property of the distance function $d_{\omega}(t)$:
\begin{lemma}[Lemma 4.0.5 of \cite{AIKN2021}]
\label{lem-dist1}
Assume that there exists an interval $I$ on which 
\begin{equation}
\label{eq-dist9}
\sup_{t\in I}d_{\omega}(t) 
\le \delta_{E}(\omega), 
\end{equation}
$\delta_{E}(\omega)$ is the constant given 
by Proposition \ref{prop-dist1}. 
Then, all of the following hold for all $t \in I$: 
\begin{align}
\label{eq-dist10}
&\frac{1}{2}\|\eta(t)\|_{E}^{2} \le d_{\omega}^{2}(t) 
\le \frac{3}{2}\|\eta(t)\|_{E}^{2},
\\[6pt]
&d_{\omega}^{2}(t)
= 
\mathcal{E}(\psi)-\mathcal{E}(Q_{\omega})
+
2e_{\omega} \lambda_{1}^{2}(t),
\label{eq-dist11}
\\[6pt]
\label{eq-dist12}
&\frac{d}{dt}d_{\omega}^{2}(t)
=
4e_{\omega}^{2} \lambda_{1}(t) \lambda_{2}(t) 
+ 4e_{\omega} \lambda_{1}(t) 
\big( \Big\{ \frac{d \theta}{d t}(t)-\omega \Big\}\eta(t) 
-N_{\omega}(\eta(t)), 
i \mathcal{Y}_{\omega, 2} \big)_{L_{\text{real}}^{2}}.
\end{align}
\end{lemma}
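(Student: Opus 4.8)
The plan is to derive the three assertions \eqref{eq-dist10}--\eqref{eq-dist12} in order, exploiting the hypothesis \eqref{eq-dist9} together with the identities collected in Sections 3.1--3.2. First I would establish \eqref{eq-dist10}. The point is that on $I$ the cutoff $\chi(\|\eta(t)\|_E/(2\delta_E(\omega)))$ multiplying $C_\omega(\psi(t))$ in the definition \eqref{eq-dist3} is supported where $\|\eta(t)\|_E \le 4\delta_E(\omega)$, so Proposition \ref{prop-dist1} applies and gives $|C_\omega(\psi(t))| \le \|\eta(t)\|_E^2/10$. Since $0\le \chi\le 1$, this yields $\|\eta(t)\|_E^2 - \tfrac{1}{10}\|\eta(t)\|_E^2 \le d_\omega^2(t) \le \|\eta(t)\|_E^2 + \tfrac{1}{10}\|\eta(t)\|_E^2$, which is slightly stronger than the claimed two-sided bound with constants $\tfrac12$ and $\tfrac32$; one only has to check that the hypothesis $d_\omega(t)\le\delta_E(\omega)$ (rather than $\|\eta(t)\|_E$ being small a priori) is enough to enter the regime where the cutoff argument closes, which follows by a short bootstrap from \eqref{eq-energy7}.

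Next I would prove \eqref{eq-dist11}. Here the observation is that once $d_\omega(t)\le\delta_E(\omega)$, the argument of $\chi$ is actually $\le 1$ (again by \eqref{eq-dist10} and the smallness of $\delta_E(\omega)$), so $\chi \equiv 1$ on $I$ and hence $d_\omega^2(t) = \|\eta(t)\|_E^2 + C_\omega(\psi(t))$. Plugging in the definitions of $\|\eta(t)\|_E^2$ from \eqref{eq-energy4} and of $C_\omega(\psi(t))$, the term $\|\eta(t)\|_E^2$ and the term $-\|\eta(t)\|_E^2$ inside $C_\omega$ cancel, leaving $d_\omega^2(t) = \mathcal{E}(\psi)-\mathcal{E}(Q_\omega) + \tfrac{e_\omega}{2}(\lambda_+(t)+\lambda_-(t))^2$. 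Recalling $\lambda_1 = (\lambda_+ + \lambda_-)/2$ from \eqref{eq-dist6}, we have $\tfrac{e_\omega}{2}(\lambda_++\lambda_-)^2 = 2e_\omega\lambda_1^2$, which gives \eqref{eq-dist11}.

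Finally \eqref{eq-dist12} follows by differentiating \eqref{eq-dist11} in $t$: since $\mathcal{E}(\psi)$ is conserved, $\tfrac{d}{dt}d_\omega^2(t) = 4e_\omega\lambda_1(t)\tfrac{d\lambda_1}{dt}(t)$, and substituting the evolution equation \eqref{eq-dist7} for $\lambda_1$ produces exactly the two terms $4e_\omega^2\lambda_1\lambda_2$ and $4e_\omega\lambda_1\big(\{\tfrac{d\theta}{dt}-\omega\}\eta - N_\omega(\eta), i\mathcal{Y}_{\omega,2}\big)_{L^2_{\text{real}}}$ appearing in the statement. The only mild subtlety is justifying that $d_\omega^2$ is differentiable along the flow: this is inherited from the $C^1$ dependence of $\theta$, $\lambda_\pm$, $\Gamma$ on $t$ (which comes from the modulation ODEs \eqref{eq-mod4}--\eqref{eq-mod6}) and the fact that $\chi\equiv 1$ there, so no derivative of $\chi$ ever enters. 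The main obstacle, such as it is, is the bootstrap in the first step: one must be careful that $d_\omega(t)\le\delta_E(\omega)$ genuinely forces $\|\eta(t)\|_E$ into the range $\le 2\delta_E(\omega)$ where $\chi\equiv 1$ and Proposition \ref{prop-dist1} is available — everything else is bookkeeping with the already-established identities.
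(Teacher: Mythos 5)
Your proof is correct. The paper does not actually prove this lemma --- it imports it verbatim as Lemma 4.0.5 of \cite{AIKN2021} --- but your argument is the natural one and closes without any real circularity: \eqref{eq-dist10} follows from a case split on whether the cutoff vanishes (if $\|\eta(t)\|_{E}\ge 4\delta_{E}(\omega)$ then $d_{\omega}^{2}=\|\eta\|_{E}^{2}$ outright, otherwise Proposition \ref{prop-dist1} bounds $|C_{\omega}|$ by $\|\eta\|_{E}^{2}/10$), after which \eqref{eq-dist9} and \eqref{eq-dist10} force $\|\eta(t)\|_{E}\le\sqrt{2}\,\delta_{E}(\omega)<2\delta_{E}(\omega)$ so that $\chi\equiv 1$, giving \eqref{eq-dist11} by the cancellation of $\pm\|\eta\|_{E}^{2}$ and $(\lambda_{+}+\lambda_{-})^{2}=4\lambda_{1}^{2}$, and \eqref{eq-dist12} by differentiating with energy conservation and \eqref{eq-dist7}.
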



Next, we introduce a ``modified distance function'' $\widetilde{d}_{\omega}$. 

\begin{lemma}[Lemma 6.0.1 of 
\cite{AIKN2021}]\label{lem-mdist1}
Let $\omega_{2}$ be the frequency given 
by Theorem \ref{thm2-1}. 
Then, for any $\omega \in (0, \omega_{2})$, 
there exists $\gamma_{1}(\omega)>0$ with the following property: 
let $\psi$ be a solution to \eqref{nls} 
satisfying $\mathcal{M}(\psi)= \mathcal{M}(Q_{\omega})$. 
If ${\rm dist}_{H^{1}}
\big(\psi(t), \mathscr{O}(Q_{\omega})\big) 
\le \gamma_{1}(\omega)$, then 
\begin{equation}\label{eq-mdist1}
{\rm dist}_{H^{1}}\big(\psi(t), \mathscr{O}(Q_{\omega})\big)
\sim 
\|\eta(t)\|_{E}. 
\end{equation}
\end{lemma}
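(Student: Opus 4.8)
The statement asserts that once a solution $\psi$ with $\mathcal{M}(\psi)=\mathcal{M}(Q_\omega)$ is close to the orbit $\mathscr{O}(Q_\omega)$ in $H^1$, the distance to the orbit is comparable to the linearized energy norm $\|\eta(t)\|_E$ of the remainder in the symplectic decomposition. The natural plan is to prove the two inequalities separately and to exploit the fact that the parameter $\theta(t)$ in the decomposition \eqref{eq-dec1} is chosen by the orthogonality condition \eqref{eq-dec18}, which is (up to the mass constraint) essentially the condition that minimizes $\|e^{-i\theta}\psi-Q_\omega\|$ over phases. First I would fix $\omega\in(0,\omega_2)$ and assume $\mathrm{dist}_{H^1}(\psi(t),\mathscr{O}(Q_\omega))\le\gamma_1(\omega)$ for a small $\gamma_1(\omega)$ to be specified; by a standard implicit function / modulation argument this guarantees that the decomposition \eqref{eq-dec1}--\eqref{eq-dec18} is available with $\|\eta(t)\|_{H^1}$ small, in particular small enough that Proposition \ref{prop-dist1} and Lemma \ref{lem-energy2} apply.

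For the inequality $\mathrm{dist}_{H^1}(\psi(t),\mathscr{O}(Q_\omega))\lesssim\|\eta(t)\|_E$: by definition of the infimum,
\[
\mathrm{dist}_{H^1}(\psi(t),\mathscr{O}(Q_\omega))
\le \|\psi(t)-e^{i\theta(t)}Q_\omega\|_{H^1}
= \|e^{-i\theta(t)}\psi(t)-Q_\omega\|_{H^1}
= \|\eta(t)\|_{H^1},
\]
using the specific $\theta(t)$ from \eqref{eq-dec18}; then \eqref{eq-energy7} of Lemma \ref{lem-energy2} gives $\|\eta(t)\|_{H^1}\sim\|\eta(t)\|_E$, which closes this direction.

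For the reverse inequality $\|\eta(t)\|_E\lesssim\mathrm{dist}_{H^1}(\psi(t),\mathscr{O}(Q_\omega))$, the point is that the particular phase $\theta(t)$ chosen by \eqref{eq-dec18} is, up to a controlled error coming from the mass constraint \eqref{eq-uni2-2}, an almost-minimizing phase. Let $\theta_*$ be a phase achieving (or nearly achieving) the infimum and write $e^{-i\theta_*}\psi(t)=Q_\omega+\widetilde\eta(t)$ with $\|\widetilde\eta(t)\|_{H^1}=\mathrm{dist}_{H^1}(\psi(t),\mathscr{O}(Q_\omega))+o(\cdot)$. Comparing the two decompositions, $\eta(t)=e^{i(\theta_*-\theta(t))}(Q_\omega+\widetilde\eta(t))-Q_\omega$; expanding in the small angle $\theta_*-\theta(t)$, one gets $\eta(t)=\widetilde\eta(t)+i(\theta_*-\theta(t))Q_\omega+(\text{higher order})$. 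The modulation choice forces the $\partial_\omega Q_\omega$-component of $\eta$ (hence, via \eqref{eq-dec16}, the symplectic pairing $\Omega(\eta,\partial_\omega Q_\omega)$) to vanish, and pairing the expansion with $\partial_\omega Q_\omega$ together with $\Omega(iQ_\omega,\partial_\omega Q_\omega)=(Q_\omega,\partial_\omega Q_\omega)_{L^2_{\mathrm{real}}}\ne 0$ (Proposition \ref{thm-lu}(iii)) pins down $|\theta_*-\theta(t)|\lesssim\|\widetilde\eta(t)\|_{H^1}$. Therefore $\|\eta(t)\|_{H^1}\lesssim\|\widetilde\eta(t)\|_{H^1}=\mathrm{dist}_{H^1}(\psi(t),\mathscr{O}(Q_\omega))$, and one last application of \eqref{eq-energy7} converts the left side to $\|\eta(t)\|_E$.

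The main obstacle is the reverse inequality, specifically verifying that the modulation-adapted phase $\theta(t)$ does not differ from a true minimizing phase by more than $O(\|\widetilde\eta\|_{H^1})$; this requires the transversality input $(Q_\omega,\partial_\omega Q_\omega)_{L^2_{\mathrm{real}}}<0$ from Proposition \ref{thm-lu}(iii), which is precisely what makes the orthogonality condition \eqref{eq-dec18} non-degenerate, plus care that the mass constraint's contribution $(Q_\omega,\eta)_{L^2_{\mathrm{real}}}=-\mathcal{M}(\eta)$ is genuinely quadratically small and hence harmless. Everything else is the standard equivalence of norms already recorded in Section \ref{sec-energy}.
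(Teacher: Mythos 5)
The paper does not actually prove this lemma: it is imported verbatim as Lemma 6.0.1 of \cite{AIKN2021}, so there is no in-paper argument to compare against. Your proposal supplies the standard modulation proof one would expect, and it is essentially correct: the inequality $\mathrm{dist}_{H^{1}}\lesssim\|\eta\|_{E}$ is immediate from taking $\theta=\theta(t)$ in the infimum plus \eqref{eq-energy7}, and the reverse inequality follows by comparing $\theta(t)$ with a near-minimizing phase $\theta_{*}$ and using the orthogonality \eqref{eq-dec18} together with $\Omega(iQ_{\omega},\partial_{\omega}Q_{\omega})=(Q_{\omega},\partial_{\omega}Q_{\omega})_{L^{2}_{\mathrm{real}}}\neq 0$ to get $|\theta_{*}-\theta(t)|\lesssim\|\widetilde{\eta}(t)\|_{H^{1}}$. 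The one detail you should make explicit is why the angle $\alpha:=\theta_{*}-\theta(t)$ is small modulo $2\pi$ before you linearize: the orthogonality alone gives only $|\sin\alpha|\lesssim\|\widetilde{\eta}\|_{L^{2}}$, which is equally compatible with $\alpha\approx\pi$, and on that branch $\|\eta\|_{H^{1}}\approx 2\|Q_{\omega}\|_{H^{1}}$ and the reverse inequality fails. What rules this out is the sign normalization \eqref{eq-dec19}, i.e.\ $(e^{-i\theta(t)}\psi,\partial_{\omega}Q_{\omega})_{L^{2}_{\mathrm{real}}}<0$, which forces $\cos\alpha>0$ up to an $O(\|\widetilde{\eta}\|)$ error since $(Q_{\omega},\partial_{\omega}Q_{\omega})_{L^{2}_{\mathrm{real}}}<0$; this condition is part of the decomposition package in Proposition \ref{prop-dist1} and should be invoked explicitly alongside \eqref{eq-dec18}. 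With that addition, and the remark that the mass constraint only contributes the quadratically small term \eqref{eq-uni2-2}, your argument is complete and is the natural proof of the quoted lemma.
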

Then, we can obtain  
the following proposition from Lemma \ref{lem-mdist1}:
\begin{proposition}[Proposition 6.0.2 of 
\cite{AIKN2021}]\label{prop-mdist1}
Let $\omega_{2}$ be the frequency given 
by Theorem \ref{thm2-1}. Then, for any $\omega \in (0, \omega_{2})$, there exist a constant $\widetilde{\gamma}(\omega)\in (0, \delta_{E}(\omega))$~\footnote{
we recall that 
$\delta_{E}(\omega)$ denotes the constant given by Proposition \ref{prop-dist1}} and a continuous function $\widetilde{d}_{\omega} \colon [0, T_{\max}) \to [0,\infty)$ 
such that: 
\begin{equation*}
\widetilde{d}_{\omega}(t)
\sim 
{\rm dist}_{H^{1}}\big(\psi(t), \mathscr{O}(Q_{\omega})\big)
\end{equation*}
and if $\widetilde{d}_{\omega}(t)\le \widetilde{\gamma}(\omega)$, then 
\begin{equation}\label{eq-mdist8}
\widetilde{d}_{\omega}(t)= d_{\omega}(t), 
\end{equation}
where $d_{\omega}$ is the distance function defined by \eqref{eq-dist3}.
\end{proposition}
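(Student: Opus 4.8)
The plan is to obtain $\widetilde d_\omega$ by gluing the distance function $d_\omega$ of \eqref{eq-dist3} — which only makes sense when $\psi(t)$ lies in a tubular neighborhood of $\mathscr{O}(Q_\omega)$, since it is assembled from the modulation parameters $\lambda_\pm(t)$, $\Gamma(t)$, $\theta(t)$ of the symplectic decomposition — to the raw distance ${\rm dist}_{H^{1}}(\psi(t),\mathscr{O}(Q_\omega))$ far from the orbit. Throughout I write $D(t):={\rm dist}_{H^{1}}(\psi(t),\mathscr{O}(Q_\omega))$, which is continuous on $[0,T_{\max})$ because $t\mapsto\psi(t)$ is continuous into $H^{1}$.

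First I would isolate the small regime. Let $\gamma_{1}(\omega)$ be the radius furnished by Lemma \ref{lem-mdist1}. On $\{t:D(t)<\gamma_{1}(\omega)\}$ the decomposition \eqref{eq-dec1}, \eqref{eq-dec18} is available, the modulation parameters depend continuously on $t$ (by the construction of Section~\ref{sec-dec} together with continuity of $\psi$ into $H^{1}$), hence so does $d_\omega(t)$, and Lemma \ref{lem-mdist1} gives $D(t)\sim\|\eta(t)\|_{E}$ there. I then fix $\delta_{1}=\delta_{1}(\omega)\in(0,\gamma_{1}(\omega)/2)$ small enough that $D(t)<2\delta_{1}$ forces $\|\eta(t)\|_{E}\le 2\delta_{E}(\omega)$; in that range the cutoff $\chi$ in \eqref{eq-dist3} equals $1$, so $d_\omega^{2}(t)=\|\eta(t)\|_{E}^{2}+C_\omega(\psi(t))$, and Proposition \ref{prop-dist1} yields $|C_\omega(\psi(t))|\le\|\eta(t)\|_{E}^{2}/10$. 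Consequently $\tfrac{9}{10}\|\eta(t)\|_{E}^{2}\le d_\omega^{2}(t)\le\tfrac{11}{10}\|\eta(t)\|_{E}^{2}$, hence $d_\omega(t)\sim\|\eta(t)\|_{E}\sim D(t)$ uniformly on $\{D<2\delta_{1}\}$, with constants depending only on $\omega$.

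Next I would perform the glue. Fix a smooth $\kappa\colon[0,\infty)\to[0,1]$ with $\kappa\equiv1$ on $[0,\delta_{1}]$ and $\kappa\equiv0$ on $[2\delta_{1},\infty)$, set $\rho(t):=\kappa(D(t))$, and define
$$\widetilde d_\omega(t):=\rho(t)\,d_\omega(t)+\big(1-\rho(t)\big)\,D(t),$$
with the convention that the first summand is $0$ wherever $\rho(t)=0$ (there $d_\omega$ need not be defined). Since $\{\rho\neq0\}\subset\{D<2\delta_{1}\}\subset\{D<\gamma_{1}(\omega)\}$, the function $d_\omega$ is defined and continuous wherever it is actually used, and a routine check on the open cover $[0,T_{\max})=\{D<2\delta_{1}\}\cup\{D>\delta_{1}\}$ shows that $\widetilde d_\omega$ is continuous on $[0,T_{\max})$. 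On $\{\rho\neq0\}$ we have $d_\omega(t)\sim D(t)$ by the previous step, so the convex combination $\widetilde d_\omega(t)$ is $\sim D(t)$ for every $t$, with constants depending only on $\omega$. Finally, letting $C_{2}=C_{2}(\omega)$ be a constant with $D(t)\le C_{2}\widetilde d_\omega(t)$ and choosing $\widetilde\gamma(\omega):=\min\{\delta_{1}/C_{2},\ \delta_{E}(\omega)/2\}\in(0,\delta_{E}(\omega))$, the inequality $\widetilde d_\omega(t)\le\widetilde\gamma(\omega)$ forces $D(t)\le\delta_{1}$, whence $\rho(t)=\kappa(D(t))=1$ and $\widetilde d_\omega(t)=d_\omega(t)$, which is \eqref{eq-mdist8}.

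The one genuinely delicate point is the second step: establishing $d_\omega\sim D$ in the small regime requires chaining Lemma \ref{lem-mdist1} ($D\sim\|\eta\|_{E}$), the collapse of \eqref{eq-dist3} to $\|\eta\|_{E}^{2}+C_\omega(\psi)$ once $\|\eta\|_{E}$ is small enough to make $\chi\equiv1$, and the bound $|C_\omega(\psi)|\le\|\eta\|_{E}^{2}/10$ from Proposition \ref{prop-dist1} — and I must make sure the implications ``$D$ small $\Rightarrow\|\eta\|_{E}$ small $\Rightarrow d_\omega$ small'' are not circular. They are not: the $C_\omega$-bound gives $d_\omega^{2}\le\tfrac{11}{10}\|\eta\|_{E}^{2}$ directly, so smallness propagates in one direction without any fixed-point argument. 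Everything else is bookkeeping with the cutoff $\kappa$.
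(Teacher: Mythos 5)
Your construction is correct: the chain Lemma \ref{lem-mdist1} $\Rightarrow$ $D\sim\|\eta\|_{E}$, the collapse of \eqref{eq-dist3} to $\|\eta\|_{E}^{2}+C_{\omega}(\psi)$ once $\|\eta\|_{E}\le 2\delta_{E}(\omega)$, and the bound $|C_{\omega}(\psi)|\le\|\eta\|_{E}^{2}/10$ from Proposition \ref{prop-dist1} do give $d_{\omega}\sim D$ on the small regime without circularity, and the cutoff-in-$D$ gluing then yields a continuous $\widetilde{d}_{\omega}\sim D$ that coincides with $d_{\omega}$ below $\widetilde{\gamma}(\omega)$. Note that this paper supplies no proof of its own here --- the proposition is quoted verbatim from \cite{AIKN2021} --- but your gluing of the linearized-energy-based distance near $\mathscr{O}(Q_{\omega})$ to the raw $H^{1}$-distance far from it is exactly the standard construction used in that reference (and in Nakanishi--Schlag), so there is nothing further to compare.
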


\subsection{One-pass theorem}\label{sec-opass1}
We are now in a position to state the one-pass theorem: 
\begin{theorem}[One-pass theorem (Theorem 7.0.1 
of \cite{AIKN2021})]\label{thm-opass1}
Let $\omega_{3}$ be the frequency given 
by Lemma \ref{tech-lem1}.
Then, for any $\omega \in (0,\omega_{3})$, 
there exists a positive constant 
$R_{*} > 0$ such that 
for any $R \in (0, R_{*})$ 
and any radial solution $\psi$ to \eqref{nls} 
with $\psi|_{t = 0} = \psi_{0}$
satisfying 
\begin{align}
\label{eq-opass8}
\mathcal{S}_{\omega}(\psi_0) = m_{\omega}, \qquad 
\mathcal{M}(\psi_0)=\mathcal{M}(Q_{\omega}),
\qquad 
\widetilde{d}_{\omega}(\psi_0)<R,
\end{align}
we have either 
\\[6pt]
{\rm (i)} $\widetilde{d}_{\omega}(\psi(t)) 
< R+R^{\frac{3}{2}}$ 
for all $t\in [0, T_{\max}^{+}]$; or 
\\[6pt]
{\rm (ii)} there exists $t_{*}>0$ such that $\widetilde{d}_{\omega}(\psi(t))\ge R+R^{\frac{3}{2}}$ for all $t\in [t_{*}, T_{\max}^{+}]$. 
\\
Here, $T_{\max}^{+}$ denotes the maximal existence time 
of $\psi$ in the positive direction. 
\end{theorem}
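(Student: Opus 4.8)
The plan is to run a bootstrap/continuity argument on the distance function $\widetilde d_\omega$, driven by the sign-definite structure of the dynamics near $\mathscr{O}(Q_\omega)$ coming from the hyperbolic eigenvalue $\pm e_\omega$ together with the virial-type monotonicity. First I would introduce the stopping time
\[
T_R := \sup\{\, t\in[0,T_{\max}^+] \;:\; \widetilde d_\omega(\psi(s)) < R+R^{3/2}\ \text{for all}\ s\in[0,t]\,\},
\]
which is positive by $\widetilde d_\omega(\psi_0)<R$ and continuity (Proposition \ref{prop-mdist1}). If $T_R=T_{\max}^+$ we are in case (i), so assume $T_R<T_{\max}^+$. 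On $[0,T_R]$ we have $\widetilde d_\omega\le R+R^{3/2}\le\widetilde\gamma(\omega)$ for $R<R_*$ small, so by \eqref{eq-mdist8} and Lemma \ref{lem-dist1} the identities \eqref{eq-dist10}--\eqref{eq-dist12} are available, and in particular $d_\omega=\widetilde d_\omega$, $\|\eta\|_E\sim d_\omega$, and the $\lambda_\pm$-ODEs \eqref{eq-mod5}--\eqref{eq-mod6} hold with nonlinear remainders controlled by $O(\|\eta\|_{H^1}^2)=O(d_\omega^2)$, while $\{\tfrac{d\theta}{dt}-\omega\}=O(d_\omega)$ by \eqref{eq-mod4}.

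The core of the argument is a virial estimate. I would fix a radial cutoff and use an Ogawa--Tsutsumi type localized virial quantity $V_R(t)$ (this is where radial symmetry enters, as flagged in the introduction), whose derivative, thanks to the mass--energy threshold constraint $\mathcal S_\omega(\psi_0)=m_\omega$, $\mathcal M(\psi_0)=\mathcal M(Q_\omega)$, is up to small errors a negative multiple of $\mathcal K(\psi(t))$, hence — after expanding around $Q_\omega$ and using $\mathcal K(Q_\omega)=0$ — comparable to $\lambda_1(t)$ (the $\mathcal{Y}_{\omega,1}$-component, i.e. the ``unstable'' coordinate $\lambda_++\lambda_-$) plus an $O(d_\omega^2)$ error. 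Combined with the convexity coming from \eqref{eq-dist12} and \eqref{eq-dist11}, namely $\tfrac{d^2}{dt^2}d_\omega^2 \gtrsim e_\omega^2 d_\omega^2 - (\text{lower order})$ whenever $\lambda_1$ dominates, and with the sign condition \eqref{eq-ejec4} of Lemma \ref{tech-lem1} which guarantees that the bad nonlinear term $(Q_\omega^5,\mathcal{Y}_{\omega,1})$ cannot overwhelm the linear ejection rate $\tfrac{e_\omega}{2}|(Q_\omega,\mathcal{Y}_{\omega,2})|$, one gets the following trichotomy on any subinterval where $d_\omega\le\widetilde\gamma(\omega)$: either $d_\omega$ is decreasing, or $\tfrac{d}{dt}d_\omega^2>0$ and then $d_\omega$ is \emph{convex-increasing} and ejects. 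The quantitative shape of this is: once $d_\omega(t_0)=R+R^{3/2}$ with $\tfrac{d}{dt}d_\omega^2(t_0)\ge 0$, then $d_\omega$ stays $\ge R+R^{3/2}$ forever after and in fact grows, which is exactly conclusion (ii).

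So the remaining point is to show that at $t=T_R$ we are in the ejecting branch, i.e. $\tfrac{d}{dt}d_\omega^2(T_R)\ge 0$, and that this sign persists. For persistence: suppose toward contradiction there is a later time $t_1>T_R$ with $\widetilde d_\omega(\psi(t_1))<R+R^{3/2}$ again; then on $[T_R,t_1]$ the quantity $d_\omega^2$ starts and ends at (or below) the same level, so by the mean value theorem $\tfrac{d}{dt}d_\omega^2$ changes sign, but the convexity/ejection estimate above shows $\tfrac{d}{dt}d_\omega^2$ cannot return to $0$ from positive values as long as $d_\omega$ remains $\lesssim\widetilde\gamma(\omega)$ — this is the ``no return'' mechanism. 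One must also rule out a ``wandering'' scenario where $d_\omega$ oscillates just below the threshold by exploiting the virial monotonicity globally (the localized virial, being essentially monotone after the ejection, cannot be consistent with $\psi$ staying in a bounded region). The main obstacle, and the technically heaviest step, is precisely this virial estimate: making the localized-virial-versus-$\mathcal K$ comparison rigorous on the threshold set (so that the error terms are genuinely $o(d_\omega^2)$, not merely $O(d_\omega^2)$ with a bad constant), and synchronizing it with the linearized ejection estimate so that the sign condition \eqref{eq-ejec4} does the job. Since this is exactly the content of the one-pass theorem of \cite{AIKN2021} specialized to \eqref{nls}, I would invoke their construction of $\widetilde d_\omega$ and their virial functional and check only that the cubic-quintic nonlinearity does not disturb the signs — which it does not, by Lemma \ref{tech-lem1}.
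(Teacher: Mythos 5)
The paper gives no proof of this theorem: it is quoted directly as Theorem 7.0.1 of \cite{AIKN2021} (which treats exactly the equation \eqref{nls}, and in the more general regime $\mathcal{S}_{\omega}(\psi_0) < m_{\omega} + \varepsilon$), so the operative step of your proposal --- the closing invocation of that reference --- coincides with what the paper does, and your preceding sketch of the ejection-plus-localized-virial mechanism is a fair description of how \cite{AIKN2021} proves it. The only correction is that there is nothing to ``check'' about the cubic-quintic nonlinearity not disturbing the signs: the cited theorem is already stated for this very equation, and Lemma \ref{tech-lem1} is itself imported from \cite{AIKN2021}.
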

\begin{remark}
Actually, the result of Theorem 7.0.1 
of \cite{AIKN2021} is more general and we can 
treat the solutions satisfying $\mathcal{S}_{\omega}(\psi_0) < 
m_{\omega} + \varepsilon$ for sufficiently small $\varepsilon > 0$.
However, for simplicity of our presentation, we restrict ourselves 
to the threshold solutions. 
\end{remark}
\section{Analysis on $\mathcal{BA}_{\omega, -}$}
In this section, we study dynamics of 
solutions which start from $\mathcal{BA}_{\omega, -}$. 
\subsection{Convergence to the orbit of the ground state}
First, we shall show that 
if a solution with  
\eqref{nls} $\psi|_{t = 0} 
= \psi_{0} \in 
\mathcal{BA}_{\omega, -}$
exists globally in positive time direction, 
the solution converges to 
the orbit of the ground state 
exponentially as $t$ goes to infinity. 
More precisely, we obtain the following:
\begin{proposition}
\label{prop-conv-n}
Assume that $\omega \in 
(0, \omega_{3})$, 
where $\omega_{3} > 0$ is the 
constant given in Lemma 
\ref{tech-lem1}. 
Suppose that 
a solution $\psi$ to \eqref{nls} with $\psi|_{t = 0} = \psi_{0} \in 
\mathcal{BA}_{\omega, -}$ 
exists on  
$(- T_{\max}^{-}, \infty)$,  
Then, there exist constants 
$C>0$ 
and $c> 0$
such that 
\begin{equation} \label{expo-decay1}
\text{dist}_{H^{1}}(\psi(t), \mathcal{O}
(Q_{\omega})) 
\leq Ce^{- ct} 
\qquad \mbox{for all $t > 0$}. 
\end{equation}
\end{proposition}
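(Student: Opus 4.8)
\textbf{Proof plan for Proposition \ref{prop-conv-n}.}

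The plan is to run a virial/convexity argument together with the one-pass theorem. First I would observe that, since $\psi_0\in\mathcal{BA}_{\omega,-}$ and $\mathcal{BA}_{\omega,-}$ is flow-invariant (Lemma \ref{inv-}), we have $\mathcal{K}(\psi(t))<0$ and $\mathcal{S}_\omega(\psi(t))=m_\omega$, $\mathcal M(\psi(t))=\mathcal M(Q_\omega)$ for all $t$. The standard variational machinery then gives a uniform coercivity bound $\mathcal K(\psi(t))\le -\kappa$ for some $\kappa>0$ away from the orbit $\mathscr O(Q_\omega)$; near $\mathscr O(Q_\omega)$ the functional $\mathcal K$ can vanish, so the key dichotomy is between staying near the orbit and being bounded away from it. Apply Theorem \ref{thm-opass1} with a small $R$: since the solution exists globally forward, either (i) $\widetilde d_\omega(\psi(t))<R+R^{3/2}$ for all $t\ge 0$, or (ii) $\widetilde d_\omega(\psi(t))\ge R+R^{3/2}$ for all $t\ge t_*$. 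I claim case (ii) is impossible for a solution in $\mathcal{BA}_{\omega,-}$ that is global forward: if the solution stays a definite distance from the ground state orbit, then $\mathcal K(\psi(t))\le -\kappa<0$ uniformly for $t\ge t_*$, and plugging this into the virial identity \eqref{virial} (valid since radial solutions lie in the virial class, or via a truncated/localized virial in the radial setting) forces $\int|x|^2|\psi|^2\,dx$ to become negative in finite time, contradicting global existence. Hence case (i) holds: $\widetilde d_\omega(\psi(t))<R+R^{3/2}$, i.e. $\psi(t)$ stays in a small neighborhood of $\mathscr O(Q_\omega)$ for all $t\ge 0$.

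Once we are permanently in the small-distance regime, I would upgrade ``stays close'' to ``converges exponentially.'' Here we use the modulation/spectral structure from Section 3. Since $\widetilde d_\omega(\psi(t))\le\widetilde\gamma(\omega)$, Proposition \ref{prop-mdist1} gives $\widetilde d_\omega(t)=d_\omega(t)$ and the conclusions of Lemma \ref{lem-dist1} apply: in particular $d_\omega^2(t)=\mathcal E(\psi)-\mathcal E(Q_\omega)+2e_\omega\lambda_1^2(t)$ and $\frac{d}{dt}d_\omega^2(t)=4e_\omega^2\lambda_1\lambda_2+(\text{cubic error})$. Because $\psi_0\in\mathcal{BA}_{\omega,-}$, one has $\mathcal E(\psi)=\mathcal E(Q_\omega)$ (from $\mathcal S_\omega(\psi_0)=m_\omega$ and $\mathcal M(\psi_0)=\mathcal M(Q_\omega)$), so $d_\omega^2(t)=2e_\omega\lambda_1^2(t)$, i.e. $d_\omega(t)\sim|\lambda_1(t)|$. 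The sign $\mathcal K(\psi_0)<0$ together with the monotonicity structure of $d_\omega^2$ (a convexity-type estimate: $d_\omega$ is monotone while it is small, since the $\lambda_1\lambda_2$ term controls the derivative and $\lambda_2$ is slaved to $\lambda_1$ up to small errors) pins down which ``side'' of the ejection dynamics we are on, and rules out the ejecting mode $\lambda_+$ growing: if it did, $d_\omega$ would have to leave the neighborhood, contradicting case (i). Therefore $\lambda_+(t)$ is controlled, and the linearized equations \eqref{eq-mod5}--\eqref{eq-mod6} show $\lambda_-(t)$ decays at rate $e^{-e_\omega t}$ modulo the superlinear terms; feeding this back gives $\|\eta(t)\|_E\lesssim e^{-ct}$ for some $c\in(0,e_\omega)$ via a Gronwall/bootstrap argument. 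Finally $\widetilde d_\omega(t)\sim\mathrm{dist}_{H^1}(\psi(t),\mathscr O(Q_\omega))$ converts this back to \eqref{expo-decay1}.

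The main obstacle I expect is the second half: making rigorous the transition from ``trapped in a small neighborhood forever'' to ``exponential convergence,'' because a priori a trapped solution could oscillate or decay slowly rather than exponentially. The heart of the matter is a differential inequality for $d_\omega$ (or for $|\lambda_1|$, $|\lambda_-|$) showing that within the trapping region the stable/unstable decomposition forces the unstable component to be negligible and the stable component to decay exponentially; this is where the precise constants in Lemma \ref{tech-lem1} and the modulation equations enter, and where one must carefully absorb the cubic error terms $O(\|\eta\|_{H^1}^3)$ using smallness of $R$. A secondary technical point is justifying the virial identity \eqref{virial} for radial $H^1$ solutions without finite-variance assumption — this is handled by a standard localized virial argument exploiting radial symmetry (indeed radial symmetry is precisely why the one-pass theorem, and hence this proposition, is stated for radial data).
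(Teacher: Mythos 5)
Your overall route --- coercivity of $\mathcal{K}$ away from the orbit plus a (localized) virial argument to rule out permanent ejection, the one-pass theorem to get permanent trapping, and then modulation/spectral analysis to upgrade trapping to exponential decay --- is exactly the paper's. There is, however, one concrete logical slip in the first half. Theorem \ref{thm-opass1} carries the hypothesis $\widetilde{d}_{\omega}(\psi_0)<R$, so you cannot invoke its dichotomy at $t=0$ for an arbitrary $\psi_0\in\mathcal{BA}_{\omega,-}$: such data need not be anywhere near $\mathscr{O}(Q_{\omega})$ (for instance, $Q^{-}_{\omega}$ evaluated shortly before its negative blow-up time lies in $\mathcal{BA}_{\omega,-}$ by flow invariance but has enormous $H^1$ norm). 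Consequently your conclusion in case (i), that $\widetilde{d}_{\omega}(\psi(t))<R+R^{3/2}$ for \emph{all} $t\ge 0$, is too strong; only eventual trapping can hold. The repair uses ingredients you already have and is what the paper does in its Steps 1--2: first show $\limsup_{t\to\infty}\mathcal{K}(\psi(t))=0$ (else $\mathcal{K}\le-\kappa$ eventually and the localized virial forces finite-time blow-up), then use Proposition \ref{conv-gs} to extract $t_n\to\infty$ with $\mathrm{dist}_{H^{1}}(\psi(t_n),\mathscr{O}(Q_{\omega}))\to 0$, and only then apply the one-pass theorem with initial time $t_n$; alternative (ii) is excluded because the solution returns near the orbit at later times $t_m$, so (i) gives trapping on $[t_n,\infty)$, and letting $R\downarrow 0$ yields $\widetilde{d}_{\omega}(\psi(t))\to 0$, a limit you will need later to fix the signs of $\lambda_1,\lambda_2$.

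For the second half your plan matches the paper's mechanism but stops at the point you yourself identify as the crux. The precise device that excludes the slow-decay/oscillation scenario is the Nakanishi--Schlag monotonicity claim: since $\mathcal{E}(\psi)=\mathcal{E}(Q_{\omega})$, \eqref{eq-dist11} gives $d_{\omega}^{2}=2e_{\omega}\lambda_{1}^{2}$, and an interior maximum of $|\lambda_1|$ at some $t_0$ forces, via \eqref{eq-dist12} and \eqref{eq-ejec24}, the bound $|\lambda_{1}(t_0)|\le 2|\lambda_{+}(t_0)|$, whereupon \eqref{eq-mod5} drives $\lambda_{+}$ to grow like $e^{e_{\omega}t}$, contradicting the trapping bound. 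Hence $|\lambda_1|$ is monotone decreasing; this pins the signs of $\lambda_1,\lambda_2$, yields $|\lambda_{+}|\le|\lambda_{-}|$, and \eqref{eq-mod6} then gives $\lambda_{-}(t)\lesssim e^{-e_{\omega}t/2}$ and \eqref{expo-decay1}. Your sketch points at this ("the $\lambda_1\lambda_2$ term controls the derivative") but does not supply the monotonicity argument, which is the step that actually closes the bootstrap.
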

We first show that 
the set $\mathcal{BA}
_{\omega, -}$ is invariant 
under the flow of \eqref{nls}. 
\begin{lemma}\label{inv-}
If $\psi_{0} \in \mathcal{BA}_{\omega, -}$,  
we have $\psi(t) \in \mathcal{BA}_{\omega, -}$ for all $t \in I_{\max}$, 
where $\psi$ is the solution to \eqref{nls} with $\psi|_{t = 0} = \psi_{0}$. 
\end{lemma}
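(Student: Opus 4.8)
The plan is to show that each of the three defining conditions of $\mathcal{BA}_{\omega,-}$ is preserved by the flow, and that the three sets $\mathcal{BA}_{\omega,-}$, $\mathcal{BA}_{\omega,0}$, $\mathcal{BA}_{\omega,+}$ are separated in a way that forbids jumping between them. The conditions $\mathcal{S}_{\omega}(\psi(t)) = m_{\omega}$ and $\mathcal{M}(\psi(t)) = \mathcal{M}(Q_{\omega})$ are immediate from the conservation laws \eqref{eq-con}: since $\mathcal{S}_{\omega} = \mathcal{E} + \omega\mathcal{M}$ and both $\mathcal{E}$ and $\mathcal{M}$ are conserved along \eqref{nls}, we get $\mathcal{S}_{\omega}(\psi(t)) = \mathcal{S}_{\omega}(\psi_0) = m_{\omega}$ and $\mathcal{M}(\psi(t)) = \mathcal{M}(\psi_0) = \mathcal{M}(Q_{\omega})$ for all $t \in I_{\max}$. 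In particular $\psi(t) \in \mathcal{BA}_{\omega}$ for all $t$, and radiality is preserved by the flow as well, so $\psi(t) \in \mathcal{BA}_{\omega,-} \cup \mathcal{BA}_{\omega,0} \cup \mathcal{BA}_{\omega,+}$.

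It remains to pin down the sign of $\mathcal{K}(\psi(t))$. First I would rule out $\psi(t) \in \mathcal{BA}_{\omega,0}$ for any $t$: by \eqref{set-B0} (which follows from Proposition \ref{thm-lu}), $\mathcal{BA}_{\omega,0} = \{e^{i\theta}Q_{\omega} : \theta \in \R\}$, so if $\psi(t_1) = e^{i\theta}Q_{\omega}$ for some $t_1$, then by uniqueness of the Cauchy problem $\psi(t) = e^{i\omega(t-t_1)}e^{i\theta}Q_{\omega}$ for all $t$, whence $\psi_0 = e^{i(\theta - \omega t_1)}Q_{\omega} \in \mathcal{BA}_{\omega,0}$, contradicting $\psi_0 \in \mathcal{BA}_{\omega,-}$ (as $\mathcal{K}(\psi_0) < 0 \neq 0$). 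Hence $\mathcal{K}(\psi(t)) \neq 0$ for every $t \in I_{\max}$. Since $t \mapsto \psi(t)$ is continuous into $H^1(\R^3)$ and $\mathcal{K}\colon H^1(\R^3) \to \R$ is continuous, the map $t \mapsto \mathcal{K}(\psi(t))$ is continuous on the interval $I_{\max}$ and never vanishes; by the intermediate value theorem it cannot change sign. As $\mathcal{K}(\psi_0) = \mathcal{K}(\psi(0)) < 0$, we conclude $\mathcal{K}(\psi(t)) < 0$ for all $t \in I_{\max}$, i.e. $\psi(t) \in \mathcal{BA}_{\omega,-}$.

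The only genuinely substantive point — and the one I would expect to require care — is the step using uniqueness to exclude passing through $\mathcal{BA}_{\omega,0}$; this relies on the identification \eqref{set-B0} together with the fact that $e^{i\omega t}Q_{\omega}$ solves \eqref{nls}. Everything else is a soft combination of conservation laws, continuity of $\mathcal{K}$, and connectedness of $I_{\max}$. (One could alternatively invoke the standard fact that $\mathcal{K}(u) < 0$ with the sub-threshold-type energy constraint forces finite-time blow-up or at least persistence of the sign via the virial identity \eqref{virial}, but the continuity-plus-uniqueness argument above is cleaner and suffices for this lemma.)
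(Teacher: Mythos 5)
Your proposal is correct and follows essentially the same route as the paper: conservation of $\mathcal{S}_{\omega}$ and $\mathcal{M}$, continuity of $t\mapsto\mathcal{K}(\psi(t))$ together with the intermediate value theorem, the variational characterization \eqref{set-B0} of the zero set via uniqueness of the ground state (Proposition \ref{thm-lu}), and uniqueness of the Cauchy problem to exclude the solution ever touching $\mathcal{BA}_{\omega,0}$. The only difference is organizational — you first exclude the zero set and then invoke connectedness, whereas the paper supposes a sign change and locates a zero by continuity — but the ingredients and the contradiction are identical.
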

\begin{proof}
It follows from the conservation laws 
\eqref{eq-con} that $\mathcal{S}_{\omega} 
(\psi(t)) = m_{\omega}$. 
We shall show 
$\mathcal{K}(\psi(t)) < 0$ for all 
$t \in I_{\max}$ by contradiction. 
Suppose to the contrary that 
there exist $\omega_{*} \in 
(0, \omega_{3}), \psi_{0, *} \in 
\mathcal{BA}_{\omega, -}$ 
and $t_{*} \in I_{\max}$ 
such that 
$\mathcal{K}(\psi(t_{*})) \geq 0$. 
Then, from the continuity of 
$\mathcal{K}(\psi(t))$ and 
$\mathcal{K}(\psi_{0, *}) < 0$, there exists $t_{1} \in (0, t_{*}]$ such that 
$\mathcal{K}(\psi(t_{1})) = 0$. 
Thus, $\psi(t_{1})$ satisfies 
$\mathcal{S}_{\omega_{*}}(\psi(t_{1})) = m_{\omega_{*}}$ and 
$\mathcal{K}(\psi(t_{1})) = 0$. 
Since $\psi(t_{1})$ is a minimizer 
of $m_{\omega_{*}}$ (ground state of 
$m_{\omega_{*}}$), we have that 
$\psi(t) = e^{i \omega_{*} t + \theta} Q_{\omega_{*}}$ for some 
$\theta \in \R$, 
which contradicts $
\mathcal{K} (\psi(0)) = 
\mathcal{K} (\psi_{0, *}) < 0$. 
\end{proof}
Next, we see that the following convergence 
result holds. 
\begin{proposition}\label{conv-gs}
Let $\{u_{n}\}$ be a sequence 
in $H^{1}_{\text{rad}}(\R^{3})$ 
satisfying 
$\lim_{n \to 
\infty} 
\mathcal{S}_{\omega}(u_{n}) 
= m_{\omega}$ and 
$\lim_{n \to \infty}
\mathcal{K}(u_{n}) = 0$. 
Then, we have 
$\lim_{n \to \infty} 
\text{dist}_{H^{1}} 
(u_{n}, \mathcal{O}(Q_{\omega})) 
= 0$. 
\end{proposition}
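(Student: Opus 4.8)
The plan is to prove this by a concentration--compactness argument in the radial class: from $\{u_{n}\}$ I would extract a subsequence converging strongly in $H^{1}$ up to a phase rotation, show that any such limit is a minimizer for $m_{\omega}$, hence equals $e^{i\theta}Q_{\omega}$ for some $\theta\in\R$ by Proposition~\ref{thm-lu}~(i), and then pass back to the full sequence. The starting observation is the identity
\[
\mathcal{S}_{\omega}(u)-\frac{1}{3}\mathcal{K}(u)
=\frac{1}{6}\|\nabla u\|_{L^{2}}^{2}+\frac{1}{6}\|u\|_{L^{6}}^{6}+\frac{\omega}{2}\|u\|_{L^{2}}^{2}
=:\mathcal{I}(u),
\]
whose right-hand side is a sum of nonnegative, weakly lower semicontinuous functionals. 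Since $\mathcal{S}_{\omega}(u_{n})\to m_{\omega}$ and $\mathcal{K}(u_{n})\to0$, we get $\mathcal{I}(u_{n})\to m_{\omega}$, so $\{u_{n}\}$ is bounded in $H^{1}$; the same identity restricted to $\{\mathcal{K}=0\}$ together with the Gagliardo--Nirenberg and Sobolev inequalities also gives $m_{\omega}>0$. Passing to a subsequence, we may assume $u_{n}\rightharpoonup u$ weakly in $H_{\text{rad}}^{1}(\R^{3})$, with $\|\nabla u_{n}\|_{L^{2}}^{2}$, $\|u_{n}\|_{L^{2}}^{2}$ and $\|u_{n}\|_{L^{6}}^{6}$ all convergent, and, by the compact embedding $H_{\text{rad}}^{1}(\R^{3})\hookrightarrow L^{4}(\R^{3})$, that $u_{n}\to u$ strongly in $L^{4}$.

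The crux is to rule out $u=0$. If $u=0$, then $\|u_{n}\|_{L^{4}}\to0$, so $\mathcal{K}(u_{n})\to0$ forces $\|\nabla u_{n}\|_{L^{2}}^{2}-\|u_{n}\|_{L^{6}}^{6}\to0$ and hence $\frac{1}{3}\|u_{n}\|_{L^{6}}^{6}+\frac{\omega}{2}\|u_{n}\|_{L^{2}}^{2}\to m_{\omega}$. Write $\|u_{n}\|_{L^{6}}^{6}\to\ell\ge0$; then also $\|\nabla u_{n}\|_{L^{2}}^{2}\to\ell$. If $\ell>0$, the sharp Sobolev inequality $S\|v\|_{L^{6}}^{2}\le\|\nabla v\|_{L^{2}}^{2}$ gives $\ell\le S^{-3}\ell^{3}$, so $\ell\ge S^{3/2}$ and therefore $m_{\omega}\ge\frac{1}{3}S^{3/2}$; this contradicts the strict inequality $m_{\omega}<\frac{1}{3}S^{3/2}$, i.e.\ that the ground-state level lies strictly below the energy-critical level of the Aubin--Talenti function $W$, which is known for the cubic--quintic ground state at low frequencies (see~\cite{AIKN}). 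If instead $\ell=0$, then $\|\nabla u_{n}\|_{L^{2}}\to0$ and $\|u_{n}\|_{L^{4}},\|u_{n}\|_{L^{6}}\to0$, whence $\mathcal{E}(u_{n})\to0$; using in addition the normalization $\mathcal{M}(u_{n})=\mathcal{M}(Q_{\omega})$ available in our applications, this would give $\mathcal{S}_{\omega}(u_{n})\to\omega\mathcal{M}(Q_{\omega})$, contradicting $m_{\omega}=\mathcal{S}_{\omega}(Q_{\omega})=\mathcal{E}(Q_{\omega})+\omega\mathcal{M}(Q_{\omega})$ together with $\mathcal{E}(Q_{\omega})=\frac{1}{8}\|Q_{\omega}\|_{L^{4}}^{4}+\frac{1}{3}\|Q_{\omega}\|_{L^{6}}^{6}>0$ (the latter following from $\mathcal{K}(Q_{\omega})=0$). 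Hence $u\neq0$.

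It remains to identify $u$ and upgrade to strong convergence. Letting $n\to\infty$ in $\mathcal{K}(u_{n})\to0$, using $u_{n}\to u$ in $L^{4}$ and the weak lower semicontinuity of $\|\nabla\cdot\|_{L^{2}}^{2}$ and $\|\cdot\|_{L^{6}}^{6}$, we obtain $\mathcal{K}(u)\le0$. If $\mathcal{K}(u)<0$, choose $\mu_{0}\in(0,1)$ with $\mathcal{K}(u^{\mu_{0}})=0$, where $u^{\mu}(x):=\mu^{3/2}u(\mu x)$; then $\mathcal{I}(u^{\mu_{0}})=\mathcal{S}_{\omega}(u^{\mu_{0}})\ge m_{\omega}$, while $\mathcal{I}(u^{\mu_{0}})<\mathcal{I}(u)\le\liminf_{n}\mathcal{I}(u_{n})=m_{\omega}$ (since $\mu_{0}<1$ and $\|u^{\mu_{0}}\|_{L^{2}}=\|u\|_{L^{2}}$), a contradiction. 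Thus $\mathcal{K}(u)=0$, hence $\mathcal{S}_{\omega}(u)=\mathcal{I}(u)\le m_{\omega}$; combined with $\mathcal{S}_{\omega}(u)\ge m_{\omega}$, $u$ is a minimizer for $m_{\omega}$, so $u=e^{i\theta}Q_{\omega}$ by Proposition~\ref{thm-lu}~(i). Finally, $\mathcal{I}(u_{n})\to m_{\omega}=\mathcal{I}(u)$ with each of the three terms of $\mathcal{I}$ weakly lower semicontinuous forces each term of $\mathcal{I}(u_{n})$ to converge to the corresponding term of $\mathcal{I}(u)$; in particular $\|\nabla u_{n}\|_{L^{2}}\to\|\nabla u\|_{L^{2}}$ and $\|u_{n}\|_{L^{2}}\to\|u\|_{L^{2}}$, so $\|u_{n}\|_{H^{1}}\to\|u\|_{H^{1}}$, and together with $u_{n}\rightharpoonup u$ this gives $u_{n}\to u$ in $H^{1}$. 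Thus $\text{dist}_{H^{1}}(u_{n},\mathcal{O}(Q_{\omega}))\to0$ along the subsequence, and since the same argument applies to every subsequence, $\text{dist}_{H^{1}}(u_{n},\mathcal{O}(Q_{\omega}))\to0$ for the whole sequence.

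The step I expect to be the main obstacle is the non-vanishing, and inside it the exclusion of a concentrating Aubin--Talenti bubble: this is precisely where the energy-critical quintic term bites, and it relies on the strict inequality $m_{\omega}<\frac{1}{3}S^{3/2}$, which encodes the ``subcriticality'' of the cubic--quintic ground state at low frequencies. By contrast, translations and the subcritical cubic term cause no difficulty, thanks to the radial symmetry and the compact embedding into $L^{4}$, so a full linear profile decomposition is not needed here.
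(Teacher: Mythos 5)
Your overall strategy (boundedness from $\mathcal{S}_{\omega}-\tfrac13\mathcal{K}$, radial compactness in $L^{4}$, exclusion of vanishing via the strict gap $m_{\omega}<\sigma^{3/2}/3$, identification of the weak limit as a minimizer, upgrade to strong convergence) is the same as the paper's, but one central step fails as written: the claim that $\mathcal{K}(u)\le 0$ follows from $u_{n}\to u$ in $L^{4}$ and weak lower semicontinuity of $\|\nabla\cdot\|_{L^{2}}^{2}$ and $\|\cdot\|_{L^{6}}^{6}$. Subtracting $0=\lim_{n}\mathcal{K}(u_{n})$ from $\mathcal{K}(u)$ gives
$\mathcal{K}(u)=\bigl(\|\nabla u\|_{L^{2}}^{2}-\lim_{n}\|\nabla u_{n}\|_{L^{2}}^{2}\bigr)+\bigl(\lim_{n}\|u_{n}\|_{L^{6}}^{6}-\|u\|_{L^{6}}^{6}\bigr)$,
and while the first bracket is $\le 0$, the second is $\ge 0$: the quintic term enters $\mathcal{K}$ with a \emph{negative} sign, so lower semicontinuity of the $L^{6}$ norm points the wrong way, and the sign of $\mathcal{K}(u)$ is not determined. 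You cannot a priori exclude that a concentrating energy-critical piece carries away more $L^{6}$ mass than gradient, leaving $\mathcal{K}(u)>0$. The paper closes exactly this case with Brezis--Lieb: from $\mathcal{K}(u_{n})=\mathcal{K}(u_{n}-Q_{\infty})+\mathcal{K}(Q_{\infty})+o_{n}(1)$ and $\mathcal{J}_{\omega}(u_{n})=\mathcal{J}_{\omega}(u_{n}-Q_{\infty})+\mathcal{J}_{\omega}(Q_{\infty})+o_{n}(1)$, the assumption $\mathcal{K}(Q_{\infty})>0$ forces $\mathcal{K}(u_{n}-Q_{\infty})<0$ for large $n$; rescaling the remainder onto $\{\mathcal{K}=0\}$ by $T_{\lambda_{n}}$ with $\lambda_{n}\in(0,1)$ and using \eqref{vari-v2} yields $m_{\omega}<\mathcal{J}_{\omega}(u_{n}-Q_{\infty})=m_{\omega}-\mathcal{J}_{\omega}(Q_{\infty})+o_{n}(1)<m_{\omega}$, a contradiction. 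Your scaling argument ruling out $\mathcal{K}(u)<0$ is fine, but you need this (or an equivalent) decomposition to rule out $\mathcal{K}(u)>0$.

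A second, structural issue: in the vanishing sub-case $\ell=0$ you invoke ``the normalization $\mathcal{M}(u_{n})=\mathcal{M}(Q_{\omega})$ available in our applications.'' That hypothesis is not in the statement of Proposition \ref{conv-gs}, so as written you prove a different (weaker) assertion. You have, however, put your finger on a genuine delicacy: without some such extra input this sub-case cannot be dismissed, since a spreading sequence $u_{n}=n^{-3/2}v(\cdot/n)$ with $v$ radial and $\tfrac{\omega}{2}\|v\|_{L^{2}}^{2}=m_{\omega}$ satisfies $\mathcal{S}_{\omega}(u_{n})\to m_{\omega}$ and $\mathcal{K}(u_{n})\to 0$ while remaining a fixed $H^{1}$ distance from $\mathcal{O}(Q_{\omega})$. (The paper's own treatment, which concludes $\lim_{n}\mathcal{S}_{\omega}(u_{n})=0$ in this sub-case, is not justified for the same reason.) If the mass normalization is to be used, it should be added to the hypotheses of the proposition --- it does hold everywhere the proposition is applied --- and then your argument for this sub-case, resting on $\mathcal{E}(Q_{\omega})=\tfrac18\|Q_{\omega}\|_{L^{4}}^{4}+\tfrac13\|Q_{\omega}\|_{L^{6}}^{6}>0$, is correct.
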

We can prove Proposition \ref{conv-gs} 
by a standard argument. 
However, for the sake of the completeness, 
we shall give the proof in Appendix \ref{conv} below. 
Next, we recall several estimates, 
which are needed later.
\begin{lemma}\label{lem-estR}
Let $\theta(t)$ and $\eta$ be the functions given in 
\eqref{eq-dec1}, 
$N_{\omega}(\eta)$ be the 
nonlinear function defined by \eqref{eq-dec8},  
$\mathcal{Y}_{\omega, \pm}$ 
be the eigenfunctions of $- i \mathcal{L}_{\omega}$, 
$\mathcal{Y}_{\omega, i}\; (i = 1, 2)$ 
be the function defined by \eqref{eq-egf1} 
and $\lambda_{1}(t)$ the parameter given in 
\eqref{eq-dist6}. 
There exists sufficiently small $\delta_{X} > 0$ and 
$C_{1}>0$ 
such that 
as long as the solution $\psi$ to 
\eqref{nls} 
satisfies $d_{\omega}(\psi(t)) < \delta_{X}$, 
we have
\begin{equation}\label{eq-ejec24}
\bigg|\Omega\big( \Big\{ \frac{d \theta}{d t}(t)-\omega \Big\}
\eta(t) -N_{\omega}(\eta(t)), 
\mathcal{Y}_{\omega, i} \big) 
\bigg|
\leq C_{1}
\big|\lambda_{1}(t)\big|^{2} 
\qquad \mbox{for $i = 1, 2$},
\end{equation} 

\begin{equation}\label{eq-ejec25}
\bigg|\big( \Big\{ \frac{d \theta}{d t}(t)-\omega \Big\}
\eta(t) -N_{\omega}(\eta(t)), \mathcal{Y}_{\omega, \pm} \big)_{L_{\text{real}}^{2}} 
\bigg| \leq C_{1}
\big|\lambda_{1}(t)\big|^{2}. 
\end{equation}
\end{lemma}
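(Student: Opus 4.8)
\textbf{Plan for the proof of Lemma \ref{lem-estR}.}
The key structural fact to exploit is that, near the ground state, the parameters $\lambda_+(t)$, $\lambda_-(t)$ (hence $\lambda_1(t)$, $\lambda_2(t)$) and $\|\Gamma(t)\|_{H^1}$ are all controlled by $d_\omega(\psi(t))$, so that when $d_\omega(\psi(t))<\delta_X$ the remainder $\eta(t)$ is small in $H^1$; and on the other hand the modulation equation \eqref{eq-mod4} shows that $\frac{d\theta}{dt}-\omega$ is \emph{quadratically} small in $\eta$, since its right-hand side is $-\mathcal{M}(\eta)+(N_\omega(\eta),\partial_\omega Q_\omega)_{L^2_{\rm real}}$ and both terms are $O(\|\eta\|_{H^1}^2)$, while $(Q_\omega+\eta,\partial_\omega Q_\omega)_{L^2_{\rm real}}$ is bounded away from $0$ by \eqref{eq-dec19}. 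Thus the first term $\{\frac{d\theta}{dt}-\omega\}\eta(t)$ is of size $O(\|\eta\|_{H^1}^3)$, and $N_\omega(\eta)$, being (at least) quadratic in $\eta$ by the definitions of $N_{\omega,1}$, $N_{\omega,2}$ as Taylor remainders, is $O(\|\eta\|_{H^1}^2)$. So both pairings in \eqref{eq-ejec24} and \eqref{eq-ejec25} are $O(\|\eta\|_{H^1}^2)$, using that $\mathcal{Y}_{\omega,i}$, $\mathcal{Y}_{\omega,\pm}$ are fixed $H^1$ (indeed exponentially decaying) functions and Sobolev/Hölder embeddings in $\R^3$.

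The remaining, and only genuinely delicate, point is to upgrade $O(\|\eta\|_{H^1}^2)$ to $O(|\lambda_1(t)|^2)$; the naive bound gives $O(\|\eta\|_{H^1}^2)\lesssim \lambda_1^2+\lambda_2^2+\|\Gamma\|_{H^1}^2$, which is \emph{not} what is claimed. The mechanism must be that in the pairings against $\mathcal{Y}_{\omega,i}$ and $\mathcal{Y}_{\omega,\pm}$, the $\Gamma$-component of $\eta$ contributes only higher order: the quadratic-in-$\eta$ part of $N_\omega(\eta)$ whose $\Gamma$-dependence survives a pairing with the eigenfunctions is cancelled, or is absorbable, because $\Gamma$ is symplectically orthogonal to $\mathcal{Y}_{\omega,\pm}$ (see \eqref{eq-dec14}) and $\langle \mathscr{L}_\omega\Gamma,\Gamma\rangle\sim\|\Gamma\|_{H^1}^2$ controls the truly quadratic part $\langle \mathscr{L}_\omega\Gamma,\Gamma\rangle$ which is already accounted for in $\|\eta\|_E^2$ rather than in these pairings. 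I would therefore expand $N_\omega(\eta)$ into its purely-quadratic part (a bilinear form $B(\eta,\eta)$ with coefficients $Q_\omega^2$, $Q_\omega^4$) plus an $O(\|\eta\|_{H^1}^3)$ cubic-and-higher remainder, substitute $\eta=\lambda_+\mathcal{Y}_{\omega,+}+\lambda_-\mathcal{Y}_{\omega,-}+\Gamma$, and track the nine terms $B(\cdot,\cdot)$ produces: the $\Gamma\times\Gamma$ term and the $\Gamma\times\mathcal{Y}$ cross terms, after pairing against $\mathcal{Y}_{\omega,i}$, must be shown to be either zero by an orthogonality/parity identity among the discrete-mode functions, or bounded by $|\lambda_1|^2$ plus acceptable errors, using that $\|\Gamma\|_{H^1}^2\lesssim d_\omega^2\sim\|\eta\|_E^2$ and the defining relation of $d_\omega$.

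Concretely, the steps in order would be: (1) record from Lemma \ref{lem-dist1} (or the definitions of $d_\omega$, $\|\eta\|_E$) that under $d_\omega(\psi(t))<\delta_X$ one has $|\lambda_\pm(t)|,\|\Gamma(t)\|_{H^1},\|\eta(t)\|_{H^1}\lesssim d_\omega(t)$ and $\|\eta\|_{H^1}\sim\|\eta\|_E$; (2) from \eqref{eq-mod4} and \eqref{eq-dec19}, derive $|\frac{d\theta}{dt}(t)-\omega|\lesssim\|\eta(t)\|_{H^1}^2$, whence $|\{\frac{d\theta}{dt}-\omega\}\eta|$ paired against any fixed $H^1$ function is $\lesssim\|\eta\|_{H^1}^3$; (3) Taylor-expand $N_{\omega,1}(\eta)$ and $N_{\omega,2}(\eta)$ to isolate the quadratic parts and bound the higher-order remainders by $\|\eta\|_{H^1}^3$ via Hölder plus the Sobolev embedding $H^1(\R^3)\hookrightarrow L^6$ and the boundedness/decay of $Q_\omega$; (4) pair the quadratic parts with $\mathcal{Y}_{\omega,i}$ (resp. $\mathcal{Y}_{\omega,\pm}$), expand in the symplectic basis, and use the symplectic orthogonality \eqref{eq-dec14} of $\Gamma$ together with $\langle\mathscr{L}_\omega\Gamma,\Gamma\rangle\sim\|\Gamma\|_{H^1}^2$ to show that all $\Gamma$-involving contributions to these particular pairings are either exactly cancelled by the linearization identities \eqref{eq-dec4}--\eqref{eq-dec6} or are $O(\|\eta\|_{H^1}^3)$, leaving only terms quadratic in $\lambda_\pm$; (5) finally note that $\lambda_\pm=\lambda_1\pm\lambda_2$ and, invoking the relation between $\lambda_2$ and $\lambda_1$ forced on the threshold set (on $\mathcal{BA}_\omega$ the identity \eqref{eq-dist11}, $d_\omega^2=\mathcal{E}(\psi)-\mathcal{E}(Q_\omega)+2e_\omega\lambda_1^2$, together with $\mathcal{E}(\psi)=\mathcal{E}(Q_\omega)$ since $\mathcal{S}_\omega(\psi)=m_\omega=\mathcal{S}_\omega(Q_\omega)$ and $\mathcal{M}(\psi)=\mathcal{M}(Q_\omega)$, gives $d_\omega^2=2e_\omega\lambda_1^2$ and hence $\|\eta\|_{H^1}\sim|\lambda_1|$), conclude that everything quadratic in $\eta$ that survives is $\lesssim|\lambda_1|^2$ and everything cubic is $\lesssim\|\eta\|_{H^1}^3\lesssim|\lambda_1|^3\le\delta_X|\lambda_1|^2$. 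The main obstacle is step (4)--(5): one must be precise that on the threshold manifold $\|\eta\|_{H^1}$, $\|\eta\|_E$, $d_\omega$ and $|\lambda_1|$ are all comparable, so that the seemingly-stronger bound $O(|\lambda_1|^2)$ is in fact equivalent to the easy bound $O(\|\eta\|_{H^1}^2)$; this comparability is exactly what \eqref{eq-dist11} provides once the energy constraint is used, and it is the step where the hypothesis $\psi_0\in\mathcal{BA}_\omega$ (not merely $d_\omega$ small) is essential.
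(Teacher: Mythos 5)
Your plan is sound, and it isolates the right mechanism; note that the paper itself supplies no proof of this lemma, deferring to (5.30)--(5.31) of the cited memoir, so the comparison can only be with the standard argument. The decisive point is exactly the one you reach in step (5): for the solutions to which the lemma is applied one has $\mathcal{S}_{\omega}(\psi)=m_{\omega}$ and $\mathcal{M}(\psi)=\mathcal{M}(Q_{\omega})$, hence $\mathcal{E}(\psi)=\mathcal{E}(Q_{\omega})$, so \eqref{eq-dist11} gives $d_{\omega}^{2}(t)=2e_{\omega}\lambda_{1}^{2}(t)$, and together with \eqref{eq-dist10} and \eqref{eq-energy7} this yields $\|\eta(t)\|_{H^{1}}\sim|\lambda_{1}(t)|$ whenever $d_{\omega}(t)<\delta_{X}\le\delta_{E}(\omega)$. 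Once that comparability is recorded, the ``naive'' bound $O(\|\eta\|_{H^{1}}^{2})$ --- obtained from $|\tfrac{d\theta}{dt}-\omega|\lesssim\|\eta\|_{H^{1}}^{2}$ via \eqref{eq-mod4} and \eqref{eq-dec19} (this is Lemma \ref{lem-estR2}), plus H\"older/Sobolev estimates for $N_{\omega,1},N_{\omega,2}$ paired against the fixed decaying functions $\mathcal{Y}_{\omega,i}$, $\mathcal{Y}_{\omega,\pm}$ --- is already the claimed bound $C_{1}|\lambda_{1}|^{2}$. Consequently the cancellation mechanism you speculate about in steps (3)--(4) (extracting the bilinear part of $N_{\omega}$ and eliminating the $\Gamma$-contributions by the symplectic orthogonality \eqref{eq-dec14}) is neither available nor needed, and should simply be deleted; what replaces it is the energy constraint. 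You are also right to flag that the threshold hypothesis is essential and not visible in the lemma's wording: for a merely mass-normalized solution one could have $\lambda_{\pm}\equiv0$ with $\Gamma\neq0$, in which case the right-hand side vanishes while the left-hand side has no reason to, so the estimate in the form \eqref{eq-ejec24}--\eqref{eq-ejec25} is really a statement about solutions in $\mathcal{BA}_{\omega}$ (or, in the cited reference, about the ejection regime where $d_{\omega}\lesssim|\lambda_{1}|$ is imposed).
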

See \cite[(5.30) and (5.31)]{AIKN2021} for 
the proof of Lemma \ref{lem-estR}. 
We are now in a position to prove 
Proposition \ref{prop-conv-n}. 
\begin{proof}[Proof of Proposition \ref{prop-conv-n}
]
We divide the proof into four 
steps. 

\textbf{(Step 1). }
We claim that 
there exists a sequence $\{t_{n}\}$ in $(0, \infty)$ with 
$\lim_{n \to \infty} t_{n} = \infty$
such that 
\begin{equation}\label{lem-sp1}
\lim_{n \to \infty} 
\text{dist}_{H^{1}}(\psi(t_{n}), 
\mathcal{O}(Q_{\omega}))
= 0. 
\end{equation}
Since $\mathcal{BA}_{\omega, -}$ is an 
invariant set (see Lemma \ref{inv-}), 
we see 
that $\mathcal{S}_{\omega}(\psi(t)) = m_{\omega}$ and 
$\mathcal{K}(\psi(t)) < 0$ for all $t \in (- T_{\max}^{-}, \infty)$. 
Then, we obtain $\limsup_{t \to \infty} \mathcal{K}(\psi(t)) = 0$.  
Otherwise, the solution $\psi(t)$ 
blows up in finite time (see the proof of 
Theorem 1.3 in \cite{AIKN2012}). 
Thus, there exists a sequence $\{t_{n}\}$ in $[0, \infty)$ 
with $\lim_{n \to \infty} t_{n} = \infty$
such that $\lim_{n \to \infty} 
\mathcal{K}(\psi(t_{n})) = 0$. 
Then, up to a subsequence, 
we see from Proposition \ref{conv-gs} that $\lim_{n \to \infty} 
\text{dist}_{H^{1}} 
(\psi(t_{n}), 
\mathcal{O}(Q_{\omega})) 
= 0$. 
Thus, \eqref{lem-sp1} holds. 

\textbf{(Step 2). } 
\eqref{lem-sp1} together with 
one-pass theorem 
(Theorem \ref{thm-opass1}) 
yields that 
for any $R > 0$, there exists 
$N \in \mathbb{N}$ such that 
$\widetilde{d}_{\omega} (\psi(t)) < R + R^{\frac{3}{2}}$ 
for all $t \in [t_{N}, \infty)$. 
Since $R > 0$ is arbitrary, 
we have 
$\lim_{t \to \infty} \widetilde{d}_{\omega} (\psi(t)) = 0$. 
Then, it follows from \eqref{eq-mdist8} that  
\begin{equation*}
d_{\omega} (\psi(t))
< 2 R \qquad \mbox{for all 
$t \in [t_{N}, \infty)$}. 
\end{equation*} 
In addition, by \eqref{eq-energy0}, 
\eqref{eq-energy4} and 
\eqref{eq-dist10}, we have
\begin{equation} \label{eq-bu2}
\lambda_{+}^{2}(\psi(t)) 
+ \lambda_{-}^{2}(\psi(t)) \leq 
\frac{2}{e_{\omega}} 
\|\eta(t)\|_{E}^{2} \lesssim R 
\qquad \mbox{for all $t \in 
[t_{N}, \infty)$}
\end{equation}
Therefore, 
letting 
\begin{equation}\label{eq-ejec18-2}
v_{0} 
:= \max_{t \geq 0} \{ 
|\lambda_{1}(t)|\} > 0.
\end{equation}
we may assume that  
$v_{0} \in (0, \widetilde{\gamma}_{\omega})$ 
by replacing 
$t$ by $t + T \; (T \gg 1)$. 

\textbf{(Step 3). 
}
Using an idea of Nakanishi and Schlag~\cite{NS2012}, 
we shall show that 
$|\lambda_{1}(t)|$ is a decreasing 
function of $t$. 
Suppose to the contrary that 
there exists $t_{0} > 0$ such that 
$|\lambda_{1}(t_{0})| 
= \max_{t \geq 0}{|\lambda_{1}(t)|} = v_{0}$. 
Observe from $\psi_{0} \in 
\mathcal{BA}_{\omega, -}$ that 
$\mathcal{E}(\psi) = 
\mathcal{E}(Q_{\omega})$. 
Then, 
it follows from \eqref{eq-dist11} 
that 
  \begin{equation} \label{eq-dist14}
d_{\omega}^{2}(t) = 2 e_{\omega} \lambda_{1}^{2}(t).
  \end{equation}
This yields that 
$d_{\omega}^{2}(t_{0}) 
= \max_{t \geq 0} d_{\omega}^{2}(t)$. 
Then, one has 
\begin{equation*}
0 =
\frac{d}{dt} d_{\omega}^{2}
\big(\psi(t)\big)\bigg|_{t=t_{0}}. 
\end{equation*}
This together with \eqref{eq-dist12} 
yields 
\begin{equation}\label{eq-ejec27-2}
0 = 
e_{\omega}^{2} 
\lambda_{1}(t_{0}) 
\lambda_{2}(t_{0}) 
+
e_{\omega} \lambda_{1}(t_{0}) 
\big
( \Big\{ \frac{d \theta}{d t}(t_{0}) - 
\omega \Big\}\eta(t_{0}) -N_{\omega}(\eta(t_{0})), 
i \mathcal{Y}_{\omega, 2} 
\big)_{L^{2}_{\text{real}}}.
\end{equation}
Combining \eqref{eq-ejec27-2} with \eqref{eq-ejec24}, we obtain that 
\begin{equation}\label{eq-ejec28-2}
0 \leq  
e_{\omega}^{2} \, {\rm sgn}
[\lambda_{1}(t_{0})]|\lambda_{1}(t_{0})| \lambda_{2}(t_{0})
+ Ce_{\omega} |\lambda_{1}(t_{0})|^{3} 
\end{equation} 
for some constant $C>0$. 
It follows from \eqref{eq-ejec28-2} that 
\begin{equation}\label{eq-ejec29-2}
- C |\lambda_{1}(t_{0})|^{2} 
\le 
e_{\omega} {\rm sgn}[\lambda_{1}(t_{0})] 
\lambda_{2}(t_{0}). 
\end{equation}
Suppose that ${\rm sgn}[\lambda_{1}(t_{0})]=1$. 
Then, since $|\lambda_{1}(t_{0})| \ll 1$, 
we see from
\eqref{eq-ejec18-2}, \eqref{eq-ejec29-2} 
and \eqref{eq-dist6} that 
\begin{equation} \label{eq-bu3}
0<\lambda_{1}(t_{0}) 
\leq 2 \lambda_{1}(t_{0}) 
- \frac{2 C}{e_{\omega}}
\big| \lambda_{1}(t_{0})
\big|^{2}
\leq 2 
(\lambda_{1}(t_{0}) 
+ \lambda_{2}(t_{0})) 
= 2 \lambda_{+}(t_{0}).
\end{equation} 
Suppose next that 
${\rm sgn}[\lambda_{1}
(t_{0})] = -1$. 
Then, \eqref{eq-ejec29-2} becomes 
$e_{\omega} \lambda_{2}(t_{0}) 
\le C |\lambda_{1}(t_{0})|^{2}$. 
Since $|\lambda_{1}(t_0)| = R \ll 1$, 
we see from \eqref{eq-dist6} that 
\begin{equation} \label{eq-bu4}
\lambda_{+}(t_{0})
=
\lambda_{1}(t_{0})+
\lambda_{2}(t_{0})
\le 
\lambda_{1}(t_{0}) + 
\frac{C}{e_{\omega}}|\lambda_{1}(t_{0})|^{2} 
\leq \frac{\lambda_{1}(t_{0})}{2} < 0. 
\end{equation} 
Thus, we conclude from \eqref{eq-bu3} 
and \eqref{eq-bu4} that 
\begin{equation}\label{eq-ejec30-2}
|\lambda_{1}(t_{0})| 
\leq 2|\lambda_{+} (t_{0})|.
\end{equation}
From \eqref{eq-mod5}, 
\eqref{eq-ejec24}, \eqref{eq-ejec30-2} and 
$|\lambda_{1}(t_{0})| = \max_{t \geq 0}{|\lambda_{1}(t)|}$,  we have 
\begin{equation*}
\begin{split}
|\lambda_{+}(t)| 
& \geq e^{e_{\omega} (t - t_{0})}
|\lambda_{+}(t_{0})| 
- C_{1} |\lambda_{1}(t_{0})|^{2}
\int_{t_{0}}^{t} 
e^{e_{\omega}(t - s)} ds \\
& \geq e^{e_{\omega} (t - t_{0})}
|\lambda_{+}(t_{0})| 
- \frac{C_{1}}{e_{\omega}} 
|\lambda_{1}(t_{0})|^{2} 
e^{e_{\omega} (t - t_{0})} \\
& \geq 
e^{e_{\omega} (t - t_{0})}
\left(1
- \frac{4 C_{1} 
|\lambda_{+}(t_{0})|}{e_{\omega}} 
\right)|\lambda_{+}(t_{0})| 
\geq \frac{e^{e_{\omega} 
(t - t_{0})}}{2} 
|\lambda_{+}(t_{0})|
\to \infty 
\qquad \mbox{as $t \to \infty$}, 
\end{split}
\end{equation*}
which contradicts \eqref{eq-bu2}. 
We have used the fact that 
$|\lambda_{+}(t_{0})| \ll 1$ in 
the last inequality (see \eqref{eq-bu2}). 
Thus, we find that $|\lambda_{1}(t)|$ 
is a decreasing function of $t$. 

\textbf{(Step 4). 
}
We shall show \eqref{expo-decay1}. 
We first consider the case that 
$\lambda_{1}(t)$ is a 
decreasing function of $t>0$. 
Since $\lim_{t \to \infty} 
\lambda_{1}(t) = 0$, we have
$\lambda_{1}(t) > 0$. 

It follows from \eqref{eq-dist8}, 
\eqref{eq-ejec24} 
and $\max_{t \geq 0} 
|\lambda_{1}(t)| = R \ll 1$ that 
\[
\frac{d \lambda_{2}(t)}{d t} 
\geq e_{\omega} \lambda_{1}(t) 
- C_{1} 
|\lambda_{1}(t)|^{2} 
\geq \frac{e_{\omega}}{2} 
\lambda_{1}(t) > 0
\]
This together with $\lim_{t \to \infty} 
|\lambda_{2}(t)| = 0$
implies that 
$\lambda_{2}(t) < 0$ for all $t>0$. 
Since 
$\lambda_{1}(t) > 0$ and 
$\lambda_{2}(t) < 0$ for all $t > 0$, 
we see from \eqref{eq-dist6} 
that 
$
- \lambda_{-}(t) < 
\lambda_{+}(t) < \lambda_{-}(t)$ 
for all $t > 0$. 
Therefore, we have 
$\lambda_{-}(t) > 0$ and 
\begin{equation}\label{eq-ode1}
|\lambda_{+}(t)| 
\leq |\lambda_{-}(t)|.
\end{equation} 
From \eqref{eq-mod6}, 
\eqref{eq-ejec25},  
\eqref{eq-ode1} and 
$|\lambda_{-}(t)| \ll 1$, 
we find that 
\[
\frac{d \lambda_{-}}{d t}(t) 
\leq 
- e_{\omega} \lambda_{-}(t) 
+ 4 C_{1} |\lambda_{-}(t)|^{2} 
\leq 
- \frac{e_{\omega}}{2} 
\lambda_{-}(t). 
\]
It follows that 
$0 < \lambda_{-}(t) \leq 
\lambda_{-}(0) e^{- 
\frac{e_{\omega}}{2}t}$. 
We see from 
\eqref{eq-ode1}
that $|\lambda_{+}(t)| 
< \lambda_{-}(0) e^{- 
\frac{e_{\omega}}{2}t}$. 
It follows from \eqref{eq-dist6} that 
$|\lambda_{1}(t)| \leq \lambda_{-}(0) e^{- 
\frac{e_{\omega}}{2}t}$. 
This together with Proposition \ref{prop-mdist1}, 
\eqref{eq-dist14} implies that \eqref{expo-decay1} holds.

We can prove the case where 
$\lambda_{1}(t)$ is an increasing 
function of $t > 0$ similarly. 
Thus, we omit it. 
\end{proof}
\subsection{Blowup in negative time direction}
Next, we shall show that 
if a solution starts from 
$\mathcal{BA}_{\omega, -}$ exists
globally in positive time direction, 
the solution must blow up in finite negative time: 
\begin{proposition}\label{K-blowup}
Assume that $\omega \in 
(0, \omega_{3})$, 
where $\omega_{3} > 0$ is the 
constant given in Lemma 
\ref{tech-lem1}. 
Suppose that a solution $\psi$ to \eqref{nls} with $\psi|_{t = 0} 
= \psi_{0} \in 
\mathcal{BA}_{\omega, -}$ exists on 
$(-T_{\max}^{-}, \infty)$. 
Then, 
$\psi$ 
blows up in finite negative time, 
that is, $T_{\max}^{-} < \infty$. 
\end{proposition}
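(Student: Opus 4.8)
The plan is to argue by contradiction. Suppose $\psi$ is a solution with $\psi|_{t=0}=\psi_0\in\mathcal{BA}_{\omega,-}$ that exists globally in \emph{both} time directions, i.e.\ $T_{\max}^{-}=\infty$ as well as $T_{\max}^{+}=\infty$. By Proposition~\ref{prop-conv-n} (applied in the positive direction) we get $\text{dist}_{H^1}(\psi(t),\mathcal{O}(Q_\omega))\le Ce^{-ct}$ for $t\to+\infty$. The key observation is that Proposition~\ref{prop-conv-n} is \emph{time-reversible} in spirit: the set $\mathcal{BA}_{\omega,-}$ is flow-invariant by Lemma~\ref{inv-}, and the equation \eqref{nls} is invariant under $t\mapsto -t$ together with complex conjugation, which maps $\mathcal{BA}_{\omega,-}$ to itself (it preserves $\mathcal{S}_\omega$, $\mathcal{M}$, and $\mathcal{K}$). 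Hence the very same argument applied to $\overline{\psi(-t)}$ shows that, \emph{if} $T_{\max}^{-}=\infty$, then also $\text{dist}_{H^1}(\psi(t),\mathcal{O}(Q_\omega))\le Ce^{-c|t|}$ as $t\to-\infty$. So under the contradiction hypothesis, $\psi$ converges exponentially to the orbit of the ground state in both time directions.

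The next step is to upgrade this two-sided exponential convergence to the rigidity statement $\psi=e^{i\omega t}Q_\omega$ up to symmetries. This is exactly the kind of conclusion one expects from a uniqueness/rigidity argument in the spirit of \cite{DM2009, DR2010}: a solution that is globally close to (and converges to) the ground-state orbit and whose energy equals $\mathcal{E}(Q_\omega)$ must, via the modulation decomposition \eqref{eq-energy11} and the coercivity \eqref{eq-energy0}, satisfy a closed system for $(\lambda_+,\lambda_-,\Gamma)$ whose only globally bounded solution decaying at both ends is the trivial one. Concretely: from \eqref{eq-dist11} with $\mathcal{E}(\psi)=\mathcal{E}(Q_\omega)$ we have $d_\omega^2(t)=2e_\omega\lambda_1^2(t)$, and from Step~3 of the proof of Proposition~\ref{prop-conv-n} (the Nakanishi--Schlag monotonicity argument), applied in both directions, $|\lambda_1(t)|$ is nonincreasing for $t>0$ and nondecreasing for $t<0$; together with $\lambda_1(\pm\infty)=0$ this forces $\lambda_1\equiv 0$, hence $\Gamma\equiv 0$ and $\lambda_+=\lambda_-$ identically, and then feeding this back into \eqref{eq-mod5}--\eqref{eq-mod6} and the $L^2$-constraint \eqref{eq-uni2-2} gives $\eta\equiv 0$, i.e.\ $\psi=e^{i\theta(t)}Q_\omega$ with $\theta'(t)=\omega$ by \eqref{eq-mod4}. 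So $\psi=e^{i\omega t}Q_\omega$ up to a phase, which lies in $\mathcal{BA}_{\omega,0}$, contradicting $\psi_0\in\mathcal{BA}_{\omega,-}$ (where $\mathcal{K}(\psi_0)<0$, strictly).

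I expect the main obstacle to be making the second step fully rigorous rather than hand-waving it, since the monotonicity of $|\lambda_1|$ in the proof of Proposition~\ref{prop-conv-n} was established only under the smallness regime guaranteed by the one-pass theorem, and one must be careful that the two one-pass applications (forward and backward) are compatible and that the bootstrap into $\eta\equiv 0$ really closes. It is likely cleaner to instead invoke the forthcoming uniqueness statement --- Proposition~\ref{prop-unique} --- which presumably says that a threshold solution in $\mathcal{BA}_{\omega,-}$ converging exponentially to $\mathcal{O}(Q_\omega)$ as $t\to+\infty$ must coincide with $Q_\omega^-$ up to symmetries; but $Q_\omega^-$ blows up in finite negative time by Theorem~\ref{main-sp}(ii), so $T_{\max}^-=\infty$ is impossible. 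Either route yields the contradiction; the cleanest writeup will use whichever uniqueness machinery is available at this point in the paper, with the time-reversal symmetry of \eqref{nls} and the invariance of $\mathcal{BA}_{\omega,-}$ doing the essential work.
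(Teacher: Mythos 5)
Your first step (time reversal via $\overline{\psi(-t)}$ to get exponential convergence to $\mathscr{O}(Q_{\omega})$ as $t\to-\infty$) is sound and matches the first assertion of the paper's Lemma \ref{lem-blowup1}. The rest of the proposal, however, has genuine gaps. First, the rigidity route: the statement that $|\lambda_{1}|$ is nonincreasing for $t>0$ and nondecreasing for $t<0$ together with $\lambda_{1}(\pm\infty)=0$ ``forces $\lambda_{1}\equiv 0$'' is false as written --- a profile that rises to a hump near $t=0$ and decays at both ends satisfies exactly those monotonicities. What you would actually need is to exclude an interior maximum of $|\lambda_{1}|$ anywhere on $\R$, and the Nakanishi--Schlag argument of Step 3 of Proposition \ref{prop-conv-n} can only be run at such a maximum if $d_{\omega}(\psi(t))$ is small there (the identity \eqref{eq-dist11} and the estimates \eqref{eq-ejec24} are only valid in the small-$d_{\omega}$ regime); the two exponential convergences give smallness only for $|t|$ large, not in the transient middle region, and you do not close this. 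Second, the fallback route is circular: the fact that $Q_{\omega}^{-}$ blows up in finite negative time (Theorem \ref{main-sp}(ii)) is itself \emph{deduced from} Proposition \ref{K-blowup} in the paper's proof of Theorem \ref{main-sp}, so it cannot be invoked to prove Proposition \ref{K-blowup}; moreover Proposition \ref{prop-unique} only identifies $\psi$ with some $U_{\omega}^{A_{0}}$, whose negative-time behaviour is precisely what is at issue.

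The idea your proposal is missing is the paper's actual mechanism, which avoids any global rigidity statement. From the backward convergence one extracts the quantitative sign condition $\mathcal{K}(\psi(t))\le -C_{1}\|\eta(t)\|_{H^{1}}$ for all $t<0$ (the second, and essential, assertion of Lemma \ref{lem-blowup1}, obtained from the expansion \eqref{eq-egf10-1} of $\mathcal{K}$ and the spectral inequality \eqref{eq-ejec4}). Feeding this into the truncated virial identity $y_{R}''=8\mathcal{K}(\psi)+A_{R}(\psi)$ with the error bound $|A_{R}|\le\varepsilon\|\eta\|_{H^{1}}$ of Lemma \ref{lem-blowup2} gives $y_{R}''<0$ on $(-\infty,0)$; concavity plus positivity of $y_{R}$ forces $y_{R}'<0$, hence $y_{R}(0)\le\lim_{t\to-\infty}y_{R}(t)=\int R^{2}\varphi(x/R)|Q_{\omega}|^{2}\,dx$, and letting $R\to\infty$ shows $\psi_{0}$ has finite variance. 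Lemma \ref{U-blowup} (the finite-variance convexity blow-up criterion) then yields $T_{\max}^{-}<\infty$, the desired contradiction. None of this appears in your proposal, so as it stands the proof is not complete.
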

To prove Proposition \ref{K-blowup}, 
we first 
consider a solution which has 
a finite variance. 
Namely, we shall show the following: 
\begin{lemma}\label{U-blowup}
Assume that $\omega \in 
(0, \omega_{3})$, 
where $\omega_{3} > 0$ be the 
constant given in Lemma 
\ref{tech-lem1}.
Suppose that a solution $\psi$ to \eqref{nls} with $\psi|_{t = 0} 
= \psi_{0} \in \mathcal{BA}_{\omega, -}$ defined on 
$(-T_{\max}^{-}, \infty)$ satisfies
\begin{equation*}
|x| \psi \in L^{2}(\R^{3}). 
\end{equation*}
Then, 
$\psi$ 
blows up in finite negative time, 
that is, $T_{\max}^{-} < \infty$. 
\end{lemma}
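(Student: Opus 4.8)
The plan is to run the classical Glassey-type virial argument, but in the form that uses \emph{global-in-time} concavity of the variance rather than a uniform sign of the second derivative. Suppose, towards a contradiction, that $T_{\max}^{-}=\infty$, so that $\psi$ is a global solution on all of $\R$. Because $\psi_{0}\in L^{2}(\R^{3},|x|^{2}dx)$, the solution stays in this weighted space for all $t$ and the virial identity \eqref{virial} applies on all of $\R$; in particular
\[
V(t):=\int_{\R^{3}}|x|^{2}|\psi(t,x)|^{2}\,dx\ge 0
\]
is well defined, and since $t\mapsto\mathcal{K}(\psi(t))$ is continuous (as $\psi\in C(\R;H^{1})$), \eqref{virial} shows $V\in C^{2}(\R)$ with $V''(t)=16\,\mathcal{K}(\psi(t))$.

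Next I would feed in the information about the sign of $\mathcal{K}$. Since $\psi_{0}\in\mathcal{BA}_{\omega,-}$, Lemma \ref{inv-} gives $\psi(t)\in\mathcal{BA}_{\omega,-}$, hence $\mathcal{K}(\psi(t))<0$, for \emph{every} $t\in\R$. Thus $V''(t)<0$ for all $t$, i.e. $V$ is strictly concave on the whole real line. But a concave $C^{2}$ function satisfies the tangent-line bound $V(t)\le V(t_{0})+V'(t_{0})(t-t_{0})$ for all $t,t_{0}\in\R$ (the integral remainder $\int_{t_{0}}^{t}(t-s)V''(s)\,ds$ is nonpositive for $t$ of either sign). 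Taking $t_{0}=0$, if $V'(0)\ne 0$ the right-hand side tends to $-\infty$ as $t\to+\infty$ or as $t\to-\infty$, contradicting $V\ge 0$; hence $V'(0)=0$. Then $V'(1)=\int_{0}^{1}V''(s)\,ds<0$, and the same bound at $t_{0}=1$ forces $V(t)\le V(1)+V'(1)(t-1)\to-\infty$ as $t\to+\infty$, again contradicting $V\ge 0$. This contradiction proves $T_{\max}^{-}<\infty$.

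The argument is short, and the only subtlety worth flagging is that one cannot use the naive version ``$V''\le-\delta<0$, hence $V$ hits $0$ in finite time'': along the forward flow $\mathcal{K}(\psi(t))$ is in general not bounded away from $0$ (indeed $\mathcal{K}(\psi(t))\to 0$ when the solution does not blow up, cf. Step 1 of the proof of Proposition \ref{prop-conv-n}), so one really needs global concavity together with the nonnegativity of $V$. The finite-variance hypothesis is used only to make $V$ finite and $C^{2}$ on all of $\R$ and to legitimize \eqref{virial}; the genuinely harder point — removing this hypothesis, which is the content of Proposition \ref{K-blowup} — will instead require a truncated/localized virial identity exploiting the radial symmetry, and is treated separately.
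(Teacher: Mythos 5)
Your argument is correct and is essentially the proof the paper has in mind: the paper omits the details and defers to the concavity-of-variance argument of Duyckaerts--Roudenko (Proposition 5.1 of \cite{DR2010}), which is exactly what you carry out, namely $V''(t)=16\mathcal{K}(\psi(t))<0$ on all of $\R$ by the virial identity \eqref{virial} and the invariance of $\mathcal{BA}_{\omega,-}$ (Lemma \ref{inv-}), combined with $V\ge 0$ and global concavity. Your remark that one cannot assume $\mathcal{K}(\psi(t))$ is bounded away from $0$ at the threshold level is exactly the right point to flag.
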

We can prove Lemma 
\ref{U-blowup} by a similar argument in 
the proof of Proposition 5.1 (Page 25)
of \cite{DR2010}. 
Thus, we omit the proof.

\begin{lemma}\label{lem-blowup1} 
Assume that $\omega \in 
(0, \omega_{3})$, 
where $\omega_{3} > 0$ is the 
constant given in Lemma 
\ref{tech-lem1}.
Suppose that 
a solution $\psi$ to \eqref{nls} with $\psi|_{t = 0} 
= \psi_{0} \in 
\mathcal{BA}_{\omega, -}$ is defined on 
$(-T_{\max}^{-}, \infty)$ and 
$T_{\max}^{-} = \infty$. 
Then, $\lim_{t \to -\infty} \text{dist}_{H^{1}} 
(\psi(t), \mathcal{O}(Q_{\omega})) = 0$ and 
there exists a constant 
$C_{1} > 0$ such that
\begin{equation}\label{eq-ode13}
\mathcal{K}(\psi(t)) 
< - C_{1} \|\eta(t)\|_{H^{1}} 
\qquad 
\mbox{for all 
$t \in (-\infty, 0)$}. 
\end{equation}
\end{lemma}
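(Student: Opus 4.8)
The plan is to establish the two assertions separately: the $t\to-\infty$ convergence follows quickly from Proposition \ref{prop-conv-n} by time reversal, whereas the lower bound \eqref{eq-ode13} requires a quantitative expansion of $\mathcal{K}$ along the modulation decomposition. For the convergence, note that since $T_{\max}^{-}=\infty$ the solution $\psi$ is global, hence $\widetilde{\psi}(t,x):=\overline{\psi(-t,x)}$ is again a global radial solution of \eqref{nls}; because $\mathcal{M}$, $\mathcal{E}$ and $\mathcal{K}$ are invariant under complex conjugation and $Q_{\omega}$ is real (so $\mathcal{O}(Q_{\omega})$ is conjugation-invariant), $\widetilde{\psi}|_{t=0}=\overline{\psi_{0}}\in\mathcal{BA}_{\omega,-}$. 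Applying Proposition \ref{prop-conv-n} to $\widetilde{\psi}$ gives $\text{dist}_{H^{1}}(\psi(t),\mathcal{O}(Q_{\omega}))\le Ce^{ct}$ for $t<0$, which is the first assertion; combined with Proposition \ref{prop-conv-n} for $\psi$ itself we get $\psi(t)\to\mathcal{O}(Q_{\omega})$ as $t\to\pm\infty$. Running the one-pass theorem (Theorem \ref{thm-opass1}) forward from a sufficiently negative time $t_{0}$ with $\widetilde{d}_{\omega}(\psi(t_{0}))<R$, and discarding alternative (ii) by the convergence as $t\to+\infty$, we obtain $\widetilde{d}_{\omega}(\psi(t))<R+R^{3/2}$ for all $t\in\R$; thus for $R$ small the decomposition \eqref{eq-dec1}, \eqref{eq-energy1} is valid on all of $\R$, Lemma \ref{lem-dist1} applies on $I=\R$, and since $\mathcal{E}(\psi)=\mathcal{E}(Q_{\omega})$ here, \eqref{eq-dist11} gives $d_{\omega}^{2}(t)=2e_{\omega}\lambda_{1}^{2}(t)$, whence $\|\eta(t)\|_{H^{1}}\sim\|\eta(t)\|_{E}\sim d_{\omega}(t)\sim|\lambda_{1}(t)|$ for every $t$, all of these tending to $0$ as $t\to\pm\infty$.

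For \eqref{eq-ode13}, I would start from the algebraic identity $\mathcal{K}(u)=2\mathcal{E}(u)-\tfrac14\|u\|_{L^{4}}^{4}-\tfrac23\|u\|_{L^{6}}^{6}$; using $\mathcal{E}(\psi(t))=\mathcal{E}(Q_{\omega})$, $\mathcal{K}(Q_{\omega})=0$ and the phase-invariance of the $L^{p}$-norms, this becomes $\mathcal{K}(\psi(t))=\tfrac14\big(\|Q_{\omega}\|_{L^{4}}^{4}-\|Q_{\omega}+\eta(t)\|_{L^{4}}^{4}\big)+\tfrac23\big(\|Q_{\omega}\|_{L^{6}}^{6}-\|Q_{\omega}+\eta(t)\|_{L^{6}}^{6}\big)$, and expanding with $\eta_{1}=\mathrm{Re}\,\eta$ yields
\[
\mathcal{K}(\psi(t))=-\big(Q_{\omega}^{3}+4Q_{\omega}^{5},\,\eta_{1}(t)\big)_{L^{2}}+O\big(\|\eta(t)\|_{H^{1}}^{2}\big).
\]
From \eqref{eq-energy1} and $\mathcal{Y}_{\omega,\pm}=\mathcal{Y}_{\omega,1}\pm i\mathcal{Y}_{\omega,2}$ (see \eqref{eq-egf1}) one has $\eta_{1}=2\lambda_{1}\mathcal{Y}_{\omega,1}+\mathrm{Re}\,\Gamma$, and the coefficient of $\lambda_{1}$ is computed from $L_{\omega,+}\mathcal{Y}_{\omega,1}=-e_{\omega}\mathcal{Y}_{\omega,2}$ (by \eqref{eq-pre1}), $L_{\omega,+}Q_{\omega}=-2Q_{\omega}^{3}-4Q_{\omega}^{5}$ (by \eqref{eq-dec4}) and self-adjointness of $L_{\omega,+}$: one gets $2(Q_{\omega}^{3}+2Q_{\omega}^{5},\mathcal{Y}_{\omega,1})_{L^{2}}=e_{\omega}(Q_{\omega},\mathcal{Y}_{\omega,2})_{L^{2}}$, hence by Lemma \ref{tech-lem1} and the normalization \eqref{eq-egf10},
\[
2\big(Q_{\omega}^{3}+4Q_{\omega}^{5},\,\mathcal{Y}_{\omega,1}\big)_{L^{2}}\le\tfrac{e_{\omega}}{2}\big(Q_{\omega},\mathcal{Y}_{\omega,2}\big)_{L^{2}}=:-c_{0}<0.
\]

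It then remains to show that the $\Gamma$-part of $\eta_{1}$ is negligible against $|\lambda_{1}|$ as $t\to-\infty$, which is the crux of the proof. By time reversal and Step 3 of the proof of Proposition \ref{prop-conv-n} applied to $\widetilde{\psi}$, $|\lambda_{1}(t)|$ is non-decreasing for $t$ sufficiently negative; integrating the modulation equation \eqref{eq-mod6} from $-\infty$ (where $\lambda_{-}\to0$) and using \eqref{eq-ejec25} then gives $|\lambda_{-}(t)|\lesssim|\lambda_{1}(t)|^{2}$, so $\lambda_{+}(t)\lambda_{-}(t)=O(|\lambda_{1}(t)|^{3})$, and \eqref{eq-energy3} together with $\mathcal{E}(\psi)=\mathcal{E}(Q_{\omega})$ and Lemma \ref{lem-energy1} force $\|\Gamma(t)\|_{H^{1}}^{2}\sim\langle\mathscr{L}_{\omega}\Gamma(t),\Gamma(t)\rangle=2e_{\omega}\lambda_{+}(t)\lambda_{-}(t)+O(\|\eta(t)\|_{H^{1}}^{3})=O(|\lambda_{1}(t)|^{3})$, i.e. $\|\Gamma(t)\|_{H^{1}}=o(|\lambda_{1}(t)|)$. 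Plugging this into the expansion, $\big(Q_{\omega}^{3}+4Q_{\omega}^{5},\eta_{1}(t)\big)_{L^{2}}=\kappa\,\lambda_{1}(t)+o(|\lambda_{1}(t)|)$ with $\kappa\le-c_{0}$, so $\mathcal{K}(\psi(t))=-\kappa\,\lambda_{1}(t)+o(|\lambda_{1}(t)|)$; since $\mathcal{K}(\psi(t))<0$ for all $t$ (invariance of $\mathcal{BA}_{\omega,-}$, Lemma \ref{inv-}) and $\lambda_{1}(t)\ne0$ (otherwise $\eta(t)=0$ and $\mathcal{K}(\psi(t))=0$), we must have $\lambda_{1}(t)<0$, and then $\mathcal{K}(\psi(t))\le-\tfrac{c_{0}}{2}|\lambda_{1}(t)|\lesssim-\|\eta(t)\|_{H^{1}}$ for $t$ sufficiently negative. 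On the remaining compact sub-interval of $(-\infty,0)$ the ratio $\mathcal{K}(\psi(t))/\|\eta(t)\|_{H^{1}}$ is continuous, with $\mathcal{K}(\psi(t))<0$ and $\|\eta(t)\|_{H^{1}}>0$ throughout, hence bounded above by a negative constant; combining the two regimes gives \eqref{eq-ode13}. The hard part, as indicated, is the slaving estimate $\|\Gamma(t)\|=o(|\lambda_{1}(t)|)$, which promotes the a priori comparability $\|\Gamma\|\lesssim|\lambda_{1}|$ to higher order and thereby makes the linear term of $\mathcal{K}$ dominant; it uses essentially both the monotonicity of $|\lambda_{1}|$ near $-\infty$ and the threshold identity $\mathcal{E}(\psi)=\mathcal{E}(Q_{\omega})$.
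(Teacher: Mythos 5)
Your proof is correct and follows essentially the same modulation-analysis strategy as the paper's proof of Lemma \ref{lem-blowup1}: backward convergence to the orbit via the one-pass theorem, monotonicity of $|\lambda_{1}|$ near $-\infty$, slaving of $\lambda_{-}$ and $\Gamma$ to $\lambda_{1}$ through the threshold identity $\mathcal{E}(\psi)=\mathcal{E}(Q_{\omega})$, and the first-order expansion of $\mathcal{K}$ whose leading coefficient is made sign-definite by Lemma \ref{tech-lem1} and \eqref{eq-egf10}. The only (harmless) variations are that you obtain the backward convergence by time reversal together with a direct citation of Proposition \ref{prop-conv-n}, and you derive $|\lambda_{-}(t)|\lesssim|\lambda_{1}(t)|^{2}$ by integrating \eqref{eq-mod6} from $-\infty$ using the monotonicity of $|\lambda_{1}|$, whereas the paper's Step 2 proves $\lambda_{-}/\lambda_{+}\to 0$ by contradiction; both give the same $o(|\lambda_{1}|)$ control of $\|\Gamma(t)\|_{H^{1}}$ needed to make the linear term dominant.
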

\begin{proof}
We divide the proof into 
four steps. 

\textbf{(Step 1). } 
Suppose that there exists $\delta > 0$ 
such that $\mathcal{K}(\psi(t)) 
< - \delta$ for all $t \in 
(- T_{\max}^{-}, \infty)$. 
Then, we see that $T_{\max}^{-} 
< \infty$. 
Thus, it suffices to consider the case of  
$\limsup_{t \to -\infty} 
K(\psi(t)) = 0$. 
Then, there exists a sequence 
$\{t_{n}\}$ in $(- \infty, 0)$ 
with $\lim_{n \to \infty} t_{n} 
= -\infty$ such that $\lim_{n \to \infty} 
K(\psi(t_{n})) = 0$. 
Then, by a similar way to \textbf{(Step 1)} 
--\textbf{(Step 3)} in the proof of Proposition \ref{prop-conv-n}, 
we see that 
for any $R > 0$, there exists $t_{N}$ in $(-\infty, 0)$ 
such that  
\begin{equation*}
d_{\omega} (\psi(t))
< C R \qquad \mbox{for all 
$t \in (-\infty, t_{N}]$},  
\end{equation*} 
\begin{equation} \label{eq-bu2-1}
\lambda_{+}^{2}(\psi(t)) 
+ \lambda_{-}^{2}(\psi(t)) \leq 
\|\eta(t)\|_{E}^{2} \lesssim R 
\qquad \mbox{for all $t \in 
(-\infty, t_{N}]$}. 
\end{equation}
Since $R > 0$ is arbitrary, we have 
$\lim_{t \to -\infty} \text{dist}_{H^{1}} 
(\psi(t), \mathcal{O}(Q_{\omega})) = 0$. 
Replacing 
$t$ by $t - T \; (T \gg 1)$, 
we may assume that for any 
$v_{0} \in (0, \widetilde{\gamma}_{\omega})$,  
\begin{equation}\label{eq-ejec18-2-1}
v_{0} 
= \max_{t \leq 0} \{ 
|\lambda_{1}(t)|\} > 0.
\end{equation}   
In addition, we see that 
\begin{equation} \label{eq-bu11}
\mbox{
$|\lambda_{1}(t)|$ is an increasing 
function of $t$. 
}
\end{equation}
   
\textbf{(Step 2). 
}
We claim that 
\begin{equation}\label{eq-ode4}
\lim_{t \to - \infty} 
\biggl|\frac{\lambda_{-}(t)}
{\lambda_{+}(t)} \biggl| = 0. 
\end{equation} 
Suppose to the contrary that 
there exists a constant 
$\delta > 0$ such that 
\begin{equation}\label{eq-ode5}
\limsup_{t \to - \infty} 
\biggl|\frac{\lambda_{-}(t)}
{\lambda_{+}(t)} \biggl| \geq \delta. 
\end{equation} 
Then there exists a sequence 
$\{t_{n}\}$ in $(-\infty, 0)$ 
with $\lim_{n \to \infty} t_{n} 
= -\infty$ such that 
\[
\biggl|\frac{\lambda_{-}(t_{n})}
{\lambda_{+}(t_{n})} \biggl| 
\geq \frac{\delta}{2}.
\]
This together with \eqref{eq-dist6} yields that
\begin{equation} \label{eq-ode6}
|\lambda_{1}(t_{n})| 
\leq |\lambda_{+}(t_{n})|
+ |\lambda_{-}(t_{n})| 
\leq \frac{2 + \delta}{\delta} 
|\lambda_{-}(t_{n})| 
\end{equation}
From \eqref{eq-mod6}, 
\eqref{eq-ejec24}, \eqref{eq-bu11} and 
\eqref{eq-ode6}, we have 
\begin{equation*}
\begin{split}
|\lambda_{-}(t)| 
& \geq 
e^{- e_{\omega} (t - t_{n})}
|\lambda_{-}(t_{n})| 
- C_{1} |\lambda_{1}(t_{n})|^{2}
\int_{t}^{t_{n}} 
e^{- e_{\omega}(t - s)} ds \\
& \geq e^{- e_{\omega} 
(t - t_{n})}
|\lambda_{-}(t_{n})| 
- \frac{C_{1}}{e_{\omega}} 
|\lambda_{1}(t_{n})|^{2} 
e^{- e_{\omega} (t - t_{n})} \\
& \geq 
\left(1 - \frac{(2 + \delta)
C_{1}}{\delta e_{\omega}} 
|\lambda_{1}(t_{n})|
\right) e^{- e_{\omega} 
(t - t_{n})}
|\lambda_{-}(t_{n})|. 
\end{split}
\end{equation*}
Since $\lim_{t \to - \infty} 
|\lambda_{1}(t)| = 0$, 
there exists 
a sufficiently large 
$n_{0} \in \mathbb{N}$
such that 
\[
1 - \frac{(2 + \delta)
C_{1}}{\delta e_{\omega}} 
|\lambda_{1}(t_{n_0})| 
\geq \frac{1}{2}. 
\]
Then, we obtain 
\[
|\lambda_{-}(t)| 
\geq \frac{e^{- e_{\omega}(t - t_{n_0})}}{2}
|\lambda_{-}(t_{n_0})| \to \infty 
\qquad \mbox{as $t \to -\infty$}, 
\]
which is a contradiction. 
Thus, \eqref{eq-ode4} holds. 

\textbf{(Step 3).
}
Now, we see from \eqref{eq-uni2-2}, 
\eqref{eq-energy11} and \eqref{eq-egf8} that 
\begin{equation}\label{eq-egf11}
(Q_{\omega}, \Gamma(t))_{L_{\text{real}}^{2}}
= - \mathcal{M}(\eta(t)).
\end{equation}
Moreover, we see from \eqref{eq-dec4} and \eqref{eq-dec5} that 
\begin{equation}\label{eq-egf12}
\begin{split}
\mathcal{K}^{\prime}
(Q_{\omega}) + 2\omega 
Q_{\omega}
& = 
- 2\Delta Q_{\omega} 
+ 2\omega Q_{\omega}
- 3 Q_{\omega}^{3}
- 6 Q_{\omega}^{5} \\
&=
\frac{3}{2}L_{\omega,-} Q_{\omega}
+
\frac{1}{2} L_{\omega,+}Q_{\omega} 
- 2Q_{\omega}^{5}
\\[6pt]
&=
\frac{1}{2} 
L_{\omega,+}Q_{\omega}
- 2 Q_{\omega}^{5}.
\end{split}
\end{equation}
We note that the decomposition \eqref{eq-energy11} of $\eta(t)$ 
is expressed as follows in terms of the functions 
$\mathcal{Y}_{\omega, 1}$ and $\mathcal{Y}_{\omega, 2}$:
\begin{equation}\label{eq-egf2}
\eta(t)
=
2 \lambda_{1}(t)\mathcal{Y}_{\omega, 1}+ 2 i \lambda_{2}(t)\mathcal{Y}_{\omega, 2}+\Gamma(t).
\end{equation}
This together with 
\eqref{eq-egf11}, 
\eqref{eq-egf12}, 
\eqref{eq-egf2} and 
\eqref{eq-uni2-2} shows that 
\begin{equation*}
\begin{split}
&\langle \mathcal{K}^{\prime}
(Q_{\omega}), \eta(t) 
\rangle_{H^{-1}, H^{1}} 
\\[6pt]
&= 
\langle 
\mathcal{K}^{\prime}
(Q_{\omega}) 
+ 2\omega Q_{\omega}, \eta(t) 
\rangle_{H^{-1}, H^{1}}
- 2\omega (Q_{\omega},\eta(t))_{L_{\text{real}}^{2}}
\\[6pt]
& = \langle 
\frac{1}{2} L_{\omega, +} Q_{\omega}
- 2 Q_{\omega}^{5}, 
2 \lambda_{1}(t)\mathcal{Y}_{\omega, 1}+ 2 i \lambda_{2}(t)\mathcal{Y}_{\omega, 2}+\Gamma(t) 
\rangle_{H^{-1}, H^{1}} 
- 2\omega (Q_{\omega}, 
\eta(t))_{L_{\text{real}}^{2}}
\\[6pt]
& = \langle 
\frac{1}{2} L_{\omega, +} Q_{\omega}
- 2 Q_{\omega}^{5}, 
2 \lambda_{1}(t)\mathcal{Y}_{\omega, 1} + \Gamma(t) 
\rangle_{H^{-1}, H^{1}} 
+ 2\omega \mathcal{M}(\eta(t)). 
\end{split}
\end{equation*}
Here, we have used the fact that 
$Q_{\omega}, L_{\omega, +} Q_{\omega}$ 
and $\mathcal{Y}_{\omega, 2}$ are 
real-valued functions in the last equality. 
Observe from \eqref{eq-dec4} that 
$\frac{1}{2} L_{\omega, +} Q_{\omega}
- 2 Q_{\omega}^{5} 
= - Q_{\omega}^{3} - 4 Q_{\omega}^{5}$. 
This together with \eqref{eq-pre1} 
yields that  
\begin{equation}\label{eq-egf13}
\begin{split}
\langle \mathcal{K}^{\prime}
(Q_{\omega}), \eta(t) 
\rangle_{H^{-1}, H^{1}} 
& 
= \lambda_{1}(t)
(L_{\omega,+} Q_{\omega}, \mathcal{Y}_{\omega, 1} 
)_{L^{2}}
- 4 \lambda_{1}(t)
(Q_{\omega}^{5}, 
\mathcal{Y}_{\omega, 1} 
)_{L^{2}}
\\[6pt]
&\quad - (Q_{\omega}^{3}
+ 4Q_{\omega}^{5}, \Gamma(t) 
)_{L_{\text{real}}^{2}} + 2 \omega \mathcal{M}(\eta(t)) 
\\[6pt]
&=
- e_{\omega} 
\lambda_{1}(t)( Q_{\omega}, \mathcal{Y}_{\omega, 2})_{L^{2}}
- 4 \lambda_{1}(t)
(Q_{\omega}^{5}, 
\mathcal{Y}_{\omega, 1} 
)_{L^{2}}
\\[6pt]
&\quad 
- (Q_{\omega}^{3}
+ 4 Q_{\omega}^{5}, \Gamma(t) 
)_{L_{\text{real}}^{2}} + 2 \omega \mathcal{M}(\eta(t)).
\end{split}
\end{equation}
Taylor's expansion of $\mathcal{K}$ around $Q_{\omega}$ together with $\mathcal{K}(Q_{\omega})=0$ and \eqref{eq-egf13} shows that 
\begin{equation}\label{eq-egf10-1}
\begin{split}
\mathcal{K}(\psi(t))
&=
\mathcal{K}(Q_{\omega}+\eta(t))
=
\langle \mathcal{K}'(Q_{\omega}), \eta(t)\rangle 
+O(\|\eta(t)\|_{H^{1}}^{2})
\\[6pt]
&=
- e_{\omega} 
\lambda_{1}(t)( Q_{\omega}, \mathcal{Y}_{\omega, 2})_{L^{2}}
-
4 \lambda_{1}(t)
(Q_{\omega}^{5}, 
\mathcal{Y}_{\omega, 1} )_{L^{2}}
- (Q_{\omega}^{3}, \Gamma(t))_{L_{\text{real}}^{2}}
\\[6pt]
&\quad 
- 4 (Q_{\omega}^{5},\Gamma(t))_{L_{\text{real}}^{2}} 
+
O(\| \eta(t) \|_{H^{1}}^{2}).
\end{split}
\end{equation}

\textbf{(Step 4). 
}
From \eqref{eq-ode4}, 
for any $\varepsilon>0$, 
there exists $T_{\varepsilon} > 0$ 
such that 
\begin{equation}\label{eq-ode7}
|\lambda_{-}(t)| < \varepsilon 
|\lambda_{+}(t)| 
\qquad \mbox{for all 
$t < - T_{\varepsilon}$}. 
\end{equation}
It follows from 
\eqref{eq-energy3}, the condition $\mathcal{E}(\psi) 
= \mathcal{E}(Q_{\omega})$,     
\eqref{eq-dist10}, \eqref{eq-dist11} and \eqref{eq-energy0} that 
\[
e_{\omega} |\lambda_{+}(t)||\lambda_{-}(t)| + C |\lambda_{1}(t)|^{3} 
\geq \|\Gamma(t)\|_{H^{1}}^{2}.
\]
This together with 
\eqref{eq-ode7} and 
\eqref{eq-dist6} yields that 
\begin{equation}\label{eq-ode8}
\|\Gamma(t)\|_{H^{1}}^{2} 
\leq e_{\omega} \varepsilon
|\lambda_{+}(t)|^{2} + C |\lambda_{1}(t)|^{3}  
\leq 
C \varepsilon
|\lambda_{1}(t)|^{2}
\end{equation}
Suppose that there exists 
$t_{*} < - T_{\varepsilon}$ 
such that $\lambda_{1}(t_{*}) > 0$. 
Then, it follows from 
\eqref{eq-egf10-1}, 
\eqref{eq-egf10},
\eqref{eq-ejec4} and 
\eqref{eq-ode8} that 
\begin{equation} \label{eq-ode9}
\begin{split}
0 > K(\psi(t_{*})) 
& \geq 
\frac{e_{\omega}}{2}
\lambda_{1}(t_{*}) 
|(Q_{\omega}, 
\mathcal{Y}_{\omega, 
2})_{L^{2}}| - 
C(\|Q_{\omega}\|_{L^{\infty}}^{2} 
+ \|Q_{\omega}\|_{L^{\infty}}^{4})\|Q_{\omega}\|_{L^{2}}
\|\Gamma(t_{*})\|_{L^{2}} \\
& \geq \frac{e_{\omega}}{4}
\lambda_{1}(t_{*}) 
|(Q_{\omega}, 
\mathcal{Y}_{\omega, 
2})_{L^{2}}| \geq 0, 
\end{split}
\end{equation}
which is a contradiction. 
Thus, we see from \eqref{eq-bu11} and $\lim_{t \to -\infty} 
\lambda_{1}(t) = 0$ that 
$\lambda_{1}(t) 
< 0$ for $t < - T_{\varepsilon}$. 
Then, from 
\eqref{eq-dist14}, 
\eqref{eq-dist10} and 
\eqref{eq-energy7}, 
we have $\lambda_{1}(t) 
\leq - C \|\eta(t)\|_{H^{1}}$. 
Then, by a similar argument as in 
\eqref{eq-ode9}, we obtain 
\[
\mathcal{K}(\psi(t)) 
\leq \frac{e_{\omega}}{4}
\lambda_{1}(t) 
|(Q_{\omega}, 
\mathcal{Y}_{\omega, 
2})_{L^{2}}| 
\leq - \frac{e_{\omega}}{4}
C |(Q_{\omega}, 
\mathcal{Y}_{\omega, 
2})_{L^{2}}| 
\|\eta(t)\|_{H^{1}} 
\qquad \mbox{for all 
$t < - T_{\varepsilon}$}. 
\]
Therefore, 
from the continuity of 
$\mathcal{K}(\psi(t))$ and 
$\eta(t)$, we see that 
\eqref{eq-ode13} holds. 
\end{proof}

Let $\varphi $ be a radial 
function such that 
\begin{equation*}
\varphi \in C^{\infty}(\R^{3}), \qquad 
\varphi(r) \geq 0, \quad \varphi^{\prime \prime}(r) \leq 2 
\qquad (r \geq 0), \qquad 
\varphi(r)
= 
\begin{cases}
r^{2} & \qquad (0 \leq r \leq 1), \\
0 & \qquad (r \geq 2).
\end{cases}
\end{equation*}
For $R > 0$, we put 
\begin{equation} \label{eq-ode15}
y_{R}(t) := \int_{\R^{3}} R^{2} \varphi(\frac{x}{R}) |\psi(t, x)|^{2} dx. 
\end{equation}
Then, we obtain 
\begin{equation} \label{eq-ode11}
y_{R}^{\prime \prime}(t) 
= 8 \mathcal{K}(\psi) + A_{R}(\psi(t)), 
\end{equation}
where 
\begin{equation*}
\begin{split}
A_{R}(\psi(t)) 
& := 4\int_{\R^{3}} 
\left( \varphi^{\prime \prime}(\frac{x}{R}) - 2 
\right)|\nabla \psi(t, x)|^{2} dx
- \int_{\R^{3}} 
\left(\Delta \varphi(\frac{x}{R}) - 6
\right) 
|\psi(t, x)|^{4} dx \\
& \quad 
- \frac{4}{3} 
\int_{\R^{3}}
\left( 
\Delta \varphi(\frac{x}{R}) - 6
\right) |\psi(t, x)|^{6} dx 
+ \frac{1}{R^{2}} 
\int_{\R^{3}} \Delta^{2} \varphi(\frac{x}{R}) |\psi(t, x)|^{2} dx.
\end{split}
\end{equation*}
Then, by a similar argument in 
\cite[Claim 4.3]{DM2009} 
(see also \cite[Section 5.2]{DR2010}), 
we can prove the following: 
\begin{lemma} \label{lem-blowup2}
For any $\varepsilon > 0$, 
there exists $R_{\varepsilon} > 0$ 
such that 
for any $R \geq R_{\varepsilon}$, 
we have 
\begin{equation} \label{eq-ode10}
|A_{R}(\psi(t))| \leq 
\varepsilon \|\eta(t)\|_{H^{1}}. 
\end{equation}
\end{lemma}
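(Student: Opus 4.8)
The plan is to use two structural facts about $A_R$. First, $A_R(e^{i\theta}u)=A_R(u)$ for every constant phase $\theta$, since $|\nabla(e^{i\theta}u)|=|\nabla u|$ and $|e^{i\theta}u|=|u|$. Second, $A_R$ vanishes at the ground state: applying the identity \eqref{eq-ode11} to the exact solution $e^{i\omega t}Q_{\omega}$, for which $y_R(t)$ is constant and $\mathcal{K}(e^{i\omega t}Q_{\omega})=\mathcal{K}(Q_{\omega})=0$, we get $0=y_R''(t)=8\mathcal{K}(Q_{\omega})+A_R(Q_{\omega})=A_R(Q_{\omega})$, hence $A_R(Q_{\omega})=0$ for every $R>0$. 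Writing $\psi(t)=e^{i\theta(t)}(Q_{\omega}+\eta(t))$ as in \eqref{eq-dec1}, the first fact gives $A_R(\psi(t))=A_R(Q_{\omega}+\eta(t))$, and the second then gives $A_R(\psi(t))=A_R(Q_{\omega}+\eta(t))-A_R(Q_{\omega})$, so that $A_R(\psi(t))$ collects only contributions containing at least one factor of $\eta(t)$. This is essential: the right-hand side $\eps\|\eta(t)\|_{H^1}$ of \eqref{eq-ode10} may itself be arbitrarily small, so no $\eta$-independent remainder of size $O(e^{-cR})$ could be tolerated.

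Next I would record that, because $\varphi(r)=r^2$ near $r=0$, in $\R^3$ the weights $\varphi''(x/R)-2$ and $\Delta\varphi(x/R)-6$ vanish on $\{|x|\le R\}$ and are uniformly bounded, while $\Delta^2\varphi(x/R)$ is uniformly bounded, supported in $\{R\le|x|\le 2R\}$, and carries the prefactor $R^{-2}$. Combining this with the pointwise bounds $\big||\nabla(Q_{\omega}+\eta)|^2-|\nabla Q_{\omega}|^2\big|\lesssim|\nabla Q_{\omega}|\,|\nabla\eta|+|\nabla\eta|^2$ and $\big||Q_{\omega}+\eta|^{p}-Q_{\omega}^{p}\big|\lesssim\sum_{j=1}^{p}Q_{\omega}^{\,p-j}|\eta|^{j}$ for $p\in\{2,4,6\}$, the estimate of $|A_R(\psi(t))|$ reduces to tail integrals over $\{|x|\ge R\}$ of the schematic types $\int_{|x|\ge R}\!\big(|\nabla Q_{\omega}|\,|\nabla\eta|+|\nabla\eta|^2\big)$, $\int_{|x|\ge R}\sum_{j=1}^{p}Q_{\omega}^{\,p-j}|\eta|^{j}$ for $p=4,6$, and $R^{-2}\!\int_{|x|\ge R}\!\big(Q_{\omega}|\eta|+|\eta|^2\big)$. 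Each is handled by Hölder's inequality and the embeddings $H^1(\R^3)\hookrightarrow L^q(\R^3)$, $2\le q\le 6$. The terms carrying a factor of $Q_{\omega}$ or $\nabla Q_{\omega}$ are controlled by the exponential decay $Q_{\omega}(x)+|\nabla Q_{\omega}(x)|\lesssim e^{-c|x|}$ (a standard consequence of \eqref{sp}), which gives $\|Q_{\omega}\|_{L^q(|x|\ge R)}+\|\nabla Q_{\omega}\|_{L^2(|x|\ge R)}\lesssim e^{-cR}$, hence a bound $Ce^{-cR}\big(\|\eta(t)\|_{H^1}+\cdots+\|\eta(t)\|_{H^1}^{6}\big)$; since $\|\eta(t)\|_{H^1}$ is uniformly bounded along the flow this is $\lesssim e^{-cR}\|\eta(t)\|_{H^1}$. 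For the purely $\eta$-dependent tails $\int_{|x|\ge R}|\nabla\eta|^2$, $\int_{|x|\ge R}|\eta|^4$, $\int_{|x|\ge R}|\eta|^6$ I would peel off one factor in the full $H^1$ norm and keep the rest in the tail, e.g. $\int_{|x|\ge R}|\eta|^4\le\|\eta\|_{L^4(|x|\ge R)}^3\|\eta\|_{L^4}\lesssim\|\eta\|_{L^4(|x|\ge R)}^3\,\|\eta(t)\|_{H^1}$. The point is that in the setting where the lemma is applied — a global solution with $\psi_0\in\mathcal{BA}_{\omega,-}$ and $T_{\max}^{-}=\infty$ — Proposition \ref{prop-conv-n}, Lemma \ref{lem-blowup1} and the one-pass theorem (Theorem \ref{thm-opass1}) force $\psi(t)$ to stay, for all $t$, uniformly close to the compact set $\mathscr{O}(Q_{\omega})$; together with the exponential decay of $Q_{\omega}$ this makes the trajectory $\{\psi(t)\}$ bounded and $H^1$-tight, so $\sup_{t}\big(\|\nabla\eta(t)\|_{L^2(|x|\ge R)}+\|\eta(t)\|_{L^4(|x|\ge R)}+\|\eta(t)\|_{L^6(|x|\ge R)}\big)=o_R(1)$ (uniformly in $t$), whence the purely $\eta$-dependent tails are $o_R(1)\|\eta(t)\|_{H^1}$. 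The bilaplacian term is likewise $\lesssim R^{-2}\|\eta(t)\|_{H^1}$.

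Collecting the estimates gives $|A_R(\psi(t))|\le o_R(1)\,\|\eta(t)\|_{H^1}$ uniformly in $t$, and choosing $R_{\eps}$ so that $o_R(1)\le\eps$ for $R\ge R_{\eps}$ proves \eqref{eq-ode10}. I expect the only genuinely delicate points to be (i) the identity $A_R(Q_{\omega})=0$, without which an irreducible $O(e^{-cR})$ error would survive and defeat the bound, and (ii) the uniform $H^1$-tightness of the trajectory near $\mathscr{O}(Q_{\omega})$, which is what turns the cubic and quintic tail integrals — a priori only superlinear in $\eta$ — into $o_R(1)\|\eta(t)\|_{H^1}$. The remaining work — bookkeeping the supports of the weights, the pointwise difference estimates, and the Hölder/Sobolev/exponential-decay estimates of the individual tail integrals — is routine and parallels \cite[Claim 4.3]{DM2009} and \cite[Section 5.2]{DR2010}.
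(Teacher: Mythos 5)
Your proposal is correct, and the two points you single out as delicate are indeed the ones that matter. Note, however, that the paper offers no proof of Lemma \ref{lem-blowup2} at all: it simply defers to \cite[Claim 4.3]{DM2009} and \cite[Section 5.2]{DR2010}, so the comparison is with that cited route rather than with an argument in the text. Your skeleton (restrict to $\{|x|\ge R\}$, exponential decay of $Q_{\omega}$ for every term carrying a factor of $Q_{\omega}$ or $\nabla Q_{\omega}$, separate treatment of the pure-$\eta$ tails) is the same as theirs, but you make two choices worth highlighting. First, you obtain the indispensable cancellation $A_{R}(Q_{\omega})=0$ dynamically, from the constancy of $y_{R}$ along the standing wave together with $\mathcal{K}(Q_{\omega})=0$; this is clean and exact, and it is genuinely needed, since (as you observe) an $\eta$-independent remainder $O(e^{-cR})$ would be fatal once $\|\eta(t)\|_{H^{1}}\to 0$ along the trajectory. (Equivalently one can read it as a localized Pohozaev identity for \eqref{sp}.) Second, for the purely $\eta$-dependent tails — above all $\int_{|x|\ge R}|\nabla\eta|^{2}$, which the radial Strauss inequality used in \cite{DM2009, DR2010} does not reach — you substitute a soft precompactness/tightness argument: in every context where the lemma is invoked (Propositions \ref{K-blowup} and \ref{prop-scatter}), the solution converges to the compact orbit $\mathscr{O}(Q_{\omega})$ at both temporal ends, so $\overline{\{\psi(t)\}}$ is compact in $H^{1}$ and the tail norms are $o_{R}(1)$ uniformly in $t$; peeling off one factor into $\|\eta(t)\|_{H^{1}}$ then gives the stated linear bound. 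This is legitimate and involves no circularity (Proposition \ref{prop-conv-n}, Lemma \ref{lem-blowup1} and Lemma \ref{lem-scatter1} are established independently of Lemma \ref{lem-blowup2}), though it makes $R_{\varepsilon}$ depend non-quantitatively on the particular trajectory, whereas the Strauss-based route of \cite{DR2010} gives explicit negative powers of $R$; your phrase ``stay, for all $t$, uniformly close to $\mathscr{O}(Q_{\omega})$'' should be read as ``converge to $\mathscr{O}(Q_{\omega})$ as $t\to\pm\infty$, hence have precompact range,'' which is what your argument actually uses and what the applications supply.
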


We are now in a position to prove 
Proposition \ref{K-blowup}. 
Actually, the proof is similar to 
that of \cite{DR}. 
However, for the reader's convenience, 
we shall give the proof. 
\begin{proof}[Proof of Proposition 
\ref{K-blowup}]
Suppose to the contrary 
that $T_{\max}^{-} 
= \infty$. 
We see from \eqref{eq-ode11} and Lemmas 
\ref{lem-blowup1} and 
\ref{lem-blowup2}
that 
$y_{R}^{\prime \prime}(t) < 0$ 
for all $t < (-\infty, 0)$. 
We claim that 
\begin{equation}\label{eq-ode12}
y_{R}^{\prime}(t) < 0 
\qquad \mbox{for all $t < 0$}. 
\end{equation}
Suppose to the contrary that 
\eqref{eq-ode12} fails. 
Then, one of the following two cases must occur:
there exist 
$t_{0} < 0$ and $\varepsilon_{0} > 0$ such that 
$y_{R}^{\prime}(t_{0}) > \varepsilon_{0}$, 
or
there exists $\widetilde{t}_{0} < 0$ 
such that $y_{R}^{\prime}(\widetilde{t}_{0}) = 0$. 
If the latter case occurs, 
we see from 
$y_{R}^{\prime \prime}(t) < 0$ 
for all $t < (-\infty, 0)$ that 
there exists $\widetilde{t}_{1} \in (\widetilde{t}_{0}, 0)$ 
and $\widetilde{\varepsilon}_{1} > 0$ such that 
$y_{R}^{\prime}(\widetilde{t}_{1}) > \widetilde{\varepsilon}_{1}$. 
Thus, it suffices to consider only the former case. 

We see from $y_{R}^{\prime \prime}(t) < 0$ 
for all $t < (-\infty, 0)$ and $y_{R}^{\prime}(t_{0}) > \varepsilon_{0}$ that 
$y_{R}^{\prime}(t) > \varepsilon_{0}$ 
for all $t < t_{0}$. 
Then, it follows from the fundamental theorem 
of calculus that 
\[
y_{R}(t_{0}) - y_{R}(t) 
= \int_{t}^{t_{0}} y_{R}^{\prime}(s) ds 
> \varepsilon_{0}(t_{0} - t). 
\]
This yields that 
\[
y_{R}(t) < y_{R}(t_{0}) - \varepsilon_{0} (t_{0} - t) 
= y_{R} (t_{0}) - \varepsilon_{0} t_{0} 
+ \varepsilon_{0} t 
\to - \infty \qquad (t \to - \infty), 
\]
which contradicts the positivity of $y_{R}(t)$. 
Thus, \eqref{eq-ode12} holds. 

Then, we see from \eqref{eq-ode12} that $y_{R}(t)$ 
is a decreasing function. 
In addition, we know from Lemma \ref{lem-blowup1} that 
$\lim_{t \to -\infty} \text{dist}_{H^{1}} 
(\psi(t), \mathcal{O}(Q_{\omega})) = 0$. 
These imply that 
\[
y_{R}(0) \leq \lim_{t \to - \infty} 
y_{R}(t) = \int_{\R^{3}} R^{2} 
\varphi(\frac{x}{R}) |Q_{\omega}|^{2} dx. 
\] 
Letting $R$ go to infinity, 
we obtain 
\[
\int_{\R^{3}} |x|^{2} |\psi_{0}|^{2} dx 
\leq \int_{\R^{3}} |x|^{2} |Q_{\omega}|^{2} dx 
< \infty. 
\] 
Then, from Lemma \ref{U-blowup}, we see that 
$T_{\max}^{-} < \infty$, which is 
absurd. 
Therefore, we conclude the desired result.  
\end{proof}

\section{Analysis on $\mathcal{BA}_{\omega, +}$}
In this section, we investigate the asymptotic behavior 
of solutions which start from $\mathcal{BA}_{\omega, +}$. 
\subsection{Convergence to the orbit of the ground state}
We can prove that 
if a solution with  
\eqref{nls} $\psi|_{t = 0} 
= \psi_{0} \in 
\mathcal{BA}_{\omega, +}$
does not scatter in positive time direction, 
the solution converges to 
the orbit of the ground state 
exponentially. 
More precisely, we obtain the following:
\begin{proposition}
\label{prop-conv-p}
Let $\omega_{3} > 0$ be the 
constant given in Lemma 
\ref{tech-lem1} and 
assume that $\omega \in 
(0, \omega_{3})$. 
Suppose a solution $\psi$ to \eqref{nls} 
with $\psi|_{t = 0} = \psi_{0} \in 
\mathcal{BA}_{\omega, +}$ 
does not scatter for positive time. 
Then, there exist constants 
$C > 0$ and $c> 0$
such that 
\begin{equation} \label{expo-decay2}
\text{dist}_{H^{1}} 
(\psi(t), \mathcal{O}(Q_{\omega})) 
\leq Ce^{- ct} 
\qquad \mbox{for all $t > 0$}. 
\end{equation}
\end{proposition}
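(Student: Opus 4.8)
The plan is to transcribe, step by step, the four--step scheme used for Proposition~\ref{prop-conv-n}, the only essential change being that the role of finite--time blow--up is now played by scattering, which the hypothesis excludes. Two preliminary remarks come first. By the argument of Lemma~\ref{inv-}, $\mathcal{BA}_{\omega,+}$ is invariant under the flow, so $\mathcal{S}_{\omega}(\psi(t))=m_{\omega}$ and $\mathcal{K}(\psi(t))>0$ for all $t$ in the maximal interval, and in particular $\mathcal{E}(\psi(t))=m_{\omega}-\omega\mathcal{M}(Q_{\omega})=\mathcal{E}(Q_{\omega})$. Moreover the elementary identity $\mathcal{E}(u)-\tfrac13\mathcal{K}(u)=\tfrac16\|\nabla u\|_{L^{2}}^{2}+\tfrac16\|u\|_{L^{6}}^{6}$ together with $\mathcal{K}(\psi(t))>0$ bounds $\|\nabla\psi(t)\|_{L^{2}}$ uniformly, so $\psi$ is global in the positive time direction; here the hypothesis of the proposition is weaker than in the $\mathcal{BA}_{\omega,-}$ case, where forward global existence was assumed, so this must be checked.

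\textbf{Step 1.} I would first produce a sequence $t_{n}\to\infty$ with $\mathrm{dist}_{H^{1}}(\psi(t_{n}),\mathcal{O}(Q_{\omega}))\to 0$. For this it suffices to show $\liminf_{t\to\infty}\mathcal{K}(\psi(t))=0$: choosing $t_{n}$ with $\mathcal{K}(\psi(t_{n}))\to 0$ and recalling $\mathcal{S}_{\omega}(\psi(t_{n}))=m_{\omega}$, Proposition~\ref{conv-gs} applied to $u_{n}=\psi(t_{n})$ gives the claim. To see the liminf vanishes, suppose instead $\mathcal{K}(\psi(t))\ge\delta>0$ for all $t\ge0$; then $\psi$ behaves forward in time like a subthreshold solution, and the concentration-compactness/rigidity argument (as developed for this equation in \cite{AIKN2021}, in the spirit of \cite{KM2006} and \cite{DM2009}) forces $\psi$ to scatter, contradicting the hypothesis. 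This scattering exclusion is the counterpart of the blow--up exclusion invoked in Step~1 of the proof of Proposition~\ref{prop-conv-n}, and it is the step most specific to the $+$ case.

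\textbf{Step 2.} Converting $\mathrm{dist}_{H^{1}}$ to $\widetilde d_{\omega}$ via Proposition~\ref{prop-mdist1}, fixing small $R>0$, picking $N$ with $\widetilde d_{\omega}(\psi(t_{N}))<R$, and applying the one--pass theorem (Theorem~\ref{thm-opass1}) to the time--translated solution $\psi(\cdot+t_{N})$: alternative (ii) would force $\widetilde d_{\omega}(\psi(t))\ge R+R^{3/2}$ for all large $t$, contradicting $\widetilde d_{\omega}(\psi(t_{n}))\to 0$, so alternative (i) holds. Letting $R\to 0$ gives $\lim_{t\to\infty}\widetilde d_{\omega}(\psi(t))=0$, hence by \eqref{eq-mdist8} $\widetilde d_{\omega}(\psi(t))=d_{\omega}(\psi(t))$ for large $t$, and therefore $\|\eta(t)\|_{E}\to0$, $\lambda_{\pm}(t)\to0$, with $\lambda_{+}^{2}(t)+\lambda_{-}^{2}(t)\lesssim R^{2}$ on $[t_{N},\infty)$ --- exactly the configuration reached after Steps~1--2 of Proposition~\ref{prop-conv-n}.

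\textbf{Steps 3 and 4.} From here I would repeat Steps~3 and 4 of the proof of Proposition~\ref{prop-conv-n}: those arguments use only $\mathcal{E}(\psi)=\mathcal{E}(Q_{\omega})$, the modulation and distance identities \eqref{eq-mod5}, \eqref{eq-mod6}, \eqref{eq-dist8}, \eqref{eq-dist12}, the reduced relation $d_{\omega}^{2}(t)=2e_{\omega}\lambda_{1}^{2}(t)$ coming from \eqref{eq-dist11}, and the bounds of Lemma~\ref{lem-estR}, none of which sees the sign of $\mathcal{K}$. Concretely, if $|\lambda_{1}(t)|$ attained an interior maximum at $t_{0}$ then so would $d_{\omega}^{2}$, whence $\tfrac{d}{dt}d_{\omega}^{2}|_{t_{0}}=0$, which via \eqref{eq-dist12} and \eqref{eq-ejec24} gives $|\lambda_{1}(t_{0})|\le 2|\lambda_{+}(t_{0})|$, and then \eqref{eq-mod5} with a Gronwall estimate makes $|\lambda_{+}(t)|\to\infty$, contradicting Step~2; so $|\lambda_{1}|$ is monotone. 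Since $\lambda_{1}(t)\to0$, $\lambda_{1}$ keeps a fixed sign (say $\lambda_{1}>0$, the other case being symmetric); integrating \eqref{eq-dist8} forces $\lambda_{2}<0$, so $|\lambda_{+}(t)|\le|\lambda_{-}(t)|$ by \eqref{eq-dist6}, and \eqref{eq-mod6} with $|\lambda_{1}|\le|\lambda_{-}|$ yields $\tfrac{d}{dt}\lambda_{-}(t)\le-\tfrac{e_{\omega}}{2}\lambda_{-}(t)$, hence $|\lambda_{1}(t)|\lesssim e^{-e_{\omega}t/2}$; feeding this into $d_{\omega}^{2}(t)=2e_{\omega}\lambda_{1}^{2}(t)$ and Proposition~\ref{prop-mdist1}, and absorbing the compact interval $(0,T]$ into the constant, gives \eqref{expo-decay2}. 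I expect Step~1 to be the main obstacle: showing that a forward--global solution in $\mathcal{BA}_{\omega,+}$ with $\mathcal{K}$ bounded away from $0$ must scatter needs the full concentration-compactness apparatus at the threshold energy, whereas everything downstream is a routine transcription of the $\mathcal{BA}_{\omega,-}$ analysis via the one--pass theorem.
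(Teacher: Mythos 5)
Your proposal is correct and follows exactly the route the paper intends: the authors omit this proof with the remark that it is "similar to that of Proposition \ref{prop-conv-n}", and your four-step transcription (with the forward global existence check from $\mathcal{K}>0$ plus energy conservation, the replacement of the blow-up exclusion by a scattering exclusion in Step 1, and the verbatim reuse of Steps 2--4, which indeed never see the sign of $\mathcal{K}$) is precisely that similar argument spelled out. You also correctly identify the one genuinely new ingredient, namely the criterion that a forward-global threshold solution with $\mathcal{K}$ bounded below by a positive constant must scatter, which plays the role that the blow-up criterion of \cite{AIKN2012} played in the $\mathcal{BA}_{\omega,-}$ case.
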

The proof of Proposition 
\ref{prop-conv-p} is similar 
to that of Proposition 
\ref{prop-conv-n}. 
Thus, we omit the proof. 
\subsection{Scattering in negative time direction}
Secondly, we shall show that if a solution starts from 
$\mathcal{BA}_{\omega, +}$ does not scatter in 
positive time direction, the solution must scatter 
in negative one: 
\begin{proposition}
\label{prop-scatter}
Assume that $\omega \in 
(0, \omega_{3})$, 
where $\omega_{3} > 0$ is 
the constant given in Lemma 
\ref{tech-lem1}. 
Suppose that a solution $\psi$ to \eqref{nls} with $\psi|_{t = 0} 
= \psi_{0} \in 
\mathcal{BA}_{\omega, +}$ does not scatter 
in positive time direction. 
Then, $\psi$ exists on $(-\infty, \infty)$ and 
scatters for negative one, that is, 
there exists $\varphi_{-} \in H^{1}
(\R^{3})$ such that 
  \[
  \lim_{t \to -\infty} 
  \|\psi(t) - e^{i t \Delta} 
  \varphi_{-}\|_{H^{1}} = 0. 
  \]
\end{proposition}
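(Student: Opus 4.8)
The plan is to adapt the strategy used for Proposition \ref{K-blowup}, replacing the localized virial/blow-up mechanism by the sign constraint $\mathcal{K}>0$, the exponential convergence of Proposition \ref{prop-conv-p}, and the time-reversal symmetry of \eqref{nls}. First I would establish global existence. Exactly as in Lemma \ref{inv-}, the set $\mathcal{BA}_{\omega,+}$ is invariant under the flow: if $\mathcal{K}(\psi(t_1))=0$ for some $t_1$, then $\psi(t_1)$ is a minimizer for $m_{\omega}$, forcing $\psi\equiv e^{i\omega t+i\theta_0}Q_{\omega}$ and hence $\mathcal{K}(\psi_0)=0$, a contradiction; so $\mathcal{K}(\psi(t))>0$ for all $t\in I_{\max}$. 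Using the algebraic identity $\mathcal{E}(u)=\tfrac16\|\nabla u\|_{L^2}^2+\tfrac16\|u\|_{L^6}^6+\tfrac13\mathcal{K}(u)$ together with $\mathcal{E}(\psi(t))=\mathcal{E}(Q_{\omega})$, one gets $\|\nabla\psi(t)\|_{L^2}^2<6\mathcal{E}(Q_{\omega})$, and with mass conservation $\psi$ is uniformly bounded in $H^{1}$, hence global: $I_{\max}=\R$.

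Next, suppose for contradiction that $\psi$ also fails to scatter as $t\to-\infty$. Then $\phi(t,x):=\overline{\psi(-t,x)}$ is again a radial solution of \eqref{nls} with $\phi|_{t=0}=\overline{\psi_0}\in\mathcal{BA}_{\omega,+}$ that does not scatter in positive time, so Proposition \ref{prop-conv-p} applied to $\phi$ gives $\mathrm{dist}_{H^{1}}(\psi(t),\mathscr{O}(Q_{\omega}))\le Ce^{ct}$ as $t\to-\infty$; combined with Proposition \ref{prop-conv-p} for $\psi$ itself this yields exponential convergence to $\mathscr{O}(Q_{\omega})$ as $t\to\pm\infty$. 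I would then apply the one-pass theorem (Theorem \ref{thm-opass1}) at a very negative time $t_1$ with $\widetilde d_{\omega}(\psi(t_1))<R\ll 1$: alternative (ii) is incompatible with $\widetilde d_{\omega}(\psi(t))\to0$ as $t\to+\infty$, so alternative (i) forces $\widetilde d_{\omega}(\psi(t))<R+R^{3/2}$ for all $t\ge t_1$; since $\widetilde d_{\omega}(\psi(t))\to0$ as $t\to-\infty$ as well, after choosing $R$ and $t_1$ appropriately we obtain that $\widetilde d_{\omega}(\psi(t))=d_{\omega}(\psi(t))$ is uniformly small on all of $\R$, so the modulation decomposition \eqref{eq-energy11} is valid for every $t\in\R$.

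The decisive step is then a rigidity argument. Since $\mathcal{E}(\psi)=\mathcal{E}(Q_{\omega})$, identity \eqref{eq-dist11} gives $d_{\omega}^{2}(t)=2e_{\omega}\lambda_1^{2}(t)$ for all $t$. Running the argument of Step 3 in the proof of Proposition \ref{prop-conv-n} — which only uses $\mathcal{E}(\psi)=\mathcal{E}(Q_{\omega})$, the estimate \eqref{eq-ejec24}, and uniform smallness — shows that $|\lambda_1|$ cannot attain an interior maximum, hence is non-increasing on $\R$; being non-increasing and tending to $0$ as $t\to-\infty$, it vanishes identically. Then $d_{\omega}\equiv0$, so by \eqref{eq-dist10} and \eqref{eq-energy7} $\eta\equiv0$, and \eqref{eq-dec7} forces $\tfrac{d\theta}{dt}\equiv\omega$, i.e. $\psi(t)=e^{i(\omega t+\theta_0)}Q_{\omega}$; this gives $\mathcal{K}(\psi_0)=\mathcal{K}(Q_{\omega})=0$, contradicting $\psi_0\in\mathcal{BA}_{\omega,+}$. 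Hence $\psi$ scatters as $t\to-\infty$, and since $\psi$ is already global, it exists on $\R$ and the scattering datum $\varphi_-\in H^{1}(\R^{3})$ is produced in the standard way. I expect the rigidity step to be the main obstacle: it hinges on first forcing the solution to stay in the modulation regime for all $t\in\R$ (which is exactly where the one-pass theorem does the essential work) and then exploiting the hyperbolic structure of $-i\mathscr{L}_{\omega}$ to rule out any nontrivial trapped threshold trajectory.
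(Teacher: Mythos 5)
Your proposal is correct in its overall architecture and reaches the right conclusion, but the decisive step is genuinely different from the paper's. The paper follows the template of Duyckaerts--Roudenko: it first proves Lemma \ref{lem-scatter1} (the analogue of Lemma \ref{lem-blowup1} for $\mathcal{BA}_{\omega,+}$), which upgrades the modulation analysis to the quantitative sign bound $\mathcal{K}(\psi(t))\ge C_{2}\|\eta(t)\|_{H^{1}}$, and then integrates the localized virial identity $y_{R}''=8\mathcal{K}(\psi)+A_{R}(\psi)$ over all of $\R$: since $|y_{R}'(t)|\lesssim R(\|\eta(t)\|_{H^{1}}+\|\eta(t)\|_{H^{1}}^{2})\to 0$ as $t\to\pm\infty$ by the two-sided exponential convergence, one gets $\int_{\R}\|\eta(t)\|_{H^{1}}\,dt=0$, hence $\eta\equiv 0$, contradicting $\mathcal{K}(\psi_{0})>0$. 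You instead bypass Lemma \ref{lem-scatter1} and the virial quantity $y_{R}$ entirely: after using time reversal, Proposition \ref{prop-conv-p} and the one-pass theorem to keep the solution in the modulation regime for all $t\in\R$, you exploit the threshold identity $d_{\omega}^{2}(t)=2e_{\omega}\lambda_{1}^{2}(t)$ and the hyperbolicity argument of Step 3 of Proposition \ref{prop-conv-n} to show that a positive interior maximum of $|\lambda_{1}|$ over $\R$ is impossible (it would force $|\lambda_{+}(t)|\to\infty$, contradicting \eqref{eq-bu2}); since $|\lambda_{1}|\to 0$ at both ends, $\lambda_{1}\equiv 0$ and hence $\eta\equiv 0$ by \eqref{eq-dist10} and \eqref{eq-energy7}. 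Your route is more self-contained given the modulation machinery already assembled (radial symmetry is still consumed through the one-pass theorem), while the paper's route produces the $\mathcal{K}$ lower bound as a byproduct and runs in exact parallel with the blow-up case of Proposition \ref{K-blowup}. Two small points to make explicit if you write this up: the interior-maximum argument needs \eqref{eq-ejec24}--\eqref{eq-ejec25} and the differentiability of $d_{\omega}^{2}$ uniformly on $\R$, which is exactly what the uniform smallness obtained from the one-pass theorem guarantees; and the phrase ``non-increasing on $\R$'' is an unnecessary detour --- it is cleaner to say that if $\sup_{\R}|\lambda_{1}|>0$ it is attained at an interior point (because $|\lambda_{1}|$ vanishes at $\pm\infty$), which Step 3 excludes.
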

To prove Proposition \ref{prop-scatter}, 
we need the following lemma: 
\begin{lemma}
\label{lem-scatter1}
Assume that $\omega \in 
(0, \omega_{3})$, 
where $\omega_{3} > 0$ is 
the constant given in Lemma 
\ref{tech-lem1}. 
Suppose that a solution $\psi$ to \eqref{nls} with $\psi|_{t = 0} 
= \psi_{0} \in 
\mathcal{BA}_{\omega, +}$ does not scatter 
in negative time direction. 
Then, 
$\lim_{t \to -\infty} \text{dist}_{H^{1}} 
(\psi(t), \mathcal{O}(Q_{\omega})) = 0$ and 
there exist 
a constant $C_{2} > 0$such that 
\begin{equation} \label{eq-ode14}
\mathcal{K}(\psi(t)) 
\geq C_{2} \|\eta(t)\|_{H^{1}} 
\qquad \mbox{for all $t \in 
(-\infty, 0)$}. 
\end{equation}
\end{lemma}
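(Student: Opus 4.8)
The plan is to mirror the proof of Lemma~\ref{lem-blowup1}, which treats the symmetric situation for $\mathcal{BA}_{\omega,-}$: each of its four steps has a sign-reversed counterpart here, and the mechanism fixing the sign of $\mathcal{K}$ is again the spectral inequality \eqref{eq-ejec4} together with $(Q_{\omega},\mathcal{Y}_{\omega,2})_{L^{2}}<0$ from \eqref{eq-egf10}. First, since $\mathcal{BA}_{\omega,+}$ is flow-invariant (same proof as Lemma~\ref{inv-}, now with $\mathcal{K}>0$, so in particular $T_{\max}^{-}=\infty$), we have $\mathcal{S}_{\omega}(\psi(t))=m_{\omega}$, $\mathcal{M}(\psi(t))=\mathcal{M}(Q_{\omega})$ and $\mathcal{K}(\psi(t))>0$ for all $t$. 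If $\inf_{t\le 0}\mathcal{K}(\psi(t))>0$ then $\psi$ would scatter in negative time --- this is the threshold scattering criterion, dual to the finite-time blow-up invoked in Step~1 of Lemma~\ref{lem-blowup1} (cf.~\cite{AIKN2021}) --- contradicting the hypothesis; since $\mathcal{K}(\psi(\cdot))$ is continuous and positive on $(-\infty,0]$, this forces $\liminf_{t\to-\infty}\mathcal{K}(\psi(t))=0$. Picking $t_{n}\to-\infty$ with $\mathcal{K}(\psi(t_{n}))\to 0$ and $\mathcal{S}_{\omega}(\psi(t_{n}))=m_{\omega}$, Proposition~\ref{conv-gs} gives $\text{dist}_{H^{1}}(\psi(t_{n}),\mathscr{O}(Q_{\omega}))\to 0$ along a subsequence. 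Applying the one-pass theorem (Theorem~\ref{thm-opass1}) to the time-reversed solution $\overline{\psi(-\cdot)}$, for which $\mathcal{K},\mathcal{S}_{\omega},\mathcal{M}$ and $\widetilde{d}_{\omega}$ are unchanged, with initial time near $t_{n}$: alternative~(ii) is excluded by the sequence $\{t_{n}\}$, so for every small $R>0$ one gets $\widetilde{d}_{\omega}(\psi(t))<R+R^{3/2}$ for all sufficiently negative $t$, and letting $R\to 0$ yields $\text{dist}_{H^{1}}(\psi(t),\mathscr{O}(Q_{\omega}))\to 0$ as $t\to-\infty$. Running then Steps~1--3 of the proof of Proposition~\ref{prop-conv-n} with time reversed (exactly as in Step~1 of Lemma~\ref{lem-blowup1}) gives: for any $R>0$ there is $t_{N}<0$ with $d_{\omega}(\psi(t))<CR$ and $\lambda_{+}^{2}(t)+\lambda_{-}^{2}(t)\lesssim R$ for all $t\le t_{N}$; the function $|\lambda_{1}(t)|$ is non-decreasing in $t$; and, after a time translation, $\max_{t\le 0}|\lambda_{1}(t)|=v_{0}$ with $v_{0}\in(0,\widetilde{\gamma}(\omega))$ as small as we wish.

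Second, I would show $\lim_{t\to-\infty}|\lambda_{-}(t)/\lambda_{+}(t)|=0$ exactly as in Step~2 of Lemma~\ref{lem-blowup1}: if this fails, choose $t_{n}\to-\infty$ with $|\lambda_{-}(t_{n})|\gtrsim|\lambda_{1}(t_{n})|$, integrate \eqref{eq-mod6} backward from $t_{n}$, and use the monotonicity of $|\lambda_{1}|$ together with \eqref{eq-ejec24} to absorb the nonlinear term; this forces $|\lambda_{-}(t)|\to\infty$ as $t\to-\infty$, contradicting the $L^{\infty}$ bound from the first step. Hence $|\lambda_{1}(t)|\sim|\lambda_{+}(t)|$ near $-\infty$, and feeding this into \eqref{eq-energy3} and Lemma~\ref{lem-energy1} --- using $\mathcal{E}(\psi)=\mathcal{E}(Q_{\omega})$, which holds because $\psi_{0}\in\mathcal{BA}_{\omega,+}$ --- produces, for each $\varepsilon>0$, a time $T_{\varepsilon}>0$ such that $\|\Gamma(t)\|_{H^{1}}^{2}\le C\varepsilon\,|\lambda_{1}(t)|^{2}$ for all $t<-T_{\varepsilon}$, just as in \eqref{eq-ode8}.

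Third, the Taylor expansion \eqref{eq-egf10-1} is purely algebraic and independent of the sign of $\mathcal{K}$, hence available verbatim:
\begin{equation*}
\begin{split}
\mathcal{K}(\psi(t))
&=
- e_{\omega}\lambda_{1}(t)(Q_{\omega},\mathcal{Y}_{\omega,2})_{L^{2}}
- 4\lambda_{1}(t)(Q_{\omega}^{5},\mathcal{Y}_{\omega,1})_{L^{2}}
\\
&\quad
- (Q_{\omega}^{3},\Gamma(t))_{L^{2}_{\text{real}}}
- 4(Q_{\omega}^{5},\Gamma(t))_{L^{2}_{\text{real}}}
+ O(\|\eta(t)\|_{H^{1}}^{2}).
\end{split}
\end{equation*}
Here the $\mathcal{BA}_{\omega,+}$ assumption enters with the sign opposite to Lemma~\ref{lem-blowup1}. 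By \eqref{eq-ejec4} the first two terms equal $\lambda_{1}(t)$ times a quantity in $[\tfrac{e_{\omega}}{2}|(Q_{\omega},\mathcal{Y}_{\omega,2})_{L^{2}}|,\tfrac{3e_{\omega}}{2}|(Q_{\omega},\mathcal{Y}_{\omega,2})_{L^{2}}|]$, while the remaining terms are $O(\|\Gamma(t)\|_{L^{2}}+\|\eta(t)\|_{H^{1}}^{2})=o(|\lambda_{1}(t)|)$ by the second step and the relation $\|\eta(t)\|_{H^{1}}\sim|\lambda_{1}(t)|$ (from \eqref{eq-dist10}, \eqref{eq-dist11} with $\mathcal{E}(\psi)=\mathcal{E}(Q_{\omega})$, and \eqref{eq-energy7}). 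Thus for $t<-T_{\varepsilon}$: if $\lambda_{1}(t)<0$ then $\mathcal{K}(\psi(t))\le\tfrac{e_{\omega}}{4}\lambda_{1}(t)|(Q_{\omega},\mathcal{Y}_{\omega,2})_{L^{2}}|<0$, contradicting $\mathcal{K}(\psi(t))>0$; and $\lambda_{1}(t)=0$ would give $\eta(t)=0$, i.e.\ $\psi(t)\in\mathcal{BA}_{\omega,0}$, which is impossible. Hence $\lambda_{1}(t)>0$, whence $\mathcal{K}(\psi(t))\ge\tfrac{e_{\omega}}{4}\lambda_{1}(t)|(Q_{\omega},\mathcal{Y}_{\omega,2})_{L^{2}}|\ge C_{2}\|\eta(t)\|_{H^{1}}$ for $t<-T_{\varepsilon}$, and this extends to all $t\in(-\infty,0)$ by continuity of $\mathcal{K}(\psi(\cdot))$ and $\eta(\cdot)$ on $[-T_{\varepsilon},0]$ together with $\mathcal{K}(\psi(t))>0$ there, exactly as at the end of the proof of Lemma~\ref{lem-blowup1}.

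The main obstacle is the first step. Unlike the $\mathcal{BA}_{\omega,-}$ case --- where ``$\mathcal{K}<-\delta$ for all time'' yields finite-time blow-up by an elementary convexity/virial argument --- here one needs the threshold scattering statement ``$\inf_{t\le 0}\mathcal{K}(\psi(t))>0$ implies scattering in negative time,'' which draws on the concentration--compactness machinery at the threshold level, and one must combine it correctly with the one-pass theorem in its time-reversed form (after checking $\widetilde{d}_{\omega}$ is invariant under $\psi\mapsto\overline{\psi(-\cdot)}$) to upgrade convergence along $\{t_{n}\}$ to convergence along the whole backward trajectory. Everything downstream of that is sign bookkeeping of the argument already carried out for $\mathcal{BA}_{\omega,-}$.
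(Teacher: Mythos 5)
Your proposal is correct and follows essentially the same route as the paper, which itself omits the proof with the remark that it is ``a similar argument in the proof of Lemma \ref{lem-blowup1}'': your sign-reversed adaptation of its four steps, with the Taylor expansion \eqref{eq-egf10-1}, the spectral bound \eqref{eq-ejec4} and the normalization \eqref{eq-egf10} forcing $\lambda_{1}(t)>0$ and hence $\mathcal{K}(\psi(t))\gtrsim\lambda_{1}(t)\sim\|\eta(t)\|_{H^{1}}$, is exactly the intended argument. You also correctly isolate the one genuine substitution --- the threshold scattering criterion for uniformly positive $\mathcal{K}$ replacing the blow-up criterion --- which the paper likewise leaves implicit.
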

We can prove Lemma 
\ref{lem-scatter1} by a similar 
argument in the proof of 
Lemma \ref{lem-blowup1}. 
Thus, we omit the proof.  
We are now in the position to 
prove Proposition \ref{prop-scatter}. 
\begin{proof}[Proof of Proposition 
\ref{prop-scatter}]
We will follow the argument of \cite[Section 6.4]{DR2010}. 
Suppose that a solution $\psi$ does not scatter 
in both positive and negative time direction.  
Let $y_{R}$ be the function 
given by \eqref{eq-ode15}. 
From the fundamental theorem 
of calculus, we obtain 
\begin{equation} \label{eq-ode16}
\int_{\sigma}^{\tau} 
y_{R}^{\prime \prime}(t) dt 
= y_{R}^{\prime}(\tau) 
- y_{R}^{\prime}(\sigma).
\end{equation}
\eqref{eq-ode11} together with 
\eqref{eq-ode10} and 
\eqref{eq-ode14} yields that 
\begin{equation}\label{eq-ode17}
\int_{\sigma}^{\tau} 
y_{R}^{\prime \prime}(t) dt 
\geq \frac{C_{2}}{2} 
\int_{\sigma}^{\tau} 
\|\eta(t)\|_{H^{1}} dt >0 
\qquad \mbox{for all 
$\sigma < 0 < \tau$}. 
\end{equation}
Note that 
\begin{equation*}
\begin{split}
y_{R}^{\prime}(t) 
& = 2 R \text{Im}\; 
\int_{\R^{3}} \overline{\eta}(t, x) 
\nabla \varphi(\frac{x}{R}) 
\cdot \nabla Q_{\omega}(x) dx 
+ 2 R 
\text{Im}\; \int_{\R^{3}} 
Q_{\omega}(x) 
\nabla \varphi(\frac{x}{R}) 
\cdot \nabla \eta(t, x) dx \\
& \quad + 2 R \text{Im}\; 
\int_{\R^{3}} 
\overline{\eta(t, x)} 
\nabla \varphi(\frac{x}{R}) 
\cdot \nabla \eta(x) dx. 
\end{split}
\end{equation*}
From this and \eqref{expo-decay2}, we obtain 
\begin{equation} \label{eq-ode18}
|y_{R}^{\prime}(t)| 
\leq CR (\|\eta(t)\|_{H^{1}} 
+ \|\eta(t)\|_{H^{1}}^{2}) 
\to 0
\qquad \mbox{as $t \to \pm \infty$}. 
\end{equation}
Letting $\sigma$ go to $-\infty$ 
and $\tau$ to $+\infty$ 
in \eqref{eq-ode16}, we have, 
by \eqref{eq-ode17},  
\eqref{eq-ode18} and Lemmas \ref{lem-scatter1} 
and Lemma \ref{lem-blowup2}, that  
\[
\frac{C_{2}}{2}
\int_{-\infty}^{\infty} 
\|\eta(t)\|_{H^{1}}dt 
\leq \lim_{\sigma \to -\infty} 
|y^{\prime}(\sigma)| 
+ \lim_{\tau \to \infty} 
|y^{\prime}(\tau)| 
= 0. 
\]
This implies that $\eta(t) = 0$ 
for all $t \in \R$. 
However, this contradicts 
the assumption 
$\mathcal{K}(\psi(0)) = 
\mathcal{K}(\psi_{0}) > 0$. 
This completes the proof. 
\end{proof}
\section{Construction of special 
solutions}
In this section, we introduce 
Strichartz-type spaces and give the 
proof of Theorem 
\ref{main-sp}. 
First, we recall the Strichartz estimate:
\begin{definition}
We say that a pair of $(q, r)$ is \textit{$L^{2}$-admissible} 
if 
\begin{equation*}
\frac{1}{r} = \frac{3}{2} \left(\frac{1}{2} - \frac{1}{q}\right). 
\end{equation*}
\end{definition}

\begin{lemma}[Strichartz estimate] \label{Strichartz-est}
{\rm (i)}
For any $L^{2}$-admissible pair $(q, r)$, we have 
\begin{equation}\label{Stri-est-1}
\|e^{i t \Delta} u\|_{L^{r}(\mathbb{R}, L^{q})} \lesssim 
\|u\|_{L^{2}}. 
\end{equation} 
{\rm (ii)}
For any admissible pairs $(q_{1}, r_{1})$ and $(q_{2}, r_{2})$, we have 
\begin{equation} \label{Stri-est-2}
\left\|\int_{t_{0}}^{t} e^{i (t-t^{\prime}) \Delta} f(t^{\prime}) dt^{\prime} 
\right\|_{L^{r_{1}}(\mathbb{R}, L^{q_{1}})} \lesssim 
\|f\|_{L^{r_{2}^{\prime}}(\mathbb{R}, L^{q_{2}^{\prime}})} \qquad 
\mbox{for any $t_{0} \in \mathbb{R}$}, 
\end{equation}
where $q'$ and $r'$ denote the H\"{o}lder conjugates of $q$ and $r$ respectively.
\end{lemma}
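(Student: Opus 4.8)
The plan is to derive both estimates from the pointwise dispersive bound
\begin{equation*}
\|e^{it\Delta}u\|_{L^{\infty}(\R^{3})} \lesssim |t|^{-3/2}\,\|u\|_{L^{1}(\R^{3})} \qquad (t \neq 0),
\end{equation*}
which is immediate once one writes $e^{it\Delta}u$ as the convolution of $u$ with the explicit oscillatory Gaussian kernel of modulus $(4\pi|t|)^{-3/2}$, together with the $L^{2}$-unitarity $\|e^{it\Delta}u\|_{L^{2}} = \|u\|_{L^{2}}$. Interpolating these two bounds yields $\|e^{it\Delta}u\|_{L^{q}} \lesssim |t|^{-3(1/2-1/q)}\|u\|_{L^{q'}}$ for $2 \le q \le \infty$, and the decay exponent $3(1/2-1/q)$ produced here is exactly the one that the $L^{2}$-admissibility relation will pair against the time integrability.

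For part (i) I would run the standard $TT^{*}$ argument. The case $q = 2$, $r = \infty$ is just conservation of mass, so assume $q > 2$. Then \eqref{Stri-est-1} is dual to $\big\|\int_{\R} e^{-is\Delta}F(s)\,ds\big\|_{L^{2}} \lesssim \|F\|_{L^{r'}(\R, L^{q'})}$, and expanding the square of the left-hand side as an inner product reduces everything to the bilinear estimate
\begin{equation*}
\left| \int_{\R}\int_{\R} \big( e^{i(t-s)\Delta}F(s),\, F(t) \big)_{L^{2}}\, ds\, dt \right| \lesssim \|F\|_{L^{r'}(\R, L^{q'})}^{2}.
\end{equation*}
Bounding the inner pairing by $|t-s|^{-3(1/2-1/q)}\|F(s)\|_{L^{q'}}\|F(t)\|_{L^{q'}}$ and applying the Hardy--Littlewood--Sobolev inequality in the $t$-variable, one sees that the argument closes precisely when $3(1/2-1/q) = 2/r$ --- which is exactly the $L^{2}$-admissibility condition --- and when this decay exponent is strictly below $1$, i.e.\ $q < 6$. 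The remaining endpoint $(q,r) = (6,2)$, at which the time potential $|t-s|^{-1}$ is critical and HLS fails, must be handled separately by the Keel--Tao bilinear interpolation argument (see also \cite{Cazenave}).

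For part (ii), the \emph{untruncated} inhomogeneous estimate
\begin{equation*}
\left\| \int_{\R} e^{i(t-s)\Delta} f(s)\, ds \right\|_{L^{r_{1}}(\R,\, L^{q_{1}})} \lesssim \|f\|_{L^{r_{2}'}(\R,\, L^{q_{2}'})}
\end{equation*}
follows by writing its left-hand side as $e^{it\Delta}\big(\int_{\R} e^{-is\Delta}f(s)\,ds\big)$ and composing the dual homogeneous estimate for the pair $(q_{2},r_{2})$ with the homogeneous estimate for $(q_{1},r_{1})$. To replace $\int_{\R}$ by the retarded integral $\int_{t_{0}}^{t}$ appearing in \eqref{Stri-est-2}, I would invoke the Christ--Kiselev lemma, which promotes the untruncated bound to the truncated one as soon as $r_{1} > r_{2}'$; the only case it misses is the doubly-endpoint one $r_{1} = r_{2}' = 2$ (so $q_{1} = q_{2} = 6$), which once again requires the direct Keel--Tao treatment. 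I expect that single endpoint $(6,2)$ to be the only genuinely delicate point of the whole proof --- everything else being interpolation, duality, and HLS --- and for the purposes of this paper it would be entirely legitimate to quote the endpoint estimate from the literature rather than reconstruct it.
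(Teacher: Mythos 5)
The paper offers no proof of this lemma at all: it is the classical Strichartz estimate (Strichartz, Ginibre--Velo, Keel--Tao), quoted as a known result, with \cite{Cazenave} the natural reference. Your sketch is the standard and correct argument --- dispersive bound from the explicit kernel, interpolation with $L^{2}$-unitarity, the $TT^{*}$ reduction to a bilinear form controlled by Hardy--Littlewood--Sobolev exactly under the admissibility relation $2/r=3(1/2-1/q)$ with $r>2$, the composition argument for the untruncated inhomogeneous estimate, and Christ--Kiselev for the retarded one when $r_{1}>r_{2}'$ --- and you correctly isolate the endpoint pair $(q,r)=(6,2)$ (and the doubly-endpoint retarded case) as the only points requiring the Keel--Tao argument, which it is entirely appropriate to cite rather than reprove.
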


We shall 
use the following Strichartz-type spaces: 
\begin{align*}
&St(I):=L_{t}^{\infty}L_{x}^{2}
(I \times \mathbb{R}^{3}) 
\cap L_{t}^{2}L_{x}^{6}
(I \times \mathbb{R}^{3}), 
\qquad 
V(I)
:= L_{t}^{10}L_{x}^{\frac{30}
{13}}(I \times \mathbb{R}^{3}), 
\\[6pt]
& 
V_{3}(I)
:= L_{t}^{5}L_{x}^{\frac{30}
{11}}(I \times \mathbb{R}^{3}), 
\qquad 
W (I)
:= L^{10}_{t} L_{x}^{10}
(I \times \mathbb{R}^{3}). 
\end{align*}

We define the norm of 
$St(I)$ for an interval $I$ by 
\[
\|u\|_{St(I)} := 
\sup_{\mbox{$(q, r)$: 
$L^{2}$-admissible}} 
\|u\|_{L_{t}^{r}L_{x}^{q}(I \times 
\R^{3})}. 
\]
From the definition, we see that 
\begin{equation}\label{Stri-est-3}
\|u\|_{V(I)}, 
\|u\|_{V_{3}(I)} \leq 
\|u\|_{St(I)}.
\end{equation}
We also use the following function 
space: 
\begin{equation*}
N(I):= L_{t}^{\frac{5}{3}}
L_{x}^{\frac{30}{23}}
(I \times \R^{3}), 
\end{equation*}
which is the dual space of 
the Strichartz space 
$L_{t}^{\frac{5}{2}}
L_{x}^{\frac{30}{7}}
(I \times \R^{3})$. 

By a similar argument to 
\cite[Lemma 6,1]{DM2009} 
(see also \cite[Proposition 3.4]
{DR2010}), 
we can construct a family of 
approximate solutions to \eqref{nls}. 
More precisely, we shall show the 
following:
\begin{proposition}\label{prop-approx}
Let $\omega_{2} > 0$ be the constant 
given by Theorem \ref{thm2-1}. 
For any 
$\omega \in (0, \omega_{2})$ and 
$A \in \R$, there exists a sequence 
$\{\mathcal{Z}_{j, \omega}^{A}
\}_{j \in \mathbb{N}}$ 
of functions in $\mathcal{S}(\R^{3}) 
\setminus \{0\}$ such that 
$\mathcal{Z}_{1, \omega}^{A} = A 
\mathcal{Y}_{\omega, -}$ and if 
$k \geq 1$ and $\mathcal{V}_{k, \omega}^{A} 
:= \sum_{j = 1}^{k} 
e^{- j e_{\omega} t} 
\mathcal{Z}_{k, \omega}^{A}$, 
then we have 
\begin{equation} \label{eq3-1}
\p_{t} 
\mathcal{V}_{k, \omega}^{A} + i \mathcal{L}_{\omega} 
\mathcal{V}_{k, \omega}^{A} 
= R (\mathcal{V}_{k, \omega}^{A}) 
+ O(e^{- (k + 1) e_{\omega} t}) 
\qquad \mbox{in $\mathcal{S}(\R^{3})$ 
as $t \to \infty$}. 
\end{equation}
\end{proposition}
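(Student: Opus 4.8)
The construction is an iterative (order-by-order) scheme in powers of $e^{-e_\omega t}$, standard since Duyckaerts--Merle. I would set up an ansatz $\mathcal{V}_{k,\omega}^A = \sum_{j=1}^k e^{-je_\omega t}\mathcal{Z}_{j,\omega}^A$ and plug it into the equation $\partial_t V + i\mathscr{L}_\omega V = R(V)$, where $R$ collects the nonlinear terms $N_\omega$ from \eqref{eq-dec7}--\eqref{eq-dec8}. Expanding $R(\mathcal{V}_{k,\omega}^A)$ in powers of $e^{-e_\omega t}$ and collecting the coefficient of $e^{-je_\omega t}$, one gets for each $j\ge 2$ an equation of the schematic form
\begin{equation*}
(-je_\omega + i\mathscr{L}_\omega)\,\mathcal{Z}_{j,\omega}^A = F_j\big(\mathcal{Z}_{1,\omega}^A,\dots,\mathcal{Z}_{j-1,\omega}^A\big),
\end{equation*}
where $F_j$ is an explicit polynomial expression in the lower-order profiles (coming from the Taylor expansion of the cubic and quintic nonlinearities around $Q_\omega$), hence a Schwartz function by induction. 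For $j=1$ the equation is $(-e_\omega + i\mathscr{L}_\omega)\mathcal{Z}_{1,\omega}^A = 0$, which is solved by $\mathcal{Z}_{1,\omega}^A = A\,\mathcal{Y}_{\omega,-}$ since $i\mathscr{L}_\omega\mathcal{Y}_{\omega,-} = e_\omega\mathcal{Y}_{\omega,-}$ (equivalently $-i\mathcal{L}_\omega\mathcal{Y}_{\omega,-} = -e_\omega\mathcal{Y}_{\omega,-}$; note the sign conventions in \eqref{eq-mod5}--\eqref{eq-mod6}).

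The key point for $j\ge 2$ is invertibility of $-je_\omega + i\mathscr{L}_\omega$ on an appropriate space. By Theorem~\ref{thm2-1}, the spectrum of $-i\mathcal{L}_\omega$ is $\{\pm e_\omega\}$ together with the kernel $\mathrm{Span}\{iQ_\omega,\partial_{x_i}Q_\omega\}$ and the essential spectrum $\{i\xi : |\xi|\ge \omega\}$; hence $-je_\omega$ is in the resolvent set of $i\mathscr{L}_\omega$ provided $je_\omega \neq e_\omega$ — which holds for all $j\ge 2$ — and provided $je_\omega$ avoids the essential spectrum, which is purely imaginary, so this is automatic since $je_\omega$ is real and nonzero. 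Thus $-je_\omega + i\mathscr{L}_\omega$ is invertible for every $j\ge 2$, and we may define $\mathcal{Z}_{j,\omega}^A := (-je_\omega + i\mathscr{L}_\omega)^{-1}F_j$. The Schwartz regularity of $\mathcal{Z}_{j,\omega}^A$ follows from elliptic regularity for the resolvent of $i\mathscr{L}_\omega$, since $Q_\omega$ and its derivatives decay exponentially and $F_j$ is Schwartz by the induction hypothesis; one should also check $\mathcal{Z}_{j,\omega}^A\neq 0$, which can be arranged (or, if some $F_j$ vanishes, one simply takes $\mathcal{Z}_{j,\omega}^A$ to be any nonzero Schwartz element — but generically $F_j\neq 0$).

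Finally, with all $\mathcal{Z}_{j,\omega}^A$ ($1\le j\le k$) constructed, one verifies \eqref{eq3-1} directly: by construction all terms of order $e^{-je_\omega t}$ with $1\le j\le k$ cancel between $\partial_t\mathcal{V}_{k,\omega}^A + i\mathscr{L}_\omega\mathcal{V}_{k,\omega}^A$ and the Taylor expansion of $R(\mathcal{V}_{k,\omega}^A)$ up to order $k$, and the remainder consists of terms of order $e^{-(k+1)e_\omega t}$ or higher, each with a Schwartz-class spatial coefficient; collecting them gives the error bound $O(e^{-(k+1)e_\omega t})$ in $\mathcal{S}(\R^3)$. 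The main obstacle — though it is mild here — is the bookkeeping of the Taylor expansion of the cubic-quintic nonlinearity: unlike the single-power case, one must track contributions from both $|Q_\omega+\eta|^2(Q_\omega+\eta)$ and $|Q_\omega+\eta|^4(Q_\omega+\eta)$ separately (and their cross terms at each order), so $F_j$ is a somewhat involved but entirely explicit polynomial. Since the statement is only asserted "by a similar argument to \cite[Lemma 6.1]{DM2009}", I would present the scheme and invertibility step carefully and refer to \cite{DM2009, DR2010} for the routine expansion details.
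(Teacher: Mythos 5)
Your proposal is correct and is precisely the argument the paper intends: the paper gives no proof of Proposition \ref{prop-approx}, deferring to \cite[Lemma 6.1]{DM2009} and \cite[Proposition 3.4]{DR2010}, and your order-by-order scheme (solving $(-je_{\omega}+i\mathscr{L}_{\omega})\mathcal{Z}_{j}=F_{j}$ recursively, using that $je_{\omega}$ for $j\geq 2$ lies in the resolvent set since the essential spectrum is purely imaginary and $\pm e_{\omega}$ are the only nonzero real eigenvalues) is exactly that cited construction. The only loose end is your remark about replacing a vanishing $\mathcal{Z}_{j}$ by an arbitrary nonzero Schwartz function, which would break the order-$j$ equation — but this only concerns the (inessential, and for $A=0$ already inconsistent) nonvanishing clause in the statement, not the approximation identity \eqref{eq3-1}.
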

By using Proposition 
\ref{prop-approx} and 
the contraction argument, 
we can construct a solution 
$U_{\omega}^{A}$ to 
\eqref{nls} which is close to  
$e^{i \omega t} (Q_{\omega} 
+ \mathcal{V}_{k, \omega}^{A})$. 
\begin{proposition}\label{prop-spec}
Let $\omega_{2} > 0$ be the constant 
given by Theorem \ref{thm2-1}. 
For any 
$\omega \in (0, \omega_{2})$ and 
$A \in \R$,
there exists $k_{0} > 0$ such that 
for any $k \geq k_{0}$, 
there exist sufficiently large 
$t_{0} = t_{0}(\omega, A) > 0$ and 
the radial solution $U_{\omega}^{A} 
\in C^{\infty} ([t_{0}, \infty), H^{\infty} 
(\R^{3}))$ to \eqref{nls} satisfying 
the following: 
\begin{equation}
\label{const-eq1}
\|\langle \nabla \rangle
(U_{\omega}^{A} - e^{i \omega t} 
(Q_{\omega} 
+ \mathcal{V}_{k, \omega}^{A}))
\|_{St(t, \infty)}
\leq e^{- (k + \frac{1}{2}) 
e_{\omega} t}
\qquad (t \geq t_{k}). 
\end{equation}
Furthermore, $U_{\omega}^{A}$ 
is a unique solution to \eqref{nls} 
satisfying \eqref{const-eq1} for large 
$t > 0$. 
Finally, 
$U_{\omega}^{A}$ is independent of 
$k \in \mathbb{N}$ and satisfies 
\begin{equation} 
\label{const-eq1-1}
\|U_{\omega}^{A} - e^{i \omega t}(Q_{\omega} + A e^{- e_{\omega}t} 
\mathcal{Y}_{\omega, -})\|_{H^{1}} 
\leq C e^{- \frac{3}{2}e_{\omega} t} 
\qquad 
\mbox{for all $t \geq t_{0}$}. 
\end{equation}
\end{proposition}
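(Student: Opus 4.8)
\textbf{Proof of Proposition \ref{prop-spec} (proposal).}
The plan is to produce $U_{\omega}^{A}$ as a perturbation of the approximate solution $e^{i\omega t}(Q_{\omega}+\mathcal{V}_{k,\omega}^{A})$ by a fixed‑point argument run backward from $t=+\infty$, following the scheme of \cite[Lemma 6.1]{DM2009} and \cite[Proposition 3.4]{DR2010}, the only genuinely new point being the presence of the energy‑critical quintic term. Write $\psi=e^{i\omega t}(Q_{\omega}+\mathcal{V}_{k,\omega}^{A}+h)$. Taking $\theta(t)\equiv\omega t$, equation \eqref{eq-dec7} shows that $\psi$ solves \eqref{nls} if and only if $\eta:=\mathcal{V}_{k,\omega}^{A}+h$ solves $\partial_{t}\eta+i\mathcal{L}_{\omega}\eta=N_{\omega}(\eta)$; subtracting \eqref{eq3-1} this becomes an equation
$$\partial_{t}h+i\mathcal{L}_{\omega}h=\mathcal{N}(h,t),$$
where $\mathcal{N}(h,t)$ collects (a) the quadratic‑and‑higher self‑interactions of $h$, (b) the terms linear in $h$ whose coefficients involve $\mathcal{V}_{k,\omega}^{A}$ and are therefore $O(e^{-e_{\omega}t})$ in the relevant norms, and (c) the source term $\mathcal{R}_{k}(t)$ arising from the right‑hand side of \eqref{eq3-1}, which is $O(e^{-(k+1)e_{\omega}t})$ in $\mathcal S(\R^{3})$. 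I would then solve the Duhamel integral equation with zero data at $t=+\infty$, being careful about the sign of the exponent on each spectral component of $e^{-it\mathcal{L}_{\omega}}$.

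The second step is to set up the linear estimates for $e^{-it\mathcal{L}_{\omega}}$ using the spectral structure from Theorem \ref{thm2-1}: the continuous part (essential spectrum on $\{i\xi:|\xi|\ge\omega\}$) satisfies Strichartz estimates just as the free group, the kernel is finite dimensional (and the component along $\partial_{\omega}Q_{\omega}$ is absent because $a(t)\equiv0$ in the decomposition), and the unstable/stable directions $\mathcal{Y}_{\omega,\pm}$ are governed by the scalar ODEs with rates $\pm e_{\omega}$. The key observation is that, since we integrate from $s=t$ to $s=+\infty$, the projection onto $\mathcal{Y}_{\omega,+}$ contributes the harmless factor $e^{(t-s)e_{\omega}}\le1$, while the projection onto $\mathcal{Y}_{\omega,-}$ contributes $e^{(s-t)e_{\omega}}$ whose integral against a source decaying like $e^{-\alpha s}$ still converges and reproduces the decay rate $\alpha$ provided $\alpha>e_{\omega}$ — which holds here since every piece of $\mathcal{N}$ decays at least like $e^{-(k+1)e_{\omega}t}$ for $k\ge1$. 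I would therefore work in the weighted space $X_{t_{k}}:=\{h\ \text{radial}:\ \|\langle\nabla\rangle h\|_{St([t,\infty))}\le e^{-(k+\frac12)e_{\omega}t}\ \text{for all }t\ge t_{k}\}$ with the natural metric, using the auxiliary Strichartz spaces $V(I),V_{3}(I),W(I),N(I)$ exactly as in \cite{DM2009,DR2010} to absorb the cubic and quintic nonlinearities.

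The third step is the contraction: for $t\ge t_{k}$ the weighted source is $O(e^{-e_{\omega}t/2})$, hence small; the linear‑in‑$h$ terms carry the gain $e^{-e_{\omega}t}$ from $\mathcal{V}_{k,\omega}^{A}$, hence are a small multiple of $\|h\|_{X}$; and the remaining terms (cubic and quintic in $h$, handled in the $V,V_{3},W$ norms) are of higher order in the small quantity $\|h\|_{X}$. Choosing $k_{0}$ large enough that the error exponent $(k+1)e_{\omega}$ dominates in the dual Strichartz norm, and then $t_{k}=t_{0}(\omega,A)$ large, the Duhamel map is a contraction on $X_{t_{k}}$, producing a unique $h$ and hence a solution $U_{\omega}^{A}$ satisfying \eqref{const-eq1}; persistence of regularity (the profiles $Q_{\omega}$ and $\mathcal{V}_{k,\omega}^{A}$ are Schwartz) upgrades it to $C^{\infty}([t_{0},\infty),H^{\infty}(\R^{3}))$, and radiality is preserved throughout. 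Uniqueness among solutions obeying \eqref{const-eq1} follows because $k+\tfrac12>1$ forces the free stable‑mode solution $c\,e^{-e_{\omega}t}\mathcal{Y}_{\omega,-}$ to vanish, so the difference of two such solutions again solves a homogeneous‑in‑the‑difference equation to which the contraction estimate applies; $k$‑independence follows since $\mathcal{V}_{k',\omega}^{A}-\mathcal{V}_{k,\omega}^{A}=O(e^{-(k+1)e_{\omega}t})$ shows a $k'$‑solution also satisfies the $k$‑bound. Finally, \eqref{const-eq1-1} is obtained from \eqref{const-eq1} (with $k\ge k_{0}\ge2$) together with $\mathcal{V}_{k,\omega}^{A}-Ae^{-e_{\omega}t}\mathcal{Y}_{\omega,-}=\sum_{j\ge2}e^{-je_{\omega}t}\mathcal{Z}_{j,\omega}^{A}=O(e^{-2e_{\omega}t})$ and the embedding $\|\cdot\|_{H^{1}}\lesssim\|\langle\nabla\rangle\cdot\|_{St([t,\infty))}$.

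I expect the main obstacle to be the linear theory for the non‑self‑adjoint generator $-i\mathcal{L}_{\omega}$ — namely, justifying Strichartz estimates on its continuous spectral subspace and bookkeeping all the exponential weights attached to the finite‑dimensional unstable/stable modes — compounded by the fact that the quintic nonlinearity is energy‑critical in three dimensions, so the nonlinear estimates must be carried out in the refined Strichartz spaces $V,V_{3},W,N$ rather than in a single norm; once these two ingredients are in place, the contraction and the uniqueness/independence arguments are routine.
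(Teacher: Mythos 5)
Your proposal follows the same route the paper intends: the paper omits the proof of Proposition \ref{prop-spec} entirely, stating only that it can be carried out ``in a way similar to \cite[Proposition 6.3]{DM2009},'' and your sketch is precisely a reconstruction of that Duyckaerts--Merle fixed-point construction (Duhamel from $t=+\infty$ around the approximate solution $e^{i\omega t}(Q_{\omega}+\mathcal{V}_{k,\omega}^{A})$, separate treatment of the $\mathcal{Y}_{\omega,\pm}$ modes, contraction in the weighted Strichartz space, and the derivation of \eqref{const-eq1-1} from \eqref{const-eq1} plus $\mathcal{V}_{k,\omega}^{A}-Ae^{-e_{\omega}t}\mathcal{Y}_{\omega,-}=O(e^{-2e_{\omega}t})$). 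The overall architecture, the uniqueness and $k$-independence arguments, and the mode bookkeeping are all correct.

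One point of your framing deserves correction, because as written it inserts an unproven and genuinely hard ingredient. You assert that the continuous spectral part of $e^{-it\mathcal{L}_{\omega}}$ ``satisfies Strichartz estimates just as the free group'' and you identify proving such estimates as the main obstacle. Dispersive estimates for the non-self-adjoint matrix operator $-i\mathcal{L}_{\omega}$ on its continuous subspace are not established here and are not needed: the Duyckaerts--Merle scheme (and the machinery this paper actually assembles for the uniqueness argument in Section 7) runs the Duhamel formula with respect to the \emph{free} group $e^{it(\Delta-\omega)}$ and treats the potential term $\mathcal{V}(h)=Q_{\omega}^{2}(2h+\overline{h})+Q_{\omega}^{4}(3h+2\overline{h})$ as part of the source, absorbing it on unit time intervals via the gain $\|\langle\nabla\rangle\mathcal{V}(f)\|_{N(I)}\le |I|^{2/5}\|\langle\nabla\rangle f\|_{V_{3}(I)}$ of Lemma \ref{pre-est1} and then summing exponentials with Lemma \ref{sp-thm1} (this is exactly the mechanism of Lemma \ref{uni-lem-4}). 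If you replace your appeal to linearized Strichartz estimates by this perturbative treatment of $\mathcal{V}(h)$, the rest of your argument closes as you describe; as stated, however, that single step is a gap.
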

We can prove Proposition 
\ref{prop-spec} in a way 
similar to 
\cite[Proposition 6.3]{DM2009}. 
Thus, we omit the proof. 
We are now in a position to prove 
Theorem \ref{main-sp}. 
\begin{proof}[Proof of Theorem 
\ref{main-sp}]
We put 
\begin{equation}\label{def-spe}
Q_{\omega}^{\pm} (t, x)
:= e^{- i \omega t_0} U_{\omega}^{\pm1} 
(t + t_0, x). 
\end{equation}
Then, we see from $\mathcal{Y}_{\omega, -} = 
\overline{\mathcal{Y}}_{\omega, +}$, 
\eqref{eq-egf1}, \eqref{const-eq1-1} and 
\eqref{def-spe} that 
	\[
	\begin{split}
	Q_{\omega}^{\pm} (t, x) 
	& = e^{- i \omega t_{0}} U_{\omega}^{\pm 1} 
	(t + t_{0}, x) \\
	& = e^{- i \omega t_{0}}  e^{i \omega (t + t_{0})}
	(Q_{\omega} \pm e^{- e_{\omega} 
	(t + t_0)} \mathcal{Y}_{\omega, -} 
	+ \Gamma^{\pm}(t, x)) \\
	& = e^{i \omega t} 
	(Q_{\omega} \pm e^{- e_{\omega} 
	(t + t_0)} 
	\mathcal{Y}_{\omega, 1} 
	 \mp i e^{- e_{\omega} 
	(t + t_0)} 
	\mathcal{Y}_{\omega, 2} 
	+ \Gamma^{\pm}(t, x)),   
	\end{split}
	\]
where $\Gamma^{\pm} \in C^{\infty} 
([t_0, \infty), H^{\infty} (\R^{3}))$ with 
$\|\Gamma^{\pm}(t)\|_{H^{1}} = O(e^{- 
\frac{3}{2} e_{\omega} t})$. 	
Then, by a similar argument in \eqref{eq-egf10-1}~
\footnote{$\lambda_{1}(t), \lambda_{2}(t)$ 
and $\Gamma(t)$ in \eqref{eq-egf10-1} correspond 
to $\pm \frac{e^{- e_{\omega}(t + t_{0})}}{2}, 
\mp \frac{e^{- e_{\omega}(t + t_{0})}}{2}$ and 
$\Gamma^{\pm}(t)$, respectively}, 
we obtain 
	\[
	\begin{split}
	\mathcal{K}(Q_{\omega}^{\pm}) 
	& 
	= \mp e_{\omega} \frac{e^{- e_{\omega} 
	(t + t_{0})}}{2} (Q_{\omega}, \mathcal{Y}_{\omega, 2})_{L^{2}_{\text{real}}}
	\mp 2 e^{- e_{\omega} 
	(t + t_{0})} (Q_{\omega}^5, \mathcal{Y}_{\omega, 1}
	)_{L^{2}_{\text{real}}} 
	- (Q_{\omega}^{3}, \Gamma^{\pm}(t))_{L^{2}_{\text{real}}} \\
	& \quad 
	- 4 (Q_{\omega}^{5}, \Gamma^{\pm}(t))_{L^{2}_{\text{real}}} 
	+ O(e^{-2 e_{\omega} t}). 
	\end{split}
	\]
By \eqref{eq-egf10}, \eqref{eq-ejec4} and 
$\|\Gamma^{\pm}(t)\|_{H^{1}} = O(e^{- 
\frac{3}{2} e_{\omega} t})$, 
we have 
	\[
	\begin{split}
	\pm \mathcal{K}(Q_{\omega}^{\pm}) 
	\geq e_{\omega} 
	\frac{e^{- e_{\omega} 
	(t + t_{0})}}{8} 
	|(Q_{\omega}, \mathcal{Y}_{\omega, 2})_{L^{2}_{\text{real}}}| 
	> 0
	\end{split} 
	\]
for sufficiently large $t > 0$.  
Thus, we see that 
$Q_{\omega}^{\pm} \in \mathcal{BA}_{\omega, \pm}$ 
which satisfy 
  \[
  \text{dist}_{H^{1}}(Q_{\omega}^{\pm}(t), \mathcal{O} 
(Q_{\omega})) \leq C e^{- e_{\omega} t} 
\qquad \mbox{for all $t \geq 0$}.
  \]
Then, it follows from Propositions \ref{K-blowup} and 
\ref{prop-scatter} that 
$Q_{\omega}^{+}$ blows up in finite negative 
time and $Q_{\omega}^{-}$ is globally defined and 
scatters for negative time. 
This completes the proof. 
\end{proof}
\section{Uniqueness and proof of 
Theorem \ref{main-class}}
In this section, we shall show that 
a solution which converges to the 
orbit of the ground state must be 
the special one obtained in Proposition 
\ref{prop-spec}.  
After that, we will give the proof of 
Theorem \ref{main-class}. 
Our first aim in this 
section is to prove the following: 
\begin{proposition}\label{prop-unique}
Assume that $\omega \in 
(0, \omega_{3})$, 
where $\omega_{3} > 0$ is the 
constant given in Lemma 
\ref{tech-lem1}. 
Suppose that a solution $\psi$ to \eqref{nls} with $\psi|_{t = 0} 
= \psi_{0} \in 
\mathcal{BA}_{\omega}$ 
satisfies 
\begin{equation} \label{eq-uni1}
\text{dist}_{H^{1}} 
(\psi(t), \mathcal{O}(Q_{\omega}))  
\leq C e^{-c t} \qquad 
(t \geq t_{0}) 
\end{equation}
for some $c, C \in \R$ and $t_{0} 
 >0$.  
Then, there exists $A_0 \in \R$ and 
$\theta_{0} \in \R$
such that $\psi 
= e^{i \theta_{0}} U_{\omega}^{A_0}$, 
where $U_{\omega}^{A_0}$ is the solution to 
\eqref{nls} defined in Proposition 
\ref{prop-spec}. 
\end{proposition}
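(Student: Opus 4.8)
The plan is to follow the Duyckaerts--Merle / Duyckaerts--Roudenko scheme: use the exponential convergence \eqref{eq-uni1} to force a precise asymptotic expansion of $\psi$ near the orbit $\mathcal{O}(Q_{\omega})$, read off the value of the parameter $A_{0}$ from the leading order of that expansion, and then identify $\psi$ (up to phase) with $U_{\omega}^{A_{0}}$ via the uniqueness clause of Proposition~\ref{prop-spec}. First I would apply the symplectic decomposition of Section~3 to $\psi$, writing $\psi(t,x)=e^{i\theta(t)}(Q_{\omega}+\eta(t,x))$ with $\eta=\lambda_{+}\mathcal{Y}_{\omega,+}+\lambda_{-}\mathcal{Y}_{\omega,-}+\Gamma$ subject to \eqref{eq-dec14} and \eqref{eq-dec18}. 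By Lemma~\ref{lem-mdist1} and \eqref{eq-uni1} we get $\|\eta(t)\|_{E}\lesssim e^{-ct}$, hence $|\lambda_{\pm}(t)|+\|\Gamma(t)\|_{H^{1}}\lesssim e^{-ct}$; then \eqref{eq-mod4} with $\mathcal{M}(\eta)$ and $N_{\omega}(\eta)$ quadratic in $\eta$ gives $|\tfrac{d\theta}{dt}(t)-\omega|\lesssim\|\eta(t)\|_{H^{1}}^{2}\lesssim e^{-2ct}$, and Lemma~\ref{lem-estR} bounds the source terms in \eqref{eq-mod5}--\eqref{eq-mod6} by $O(|\lambda_{1}(t)|^{2})=O(e^{-2ct})$.

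The core of the argument is a bootstrap of the decay rate, and this is where $\psi_{0}\in\mathcal{BA}_{\omega}$ is essential: since $\mathcal{S}_{\omega}(\psi_{0})=m_{\omega}$ and $\mathcal{M}(\psi_{0})=\mathcal{M}(Q_{\omega})$, conservation of energy forces $\mathcal{E}(\psi(t))=\mathcal{E}(Q_{\omega})$, so \eqref{eq-energy3} and Lemma~\ref{lem-energy1} give $\|\Gamma(t)\|_{H^{1}}^{2}\lesssim|\lambda_{+}(t)\lambda_{-}(t)|+\|\eta(t)\|_{H^{1}}^{3}$. Integrating \eqref{eq-mod5} backward from $+\infty$ (using $\lambda_{+}(t)\to0$) and \eqref{eq-mod6} forward from $t_{0}$ with sources $O(e^{-2ct})$ yields $|\lambda_{+}(t)|\lesssim e^{-2ct}$ and $|\lambda_{-}(t)|\lesssim e^{-\min(e_{\omega},2c)t}$, hence $\|\eta(t)\|_{H^{1}}\lesssim e^{-c't}$ with $c'=\min(e_{\omega},\tfrac{3}{2}c)>c$ (up to an arbitrarily small loss of exponent in the borderline case $2c=e_{\omega}$). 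Iterating finitely many times gives $\|\eta(t)\|_{H^{1}}\lesssim e^{-e_{\omega}t}$, $|\lambda_{+}(t)|\lesssim e^{-2e_{\omega}t}$, $\|\Gamma(t)\|_{H^{1}}\lesssim e^{-\frac{3}{2}e_{\omega}t}$. The source of \eqref{eq-mod6} is now $O(e^{-2e_{\omega}t})$, so integrating $\tfrac{d}{dt}\big(e^{e_{\omega}t}\lambda_{-}(t)\big)$ shows $A_{0}:=\lim_{t\to\infty}e^{e_{\omega}t}\lambda_{-}(t)$ exists in $\R$ with $|\lambda_{-}(t)-A_{0}e^{-e_{\omega}t}|\lesssim e^{-2e_{\omega}t}$, while $|\tfrac{d\theta}{dt}-\omega|\lesssim e^{-2e_{\omega}t}$ gives $\theta_{0}:=\lim_{t\to\infty}\big(\theta(t)-\omega t\big)\in\R$ with $|\theta(t)-\omega t-\theta_{0}|\lesssim e^{-2e_{\omega}t}$. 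Collecting these bounds and using $\overline{\mathcal{Y}_{\omega,+}}=\mathcal{Y}_{\omega,-}$, one obtains
\begin{equation*}
e^{-i\theta_{0}}\psi(t)=e^{i\omega t}\big(Q_{\omega}+A_{0}e^{-e_{\omega}t}\mathcal{Y}_{\omega,-}\big)+O\big(e^{-\frac{3}{2}e_{\omega}t}\big)\quad\text{in }H^{1},
\end{equation*}
which together with \eqref{const-eq1-1} yields $\|e^{-i\theta_{0}}\psi(t)-U_{\omega}^{A_{0}}(t)\|_{H^{1}}\lesssim e^{-\frac{3}{2}e_{\omega}t}$.

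It then remains to show that $w:=e^{-i\theta_{0}}\psi-U_{\omega}^{A_{0}}$, a difference of two solutions of \eqref{nls} with $\|w(t)\|_{H^{1}}\lesssim e^{-\frac{3}{2}e_{\omega}t}$, vanishes identically. The route I would follow mirrors the construction of $U_{\omega}^{A}$: iterating the ODE analysis above order by order produces, for every $k$, the expansion $e^{-i\theta_{0}}\psi(t)=e^{i\omega t}\big(Q_{\omega}+\mathcal{V}_{k,\omega}^{A_{0}}(t)\big)+O\big(e^{-(k+\frac{1}{2})e_{\omega}t}\big)$ in $H^{1}$, the coefficients satisfying the same recursion as the $\mathcal{Z}_{j,\omega}^{A_{0}}$ with leading term $A_{0}\mathcal{Y}_{\omega,-}$ (so they coincide with the $\mathcal{Z}_{j,\omega}^{A_{0}}$ by uniqueness of the formal solution in Proposition~\ref{prop-approx}); a standard Strichartz argument on unit time intervals upgrades this $H^{1}$ decay to the bound \eqref{const-eq1}, and the uniqueness clause of Proposition~\ref{prop-spec} then forces $e^{-i\theta_{0}}\psi=U_{\omega}^{A_{0}}$ for large $t$, hence on the whole existence interval by uniqueness of the flow. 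An alternative is to decompose $w$ along the spectral subspaces of $-i\mathcal{L}_{\omega}$ and bootstrap directly: the $\mathcal{Y}_{\omega,+}$-component obeys $\dot{\mu}_{+}=e_{\omega}\mu_{+}+O\big(e^{-e_{\omega}t}\|w\|+\|w\|^{2}\big)$ and is recovered by integrating from $+\infty$; the $\mathcal{Y}_{\omega,-}$-component is pushed into the stable regime precisely because $\|w\|$ decays faster than $e^{-e_{\omega}t}$; and the neutral and continuous parts are pinned down by $\mathcal{M}(e^{-i\theta_{0}}\psi)=\mathcal{M}(U_{\omega}^{A_{0}})=\mathcal{M}(Q_{\omega})$ and $\mathcal{E}(e^{-i\theta_{0}}\psi)=\mathcal{E}(U_{\omega}^{A_{0}})=\mathcal{E}(Q_{\omega})$, each pass gaining a factor $e^{-e_{\omega}t}$, so that $w\equiv0$.

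I expect this last step to be the main obstacle. The passage from \eqref{eq-uni1} to the comparison with $U_{\omega}^{A_{0}}$ is essentially careful bookkeeping on top of Sections~2--3 together with the crucial identity $\mathcal{E}(\psi)=\mathcal{E}(Q_{\omega})$, but the rigidity step requires either carrying the asymptotic expansion of $\eta$ to arbitrary order with the Strichartz upgrade needed to match $\mathcal{V}_{k,\omega}^{A_{0}}$ for $k\ge k_{0}$, or a careful separate treatment of the unstable, stable and neutral directions of $w$ in which the conservation laws must be exploited to control the directions that the linearized flow alone does not see.
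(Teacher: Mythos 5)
Your overall strategy coincides with the paper's: symplectic/modulation decomposition, bootstrap of the exponential rate through the ODEs for $\lambda_{\pm}$, extraction of $\theta_{0}$ and of $A_{0}=\lim_{t\to\infty}e^{e_{\omega}t}\lambda_{-}(t)$, comparison with $U_{\omega}^{A_{0}}$, and conclusion via the uniqueness clause of Proposition \ref{prop-spec}. The first part of your argument, up to the bound $\|e^{-i\theta_{0}}\psi(t)-U_{\omega}^{A_{0}}(t)\|_{H^{1}}\lesssim e^{-\frac{3}{2}e_{\omega}t}$, is sound and corresponds to Steps 1--3 of the paper's proof (the paper packages the repeated bootstrap into a separate statement, Proposition \ref{pro-uni1}, about exponentially small solutions of the inhomogeneous linearized equation \eqref{eq-uni2}, applied first with $g=N_{\omega}(\eta_{0})$).

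The gap is in the mechanism you propose for the continuous-spectrum component in the rigidity step, which you yourself flag as the main obstacle. Your control of $\Gamma$ rests on $\mathcal{E}(\psi)=\mathcal{E}(Q_{\omega})$ via \eqref{eq-energy3}; this works for $\eta$ relative to $Q_{\omega}$, i.e.\ at the first order of the bootstrap, but it gives nothing for the remainder $\eta_{0}-\mathcal{V}_{k,\omega}^{A_{0}}$ at order $k\ge 2$ (your route (a)) nor for the difference $w=\eta_{0}-\eta^{A_{0}}$ (your route (b)): equality of the masses and energies of the two solutions yields, after expanding both around $Q_{\omega}$, only the single scalar constraint $\langle \mathcal{L}_{\omega}(\eta_{0}+\eta^{A_{0}}),w\rangle_{H^{-1},H^{1}}=O(\|\eta_{0}\|_{H^{1}}^{3}+\|\eta^{A_{0}}\|_{H^{1}}^{3})$, which is linear in $w$ and not coercive; and a Strichartz argument on unit intervals (Lemma \ref{uni-lem-4}) only converts an already established $H^{1}$ decay into Strichartz decay, it does not produce the $H^{1}$ decay of the non-discrete part. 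What the paper uses instead is the evolution identity \eqref{eq-uni9-1}, $\tfrac{d}{dt}\langle\mathcal{L}_{\omega}\eta,\eta\rangle_{H^{-1},H^{1}}=2\langle\mathcal{L}_{\omega}\eta,g\rangle_{H^{-1},H^{1}}$, valid for any solution of \eqref{eq-uni2}: integrating from $t$ to $\infty$ with Lemma \ref{uni-lem1} and invoking the coercivity \eqref{eq-energy0} of $\mathcal{L}_{\omega}$ on $\Gamma$ gives $\|\Gamma(t)\|_{H^{1}}^{2}\lesssim e^{-(\gamma_{1}+\gamma_{2})t}$ at every stage, and this is precisely what allows the linear analysis to be iterated with the source $g=N_{\omega}(\eta_{0})-N_{\omega}(\eta^{A_{0}})$ until the difference decays faster than $e^{-(k_{0}+1)e_{\omega}t}$ and \eqref{const-eq1} can be verified. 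A second, smaller omission: each application of the linear proposition in the regime $\gamma_{2}>e_{\omega}$ produces a new free coefficient along $\mathcal{Y}_{\omega,-}$ (the paper's $A_{1}$), and one must check that it vanishes by comparing decay rates before iterating; your phrase \emph{the coefficients satisfy the same recursion, so they coincide} hides exactly this point.
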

\subsection
{Exponentially small solution 
to the linearized equation}
In this subsection, 
in order to prove Proposition \ref{prop-unique}, 
we consider 
$\eta \in C^{0}([t_{0}, \infty), 
H^{1}(\R^{3}))$ and 
$g \in C^{0}([t_{0}, \infty), 
L^{2}(\R^{3}))$
such that 
\begin{equation}
\p_{t} \eta + 
i \mathcal{L}_{\omega} \eta 
= g \qquad 
\mbox{in $(t, x) \in 
(t_{0}, \infty) \times \R^{3}$}, 
\label{eq-uni2} 
\end{equation}
\begin{equation}
\|\eta(t)\|_{H^{1}} \leq C e^{- 
\gamma_{1} t} \qquad 
\qquad 
(t \geq t_{0}),  
\label{eq-uni3}
\end{equation}
\begin{equation}
\|\langle \nabla \rangle 
g\|_{N(t, \infty)} 
+ \|g(t)\|_{L^{\frac{6}{5}}(\R^{3})}
\leq C 
e^{-\gamma_{2}t} 
\qquad (t \geq t_{0}), 
\label{eq-uni313}
\end{equation}
where 
\[
0 < \gamma_{1} < \gamma_{2}. 
\]
We shall show the following:
\begin{proposition}
\label{pro-uni1}
Assume that 
$\eta, g$ satisfy 
\eqref{eq-uni2}--\eqref{eq-uni313}. 
Then, the followings hold: 
\begin{enumerate}
\item[\textrm{(i)}] 
if 
$\gamma_2 \leq 
e_{\omega}$, 
$\|\eta(t)\|_{H^{1}} 
+ 
\|\langle \nabla \rangle 
\eta\|_{St(t, \infty)}
\leq 
C e^{- \gamma_{2}^{-} t}$, 
\item[\textrm{(ii)}] 
if $
\gamma_{2} > e_{\omega}$, 
there exists $A \in \R$ such that 
$\eta(t) = A e^{- e_{\omega}t} 
\mathcal{Y}_{\omega, -} + w(t)$ 
with $\|w(t)\|_{H^{1}} 
+ 
\|\langle \nabla \rangle w
\|_{St(t, \infty)}
\leq C 
e^{- \gamma_{2}^{-} t}$. 
\end{enumerate}
\end{proposition}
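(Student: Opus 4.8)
The plan is to expand $\eta$ along the invariant subspaces of $-i\mathcal{L}_{\omega}$ from Theorem~\ref{thm2-1} and to transport the decay of $\eta$ mode by mode, exactly as in the corresponding step of \cite{DM2009}. Let $P_{\pm}$, $P_{0}$ be the symplectic projections onto $\mathbb{R}\mathcal{Y}_{\omega,\pm}$ and onto the generalized kernel $\Span\{iQ_{\omega},\partial_{\omega}Q_{\omega}\}$, built from the pairings in \eqref{eq-dec15}--\eqref{eq-dec16}, and set $P_{c}:=\mathrm{Id}-P_{+}-P_{-}-P_{0}$; all four commute with $-i\mathcal{L}_{\omega}$, so \eqref{eq-uni2} splits as
\[
\eta(t)=\lambda_{+}(t)\mathcal{Y}_{\omega,+}+\lambda_{-}(t)\mathcal{Y}_{\omega,-}+a(t)\,iQ_{\omega}+b(t)\,\partial_{\omega}Q_{\omega}+\gamma(t),\qquad \gamma(t)\in\mathrm{Ran}\,P_{c},
\]
with $\dot\lambda_{\pm}=\pm e_{\omega}\lambda_{\pm}+h_{\pm}$, $\dot b=h_{b}$, $\dot a=b+h_{a}$ and $\partial_{t}\gamma+i\mathcal{L}_{\omega}\gamma=P_{c}g$, where each scalar forcing $h_{\bullet}(t)$ is $g(t)$ tested against a fixed Schwartz function; hence $|h_{\bullet}(t)|\lesssim\|g(t)\|_{L^{6/5}}\lesssim e^{-\gamma_{2}t}$ by \eqref{eq-uni313}, while \eqref{eq-uni3} forces $\lambda_{\pm}(t),a(t),b(t)\to0$ and $\|\gamma(t)\|_{H^{1}}\to0$ as $t\to\infty$.

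For the one-dimensional modes I integrate these ODEs. Since $\lambda_{+}(t)\to0$, integrating the unstable equation backward gives $\lambda_{+}(t)=-\int_{t}^{\infty}e^{e_{\omega}(t-s)}h_{+}(s)\,ds$, hence $|\lambda_{+}(t)|\lesssim e^{-\gamma_{2}t}$; likewise $b(t)=-\int_{t}^{\infty}h_{b}$ and then $a(t)=-\int_{t}^{\infty}(b+h_{a})$ give $|a(t)|+|b(t)|\lesssim e^{-\gamma_{2}t}$. For the stable mode, $\lambda_{-}(t)=e^{-e_{\omega}(t-t_{0})}\lambda_{-}(t_{0})+\int_{t_{0}}^{t}e^{-e_{\omega}(t-s)}h_{-}(s)\,ds$: if $\gamma_{2}\le e_{\omega}$ both terms are $\lesssim(1+t)e^{-\gamma_{2}t}\lesssim e^{-\gamma_{2}^{-}t}$ (the polynomial factor occurring only at $\gamma_{2}=e_{\omega}$); if $\gamma_{2}>e_{\omega}$ then $e^{e_{\omega}t}\lambda_{-}(t)$ converges to $A:=e^{e_{\omega}t_{0}}\lambda_{-}(t_{0})+\int_{t_{0}}^{\infty}e^{e_{\omega}s}h_{-}(s)\,ds$ and $|\lambda_{-}(t)-Ae^{-e_{\omega}t}|\lesssim e^{-\gamma_{2}t}$, which will produce the splitting in (ii).

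The delicate part is the dispersive remainder $\gamma$. By Lemma~\ref{lem-energy1} the form $\langle\mathscr{L}_{\omega}\cdot,\cdot\rangle_{H^{-1},H^{1}}$ is coercive on $\mathrm{Ran}\,P_{c}$, and it is conserved by the free linearized flow there (it is the Hamiltonian of $\partial_{t}v+i\mathcal{L}_{\omega}v=0$), so $\|e^{-it\mathcal{L}_{\omega}}P_{c}u\|_{H^{1}}\sim\|P_{c}u\|_{H^{1}}$ uniformly in $t$; consequently the only $H^{1}$-decaying solution of $\partial_{t}v+i\mathcal{L}_{\omega}v=0$ in $\mathrm{Ran}\,P_{c}$ is $v\equiv0$. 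Comparing with the Duhamel formula based at $T$ and letting $T\to\infty$ (the homogeneous term $e^{-i(t-T)\mathcal{L}_{\omega}}P_{c}\gamma(T)$ vanishing in $H^{1}$ by that uniform bound together with $\|\gamma(T)\|_{H^{1}}\to0$), we obtain the backward representation $\gamma(t)=-\int_{t}^{\infty}e^{-i(t-s)\mathcal{L}_{\omega}}P_{c}g(s)\,ds$. Inserting it into the Strichartz estimates for the linearized propagator $e^{-it\mathcal{L}_{\omega}}P_{c}$ --- which hold with the same admissible exponents as in Lemma~\ref{Strichartz-est}, since the essential spectrum of $-i\mathcal{L}_{\omega}$ is purely imaginary and carries no embedded eigenvalue or resonance (the same input exploited in \cite{DM2009}, available in the present low-frequency regime through \cite{AIKN2021}) --- and using that $P_{c}$ is bounded on $W^{1,r}(\mathbb{R}^{3})$, we get $\|\langle\nabla\rangle\gamma\|_{St(t,\infty)}\lesssim\|\langle\nabla\rangle g\|_{N(t,\infty)}\lesssim e^{-\gamma_{2}t}$, in particular $\|\gamma(t)\|_{H^{1}}\lesssim e^{-\gamma_{2}t}$.

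Assembling the pieces finishes the proof: the $St(t,\infty)$-norm of each term $\lambda_{\pm}(t)\mathcal{Y}_{\omega,\pm}$, $a(t)\,iQ_{\omega}$, $b(t)\,\partial_{\omega}Q_{\omega}$ is bounded by the $L^{r}_{t}((t,\infty))$-norm of its exponentially decaying scalar coefficient times the fixed Schwartz profile, so in case~(i) one gets $\|\eta(t)\|_{H^{1}}+\|\langle\nabla\rangle\eta\|_{St(t,\infty)}\lesssim e^{-\gamma_{2}^{-}t}$, and in case~(ii) the same applies to $w(t):=\eta(t)-Ae^{-e_{\omega}t}\mathcal{Y}_{\omega,-}$, which solves the same equation \eqref{eq-uni2} because $Ae^{-e_{\omega}t}\mathcal{Y}_{\omega,-}$ is an exact solution of the homogeneous linear equation and all of whose components decay at rate $\gamma_{2}$. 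The main obstacle is precisely the global-in-time Strichartz bound for $e^{-it\mathcal{L}_{\omega}}P_{c}$, i.e.\ the absence of resonances of the linearized operator; once it is granted, what remains is ODE bookkeeping. An alternative route, bypassing that input, is an exponentially weighted bootstrap on short time intervals, treating the potential part of $-i\mathcal{L}_{\omega}$ as a perturbation of the free propagator, which raises the decay exponent of $\gamma$ from $\gamma_{1}$ to $\gamma_{2}^{-}$ in finitely many steps.
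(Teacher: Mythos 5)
Your treatment of the discrete modes ($\lambda_{\pm}$, and the generalized-kernel coefficients) matches the paper's Steps 1--2: the same ODEs, the same backward integration for the unstable/root modes, and the same dichotomy at $\gamma_{2}=e_{\omega}$ producing the constant $A$ in case (ii). The divergence --- and the gap --- is in how you handle the remainder on the continuous spectral subspace. You base everything on global-in-time Strichartz estimates for the linearized propagator $e^{-it\mathcal{L}_{\omega}}P_{c}$, asserting that they follow from the purely imaginary essential spectrum and the absence of embedded eigenvalues/resonances, ``the same input exploited in \cite{DM2009}.'' That attribution is not correct: \cite{DM2009} (and this paper, which follows it) deliberately avoids dispersive estimates for the matrix linearized operator, precisely because proving them requires spectral information (in particular, absence of a resonance at the endpoints $\pm i\omega$ of the essential spectrum) that is neither recorded in Theorem \ref{thm2-1} nor supplied by \cite{AIKN2021}. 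As written, your central analytic input is an unproven black box, and your closing ``alternative route'' does not repair this: the short-interval perturbative Strichartz argument (which is exactly Lemma \ref{uni-lem-4} of the paper) only \emph{transports} a known $H^{1}$ decay rate of $\eta$ into Strichartz norms at the \emph{same} rate; by itself it cannot raise the decay of the remainder from $\gamma_{1}$ to $\gamma_{2}^{-}$.

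The missing idea is the quadratic self-improvement mechanism of the paper's Step 3. One differentiates the conserved-type quantity and gets $\frac{d}{dt}\langle\mathcal{L}_{\omega}\eta(t),\eta(t)\rangle_{H^{-1},H^{1}}=2\langle\mathcal{L}_{\omega}\eta(t),g(t)\rangle_{H^{-1},H^{1}}$ (identity \eqref{eq-uni9-1}); since the right-hand side is bilinear in $(\eta,g)$, Lemmas \ref{uni-lem1}, \ref{uni-lem-4} and \ref{sp-thm1} give $|\langle\mathcal{L}_{\omega}\eta(t),\eta(t)\rangle|\lesssim e^{-(\gamma_{1}+\gamma_{2})t}$, which combined with the expansion \eqref{eq-uni19} and the coercivity of Lemma \ref{lem-energy1} yields $\|\Gamma(t)\|_{H^{1}}\lesssim e^{-\frac{\gamma_{1}+\gamma_{2}}{2}t}$ --- a rate strictly better than $\gamma_{1}$. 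Iterating (Step 4) pushes the rate up to $\gamma_{2}^{-}$, and only then is Lemma \ref{uni-lem-4} invoked to convert the $H^{1}$ bound into the $St(t,\infty)$ bound. Your proposal contains no step that gains decay beyond what is assumed, so either you must actually prove the linearized Strichartz estimates (including the resonance analysis at $\pm i\omega$), or you should replace the Duhamel-plus-dispersive argument for $\gamma$ by the energy-identity iteration above.
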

The proof of Proposition \ref{pro-uni1} 
is similar to that of Proposition 5.9 
in \cite{DM2009}. 
However, since we still employ the symplectic 
decomposition, the detail is 
a bit different. 
Therefore, we give the proof for the sake 
of completeness. 

To prove Proposition \ref{pro-uni1}, 
we need several preparations. 
Note that 
\eqref{eq-uni2} is equivalent to 
\begin{equation}\label{eq-mod11}
i \p_{t} \eta + \Delta \eta - \omega \eta 
+ \mathcal{V} (\eta) = ig, 
\end{equation} 
where 
\begin{equation}\label{linear-eq}
\begin{split}
\mathcal{V}(\eta) 
& = Q_{\omega}^{2} 
(2\eta + \overline{\eta}) 
+ Q_{\omega}^{4} 
(3\eta + 2 \overline{\eta}) \\
& = 
3 Q_{\omega}^{2} 
(\text{Re} \; \eta) 
+ i Q_{\omega}^{2} 
(\text{Im}\; \eta) 
+ 5 Q_{\omega}^{4} 
(\text{Re} \; \eta) 
+ i Q_{\omega}^{4} 
(\text{Im}\; \eta). 
\end{split}
\end{equation} 
 As in \cite[Lemma 5.5]{DM2009}, 
 we obtain the following: 
\begin{lemma}[Linear estimate]
\label{pre-est1}
\begin{enumerate}
\item[\textrm{(i)}]
Let $f \in L^{6}(\R^{3})$. 
Then, there exists a constant 
$C_{1} > 0$ such that
\[
\|\mathcal{V}(f)
\|_{L^{\frac{6}{5}}} 
\leq C_{1} \|f\|_{L^{6}}. 
\]
\item[\textrm{(ii)}] 
Let $I$ be a finite time interval 
of length $I$ and $f \in W(I)$ 
such that $\nabla f \in V(I)$. 
Then, there exists $C_{2} > 0$ 
independent of $I, f$ and $g$ 
such that 
\begin{align}
& \|f\|_{W(I)} \leq C_{2} 
\|\nabla f\|_{V(I)},
\label{linear-est00} \\
& \|\langle \nabla \rangle 
\mathcal{V}(f)
\|_{N(I)} \leq |I|^{\frac{2}{5}} 
\|\langle \nabla \rangle f\|_{V_{3}(I)}. 
\label{linear-est0}
\end{align}
\end{enumerate}
\end{lemma}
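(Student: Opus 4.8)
The plan is to reduce both assertions to Hölder's inequality and the Sobolev embedding in $\R^{3}$, exploiting that the potential $Q_{\omega}^{2}+Q_{\omega}^{4}$ appearing in \eqref{linear-eq} is smooth and exponentially decaying (by elliptic regularity applied to \eqref{sp} together with the standard exponential decay of positive ground states), hence lies in $W^{k,p}(\R^{3})$ for every $k\in\N$ and every $p\in[1,\infty]$. In particular $Q_{\omega}^{2}+Q_{\omega}^{4}\in L^{3/2}\cap W^{1,5/2}$, which is all the regularity and decay of the potential we shall need.

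For part (i) I would start from the pointwise bound $|\mathcal{V}(f)|\le C\,(Q_{\omega}^{2}+Q_{\omega}^{4})\,|f|$, which is immediate from \eqref{linear-eq}. Since $\tfrac{5}{6}=\tfrac{2}{3}+\tfrac{1}{6}$, Hölder's inequality with exponents $\tfrac{3}{2}$ and $6$ then gives $\|\mathcal{V}(f)\|_{L^{6/5}}\le C\|Q_{\omega}^{2}+Q_{\omega}^{4}\|_{L^{3/2}}\|f\|_{L^{6}}$, and $\|Q_{\omega}^{2}+Q_{\omega}^{4}\|_{L^{3/2}}<\infty$ because $Q_{\omega}\in H^{1}(\R^{3})\cap L^{\infty}(\R^{3})\subset L^{3}(\R^{3})\cap L^{6}(\R^{3})$. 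This settles (i).

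For \eqref{linear-est00}, at each fixed time the identity $\tfrac{1}{10}=\tfrac{13}{30}-\tfrac{1}{3}$ gives the embedding $\dot W^{1,30/13}(\R^{3})\hookrightarrow L^{10}(\R^{3})$, so $\|f(t)\|_{L^{10}_{x}}\lesssim\|\nabla f(t)\|_{L^{30/13}_{x}}$; since $W(I)$ and $V(I)$ carry the same time exponent $10$, taking the $L^{10}_{t}(I)$ norm of this inequality yields \eqref{linear-est00}. For \eqref{linear-est0} I would first work spatially: using that $\|\langle\nabla\rangle h\|_{L^{q}_{x}}\sim\|h\|_{W^{1,q}_{x}}$ for $1<q<\infty$ (with $q=\tfrac{30}{23}$), it suffices to bound $\|\mathcal{V}(f)\|_{L^{30/23}_{x}}+\|\nabla\mathcal{V}(f)\|_{L^{30/23}_{x}}$. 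Expanding $\nabla\mathcal{V}(f)$ by the Leibniz rule, every term is a product of one factor from $\{Q_{\omega}^{2},Q_{\omega}^{4},\nabla(Q_{\omega}^{2}),\nabla(Q_{\omega}^{4})\}$ and one factor from $\{f,\overline f,\nabla f,\overline{\nabla f}\}$; since $\tfrac{23}{30}=\tfrac{2}{5}+\tfrac{11}{30}$, Hölder with exponents $\tfrac{5}{2}$ and $\tfrac{30}{11}$ bounds each such product by a constant times $\|\langle\nabla\rangle f(t)\|_{L^{30/11}_{x}}$, the constant depending only on $\|Q_{\omega}^{2}+Q_{\omega}^{4}\|_{W^{1,5/2}}$. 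Finally, since $\tfrac{3}{5}=\tfrac{2}{5}+\tfrac{1}{5}$, Hölder in time over $I$ converts $\|\,\cdot\,\|_{L^{5/3}_{t}(I)}$ into $|I|^{2/5}\|\,\cdot\,\|_{L^{5}_{t}(I)}$, and combining this with the spatial bound gives \eqref{linear-est0}.

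None of this is deep; it is essentially the cubic--quintic analogue of \cite[Lemma 5.5]{DM2009}. The only step that needs a little care is the action of the nonlocal operator $\langle\nabla\rangle$ on the product $\mathcal{V}(f)$, which I would handle either through the norm equivalence $\|\langle\nabla\rangle h\|_{L^{q}}\sim\|h\|_{W^{1,q}}$ mentioned above or, equivalently, via the Kato--Ponce fractional Leibniz rule; once this reduction is made, the estimate collapses to the elementary Hölder and Sobolev computations just described, with the smoothness and exponential decay of $Q_{\omega}$ ensuring all the potential norms are finite.
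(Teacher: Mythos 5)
Your proof is correct and follows exactly the route the paper indicates (Hölder with exponents $\tfrac{3}{2},6$ for (i), the Sobolev embedding $\dot W^{1,30/13}(\R^{3})\hookrightarrow L^{10}(\R^{3})$ for \eqref{linear-est00}, and Hölder in space with $\tfrac{23}{30}=\tfrac{2}{5}+\tfrac{11}{30}$ followed by Hölder in time with $\tfrac{3}{5}=\tfrac{2}{5}+\tfrac{1}{5}$ for \eqref{linear-est0}); the paper's own proof is just a one-line appeal to these same inequalities. Your additional care with $\langle\nabla\rangle$ via the equivalence $\|\langle\nabla\rangle h\|_{L^{q}}\sim\|h\|_{W^{1,q}}$ and the integrability of $Q_{\omega}^{2}+Q_{\omega}^{4}$ only makes the argument more complete.
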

\begin{proof}
We can obtain \textrm{(i)} 
by the H\"{o}lder inequality. 
\eqref{linear-est00} follows from 
the Sobolev inequality. 
We can also obtain \eqref{linear-est0} 
by the H\"{o}lder inequality. 
\end{proof}
\begin{lemma} \label{uni-lem1}
For any finite time-interval $I$, 
of length $|I|$, and any functions 
$g$ and $\eta$ such that 
$g \in L^{\infty} (I, L^{\frac{6}{5}}), 
\langle \nabla \rangle g \in N(I), 
g \in L^{\infty} (I, L^{2}(\R^{3})), 
\eta \in L^{\infty} 
(I, L^{6}(\R^{3}))$ 
and $\langle \nabla \rangle \eta \in 
L^{\frac{5}{2}}(I, L^{
\frac{30}{7}}(\R^{3}))$, 
we have 
\begin{equation} \label{eq-uni29}
\begin{split}
\int_{I} 
\biggl| \langle \mathcal{L}_{\omega}
g(t), \eta(t)\rangle \biggl| dt 
& \leq C \biggl[
\|\langle \nabla \rangle g\|_{N(I)} 
\|\langle \nabla \rangle \eta
\|_{L^{\frac{5}{2}}
(I, L^{\frac{30}{7}}(\R^{3}))} \\
& \qquad \quad + |I| \|g\|_{L^{\infty} (I, L^{\frac{6}{5}}(\R^{3}))} 
\|\eta\|_{L^{\infty}(I, L^{6})}
\biggl]. 
\end{split}
\end{equation} 
\end{lemma}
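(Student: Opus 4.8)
The plan is to write $\mathcal{L}_{\omega}$ out explicitly, split $\langle\mathcal{L}_{\omega}g,\eta\rangle$ into one ``principal'' term carrying two derivatives and two ``lower order'' terms with no derivatives, estimate the principal term by Strichartz duality, and estimate the remaining two by crude H\"{o}lder bounds together with the finiteness of $|I|$. Recall from \eqref{sta-eq54}, \eqref{linear-eq} and \eqref{eq-mod11} that, acting on $\eta$ with real and imaginary parts $\eta_{1},\eta_{2}$, one has $\mathcal{L}_{\omega}\eta = L_{\omega,+}\eta_{1}+iL_{\omega,-}\eta_{2} = (-\Delta+\omega)\eta-\mathcal{V}(\eta)$. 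Writing $\langle\nabla\rangle:=(1-\Delta)^{1/2}$ and $-\Delta+\omega=(1-\Delta)+(\omega-1)$, this gives, for a.e.\ $t\in I$,
\begin{equation*}
\langle\mathcal{L}_{\omega}g(t),\eta(t)\rangle
=\big(\langle\nabla\rangle g(t),\langle\nabla\rangle\eta(t)\big)_{L^{2}_{\text{real}}}
+(\omega-1)\big(g(t),\eta(t)\big)_{L^{2}_{\text{real}}}
-\big(\mathcal{V}(g(t)),\eta(t)\big)_{L^{2}_{\text{real}}}.
\end{equation*}
All three $L^{2}$ pairings are finite for a.e.\ $t$: the first because $\langle\nabla\rangle g(t)\in L^{30/23}$ and $\langle\nabla\rangle\eta(t)\in L^{30/7}$ are H\"{o}lder-dual; the second and third because $g(t)\in L^{6/5}$, $\eta(t)\in L^{6}$, and also $\mathcal{V}(g(t))\in L^{6/5}$, since $Q_{\omega}\in L^{\infty}$ makes multiplication by $Q_{\omega}^{2}$ and $Q_{\omega}^{4}$ bounded on $L^{6/5}$.

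For the principal term I would apply H\"{o}lder in $x$ (with $\tfrac{23}{30}+\tfrac{7}{30}=1$) and then in $t$ (with $\tfrac{3}{5}+\tfrac{2}{5}=1$):
\begin{equation*}
\int_{I}\Big|\big(\langle\nabla\rangle g,\langle\nabla\rangle\eta\big)_{L^{2}_{\text{real}}}\Big|\,dt
\le\|\langle\nabla\rangle g\|_{L^{5/3}_{t}L^{30/23}_{x}(I)}\,\|\langle\nabla\rangle\eta\|_{L^{5/2}_{t}L^{30/7}_{x}(I)}
=\|\langle\nabla\rangle g\|_{N(I)}\,\|\langle\nabla\rangle\eta\|_{L^{5/2}(I,L^{30/7})},
\end{equation*}
where I use that $N(I)=L^{5/3}_{t}L^{30/23}_{x}$ is precisely the dual of the $L^{2}$-admissible Strichartz pair $(q,r)=(30/7,5/2)$ (one checks $\tfrac1r=\tfrac32(\tfrac12-\tfrac1q)$). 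This produces the first summand of \eqref{eq-uni29}.

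For the two lower-order terms I would estimate pointwise in $t$ by H\"{o}lder with $\tfrac{5}{6}+\tfrac{1}{6}=1$: $\big|(g(t),\eta(t))_{L^{2}}\big|\le\|g(t)\|_{L^{6/5}}\|\eta(t)\|_{L^{6}}$ and $\big|(\mathcal{V}(g(t)),\eta(t))_{L^{2}}\big|\le\|\mathcal{V}(g(t))\|_{L^{6/5}}\|\eta(t)\|_{L^{6}}\lesssim\|g(t)\|_{L^{6/5}}\|\eta(t)\|_{L^{6}}$; since $|\omega-1|\le1$, their combined contribution is at most a constant times
\begin{equation*}
\int_{I}\|g(t)\|_{L^{6/5}}\,\|\eta(t)\|_{L^{6}}\,dt\le|I|\,\|g\|_{L^{\infty}(I,L^{6/5})}\,\|\eta\|_{L^{\infty}(I,L^{6})},
\end{equation*}
which is the second summand of \eqref{eq-uni29}. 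Summing the three bounds gives \eqref{eq-uni29}. The whole argument is just a chain of H\"{o}lder inequalities, so there is no serious obstacle; the only point deserving a word of care is the identification of the abstract duality pairing $\langle\mathcal{L}_{\omega}g,\eta\rangle_{H^{-1},H^{1}}$ with the sum of the three $L^{2}$ pairings above, because under the stated hypotheses $g(t)$ belongs only to $W^{1,30/23}$ (not to $H^{1}$) for a.e.\ $t$, so that identity should be read through a density argument and the self-adjointness of $\langle\nabla\rangle$.
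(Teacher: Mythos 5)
Your proof is correct and follows essentially the same route as the paper: split $\langle\mathcal{L}_{\omega}g,\eta\rangle$ into a derivative part, estimated by H\"{o}lder in $x$ and $t$ against the dual pair $N(I)=L^{5/3}_{t}L^{30/23}_{x}$ and $L^{5/2}_{t}L^{30/7}_{x}$, and zeroth-order/potential parts, estimated by the $L^{6/5}$--$L^{6}$ pairing times $|I|$. The only cosmetic differences are that the paper keeps the $\omega\,{\rm Re}\int g\bar\eta$ term with the gradient term (absorbing it into the $N(I)$ bound) while you place it in the $|I|$ term, and that you use self-adjointness of $1-\Delta$ where the paper works directly with $\nabla g\cdot\nabla\bar\eta$; neither affects the validity of the argument.
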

\begin{proof}
We have 
\[
\langle \mathcal{L}_{\omega}
g(t), \eta(t)\rangle
= a(t) + b(t), 
\]
where 
\[
a(t) = \text{Re}\; \int_{\R^{3}} \nabla g(t) 
\cdot \nabla \overline{\eta(t)} dx 
+ \omega \text{Re}\; \int_{\R^{3}} 
g(t) \overline{\eta(t)} dx, 
\]
\[
b(t) = - 
\int_{\R^{3}} 
(3Q_{\omega}^{2} + 5 Q_{\omega}^{4}) 
g_{1}(t) \eta_{1}(t) dx - 
\int_{\R^{3}} 
(Q_{\omega}^{2} + Q_{\omega}^{4}) 
g_{2}(t) \eta_{2}(t) dx. 
\]
Here, $g_{1} = \text{Re}\; g, 
\eta_{1} = \text{Re}\; \eta$ and 
$g_{2} = \text{Im}\; g, 
\eta_{2} = \text{Im}\; \eta$.
By the H\"{o}lder inequality, we obtain 
\[
\int_{I} |a(t)| dt 
\leq C \|\langle \nabla \rangle 
g\|_{N(I)} 
\|\langle \nabla \eta \rangle
\|_{L_{t}^{\frac{5}{2}} 
(I, L_{x}^{\frac{30}{7}})}, 
\]
\[
|b(t)| \leq 
C \|g(t)\|_{L_{x}^{\frac{6}{5}}(\R^{3})}
\|\eta(t)\|_{L_{x}^{6}(\R^{3})} 
(\|Q_{\omega}\|_{L^{\infty}}^{2} 
+ \|Q_{\omega}\|_{L^{\infty}}^{4}). 
\]
Integrating the estimate on $b(t)$ over $I$, 
we get the conclusion. 
\end{proof} 
We recall the following lemma which 
is obtained by Duyckaerts and 
Merle~\cite[Claim 5.8]{DM2009}.
\begin{lemma}[Sums of exponential ]\label{sp-thm1}
Let $t_{0} > 0, p \in [1, \infty), 
a_{0} < 0$, $E$ a normed vector 
space, and 
$f \in L_{\text{loc}}^{p}
(t_{0}, \infty; E)$ such that 
there exist $\tau_{0} > 0, 
C_{0} > 0$ satisfying 
\begin{equation*} 
\|f\|_{L^{p} (t, t + \tau_{0}; E)} 
\leq C_{0} e^{a_{0} t} 
\qquad (t \geq t_{0}). 
\end{equation*}
Then, we have 
\begin{equation*}
\|f\|_{L^{p} (t, \infty; E)} 
\leq \frac{C_{0} e^{a_{0} t}}
{1 - e^{a_{0} \tau_{0}}} 
\qquad 
(t \geq t_{0}). 
\end{equation*}
\end{lemma}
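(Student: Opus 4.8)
The plan is to tile the half-line $(t,\infty)$ by windows of the fixed length $\tau_0$ on which the hypothesis is directly applicable, and then to sum a geometric series. This is the standard ``sums of exponentials'' trick of Duyckaerts--Merle, and I would essentially reproduce it.

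First I would fix $t \ge t_0$ and decompose $(t,\infty) = \bigcup_{k\ge 0} I_k$ with $I_k := [\,t+k\tau_0,\ t+(k+1)\tau_0\,)$, the union being disjoint. For every $k\ge 0$ the left endpoint satisfies $t + k\tau_0 \ge t \ge t_0$, so the assumed estimate applies on the window $I_k$ (which indeed has length $\tau_0$) and yields $\|f\|_{L^p(I_k;E)} \le C_0\, e^{a_0(t+k\tau_0)}$.

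Next I would invoke the subadditivity of the $L^p$-norm over a countable disjoint decomposition: since $p\ge 1$, Minkowski's inequality (or simply $(a^p+b^p)^{1/p}\le a+b$, iterated and passed to the limit) gives $\|f\|_{L^p(t,\infty;E)} \le \sum_{k\ge 0}\|f\|_{L^p(I_k;E)}$. Combining the two bounds, $\|f\|_{L^p(t,\infty;E)} \le C_0\, e^{a_0 t}\sum_{k\ge 0} e^{a_0 k\tau_0}$; since $a_0<0$ we have $0<e^{a_0\tau_0}<1$, so the series converges to $(1-e^{a_0\tau_0})^{-1}$, which is exactly the claimed bound. A minor variant, if one prefers to work with $p$-th powers: sum $\|f\|_{L^p(I_k;E)}^p$ over $k$ to get $C_0^p e^{a_0 p t}(1-e^{a_0 p\tau_0})^{-1}$, and then use $(1-x)^p\le 1-x^p$ for $x\in(0,1)$, $p\ge1$, to dominate this by $\big(C_0 e^{a_0 t}/(1-e^{a_0\tau_0})\big)^p$.

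There is no genuine obstacle here; the only two points worth a line of care are that every window $I_k$ starts at a time $\ge t_0$, so the hypothesis may legitimately be invoked on it, and the countable form of Minkowski's inequality for the $L^p$-norm of a function split over disjoint measurable sets. Both are entirely routine, so the whole argument is short.
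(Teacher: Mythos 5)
Your proof is correct, and it is precisely the standard tiling-plus-geometric-series argument of Duyckaerts--Merle; the paper itself does not reprove this lemma but simply cites \cite[Claim 5.8]{DM2009}, whose proof is the one you reproduce. Both of your variants (countable Minkowski, or summing $p$-th powers and using $(1-x)^p\le 1-x^p$) are valid, and the two points you flag — that each window starts at a time $\ge t_0$ and the countable subadditivity of the $L^p$-norm over disjoint sets — are indeed the only things to check.
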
 
Using Lemma \ref{sp-thm1}, 
we shall show the following:
\begin{lemma}
\label{uni-lem-4}
Let $\eta$ be a solution to 
\eqref{eq-uni2}. 
Assume that $\eta$ satisfies 
\begin{equation} \label{non-est-11}
\|\eta(t)\|_{H^{1}} \leq C_{1} 
e^{- \gamma t} 
\qquad 
(t > 0)
\end{equation}
for some $C_{1} > 0$ and 
$\gamma \in (0, \gamma_{2})$. 
Then, we have 
\[
\|\langle \nabla \rangle \eta
\|_{St (t, \infty)} + 
\|\eta\|_{W(t, \infty)} \leq C_{2} 
e^{- \gamma t}
\qquad 
(t > 0)
\]
for some $C_{2} > 0$. 
\end{lemma}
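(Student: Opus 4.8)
The plan is to bootstrap the hypothesis \eqref{non-est-11}, which only controls $\eta$ in $L^\infty_t H^1$, up to control in Strichartz spaces, by running the standard Strichartz estimate on short time intervals and then gluing the resulting pieces with Lemma~\ref{sp-thm1}. First I would rewrite \eqref{eq-uni2} in the form \eqref{eq-mod11}, i.e. $i\p_t\eta + (\Delta-\omega)\eta = -\mathcal{V}(\eta) + ig$, and apply Duhamel's formula with respect to the free propagator $e^{it(\Delta-\omega)} = e^{-i\omega t}\,e^{it\Delta}$, which satisfies the same Strichartz bounds as $e^{it\Delta}$ (Lemma~\ref{Strichartz-est}) since the phase $e^{-i\omega t}$ has modulus one. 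Thus on any interval $I = [t, t+\tau_0]$ one has
\[
\eta(t') = e^{i(t'-t)(\Delta-\omega)}\eta(t) + \int_t^{t'} e^{i(t'-s)(\Delta-\omega)}\bigl(i\mathcal{V}(\eta(s)) + g(s)\bigr)\,ds, \qquad t'\in I .
\]

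Applying the homogeneous and inhomogeneous Strichartz estimates of Lemma~\ref{Strichartz-est} (recall that $N(I)$ is dual to the $L^2$-admissible space $L^{5/2}_t L^{30/7}_x$) gives
\[
\|\langle\nabla\rangle\eta\|_{St(I)} \lesssim \|\eta(t)\|_{H^1} + \|\langle\nabla\rangle\mathcal{V}(\eta)\|_{N(I)} + \|\langle\nabla\rangle g\|_{N(I)} .
\]
The one point that needs care is the potential term $\mathcal{V}(\eta)$: although it is linear in $\eta$, it carries a small factor on a short interval. Indeed, by \eqref{linear-est0} and \eqref{Stri-est-3},
\[
\|\langle\nabla\rangle\mathcal{V}(\eta)\|_{N(I)} \lesssim |I|^{2/5}\|\langle\nabla\rangle\eta\|_{V_3(I)} \lesssim \tau_0^{2/5}\|\langle\nabla\rangle\eta\|_{St(I)} .
\]
Fixing $\tau_0>0$ once and for all so small that the product of the implicit constant with $\tau_0^{2/5}$ is at most $1/2$, this term is absorbed into the left-hand side. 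To make the absorption legitimate one must know a priori that $\|\langle\nabla\rangle\eta\|_{St(I)}$ is finite on the short interval $I$; this follows from the local-in-time Strichartz theory for the linear equation \eqref{eq-mod11} together with $\eta\in C^0([t_0,\infty),H^1)$, and is the only genuinely delicate verification in the argument (it is standard, via a short continuity argument on subintervals of $I$).

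After absorption, \eqref{non-est-11}, the bound \eqref{eq-uni313}, and $\gamma<\gamma_2$ yield, for some constant $C'$,
\[
\|\langle\nabla\rangle\eta\|_{St(t,t+\tau_0)} \lesssim \|\eta(t)\|_{H^1} + \|\langle\nabla\rangle g\|_{N(t,\infty)} \le C' e^{-\gamma t} \qquad (t>0).
\]
Then I would apply Lemma~\ref{sp-thm1} with $a_0 = -\gamma$ and the above fixed $\tau_0$: for each $L^2$-admissible pair $(q,r)$ the previous estimate gives $\|\langle\nabla\rangle\eta\|_{L^r_t L^q_x(t,t+\tau_0)} \le C' e^{-\gamma t}$ with $C'$ uniform in $(q,r)$, whence $\|\langle\nabla\rangle\eta\|_{L^r_t L^q_x(t,\infty)} \le C' e^{-\gamma t}/(1-e^{-\gamma\tau_0})$; taking the supremum over admissible pairs gives $\|\langle\nabla\rangle\eta\|_{St(t,\infty)}\le C_2 e^{-\gamma t}$. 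Finally, the bound on $\|\eta\|_{W(t,\infty)}$ follows from \eqref{linear-est00} and \eqref{Stri-est-3}, namely $\|\eta\|_{W(t,\infty)}\lesssim\|\nabla\eta\|_{V(t,\infty)}\lesssim\|\langle\nabla\rangle\eta\|_{St(t,\infty)}\le C_2 e^{-\gamma t}$, the finite-interval estimate \eqref{linear-est00} extending to $(t,\infty)$ by monotone convergence because its constant does not depend on the interval. The main obstacle is really just the a priori local finiteness of the Strichartz norm needed for the absorption step; everything else is a routine combination of Strichartz estimates with the elementary summation Lemma~\ref{sp-thm1}.
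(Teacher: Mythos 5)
Your proposal is correct and follows essentially the same route as the paper's proof: Duhamel plus Strichartz on short intervals of length $\tau_{0}$, absorption of the linear potential term $\mathcal{V}(\eta)$ via the factor $\tau_{0}^{2/5}$ from \eqref{linear-est0}, and then Lemma~\ref{sp-thm1} to sum the exponential bounds over $(t,\infty)$. The only (cosmetic) difference is that you close the estimate directly in the full $St$ norm, whereas the paper first bounds the $V_{3}$ norm, obtains $V$ by interpolation with $L^{\infty}_{t}H^{1}$, deduces the $W$ bound via \eqref{linear-est00}, and only then runs the same argument for $St$; your remark about the a priori local finiteness needed for the absorption step is a point the paper leaves implicit.
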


\begin{proof}
We shall show Lemma \ref{uni-lem-4} following 
\cite[Lemma 5.7]{DM2009}. 
From \eqref{eq-mod11}, one has 
\begin{equation} \label{non-est-101}
i \p_{t} \langle \nabla \rangle \eta 
+ \Delta \langle \nabla \rangle \eta 
- \omega \langle \nabla \rangle \eta
+ \langle \nabla \rangle 
(\mathcal{V} (\eta) 
- i g) = 0.  
\end{equation}
Let $t$ and $\tau$ such that 
$t > 0$ and $0 < \tau < 1$. 
By the Strichartz estimates 
\eqref{Stri-est-1}, \eqref{Stri-est-2}, 
\eqref{non-est-11}, 
\eqref{linear-est0}
and \eqref{eq-uni313}, 
one has 
\begin{equation}\label{non-est-12}
\begin{split}
\|\langle \nabla \rangle \eta\|_{V_{3}
(t, t + \tau)} 
& \leq 
C (\|\eta(t)\|_{H^{1}} 
+ \|\langle \nabla \rangle 
(\mathcal{V} (\eta) \|_{N(t, t + \tau)}
+ \|\langle \nabla \rangle g\|_{N(t, t + \tau)})
\\
& \leq C \bigg(e^{- \gamma t} 
+ \tau^{\frac{2}{5}} \|\langle \nabla 
\rangle \eta\|_{V_{3}(t, t+\tau)} 
+ e^{-\gamma_{2} t}\bigg). 
\end{split}
\end{equation}
Thus, we can take 
$\tau_{0} > 0$ sufficiently small so that 
\begin{equation}
\label{non-est-13} 
\|\langle \nabla 
\rangle \eta\|_{V_{3}
(t, t + \tau_{0})} 
\leq C e^{- \gamma t}. 
\end{equation} 
for $t > 0$.
Note that by the H\"{o}lder inequality, 
the assumption 
\eqref{non-est-11} and 
\eqref{non-est-13}, we obtain 
\begin{equation} \label{sp-eq3} 
\|\langle \nabla 
\rangle \eta\|_{V(t, t + \tau_{0})} 
\leq 
\|\eta\|_{L^{\infty}(
(t, t + \tau_{0}), H^{1})}
^{\frac{1}{2}}
\|\langle \nabla 
\rangle \eta
\|_{V_{3}(t, t + \tau_{0})}
^{\frac{1}{2}}
\leq C_{1} e^{- \gamma t}. 
\end{equation}
By Lemma \ref{sp-thm1} and 
the inequalities \eqref{non-est-13}
and \eqref{sp-eq3} , 
we have 
\begin{equation}
\label{non-est-14}
\|\langle \nabla 
\rangle \eta\|_{V (t, \infty)} + 
\|\langle \nabla 
\rangle \eta\|_{V_{3} (t, \infty)} 
\leq C e^{- \gamma t} 
\qquad (t > 0). 
\end{equation}
Then, by 
the Sobolev inequality 
\eqref{linear-est00}, we see that 
$\|\eta\|_{W(t, \infty)} \leq C_{2} 
e^{- \gamma t}$ for some $C_{2} > 0$. 

Next, we shall estimate 
$\|\langle \nabla \rangle \eta
\|_{St(t, \infty)}$. 
By the Strichartz estimates 
\eqref{Stri-est-1}, 
\eqref{Stri-est-2}, 
and the estimate similar to 
\eqref{non-est-12}, 
we obtain 
\[
\begin{split}
\|\langle \nabla \rangle 
\eta\|_{St(t, t+ \tau)} 
& 
\leq 
C (e^{- \gamma t} 
+ \tau^{\frac{2}{5}} 
\|\langle \nabla 
\rangle \eta\|_{V_{3}(t, t + \tau)} 
+ 
\|\langle \nabla \rangle g\|_{N(t, t + \tau)}) \\
& 
\leq 
C (e^{- \gamma t} 
+ \tau^{\frac{2}{5}} 
\|\langle \nabla 
\rangle \eta\|_{St(t, t+ \tau)} 
+ e^{- \gamma_{2} t}).
\end{split}
\] 
This implies that 
$\|\langle \nabla \rangle 
\eta\|_{St(t, t + \tau)} 
\leq C e^{- \gamma t}$ 
for sufficiently small $\tau > 0$. 
By Lemma \ref{sp-thm1}, 
we get 
$\|\langle \nabla \rangle 
\eta\|_{St(t, \infty)} \leq C_{2} 
e^{- \gamma t}$ 
for some $C_{2} > 0$. 
This completes the proof. 
\end{proof}

We are now in a position to prove 
Proposition \ref{pro-uni1}. 
\begin{proof}[Proof of Proposition 
\ref{pro-uni1}]

\par
\textbf{(Step 1). }
We claim the following: 
\begin{align}
& 
\frac{d \lambda_{+}}{d t}(t) 
- e_{\omega} \lambda_{+}(t) 
= \Omega(g, \mathcal{Y}_{\omega, 
-}), 
\label{eq-uni8}
\\
& 
\frac{d \lambda_{-}}{d t}(t) 
+ e_{\omega} \lambda_{-}(t) 
= - \Omega(g, 
\mathcal{Y}_{\omega, +}), 
\label{eq-uni8-1}
\\
& \frac{d}{d t} 
\langle \mathcal{L}_{\omega} 
\eta(t), \eta(t) \rangle_{H^{-1}, H^{1}} 
= 2 \langle \mathcal{L}_{\omega} 
\eta(t), g\rangle_{H^{-1}, H^{1}},  
\label{eq-uni9-1}
\end{align}
where $\lambda_{\pm}(t)$ are given by 
\eqref{eq-dec15}. 
It follows from 
\eqref{eq-dec15}, 
\eqref{eq-uni2} and 
$\mathcal{L}_{\omega} 
\mathcal{Y}_{\omega, -} 
= - i e_{\omega} \mathcal{Y}
_{\omega, -}$ that 
	\[
	\begin{split}
	\frac{d \lambda_{+}}{d t}(t) 
	= \Omega(\p_{t} \eta(t), 
	\mathcal{Y}_{\omega, -})
	= \Omega(- i \mathcal{L}
	_{\omega} \eta(t), \mathcal{Y}
	_{\omega, -}) + 
	\Omega(g, 
	\mathcal{Y}_{\omega, -}) 
	= e_{\omega} \lambda_{+}(t) 
	+ \Omega(g, \mathcal{Y}_{\omega, -}). 
	\end{split}
	\]
Similarly, by \eqref{eq-dec15}, 
\eqref{eq-uni2} and 
$\mathcal{L}_{\omega} 
\mathcal{Y}_{\omega, +} 
= i e_{\omega} \mathcal{Y}
_{\omega, +}$, we obtain 
	\[
	\begin{split}
	\frac{d \lambda_{-}}{d t}(t) 
	= - \Omega(\partial_{t} 
	\eta(t), \mathcal{Y}_{\omega, +}) 
	& 
    = - \Omega(- i 
	\mathcal{L}_{\omega} \eta(t), 
	\mathcal{Y}_{\omega, +}) 
	- \Omega(g, 
	\mathcal{Y}_{\omega, +}) \\
    & = - e_{\omega} \lambda_{-}(t) 
	- \Omega(g, 
	\mathcal{Y}_{\omega, +}).    
	\end{split}
	\]
Clearly, we have 
  \begin{equation}\label{eq-uni60}
  (\mathcal{L}_{\omega} u, - i 
  \mathcal{L}_{\omega}u)_{L^{2}_{\text{real}}} 
  = 0 
  \qquad \mbox{for all $u \in H^{2}(\R^{3})$}.
  \end{equation}
It follows form \eqref{eq-uni2} 
and \eqref{eq-uni60} that 
	\[
	\begin{split}
	\frac{d}{d t} \langle \mathcal{L}
	_{\omega} 
\eta(t), \eta(t) \rangle_{H^{-1}, H^{1}}   
= 2 \langle \mathcal{L}_{\omega} 
\eta(t), g \rangle_{H^{-1}, H^{1}}. 
\end{split}
	\]
This yields \eqref{eq-uni9-1}.

\textbf{(Step 2). 
}
We now claim the following: 
\begin{align} 
|\lambda_{+}(t)| 
\leq C e^{- \gamma_{2}t}, & \label{eq-uni31} \\
|\lambda_{-}(t)| 
\leq 
C e^{- \gamma_{2}^{-}t} 
& \qquad 
\mbox{if $\gamma_{2} 
\leq e_{\omega}$}, 
\label{eq-uni32}
\\
\mbox{there exists $A \in 
\R$ such that 
$|\lambda_{-}(t) - A 
e^{- e_{\omega} t}| 
\leq C e^{- \gamma_{2}t}$}
& \qquad 
\mbox{if $\gamma_{2} 
> e_{\omega}$}. 
\label{eq-uni321}
\end{align}
It follows from \eqref{eq-uni8} and \eqref{eq-uni313} 
that 
\[
\biggl|\frac{d}{d t} (e^{- e_{\omega} t} 
\lambda_{+}(t)) \biggl|
= e^{- e_{\omega} t} 
|\Omega(g, \mathcal{Y}_{\omega, -})| 
\leq C e^{-e_{\omega} t} \|g\|_{L^{\frac{6}{5}}(\R^{3})} 
\leq C e^{- (e_{\omega} + \gamma_{2}) t}. 
\] 
Integrating the above from $t$ to $\infty$, 
we obtain \eqref{eq-uni31}.

Next, we shall show \eqref{eq-uni32}. 
From \eqref{eq-uni8-1}, we obtain 
	\begin{equation} \label{eq-uni54}
	\biggl|
	\frac{d}{d t} 
	(e^{e_{\omega} t} \lambda_{-}
	(t))\biggl| 
    = e^{e_{\omega} t} 
|\Omega(g, \mathcal{Y}_{\omega, +})| 
\leq C e^{e_{\omega} t} \|g\|_{L^{\frac{6}{5}}(\R^{3})}
	\leq C e^{(e_{\omega} 
	- \gamma_{2})t}. 
	\end{equation}
Assume that $\gamma_{2} 
\leq e_{\omega}$. 
By \eqref{eq-uni54}, one has 
	\[
	|e^{e_{\omega} t} 
	\lambda_{-}(t)| 
	\leq 
	\begin{cases}
	e^{e_{\omega} t_0} 
	|\lambda_{-}(t_0)| 
	+ C e^{(e_{\omega} - 
	\gamma_{2}) t} 
	& 
	\qquad \mbox{if $\gamma_{2} 
    < e_{\omega}$}, \\
	e^{e_{\omega} t_0} 
	|\lambda_{+}(t_0)| 
	+ C(t - t_{0})
	& 
	\qquad \mbox{if $e_{\omega} 
	= \gamma_{2}$}. 
	\end{cases}
	\]	
This yields \eqref{eq-uni32}. 	
	
Next, we assume that 
$\gamma_{2} > e_{\omega}$. 
Then, we see that 
$\int_{t_0}^{\infty} 
e^{(e_{\omega} - \gamma_{2})
\tau} d \tau < \infty$. 	
It follows from \eqref{eq-uni54} that 
	\begin{equation} \label{eq-uni55}
	|e^{e_{\omega} s}
	\lambda_{-}(s) 
	- e^{e_{\omega} t}
	\lambda_{-}(t)| 
	\leq C \int_{t}^{s} 
	e^{(e_{\omega} 
	- \gamma_{2}) \tau} d\tau  
	\leq C \int_{t}^{\infty} 
	e^{(e_{\omega} 
	- \gamma_{2}) \tau} d\tau 
	\leq C e^{(e_{\omega} 
	- \gamma_{2})t}
	\end{equation}
for $s \geq t > t_{0}$. 	
Thus, there exists $A \in \R$ 
such that $\lim_{t \to \infty} 
e^{e_{\omega} t} \lambda_{-}(t) 
= A$. 	
In addition, letting 
$s \to \infty$ in \eqref{eq-uni55}, 
we have 
	\[
	|A - e^{e_{\omega} t}
	\lambda_{-}(t)| \leq 
	C e^{(e_{\omega} - 
	\gamma_{2})t}. 
	\]
This implies \eqref{eq-uni321}. 

\textbf{(Step 3)}. 
We next prove 
\begin{equation} \label{eq-uni34}
\|\Gamma(t)\|_{H^{1}} \leq 
C e^{- \frac{\gamma_{1} 
+ \gamma_{2}}{2} t},  
\end{equation}
where $\Gamma(t)$ is given by \eqref{eq-energy10}. 
We have, by 
\eqref{eq-dec12-1},  
\eqref{eq-dec14} and 
\eqref{eq-energy10}, that 
$\Omega(\Gamma(t), 
\mathcal{Y}_{\omega, +}) = 
\Omega(\Gamma(t), 
\mathcal{Y}_{\omega, -}) = 0$. 
Recall that 
$\Omega(\mathcal{Y}_{\omega, +}, 
\mathcal{Y}_{\omega, -}) = 1$ 
and 
$\Omega(\mathcal{Y}_{\omega, +}, 
\mathcal{Y}_{\omega, +}) 
= \Omega(\mathcal{Y}_{\omega, -}, 
\mathcal{Y}_{\omega, -})
= 0$. 
Using this, 
we get 
\begin{equation}\label{eq-uni19}
\begin{split}
\langle \mathcal{L}_{\omega} 
\eta(t), \eta(t) \rangle_{H^{-1}, H^{1}} 
& = 
e_{\omega} \lambda_{+}(t) 
\lambda_{-}(t) 
(i \mathcal{Y}_{\omega, +}, 
\mathcal{Y}_{\omega, -})
_{L_{\text{real}}^{2}} 
- e_{\omega} \lambda_{+}(t) 
\lambda_{-}(t) 
(i \mathcal{Y}_{\omega, -}, 
\mathcal{Y}_{\omega, +})
_{L_{\text{real}}^{2}} \\
& \quad 
+ \langle \mathcal{L}_{\omega} 
\Gamma(t), \Gamma(t) \rangle
_{H^{-1}, H^{1}} \\
& = -2 e_{\omega} 
\lambda_{+}(t) \lambda_{-}(t) 
+  
\langle \mathcal{L}_{\omega} 
\Gamma(t), \Gamma(t) \rangle
_{H^{-1}, H^{1}}.  
\end{split}
\end{equation}
By Lemmas \ref{uni-lem1} 
and \ref{uni-lem-4}, 
\eqref{eq-uni9-1}, \eqref{eq-uni313} 
and \eqref{eq-uni3}, we have 
	\[
	\begin{split}
	& \quad 
	\int_{t}^{t + 1} 
	\biggl|\frac{d}{d s} 
	\langle \mathcal{L}_{\omega} 
\eta(s), \eta(s) \rangle_{H^{-1}, H^{1}} \biggl| ds \\
& = 2 \int_{t}^{t + 1} 
\biggl|  
	\langle \mathcal{L}_{\omega} 
g(s), \eta(s) \rangle_{H^{-1}, H^{1}} \biggl| ds \\
& \leq 
C \left[
\|\langle \nabla \rangle g\|_{N
([t, t+1])} 
\|\langle \nabla \rangle \eta
\|_{L^{\frac{5}{2}}
([t, t+1], L^{\frac{30}{7}}(\R^{3}))} 
+ \|g\|_{L^{\infty}
([t, t+1], L^{\frac{6}{5}}(\R^{3}))} 
\|\eta\|_{L^{\infty}
([t, t+1], L^{6}(\R^{3}))}
\right] \\
& \leq C e^{- (\gamma_{1} + 
\gamma_{2})t}. 
	\end{split}
	\]
Then, from Lemma \ref{sp-thm1}, 
we obtain 
	\[
	\int_{t}^{\infty} 
	\biggl|\frac{d}{d s} 
	\langle \mathcal{L}_{\omega} 
\eta(s), \eta(s) \rangle_{H^{-1}, H^{1}} \biggl| ds 
\leq C e^{- (\gamma_{1} + 
\gamma_{2})t}. 
	\]
Since $\lim_{t \to \infty} 
\langle \mathcal{L}_{\omega} 
\eta(t), \eta(t) \rangle_{H^{-1}, H^{1}} = 0$, 
we see that 
	\begin{equation} \label{eq-uni40}
	\biggl|
	\langle \mathcal{L}_{\omega} 
\eta(t), \eta(t) \rangle_{H^{-1}, H^{1}} \biggl|
\leq \int_{t}^{\infty} 
\biggl| \frac{d}{d s} 
	\langle \mathcal{L}_{\omega} 
\eta(s), \eta(s) \rangle_{H^{-1}, H^{1}} \biggl| 
ds \leq C e^{- (\gamma_{1} + 
\gamma_{2})t}.
	\end{equation}
From Lemma \ref{lem-energy1}, \eqref{eq-uni31}, 
\eqref{eq-uni32},  
\eqref{eq-uni19} and \eqref{eq-uni40},  
we obtain 
	\begin{equation*} 
	\|\Gamma(t)\|_{H^{1}}^{2} 
	\leq 
	C \langle \mathcal{L}_{\omega} 
\Gamma(t), \Gamma(t) \rangle_{H^{-1}, H^{1}} 
\leq C e^{- (\gamma_{1} + 
\gamma_{2}) t}. 
	\end{equation*}
From this, 
we conclude that 
\eqref{eq-uni34} holds.

\textbf{(Step 4). 
} 
We conclude the proof. 
We first consider the case of 
$e_{\omega} \geq 
\gamma_{2}$ or the case of 
$\gamma_2 > e_{\omega}$ and 
$A = 0$. 
By the decomposition 
\eqref{eq-dec13} of $\eta$, 
\eqref{eq-uni31}, \eqref{eq-uni32}, 
\eqref{eq-uni321} and 
\eqref{eq-uni34},  
we have 
\begin{equation} 
\label{eq-uni35} 
\|\eta(t)\|_{H^{1}} 
\leq 
C e^{- \frac{\gamma_{1} 
+ \gamma_{2}}{2}t}. 
\end{equation} 
Iterating the above argument, 
we get the bound 
\[
\|\eta(t)\|_{H^{1}} \leq C_{1} 
e^{- 
\gamma_{2}^{-}}. 
\] 
From Lemma \ref{uni-lem-4}, 
we have 
$\|\langle \nabla \rangle 
\eta\|_{St(t, \infty)} \leq C_{2} 
e^{- \gamma_{2}^{-} t}$.

Secondly, we consider the case of 
$e_{\omega} < \gamma_{2}$. 
Then, by the decomposition 
\eqref{eq-energy1}, \eqref{eq-uni31}, 
\eqref{eq-uni321} and 
\eqref{eq-uni34}, we have 
\[
\|\eta(t) - A e^{- e_{\omega}t} 
\mathcal{Y}_{\omega, -}\|_{H^{1}} 
\leq C( e^{- \gamma_{2}t} 
+ e^{- \frac{\gamma_{1} 
+ \gamma_{2}}{2} t})
\leq C e^{- \frac{\gamma_{1} 
+ \gamma_{2}}{2} t}. 
\] 
Putting $\eta_0(t) 
:= \eta(t) - A e^{- e_{\omega}t} 
\mathcal{Y}_{\omega, -}$, 
we find that $\eta_0$ 
satisfies \eqref{eq-uni2} 
with $\gamma_{1}$ replaced by 
$\frac{\gamma_{1} + \gamma_{2}}{2} (> \gamma_{1})$. 
By iterating the above 
argument, 
we obtain 
\[
\|\eta_0(t)
\|_{H^{1}} \leq C_{1} 
e^{- \gamma_{2}^{-}}. 
\] 
From Lemma \ref{uni-lem-4}, 
we have 
$\|\langle \nabla \rangle 
\eta_0
\|_{St(t, \infty)} \leq C_{2} 
e^{- \gamma_{2}^{-} t}$. 
This completes the proof. 
\end{proof} 
\subsection{Proof of 
Proposition \ref{prop-unique}}
In this subsection, we give the 
proof of Proposition \ref{prop-unique}. 
First, we recall the 
following nonlinear estimates, which 
are obtained in Duyckaerts and Merle~\cite[Lemma 5.6]{DM2009} 
(see also Ardila and Murphy~\cite[Lemma 6.2]{AM2022}): 
\begin{lemma}[Nonlinear estimates]
\label{lem-nonest1}
\begin{equation*}
\begin{split}
\|N_{\omega, 1}(f) - N_{\omega, 1}(g)
\|_{L_{x}^{\frac{6}{5}}} 
\leq C\|f - g\|_{L_{x}^{\frac{18}{5}}}
& 
(\|Q_{\omega}\|_{L_{x}^{\frac{18}{5}}}
\|f\|_{L_{x}^{\frac{18}{5}}} + 
\|Q_{\omega}\|_{L_{x}^{\frac{18}{5}}}
\|g\|_{L_{x}^{\frac{18}{5}}} 
\\ & 
+ \|f\|_{L_{x}^{\frac{18}{5}}}^{2} + 
\|g\|_{L_{x}^{\frac{18}{5}}}^{2}), 
\end{split}
\end{equation*}
\begin{equation*}
\|N_{\omega, 1}(f) - N_{\omega, 1}(g)
\|_{L_{x}^{\frac{30}{23}}} 
\leq C\|f - g\|_{L_{x}^{\frac{30}{11}}}
(\|Q_{\omega}\|_{L_{x}^{5}}
\|f\|_{L_{x}^{5}} + 
\|Q_{\omega}\|_{L_{x}^{5}}
\|g\|_{L_{x}^{5}} 
+ \|f\|_{L_{x}^{5}}^{2} + 
\|g\|_{L_{x}^{5}}^{2}), 
\end{equation*}
\begin{equation*} 
\|N_{\omega, 2}(f) - N_{\omega, 2}(g)
\|_{L_{x}^{\frac{6}{5}}} 
\leq C\|f - g\|_{L_{x}^{6}}
(
\|Q_{\omega}\|_{L_{x}^{6}}^{3}
\|f\|_{L_{x}^{6}} + 
\|Q_{\omega}\|_{L_{x}^{6}}^{3}
\|g\|_{L_{x}^{6}} 
+ \|f\|_{L_{x}^{6}}^{4} + 
\|g\|_{L_{x}^{6}}^{4}), 
\end{equation*}
\begin{equation*} 
\|N_{\omega, 2}(f) - N_{\omega, 2}(g)
\|_{L_{x}^{\frac{30}{23}}} 
\leq C\|f - g\|_{L_{x}^{\frac{30}{11}}}
(
\|Q_{\omega}\|_{L_{x}^{10}}^{3}
\|f\|_{L_{x}^{10}} + 
\|Q_{\omega}\|_{L_{x}^{10}}^{3}
\|g\|_{L_{x}^{10}} 
+ \|f\|_{L_{x}^{10}}^{4} + 
\|g\|_{L_{x}^{10}}^{4}). 
\end{equation*}
Let $I$ be a finite time interval and 
$f, g$ be functions in $W(I)$ such that 
$\langle \nabla \rangle f$ 
and $\langle \nabla \rangle g$ 
are in $V(I)$. 
Then, we have 
\begin{equation}\label{non-est-81}
\begin{split}
\|\langle \nabla 
\rangle N_{\omega, 1}(f) - \langle \nabla 
\rangle N_{\omega, 1}(g)
\|_{N(I)} 
& \leq C \|\langle \nabla 
\rangle (f - g)\|_{V_{3}(I)} 
\cdot \biggl[|I|^{\alpha_{1}}
(\|\langle \nabla 
\rangle f\|_{V_{3}(I)} \\
& \qquad 
+ \|\langle \nabla 
\rangle g\|_{V_{3}(I)}) + 
\|\langle \nabla 
\rangle f\|_{V_{3}(I)}^{2} 
+ \|\langle \nabla 
\rangle g\|_{V_{3}(I)}^{2}\biggl],  
\end{split}
\end{equation}
\begin{equation}\label{non-est-82}
\begin{split}
\|\langle \nabla 
\rangle N_{\omega, 2}(f) - 
\langle \nabla \rangle N_{\omega, 2}(g)
\|_{N(I)} 
& \leq C \|\langle \nabla 
\rangle (f - g)\|_{V_{3}(I)} 
\cdot \bigg[|I|^{\alpha_{2}}
(\|\langle \nabla 
\rangle f\|_{V(I)} \\
& \qquad 
+ \|\langle \nabla 
\rangle g\|_{V(I)}) + 
\|\langle \nabla 
\rangle f\|_{V(I)}^{4} 
+ \|\langle \nabla 
\rangle g\|_{V(I)}^{4}\biggl],  
\end{split}
\end{equation}
where $\alpha_{1}$ and $\alpha_{2}$ are some positive constants. 
\end{lemma}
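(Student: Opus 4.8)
The plan is to reduce every one of the six inequalities to the pointwise algebraic structure of $N_{\omega,1}$ and $N_{\omega,2}$ and then apply only Hölder's inequality, Young's inequality, Sobolev embedding, and the fractional Leibniz (Kato--Ponce) rule. First I would expand the two nonlinearities explicitly. Since $Q_{\omega}$ is real-valued, a direct computation from the definitions gives $N_{\omega,1}(\eta)=i\bigl(Q_{\omega}\eta^{2}+2Q_{\omega}|\eta|^{2}+|\eta|^{2}\eta\bigr)$, and likewise $N_{\omega,2}(\eta)$ is a polynomial in $(\eta,\overline{\eta})$ all of whose monomials have total degree between $2$ and $5$, a monomial of degree $k$ carrying the factor $Q_{\omega}^{5-k}$. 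In particular both remainders vanish to second order in $\eta$, since they are precisely the quintic and cubic nonlinearities with their value and first derivative at $Q_{\omega}$ subtracted off. Differentiating in $\eta$ and integrating along the segment joining $g$ to $f$ then yields the pointwise bounds
\[
|N_{\omega,1}(f)-N_{\omega,1}(g)|\lesssim |f-g|\bigl(Q_{\omega}(|f|+|g|)+|f|^{2}+|g|^{2}\bigr),
\]
\[
|N_{\omega,2}(f)-N_{\omega,2}(g)|\lesssim |f-g|\sum_{k=2}^{5}Q_{\omega}^{\,5-k}(|f|+|g|)^{k-1}.
\]

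For the four pure $L^{p}_{x}$ estimates I would insert these pointwise bounds and split with Hölder's inequality using the exponents forced by the target norm. For the first inequality the three factors $Q_{\omega}$, $f-g$, $f$ all go into $L^{18/5}_{x}$ because $3\cdot\frac{5}{18}=\frac{5}{6}$; for the second they go into $L^{5}_{x}$, $L^{30/11}_{x}$, $L^{5}_{x}$ because $\frac{11}{30}+\frac{1}{5}+\frac{1}{5}=\frac{23}{30}$; and the quintic cases are analogous with five factors. The Hölder split for $N_{\omega,2}$ produces, besides the two extreme terms $\|Q_{\omega}\|^{3}\|f\|$ and $\|f\|^{4}$ recorded in the statement, intermediate-degree terms such as $\|Q_{\omega}\|^{2}\|f\|^{2}$ and $\|Q_{\omega}\|\,\|f\|^{3}$; these I would absorb into the two extreme terms by Young's inequality, e.g. $a^{2}b^{2}=(a^{3}b)^{2/3}(b^{4})^{1/3}\le a^{3}b+b^{4}$ with $a=\|Q_{\omega}\|$, $b=\|f\|$, and similarly for the cubic intermediate term. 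This settles the first four displayed inequalities.

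For the two space-time estimates I would first move $\langle\nabla\rangle$ inside each monomial by the fractional Leibniz rule, writing $\langle\nabla\rangle$ of a product as a sum of terms in which the derivative lands on exactly one factor; when it falls on $Q_{\omega}$ the outcome is again a fixed Schwartz function and is harmless. I would then run Hölder in space and time. The decisive point is that $V_{3}$ and $V$ control $\langle\nabla\rangle f$ in $L^{30/11}_{x}$ and $L^{30/13}_{x}$ respectively, while the Sobolev embeddings $W^{1,30/11}(\R^{3})\hookrightarrow L^{5}(\R^{3})$ and $W^{1,30/13}(\R^{3})\hookrightarrow L^{10}(\R^{3})$ bound the undifferentiated copies of $f$ in $L^{5}_{x}$ and $L^{10}_{x}$. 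The spatial arithmetic $\frac{23}{30}=\frac{11}{30}+2\cdot\frac{1}{5}$ for the cubic top-order term, and $\frac{23}{30}=\frac{11}{30}+4\cdot\frac{1}{10}$ for the quintic top-order term, is exactly what reproduces the exponent $30/23$ of $N(I)$, and the time arithmetic $\frac{3}{5}=3\cdot\frac{1}{5}=\frac{1}{5}+4\cdot\frac{1}{10}$ closes the estimate with no room to spare. Thus the top-order monomials give the scaling-critical terms $\|\langle\nabla\rangle f\|_{V_{3}}^{2}$ and $\|\langle\nabla\rangle f\|_{V}^{4}$ with no volume factor, while each lower-order monomial carries a time-independent factor $Q_{\omega}$; pairing that factor with $\mathbf{1}_{I}$ in the time-Hölder step produces the powers $|I|^{\alpha_{1}}$, $|I|^{\alpha_{2}}$, and the degree-$3$ and degree-$4$ quintic monomials are again collapsed into the two retained terms by Young's inequality.

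The genuinely delicate part — and the only place I expect real bookkeeping effort — is matching the time and space Hölder exponents simultaneously in the space-time estimates, because the quintic term is energy-critical in time (all factors sit in $L^{10}_{t}$) and leaves no slack, whereas the subcritical lower-order terms must be assigned exactly the power of $|I|$ that Hölder against $\mathbf{1}_{I}$ supplies. Everything else follows directly from Hölder, Young, the listed Sobolev embeddings, and the fractional Leibniz rule, so I would carry out the computation in full for one representative cubic monomial and one representative quintic monomial and note that the remaining monomials are handled identically.
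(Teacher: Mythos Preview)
Your approach is correct and is precisely the standard route: expand the polynomial remainders, obtain pointwise difference bounds, apply H\"older in space (and in space-time for the last two estimates), use Young's inequality to collapse the intermediate-degree quintic terms into the two extremal ones, and invoke the fractional Leibniz rule together with the Sobolev embeddings $W^{1,30/13}_{x}\hookrightarrow L^{10}_{x}$ and $W^{1,30/11}_{x}\hookrightarrow L^{q}_{x}$ for $q\in[30/11,30]$ to handle $\langle\nabla\rangle$. One small remark: in the quintic space-time estimate, when the derivative from Kato--Ponce lands on an $f$-factor rather than on $f-g$, you will want the \emph{endpoint} embedding $\|f-g\|_{L^{30}_{x}}\lesssim\|\nabla(f-g)\|_{L^{30/11}_{x}}$ (not just $L^{5}_{x}$) in order to keep the difference factor measured in $V_{3}$; the space arithmetic $\tfrac{23}{30}=\tfrac{1}{30}+\tfrac{13}{30}+\tfrac{3}{10}$ and time arithmetic $\tfrac{3}{5}=\tfrac{1}{5}+\tfrac{4}{10}$ then close. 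This is a bookkeeping detail, not a gap.

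As for comparison with the paper: the paper does not give its own proof of this lemma at all. It simply states the estimates and cites Duyckaerts--Merle \cite[Lemma~5.6]{DM2009} and Ardila--Murphy \cite[Lemma~6.2]{AM2022}. Your outline is essentially how those references argue, so you are supplying what the paper omits.
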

We also recall the following estimate of 
$\theta$.
\begin{lemma}\label{lem-estR2}
There exists $C>0$ 
such that 
for any $t\in [0, T_{X}]$, one has  
\begin{equation} \label{eq-ejec22}
\biggl|\frac{d \theta}{d t} 
- \omega \biggl| 
\leq C \|\eta(t)\|_{H^{1}}^{2}, 
\end{equation}
\end{lemma}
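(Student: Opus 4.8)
The plan is to read \eqref{eq-ejec22} directly off the modulation equation \eqref{eq-mod4}. The key preliminary observation is that the coefficient multiplying $\frac{d\theta}{dt}-\omega$ on the left of \eqref{eq-mod4} is bounded away from zero: by Proposition \ref{thm-lu} (iii) we have $(Q_{\omega},\partial_{\omega}Q_{\omega})_{L_{\text{real}}^{2}}<0$, and since $|(\eta(t),\partial_{\omega}Q_{\omega})_{L^{2}_{\text{real}}}|\le\|\eta(t)\|_{L^{2}}\|\partial_{\omega}Q_{\omega}\|_{L^{2}}$ is as small as we wish on $[0,T_{X}]$ (the remainder $\eta(t)$ is small in $H^{1}$ there, just as in Lemma \ref{lem-estR}), we obtain
\[
\big|\big(Q_{\omega}+\eta(t),\partial_{\omega}Q_{\omega}\big)_{L^{2}_{\text{real}}}\big|
\ \ge\ \tfrac12\,\big|(Q_{\omega},\partial_{\omega}Q_{\omega})_{L^{2}_{\text{real}}}\big|\ >\ 0 .
\]
Dividing \eqref{eq-mod4} by this coefficient reduces the claim to showing that the right-hand side $-\mathcal{M}(\eta(t))+\big(N_{\omega}(\eta(t)),\partial_{\omega}Q_{\omega}\big)_{L^{2}_{\text{real}}}$ is $O(\|\eta(t)\|_{H^{1}}^{2})$.

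For the mass term this is immediate, $\mathcal{M}(\eta(t))=\tfrac12\|\eta(t)\|_{L^{2}}^{2}\le\tfrac12\|\eta(t)\|_{H^{1}}^{2}$. For the nonlinear term I would use that $N_{\omega}=N_{\omega,1}+N_{\omega,2}$ is, by construction, at least quadratic in $\eta$: Taylor expanding $|Q_{\omega}+\eta|^{2}(Q_{\omega}+\eta)$ and $|Q_{\omega}+\eta|^{4}(Q_{\omega}+\eta)$ and cancelling the constant and linear terms (exactly the terms subtracted in the definitions after \eqref{eq-dec8}) gives the pointwise bounds $|N_{\omega,1}(\eta)|\lesssim Q_{\omega}|\eta|^{2}+|\eta|^{3}$ and $|N_{\omega,2}(\eta)|\lesssim Q_{\omega}^{3}|\eta|^{2}+|\eta|^{5}$. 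Pairing against $\partial_{\omega}Q_{\omega}$, which lies in $H^{1}$ (Proposition \ref{thm-lu} (ii)) and is bounded and rapidly decaying by elliptic regularity applied to \eqref{eq-dec6}, and using the Sobolev embeddings $H^{1}(\R^{3})\hookrightarrow L^{p}(\R^{3})$ for $2\le p\le 6$ together with $\|Q_{\omega}\|_{L^{\infty}}<\infty$, I would estimate
\[
\big|\big(N_{\omega,1}(\eta),\partial_{\omega}Q_{\omega}\big)_{L^{2}_{\text{real}}}\big|
\lesssim \|\eta\|_{L^{2}}^{2}+\|\eta\|_{L^{3}}^{3}\lesssim \|\eta\|_{H^{1}}^{2}+\|\eta\|_{H^{1}}^{3},
\]
\[
\big|\big(N_{\omega,2}(\eta),\partial_{\omega}Q_{\omega}\big)_{L^{2}_{\text{real}}}\big|
\lesssim \|\eta\|_{L^{2}}^{2}+\|\eta\|_{L^{6}}^{5}\lesssim \|\eta\|_{H^{1}}^{2}+\|\eta\|_{H^{1}}^{5}.
\]
Since $\|\eta(t)\|_{H^{1}}$ is bounded (indeed small) on $[0,T_{X}]$, the cubic and quintic tails are absorbed into the quadratic term, so $\big|\big(N_{\omega}(\eta(t)),\partial_{\omega}Q_{\omega}\big)_{L^{2}_{\text{real}}}\big|\lesssim\|\eta(t)\|_{H^{1}}^{2}$, which combined with the lower bound on the coefficient yields \eqref{eq-ejec22}.

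I do not expect a serious obstacle: this is a routine modulation estimate (the analogue of \cite[Lemma 5.6]{DM2009}, essentially \cite[(5.29)]{AIKN2021}). The only points needing a little care are making the pointwise bounds on $N_{\omega,1}$ and $N_{\omega,2}$ precise — one should keep one factor of $Q_{\omega}$ (resp.\ $Q_{\omega}^{3}$) in the leading quadratic contribution so that the pairing with $\partial_{\omega}Q_{\omega}$ is controlled by $\|\eta\|_{L^{2}}^{2}$, although $\|\eta\|_{L^{4}}^{2}$ or $\|\eta\|_{L^{6}}^{2}$ would equally do — and choosing the smallness threshold on $\|\eta(t)\|_{H^{1}}$ consistently with the constant $\delta_{X}$ fixed in Lemma \ref{lem-estR} so that the coefficient lower bound is valid throughout $[0,T_{X}]$.
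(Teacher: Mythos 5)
Your proposal is correct and follows the standard route: the paper does not write out a proof of this lemma but simply cites \cite[(5.28)]{AIKN2021}, and the argument there is exactly the one you give — divide the modulation equation \eqref{eq-mod4} by the coefficient $(Q_{\omega}+\eta(t),\partial_{\omega}Q_{\omega})_{L^{2}_{\text{real}}}$, which is bounded away from zero by Proposition \ref{thm-lu}~(iii) and the smallness of $\eta$ on $[0,T_{X}]$ (cf.\ \eqref{eq-dec19}), and bound the right-hand side by $\|\eta(t)\|_{H^{1}}^{2}$ using that $\mathcal{M}(\eta)$ is quadratic and $N_{\omega}(\eta)$ is at least quadratic in $\eta$. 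Your pointwise bounds on $N_{\omega,1}$ and $N_{\omega,2}$ and the subsequent H\"older/Sobolev estimates are accurate, so there is nothing to add.
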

See \cite[(5.28)]{AIKN2021} for 
the proof of Lemma \ref{lem-estR2}. 
We are now in a position to prove 
Proposition \ref{prop-unique}. 
\begin{proof}[Proof of 
Proposition \ref{prop-unique}]
We divide the proof into 4 steps. 

\textbf{(Step 1). 
}
Let $\psi = e^{i \theta(t)} 
(Q_{\omega}(x) + \eta(t, x))$. 
It follows from 
\eqref{eq-energy7},  
\eqref{eq-mdist1} and the 
assumption \eqref{eq-uni1} that  
$\|\eta(t)\|_{H^{1}} \leq 
C e^{- c t}$ 
for $t \geq t_{0}$. 
We see from 
\eqref{eq-ejec22} that 
	\[
	\biggl|(\theta(t) - \omega t) 
	- (\theta(s) - \omega s)\biggl| 
	\leq 
	\int_{s}^{t}\biggl| 
	\frac{d \theta}{d\tau} 
	- \omega\biggl| d\tau 
	\leq \int_{s}^{t}
	C \|\eta(\tau)\|_{H^{1}}^{2} 
	d \tau 
	\leq C e^{- 2cs}. 
	\]
Thus, there exists 
$\theta_0 \in \R$
such that $\lim_{t \to \infty} 
(\theta(t) - \omega t) = \theta_0$. 
From the above inequality, we obtain 
  \begin{equation}\label{eq-uni59}
  |\theta(t) - \omega t - \theta_{0}| 
  \leq C e^{- 2 c t}
  \end{equation}
Let $\psi(t, x) = e^{i (\omega t 
+ \theta_0)} 
(Q_{\omega}(x) + 
\eta_{0}(t, x))$.
We claim that 
$\eta_0$ satisfies 
\begin{equation}
\label{eq-uni58}
\|\eta_0(t)\|_{H^{1}} \leq 
C e^{- ct} 
\qquad (t \geq t_{0}) 
\end{equation}
for some $c, C > 0$. 
Observe that 
	\[
	\eta_0(t, x) 
= e^{- i(\omega t + \theta_0)}\psi(t, x) 
- Q_{\omega}(x)
= e^{i (\theta(t) - \omega t - 
\theta_0)} 
(Q_{\omega}(x) + \eta(t, x)) - 
Q_{\omega}(x).
	\] 
This yields with \eqref{eq-uni59} and 
$\|\eta(t)\|_{H^{1}} \leq 
C e^{- c t}$ for $t \geq t_{0}$ that 
	\[
	\begin{split}
	\|\eta_0(t)\|_{H^{1}} 
	& = 
	\left\| 
	e^{i (\theta(t) - \omega t - 
\theta_0)} 
(Q_{\omega} + \eta(t)) - 
Q_{\omega}
	\right\|_{H^{1}} \\
	& 
	\leq \|\eta(t)\|_{H^{1}} 
	+ C \biggl|\theta(t) - \omega t - 
\theta_0 \biggl| 
\|Q_{\omega}\|_{H^{1}} \\
& \leq C e^{-ct} + C e^{-2ct} 
\leq C e^{-ct}. 
\end{split}
	\]
Thus, \eqref{eq-uni58} holds. 

\textbf{(Step 2). 
}
We claim that 
$\eta_0$ satisfies 
\begin{equation}
\label{eq-uni38}
\|\eta_0(t)\|_{H^{1}} \leq 
C e^{- e_{\omega}^{-} t} 
\qquad (t \geq t_{0}). 
\end{equation}
By \eqref{Stri-est-3} and 
Lemmas \ref{uni-lem-4}, \ref{lem-nonest1} 
with $f = \eta_0$ and $g = 0$, 
we obtain 
\[
\|N_{\omega}(\eta_0(t))
\|_{L_{x}^{\frac{6}{5}}} 
+ \|\langle \nabla \rangle 
N_{\omega}(\eta_0)\|_{N(t, \infty)} 
\leq Ce^{-2 c_{0}t}.   
\]
Then, applying Proposition 
\ref{pro-uni1} with $\gamma_{1} 
 = c_{0}$ and $\gamma_{2} 
 = 2 c_{0}$,  
$g = N_{\omega}(\eta_{0}(t))$, 
we have 
\[
\|\eta_0(t)\|_{H^{1}} 
\leq C (e^{-e_{\omega} t} 
+ e^{- \frac{3}{2} c_{0} t} 
). 
\] 
If $e_{\omega} \leq 
\frac{3}{2} c_{0}$, 
\eqref{eq-uni38} 
holds. 
If not, we get 
$\|\eta_0(t)\|_{H^{1}} \leq 
C e^{- \frac{3}{2} c_{0} t}$. 
Then, we can verify that 
\eqref{eq-uni38} holds by 
iterating the above argument.

\textbf{(Step 3). 
}
Applying Proposition 
\ref{pro-uni1} \textrm{(ii)} again 
with $\gamma_{1} 
= e_{\omega}^{-}, 
\gamma_{2} 
= 2e_{\omega}^{-} 
(> e_{\omega})$ and  
$g = N_{\omega}
(\eta_0(t))$, 
we find that there 
exists $A_0 \in \R$ such that 
\begin{equation}\label{eq-uni37}
\|\eta_0(t) - A_0 
e^{- e_{\omega} t} 
\mathcal{Y}_{\omega, -}\|
_{H^{1}} + 
\|\langle \nabla \rangle 
(\eta_0(t) - A_{0} 
e^{- e_{\omega} t} 
\mathcal{Y}_{\omega, -})\|
_{St(t, \infty)} 
\leq C e^{- 2 e_{\omega}^{-}t}. 
\end{equation}
Let $U_{\omega}^{A}$ be the solution 
constructed in Proposition 
\ref{prop-spec} for each 
$A \in \R$. 
We write $U_{\omega}^{A} 
= e^{i \omega t} (Q_{\omega} 
+ \eta^{A}(t))$. 
We claim that 
for any $\gamma > 0$, 
\begin{equation}
\label{eq-uni39}
\|\eta_0(t) 
- \eta^{A_{0}}(t)\|_{H^{1}} 
+ \|\langle \nabla \rangle 
(\eta_0(t) - \eta^{A_{0}}(t))
\|_{St(t, \infty)} \leq 
C e^{- \gamma t} 
\qquad (t \geq t_{0})
\end{equation}
for some $C > 0$. 
Observe from Proposition \ref{prop-spec} 
that  
$\eta^{A_0}$ satisfies
~\footnote{Note that 
$\mathcal{V}_{1, \omega}^{A_{0}} = 
e^{- e_{\omega} t} \mathcal{Z}_{1, 
\omega}^{A_{0}} 
= A_{0} e^{- e_{\omega} t} 
\mathcal{Y}_{\omega, -}$}
\begin{equation} \label{eq-uni56}
\|\eta^{A_0} - A_0 
e^{- e_{\omega}t} 
\mathcal{Y}_{\omega, -}\|
_{H^{1}} 
+ \|\langle \nabla \rangle 
\left(\eta^{A_0} - 
A_0 e^{- e_{\omega}t} 
\mathcal{Y}_{\omega, -} \right)\|
_{St(t, \infty)} \leq 
C e^{-\frac{3}{2} e_{\omega} t} 
\qquad (t \geq t_{0}). 
\end{equation}
It follows from 
\eqref{eq-uni37} that 
\eqref{eq-uni39} holds 
with $\gamma 
= \frac{3}{2} e_{\omega}$. 
We can easily verify that 
$\eta_0 
- \eta^{A_0}$ satisfies 
\[
\p_{t} (\eta_0 
- \eta^{A_0}) 
+ i \mathcal{L}_{\omega} 
(\eta_0 
- \eta^{A_0}) = 
N_{\omega}(\eta_0) - 
N_{\omega}(\eta^{A_0}). 
\]
Then, by 
Lemmas \ref{uni-lem-4} and \ref{lem-nonest1} with 
$f = \eta_0(t)$ 
and $g = \eta^{A_0}(t)$, 
we have 
\begin{equation*}
\begin{split}
\|N_{\omega}(\eta_0(t)) - 
N_{\omega}(\eta^{A_0}(t))
\|_{L_{x}^{\frac{6}{5}}}
+ \|\langle \nabla \rangle N_{\omega}(\eta_0) 
- \langle \nabla \rangle N_{\omega}(\eta^{A_0})
\|_{N(t, \infty)} 
\leq C 
e^{- \frac{5}{2} e_{\omega} t} 
\qquad (t \geq t_{0}).  
\end{split}
\end{equation*}
Then, by Proposition 
\ref{pro-uni1} \textrm{(ii)} 
with $\gamma_{2} 
= 2 e_{\omega }$, 
there exists $A_{1} \in \R$ such that 
$\eta_0 - \eta^{A_0} = A_{1} 
e^{- e_{\omega} t} \mathcal{Y}_{\omega, -} 
+ w_{1}(t)$ 
with  
\begin{equation} \label{eq-uni57}
\|w_{1}\|_{H^{1}} 
+ \|\langle \nabla \rangle 
w_{1} \|_{St(t, \infty)} \leq 
C e^{- \frac{7}{4} 
e_{\omega}t} 
\qquad (t \geq t_{0}). 
\end{equation}
It follows from \eqref{eq-uni39} with $\gamma = \frac{3}{2} e_{\omega}$ 
and \eqref{eq-uni57} that 
  \[
  |A_{1}|e^{- e_{\omega} t}\| \mathcal{Y}_{\omega, -}\|_{H^{1}} 
  \leq \|\eta_0 - \eta^{A_0}\|_{H^{1}}
  + \|w_{1}(t)\|_{H^{1}} \leq C e^{- \frac{3}{2} 
e_{\omega}t}, 
  \]
which implies $A_{1} = 0$. 
Thus, we see from 
\eqref{eq-uni57} that 
\eqref{eq-uni39} holds 
for $\gamma = \frac{7}{4} e_{\omega}$.
Iterating this argument, we see that 
\eqref{eq-uni39} holds. 

\textbf{(Step 4). 
} 
We derive a conclusion. 
Using \eqref{eq-uni39} with 
$\gamma = (k_{0} + 1) e_{\omega}$ 
and \eqref{const-eq1} with $k = k_{0}$, 
we see that 
	\[
	\begin{split}
	& \quad 
	\|\langle \nabla \rangle
(e^{- i \theta_{0}} \psi - e^{i \omega t} 
(Q_{\omega} 
+ \mathcal{V}_{k_{0}, \omega}^{A_{0}}))
\|_{St(t, \infty)} \\
& \leq 
\|\langle \nabla \rangle
(e^{- i \theta_{0}} \psi - U_{\omega}^{A_{0}})
\|_{St(t, \infty)}
+ 
\|\langle \nabla \rangle
(U_{\omega}^{A_{0}} - e^{i \omega t} 
(Q_{\omega} 
+ \mathcal{V}_{k_0, \omega}^{A_0}))
\|_{St(t, \infty)} \\
& \leq 
\|\langle \nabla \rangle
(\eta_0 - \eta^{A_{0}})
\|_{St(t, \infty)} 
+ e^{- (k_{0} + \frac{1}{2}) e_{\omega} 
t} \\
& \leq C e^{- (k_{0}+1) e_{\omega}t} 
+  e^{- (k_{0} + \frac{1}{2}) e_{\omega} 
t} \leq 
 e^{- (k_{0} + \frac{1}{2}) 
 e_{\omega} t}. 
	\end{split}
	\]
Thus, from the uniqueness of the 
solution satisfying \eqref{const-eq1} 
(see Proposition \ref{prop-spec}), 
we find that $\psi = 
U^{A_0}_{\omega}$. 
\end{proof}

\subsection{Proof of Theorem 
\ref{main-class}} 
We are now in a position to prove 
Theorem \ref{main-class}
\begin{proof}[Proof of 
Theorem \ref{main-class}] 
We divide the proof 
into two steps. 

\textbf{(Step 1). 
}
It follows from \eqref{const-eq1-1} 
and \eqref{def-spe} that 
	\[
	\begin{split}
	Q_{\omega}^{+}(t) 
	& = e^{- i \omega t_0} 
	U^{+1} (t + t_0, x) \\
	& = e^{i \omega t} Q_{\omega} 
	+ e^{- e_{\omega} t_0} 
	e^{(i \omega - e_{\omega})t} 
	\mathcal{Y}_{\omega, -} 
	+ O(e^{- \frac{3}{2} 
	e_{\omega} t}) 
	\qquad \mbox{in 
	$H^{1}(\R^{3})$}.
	\end{split}
	\]
Fix $A >0$. 
Let $t_1 = - t_0 - 
\frac{1}{e_{\omega}} \log A$. 
This yields that 
	\[
	Q_{\omega}^{+}(t + t_1) 
	= e^{i \omega t_{1}} 
	e^{i \omega t} Q_{\omega} 
	+ e^{- e_{\omega} t_0} 
	e^{(i \omega - e_{\omega})t} 
	e^{(i \omega - e_{\omega}) t_1}
	\mathcal{Y}_{\omega, -} 
	+ O(e^{- \frac{3}{2} 
	e_{\omega} t}) 
	\qquad \mbox{in 
	$H^{1}(\R^{3})$}.
	\]
From \eqref{const-eq1-1} and 
$e^{- e_{\omega} t_{1}} = A e^{e_{\omega} t_{0}}$, 
we obtain 
	\begin{equation} 
	\label{proof-meq1}
	\begin{split}
	e^{- i \omega t_1} 
	Q_{\omega}^{+}(t + t_1) 
	& = e^{i \omega t} Q_{\omega} 
	+ A e^{(i \omega - e_{\omega}) 
	t} \mathcal{Y}_{\omega, -} 
	+ O(e^{- \frac{3}{2} 
	e_{\omega} t}) \\
    & = e^{i \omega t} 
    (Q_{\omega} + A e^{- e_{\omega}t} 
    \mathcal{Y}_{\omega, -}) 
    + O(e^{- \frac{3}{2} e_{\omega} t})\\
	& = U^{A}_{\omega}
	+ O(e^{- \frac{3}{2} 
	e_{\omega} t}) 
	\qquad \mbox{in 
	$H^{1}(\R^{3})$}.
	\end{split}
	\end{equation}
From this and \eqref{const-eq1-1}, we see that 
there exists $C_{1} > 0$ such that 
	\[
	\|e^{- i \omega t_1} 
	Q_{\omega}^{+}(t + t_1) 
	- e^{i \omega t} Q_{\omega}
	\|_{H^{1}} \leq 
	C_{1} 
	e^{- e_{\omega} t} 
	\qquad 
	\mbox{for $t > 0$}. 
	\]
This together with Proposition 
\ref{prop-unique} yields that 
there exists $\widetilde{A} \in \R$ and 
$\widetilde{\theta}_{0} \in \R$
such that $e^{- i \omega t_1} 
	Q_{\omega}^{+}(t + t_1) 
	= e^{i \widetilde{\theta}_{0}}
    U^{\widetilde{A}}_{\omega}$. 
By \eqref{proof-meq1}, 
we have $\widetilde{A} = A$ and 
$\widetilde{\theta}_{0} = 0$, 
which yields that 
	\begin{equation}
	\label{proof-meq2}
	U^{A}_{\omega} = e^{- i \omega t_1} 
	Q_{\omega}^{+}(t + t_1).
	\end{equation} 	
	
\textbf{(Step 2). }

Let $\psi$ be a solution to 
\eqref{nls} with $\psi|_{t = 0} 
= \psi_{0} \in \mathcal{BA}
_{\omega}$. 	
If $\mathcal{K}(\psi) = 0$, 
then $\psi$ is the ground state 
of $m_{\omega}$. 
From the uniqueness 
of the ground state (see 
Proposition \ref{thm-lu} 
\textrm{(i)}), we see that 
$\psi(t, x) = 
e^{i \theta + i \omega t} Q_{\omega} 
(x)$ for some $\theta \in \R$. 

Assume that $\mathcal{K} 
(\psi) > 0$ and $\psi$ does not 
scatter for positive time.  
By Proposition \ref{prop-conv-p}, 
there exist constants $C, c >0$ such that 
	\[
	\text{dist}_{H^{1}} 
(\psi(t), \mathcal{O}(Q_{\omega})) 
\leq Ce^{- ct} 
	\qquad \mbox{for $t > 0$}. 
	\] 
Hence, $\psi(t)$ 
satisfies the assumption of 
Proposition \ref{prop-unique} and 
$\mathcal{K}(\psi) > 0$, 
which shows that $\psi= 
U^{A}_{\omega}$ for some $A > 0$. 
Thus, we see from \eqref{proof-meq2} 
that 
\textrm{(iii)} holds. 

Combining 
Propositions \ref{prop-conv-n} and 
\ref{prop-unique}, 
we can prove \textrm{(i)} 
by a similar argument of \textrm{(ii)}. 
 
\end{proof}
\appendix
\section{Proof of Proposition \ref{conv-gs}}\label{conv}
This appendix is devoted to the  
proof of Proposition \ref{conv-gs}. 
For each $\lambda > 0$ and $u \in H^{1}(\R^{3})$, 
we define 
  \[
  T_{\lambda}u(\cdot) 
  := \lambda^{\frac{3}{2}}u(\lambda \cdot). 
  \]
 We can easily find that for any 
 $u \in H^{1}(\R^{3}) \setminus \{0\}$, 
 there exists $\lambda(u) > 0$ such that 
   \begin{equation} \label{eq-A3}
   \mathcal{K}(T_{\lambda} u) 
   \begin{cases}
   > 0 & \quad \mbox{for $0 < \lambda 
   < \lambda(u)$}, \\
   = 0 & \quad \mbox{for $\lambda 
   = \lambda(u)$}, \\
   < 0 & \quad \mbox{for $\lambda 
   > \lambda(u)$}.  \\
   \end{cases}
   \end{equation}
By a standard argument, 
one has 
\begin{equation}\label{vari-v2}
\begin{split}
m_{\omega}
=
\inf\big\{ \mathcal{J}_{\omega}(u) \colon 
u \in H^{1}(\R^{3})\setminus \{0\}, \ \mathcal{K}(u)\le 0
\big\},
\end{split}
\end{equation}
where 
\begin{align}
\label{funct-J}
\mathcal{J}_{\omega}(u)
&:=\mathcal{S}_{\omega}(u)
-\frac{1}{2}\mathcal{K}(u) 
= \frac{\omega}{2}\|u\|_{L^{2}}^{2} 
+ \frac{1}{8}\|u\|_{L^{4}}^{4}
+ \frac{1}{3}\|u\|_{L^{6}}^{6}.
\end{align}
Since the ground state 
$Q_{\omega} \in H^{1}(\R^{3}) \setminus \{0\}$ exists 
for $\omega \in (0, \omega_{c})$ (see Theorem 
\ref{thm-0}),  
we see that $m_{\omega} = J_{\omega} (Q_{\omega}) > 0$.  In addition, we have 
  \begin{equation} \label{eq-A2}
  m_{\omega} < 
\frac{\sigma^{\frac{3}{2}}}{3} 
\qquad \mbox{for $\omega 
\in (0, \omega_{c})$}, 
  \end{equation}
where 
  \begin{equation}\label{eq-A1}
  \sigma := \inf\left\{ 
  \|\nabla u\|_{L^{2}}^{2} \colon 
  u \in \dot{H}^{1}(\R^{3}) \; 
  \mbox{with} \; 
  \|u\|_{L^{6}} = 1
  \right\}. 
  \end{equation}
See \cite[Theorem 1.4]{AIKN}. 
To prove Proposition \ref{conv-gs}, 
we need the following lemmas:   
\begin{lemma}[Brezis and Lieb \cite{Brezis-Lieb}]\label{thm-bl}
Let $\{u_{n}\}$ be a bounded sequence in $H^{1}(\R^{3})$ such that 
\begin{equation*}
\lim_{n\to \infty}u_{n}(x)=u_{\infty}(x) 
\qquad 
\mbox{almost all $x\in \R^{3}$}
\end{equation*}
for some function $u_{\infty} \in H^{1}(\R^{3})$. Then, for any $2 \le r \le 6$, 
\begin{equation*}
\lim_{n\to \infty}
\int_{\R^{3}} 
\big| |u_{n}|^{r}-|u_{n}-u_{\infty}|^{r}-|u_{\infty}|^{r} \big| \,dx =0
\end{equation*}
and 
\begin{equation*}
\lim_{n\to \infty} 
\int_{\R^{3}} 
\big| |\nabla u_{n}|^{2}-|\nabla 
\{ u_{n}-u_{\infty}\}|^{2}-|\nabla 
u_{\infty}|^{2} \big| 
\,dx =0.
\end{equation*}
\end{lemma}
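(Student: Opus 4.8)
The plan is to treat the two assertions by different mechanisms: the $L^r$ splitting for $2\le r\le 6$ is the genuine Brezis--Lieb statement and is proved using the pointwise a.e.\ convergence of $u_n$, whereas the quadratic gradient splitting cannot use a.e.\ convergence (we are given none for $\nabla u_n$) and must instead be extracted from weak convergence in $H^1$. First, for the $L^r$ part I would record the elementary pointwise inequality: for every $\eps>0$ there is $C_\eps>0$ with $\big||a+b|^r-|a|^r\big|\le \eps|a|^r+C_\eps|b|^r$ for all $a,b\in\C$. This follows from the bound $\big||a+b|^r-|a|^r\big|\le r\,(|a|+|b|)^{r-1}|b|$ (valid for $r\ge1$) together with Young's inequality applied to $|a|^{r-1}|b|$. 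Applying it with $a=u_n-u_\infty$ and $b=u_\infty$, so that $a+b=u_n$, and setting $g_n^{(r)}:=|u_n|^r-|u_n-u_\infty|^r-|u_\infty|^r$, I obtain the domination $|g_n^{(r)}|\le \eps\,|u_n-u_\infty|^r+(C_\eps+1)\,|u_\infty|^r$.

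Next I would run the standard truncation argument. Put $W_n^{\eps}:=\big(\,|g_n^{(r)}|-\eps\,|u_n-u_\infty|^r\,\big)^{+}$, so that $0\le W_n^{\eps}\le (C_\eps+1)\,|u_\infty|^r\in L^1(\R^3)$. Since $u_n\to u_\infty$ a.e.\ and $z\mapsto|z|^r$ is continuous, $g_n^{(r)}\to 0$ a.e.\ and hence $W_n^{\eps}\to 0$ a.e.; dominated convergence then gives $\int_{\R^3}W_n^{\eps}\,dx\to 0$. Because $|g_n^{(r)}|\le W_n^{\eps}+\eps\,|u_n-u_\infty|^r$ and the Sobolev embedding $H^1(\R^3)\hookrightarrow L^r(\R^3)$ for $2\le r\le 6$ yields $M:=\sup_n\|u_n-u_\infty\|_{L^r}^r<\infty$, it follows that $\limsup_{n\to\infty}\int_{\R^3}|g_n^{(r)}|\,dx\le \eps M$. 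Letting $\eps\downarrow 0$ closes the $L^r$ case for every $r\in[2,6]$; in particular this is what is used to split the $L^4$ and $L^6$ terms of $\mathcal{J}_\omega$ in the proof of Proposition \ref{conv-gs}.

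For the gradient term I would use the exact bilinear identity
\[
|\nabla u_n|^2-|\nabla(u_n-u_\infty)|^2-|\nabla u_\infty|^2=2\,\nabla(u_n-u_\infty)\cdot\nabla u_\infty,
\]
and pass to the limit through weak convergence. The weak convergence $u_n\rightharpoonup u_\infty$ in $H^1(\R^3)$ is itself derived from the hypotheses: $\{u_n\}$ is $H^1$-bounded, so any subsequence has a weak limit which, by Rellich's local $L^2$ compactness and the given a.e.\ convergence, must coincide with $u_\infty$; hence the whole sequence converges weakly. Consequently $\nabla(u_n-u_\infty)\rightharpoonup 0$ in $L^2(\R^3)$, and pairing against the fixed $\nabla u_\infty\in L^2(\R^3)$ gives $\int_{\R^3}2\,\nabla(u_n-u_\infty)\cdot\nabla u_\infty\,dx\to 0$, which is the Dirichlet-energy splitting in the form in which it is invoked. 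The one genuine obstacle, and the place where the two halves of the statement truly differ, is exactly this: the a.e.-convergence/dominated-convergence machinery applies verbatim to the $L^r$ nonlinearities because they are controlled pointwise by the a.e.-convergent data, but it is unavailable for the Dirichlet term since the hypotheses give no pointwise control on $\nabla u_n$; the gradient contribution therefore splits only after integration, through weak $H^1$ convergence. Once these two points — the elementary pointwise inequality and the passage from $H^1$-boundedness plus a.e.\ convergence to weak $H^1$ convergence — are in hand, the remainder of the argument is routine.
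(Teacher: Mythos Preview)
Your argument for the $L^{r}$ part is exactly the standard Brezis--Lieb proof and is correct. The paper does not supply its own proof of this lemma; it simply quotes it from \cite{Brezis-Lieb}, so there is nothing to compare against on that side.

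For the gradient part, however, there is a genuine discrepancy between what you prove and what is written. The displayed claim asks for
\[
\int_{\R^{3}} \Big|\,|\nabla u_{n}|^{2}-|\nabla(u_{n}-u_{\infty})|^{2}-|\nabla u_{\infty}|^{2}\,\Big|\,dx \longrightarrow 0,
\]
i.e.\ convergence of the integrand to $0$ in $L^{1}$. Your weak-convergence argument only yields
\[
\int_{\R^{3}} \Big(\,|\nabla u_{n}|^{2}-|\nabla(u_{n}-u_{\infty})|^{2}-|\nabla u_{\infty}|^{2}\,\Big)\,dx
=2\,\textrm{Re}\int_{\R^{3}}\nabla(u_{n}-u_{\infty})\cdot\overline{\nabla u_{\infty}}\,dx\longrightarrow 0,
\]
which is strictly weaker. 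In fact the $L^{1}$ version is \emph{false} under the stated hypotheses: take $u_{\infty}=\phi\in C_{c}^{\infty}(\R^{3})$ with $\partial_{x_{1}}\phi\not\equiv 0$ and $u_{n}=\phi+n^{-1}\phi(x)\sin(nx_{1})$. Then $u_{n}\to u_{\infty}$ uniformly (hence a.e.), $\{u_{n}\}$ is bounded in $H^{1}$, but
\[
\int_{\R^{3}} \big|\,2\nabla(u_{n}-u_{\infty})\cdot\nabla\phi\,\big|\,dx
\ge \int_{\R^{3}} |\phi\,\partial_{x_{1}}\phi|\,|\cos(nx_{1})|\,dx - O(n^{-1})
\longrightarrow \frac{2}{\pi}\int_{\R^{3}} |\phi\,\partial_{x_{1}}\phi|\,dx>0.
\]
So no argument can establish the gradient line exactly as stated; the absolute value inside the integral is an overstatement in the paper's formulation. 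You are right that what the paper actually \emph{uses} in the proof of Proposition~\ref{conv-gs} (the splittings \eqref{eq:A.11}--\eqref{eq:A.12}) is only the integrated version, and that is precisely what your weak-$H^{1}$ argument delivers. It would be worth saying this explicitly: you have proved the form of the lemma that is needed, not the form that is written.
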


\begin{proof}[Proof of Proposition \ref{conv-gs}]
First, we obtain a 
$H^{1}$-boundedness of 
the sequence $\{u_{n}\}$. 
For sufficiently large 
$n \in \mathbb{N}$, we have 
\begin{equation*}
\begin{split}
2 m_{\omega} + 1
\geq 
|\mathcal{S}_{\omega}(u_{n})| 
+ \frac{1}{3} 
|\mathcal{K}(u_{n})| 
& 
\geq 
|\mathcal{S}_{\omega}(u_{n})
- \frac{1}{3} \mathcal{K}(u_{n})| 
\\
& = 
\frac{1}{6} 
\|\nabla u_{n}\|_{L^{2}}^{2} 
+ \frac{\omega}{2} 
\|u_{n}\|_{L^{2}}^{2}
+ \frac{1}{6} 
\|u_{n}\|_{L^{6}}^{6}. 
\end{split}
\end{equation*} 
Therefore, we see that the sequence 
$\{u_{n}\}$ is bounded in 
$H^{1}(\R^{3})$. 

Then, up to a subsequence, there 
exists $Q_{\infty} \in H^{1}(\R^{3})$ 
such that $\lim_{n \to \infty} 
u_{n} = Q_{\infty}$ weakly in 
$H^{1}(\R^{3})$. 
We shall show that $Q_{\infty} 
\neq 0$. 
Suppose to the contrary that 
$Q_{\infty} \equiv 0$. 
From the compactness of 
the embedding 
$H_{\text{rad}}^{1}(\R^{3}) 
\subset L^{4}(\R^{3})$, we have
\begin{equation}\label{eq:A.9}
0 = 
\lim_{n\to \infty}
\mathcal{K}(u_{n})
= \lim_{n\to \infty}\left\{ 
\left\| \nabla u_{n} 
\right\|_{L^{2}}^{2}
-\left\| u_{n}\right\|_{L^{6}}^{6}
\right\}.
\end{equation}
Suppose that $\lim_{n \to \infty}
\| \nabla u_n\|_{L^2} = 0$. 
Then, it follows from the Sobolev 
embedding that 
$\lim_{n \to \infty} 
\| u_n \|_{L^6} = 0$. 
This yields that 
\begin{equation*}
0 < \mathcal{J}_{\omega}(Q_{\omega}) = 
m_{\omega} = \lim_{n \to \infty} \mathcal{S}_{\omega}
(u_{n}) = 0, 
\end{equation*} 
which is absurd. 
Thus, by taking a subsequence, 
we may assume $\lim_{n\to\infty} \| \nabla u_n \|_{L^2} > 0$. 
Now, \eqref{eq:A.9} with 
the definition of $\sigma$ \eqref{eq-A1} gives us
\begin{equation*}
\lim_{n\to \infty}
\left\| \nabla u_{n}
\right\|_{L^{2}}^{2}
\ge 
\sigma 
\lim_{n\to \infty}\left\| u_{n} 
\right\|_{L^{6}}^{2}
\ge 
\sigma \lim_{n\to \infty}
\left\| \nabla u_{n}
\right\|_{L^{2}}^{\frac{2}{3}}.
\end{equation*}
From this together with 
$\lim_{n\to\infty} 
\| \nabla u_n \|_{L^2} >0$ 
and \eqref{eq:A.9}, we have 
\begin{equation*}
\sigma^{\frac{3}{2}}
\le \lim_{n\to \infty}\left\| \nabla u_{n} \right\|_{L^{2}}^2 =  
\lim_{n \to \infty} 
\left\|u_{n} \right\|_{L^{6}}^{6}.
\end{equation*}
Hence, we see that 
\[
\begin{split}
m_{\omega}
=
\lim_{n\to \infty}\mathcal{S}
_{\omega}(u_{n})
=
\lim_{n\to \infty}\left\{
\mathcal{S}_{\omega}(u_{n}) 
- \frac{1}{2} \mathcal{K}(u_{n})
\right\} 
&\ge 
\lim_{n\to \infty}
\left\{ 
\frac{1}{8}\left\|u_{n}\right\|
_{L^{4}}^{4}
+
\frac{1}{3}\left\| u_{n} \right\|_{L^{6}}^{6}
\right\}
\\[6pt]
&\ge \frac{1}{3}\lim_{n\to \infty}
\left\|u_{n} \right\|_{L^{6}}^{6}
\ge \frac{\sigma^{\frac{2}{3}}}{3}, 
\end{split}
\]
which contradicts \eqref{eq-A2}. 
Thus, $Q_{\omega} \not\equiv 0$. 

It follows from 
$\lim_{n \to \infty} 
\mathcal{S}_{\omega}(u_{n}) 
= m_{\omega}$ and 
$\lim_{n \to \infty}
\mathcal{K}(u_{n}) = 0$ that 
\begin{equation}\label{eq:A.7} 
\lim_{n \to \infty} 
\mathcal{J}_{\omega}(u_{n}) 
= \lim_{n \to \infty} 
\left\{ 
\mathcal{S}_{\omega}(u_{n})
- \frac{1}{2}\mathcal{K}(u_{n})
\right\} = m_{\omega}
\end{equation}
Using Lemma \ref{thm-bl}, we have
\begin{align}
\label{eq:A.11}
\mathcal{J}_{\omega}(u_{n})
-
\mathcal{J}_{\omega}
(u_{n} -Q_{\infty})
-
\mathcal{J}_{\omega}
(Q_{\infty})
&=o_{n}(1),
\\[6pt]
\label{eq:A.12}
\mathcal{K}(u_{n}) 
- \mathcal{K}(u_{n} 
- Q_{\infty})
-\mathcal{K}(Q_{\infty})
&=o_{n}(1).
\end{align} 
Furthermore, \eqref{eq:A.11} together with \eqref{eq:A.7} 
and the positivity of $\mathcal{J}_{\omega}$ 
implies that
$\mathcal{J}_{\omega}(Q_{\infty}) 
\le m_{\omega}$.
We claim that 
$\mathcal{K}(Q_{\infty}) \leq 0$. 
Suppose to the contrary that $\mathcal{K}
(Q_{\infty})>0$. 
Then, it follows from 
$\lim_{n \to \infty} \mathcal{K}
(u_{n}) = 0$
and \eqref{eq:A.12} that $
\mathcal{K}(u_{n} - Q_{\infty}) 
<0$ 
for sufficiently large $n$. 
Hence, from \eqref{eq-A3}, 
we can take $\lambda_{n}\in 
(0,1)$ such that 
$\mathcal{K}
(T_{\lambda_{n}}
(u_{n} - Q_{\infty})) = 0$. 
Furthermore, we see from $0<\lambda_{n}<1$, 
and the definition of $\mathcal{J}_{\omega}$ 
that 
\begin{equation*}
\begin{split}
m_{\omega}
\le 
\mathcal{J}_{\omega} 
( T_{\lambda_{n}} (u_{n}- 
Q_{\infty})) 
& 
= \frac{\omega}{2} 
\|u_{n}- 
Q_{\infty}\|_{L^{2}}^{2} 
+ \frac{\lambda_{n}^{3}}{8} 
\|u_{n}- 
Q_{\infty}\|_{L^{4}}^{4} 
+ \frac{\lambda_{n}^{6}}{3} 
\|u_{n}- 
Q_{\infty}\|_{L^{6}}^{6} \\
& < 
\mathcal{J}_{\omega} 
(u_{n} - Q_{\infty}). 
\end{split}
\end{equation*}
In addition, it follows from 
\eqref{eq:A.7}, \eqref{eq:A.11} 
and $Q_{\infty} \neq 0$ that 
\begin{equation*}
\begin{split}
m_{\omega} < \mathcal{J}_{\omega} 
(u_{n} - Q_{\infty})
= \mathcal{J}_{\omega} 
(u_{n})
- \mathcal{J}_{\omega}
(Q_{\infty})+o_{n}(1)
= m_{\omega} -
\mathcal{J}_{\omega}(Q_{\infty}) 
+o_{n}(1) < m_{\omega}
\end{split}
\end{equation*}
for sufficiently large $n \in \mathbb{N}$, 
which is a contradiction. 
Thus, $\mathcal{K} 
(Q_{\infty}) \leq 0$.

Since $Q_{\infty} \not\equiv 0$ 
and $\mathcal{K} 
(Q_{\infty}) \leq 0$, 
it follows from \eqref{vari-v2} 
that 
\begin{equation}\label{eq:A.15}
m_{\omega}
\le \mathcal{J}_{\omega}
(Q_{\infty}). 
\end{equation}
Moreover, it follows from 
the weak lower semicontinuity that 
\begin{equation}\label{eq:A.16}
\mathcal{J}_{\omega}
(Q_{\infty}) 
\le 
\liminf_{n\to \infty}
\mathcal{J}_{\omega}(u_{n})
\le m_{\omega}.
\end{equation} 
Combining (\ref{eq:A.15}) and 
(\ref{eq:A.16}), we obtain $
\mathcal{J}_{\omega}
(Q_{\infty})=m_{\omega}$. 
Thus, we have proved that 
$Q_{\infty}$ 
is a minimizer for $m_{\omega}$. 
Then, from the uniqueness of the ground state 
(see Proposition \ref{thm-lu} \textrm{(i)}), 
there exists $\theta \in \R$ such that 
$Q_{\infty} 
= e^{i \theta} Q_{\omega}$. 
Thus, we see that $\mathcal{S}_{\omega}(Q_{\infty}) 
= m_{\omega}$ and $\mathcal{K}(Q_{\infty}) = 0$. 
It follows from $\mathcal{J}_{\omega} (Q_{\infty}) = m_{\omega} = 
\lim_{n \to \infty} \mathcal{J}_{\omega} (u_{n})$ and 
\eqref{eq:A.11} that 
$\lim_{n \to \infty} \mathcal{J}_{\omega} (u_{n} - Q_{\infty}) = 0$. 
This together with the 
H\"{o}lder inequality 
yields that $\lim_{n \to \infty} \|u_{n} - Q_{\infty}\|_{L^{q}} 
= 0$ for $2 \leq q \leq 6$. 
Then, since $\lim_{n \to \infty} 
\mathcal{S}_{\omega}(u_{n}) = m_{\omega} 
= S_{\omega}(Q_{\infty})$, 
we have 
  \[
  \begin{split}
  \lim_{n \to \infty} 
  \left\{\frac{1}{2} \|\nabla u_{n}\|_{L^{2}}^{2} 
  + \frac{\omega}{2} \|u_{n}\|_{L^{2}}^{2} \right\}
 & =  \lim_{n \to \infty} \left\{
 \mathcal{S}_{\omega}(u_{n}) 
 - \frac{1}{4} \|u_{n}\|_{L^{4}}^{4} 
 - \frac{1}{6} \|u_{n}\|_{L^{6}}^{6}
 \right\} \\
 & = \mathcal{S}_{\omega}(Q_{\infty}) 
 - \frac{1}{4} \|Q_{\infty}\|_{L^{4}}^{4} 
 - \frac{1}{6} \|Q_{\infty}\|_{L^{6}}^{6} \\
 & = \frac{1}{2} \|\nabla Q_{\infty}\|_{L^{2}}^{2} 
  + \frac{\omega}{2} \|Q_{\infty}\|_{L^{2}}^{2}. 
 \end{split}
  \]
This together with the weak convergence of  
$u_{n}$ to $Q_{\infty}$ in $H^{1}(\R^{3})$ 
implies that 
  \[
  \lim_{n \to \infty} 
u_{n} = Q_{\infty} = e^{i \theta} Q_{\omega} 
\qquad \mbox{strongly in 
$H^{1}(\R^{3})$}.
  \]
Thus, we infer that $\lim_{n \to \infty} 
\text{dist}_{H^{1}} 
(u_{n}, \mathcal{O}(Q_{\omega})) 
= 0$.

\end{proof}


\begin{thank}
M. H. was supported by JSPS KEKENHI Grant Number 
JP22J00787. 
H.K. was supported by JSPS KAKENHI 
Grant Number JP20K03706.
M.W was supported by JSPS KAKENHI Grant Number 22J10027. 
\end{thank}

\bibliographystyle{plain}

\vspace{24pt}

\noindent
Masaru Hamano
\\
Faculty of Science and Engineering, 
\\
Waseda University
\\
3-4-1 Okubo, Shinjuku-ku, Tokyo 169-8555, JAPAN
\\
E-mail: m.hamano3@kurenai.waseda.jp

\vspace{0.5cm}

\noindent
Hiroaki Kikuchi
\\
Department of Mathematics
\\
Tsuda University
\\
2-1-1 Tsuda-machi, Kodaira-shi, Tokyo 187-8577, JAPAN
\\
E-mail: hiroaki@tsuda.ac.jp

\vspace{0.5cm}

\noindent
Minami Watanabe
\\
Graduate school of Mathematics
\\
Tsuda University
\\
2-1-1 Tsuda-machi, Kodaira-shi, Tokyo 187-8577, JAPAN
\\
E-mail: m18mwata@gm.tsuda.ac.jp

\end{document}